\documentclass[11pt]{amsart}
\usepackage[dvips]{graphicx}
\usepackage{amssymb,latexsym,amsmath}
\usepackage{amsfonts}
\usepackage{amsthm}
\usepackage{slashed}
\usepackage[all]{xy}
\usepackage{color}
\usepackage{slashed}
\usepackage{tikz}%
\usetikzlibrary{matrix,arrows}
\usepackage{float}
\usepackage{mathrsfs}
\usepackage{empheq}
\usepackage{multicol}


\numberwithin{equation}{section}

\newcounter{introequation}
\newenvironment{introequation}{\refstepcounter{introequation}\equation}{\tag{\theintroequation}\endequation}

\usepackage{newfloat}
\DeclareFloatingEnvironment[
    fileext=los,
    listname=List of Schemes,
    name=Figure,
    placement=tbhp,
    within=section,
]{scheme}

\newtheorem{theorem}{Theorem}[section]
\newtheorem{THM}{Theorem}
\newtheorem{proposition}[theorem]{Proposition}
\newtheorem{lemma}[theorem]{Lemma}
\newtheorem{coro}[theorem]{Corollary}

\theoremstyle{definition}
\newtheorem{definition}[theorem]{Definition}
\newtheorem{remark}[theorem]{Remark}
\newtheorem{example}[theorem]{Example}

\newcounter{x}\setcounter{x}{1}

\newcommand{\norm}[1]{\lVert #1 \rVert}
\newcommand{\bnorm}[1]{\bigg{\lVert} #1 \bigg{\rVert}}

\newcommand{\inner}[2]{\langle #1,#2\rangle}

\newcommand{\ran}{\textnormal{ran}}
\newcommand{\ind}{\textnormal{ind}}

\newcommand{\spec}{\textnormal{spec}}

\newcommand{\rk}{\text{rk}}

\newcommand{\vol}{\text{vol}}

\newcommand{\dom}{\textnormal{Dom}}

\usepackage{color}
\usepackage[pdftex,colorlinks=false,a4paper,bookmarks=true,bookmarksnumbered=true,
linkcolor=blue,citecolor=blue,citebordercolor={0 0 1}]{hyperref}
\usepackage{hyperref}

\textwidth = 420pt
\oddsidemargin = 18pt
\evensidemargin = 18pt
\linespread{1}

\begin{document}
\title{The $S^1$-Equivariant signature for semi-free actions as an index formula}
\author{Juan Camilo Orduz}
\address{Institut F\"ur Mathematik , Humboldt Universit\"at zu
Berlin, Germany.}
\email{juanitorduz@gmail.com}
\date{\today}

\maketitle

\begin{abstract}
In \cite{L00} John Lott defined an integer-valued signature $\sigma_{S^1}(M)$ for the orbit space
of a compact orientable manifold with a semi-free $S^1$-action but he did not construct
a Dirac-type operator which has this signature as its index. 
We construct such operator on the orbit space and
we show that it is essentially unique and that its index coincides
with Lott's signature, at least when the stratified space satisfies
the so-called Witt condition. For the non-Witt case, this operator 
remains essentially self-adjoint (in contrast to the  Hodge de-Rham operator) and it has a well-defined index which 
we conjecture will also compute $\sigma_{S^1}(M)$. 
\end{abstract}

\setcounter{tocdepth}{1}
\tableofcontents

\section*{Introduction}

In \cite{L00} John Lott studied a signature invariant for quotients of closed oriented $(4k+1)$-dimensional manifolds $M$ by $S^1$-actions. This signature, denoted by $\sigma_{S^1}(M)$, is defined at the level of basic forms with compact support. If the action is semi-free, i.e. if the isotropy groups are either the trivial group or the whole $S^1$, the quotient space $M/S^1$ is a stratified space with singular stratum the fixed point set $M^{S^1}$. Locally, a neighborhood of the singular stratum is homeomorphic to the product $D^{4k-2N-1}\times C(\mathbb{C} P^N)$ for some $N$. Here $C(\mathbb{C} P^N)$ denotes a cone with link $\mathbb{C}P^N$. Moreover, on the open and dense subset $M_0\coloneqq M-M^{S^1}$  the action is free. In this context, Lott proved that the following remarkable formula yields an $S^1$ -homotopy invariant(\cite[Theorem 4]{L00})
\begin{introequation}\label{Eqn:Lott}
\sigma_{S^1}(M)=\int_{M_0/S^1}L\left(T(M_0/S^1), g^{T(M_0/S^1)}\right)+\eta(M^{S^1}),
\end{introequation}
where $L\left(T(M_0/S^1), g^{T(M_0/S^1)}\right)$ is the $L$-polynomial of the curvature form of the tangent bundle $T(M_0/S^1)$ with respect to the quotient metric $g^{T(M_0/S^1)}$and $\eta(M^{S^1})$ is the eta invariant of the odd signature operator defined on the fixed point set. It is important to emphasize that part of the result is the convergence of the integral over $M_0/S^1$. The question that arises naturally is whether there exists a Fredholm operator whose index computes $\sigma_{S^1}(M)$. This question was posed by Lott himself as a remark in his original work \cite[Section 4.2]{L00}. A natural candidate is the Hodge-de Rham operator $D_{M_0/S^1}
\coloneqq d_{M_0/S^1}+d^{\dagger}_{M_0/S^1}$ defined on the space of compactly supported differential forms $\Omega_c(M_0/S^1)$. If the quotient metric is not complete, $D_{M_0/S^1}$ might have several closed extensions. In order to understand this phenomenon better it is necesary to study the form of the operator close to the fixed point set. Following Br\"uning's work  \cite{B09} one sees that $D_{M_0/S^1}$, close to $F\subset M^{S^1}$, is unitarily equivalent to an operator of the form 
\begin{introequation}\label{Eqn:OpSing}
\Psi^{-1}{D}_{M_0/S^1}\Psi
=\gamma\left(\frac{\partial}{\partial r}+
\left(\begin{array}{cc}
I & 0\\
0 & -I
\end{array}\right)\otimes A(r)
\right).
\end{introequation}
The operator $A(r)$ can be written as
\begin{introequation}\label{Eqn:A}
A(r)\coloneqq A_H(r)+\frac{1}{r}A_V,
\end{introequation}
where $A_H(r)$ is a first order horizontal operator, well-defined for $r\geq 0$. The coefficient $A_V$ is a first order vertical operator, known as the {\em cone coefficient}. Using the techniques developed in \cite{BS91}, Br\"uning showed in \cite[Section 4]{B09} that the operator \eqref{Eqn:OpSing} has a discrete self-adjoint extension. In addition, if the cone coefficient satisfies the spectral condition
\begin{introequation}\label{Eqn:AVgeg12}
|A_V|\geq\frac{1}{2}, 
\end{introequation} 
then the operator is in fact essentially self-adjoint. In the {\em Witt case}, i.e. when there are no vertical harmonic forms in degree $N$ (i.e. $N$ odd), we can always achieve condition \eqref{Eqn:AVgeg12} by rescaling the vertical metric, which is an operation that preserves the index. To see this one needs to understand the spectrum of $A_V$. It was shown in \cite[Theorem 3.1]{B09} that the essential eigenvalues, those invariant under the rescaling, are the ones obtained when restricting to the space of vertical harmonic forms. These eigenvalues are explicitly given by $2j-N$, for $j=0,1,\cdots, N$. Observe that if $N$ is odd, zero does not appear as an essential eigenvalue and $|2j-N|\geq 1$. On the other hand, if $N=2\ell$ then zero appears as an eigenvalue when $j=\ell$ and the corresponding eigenspace is non-zero. For the Witt case we prove following the work of Br\"uning \cite[Section 5]{B09} that $\ind(D_{M_0/S^1}^+)=\sigma_{S^1}(M)$, where $D_{M_0/S^1}^+$ is the chiral Dirac operator with respect to the Clifford involution $\star_{M_0/S^1}$. \\

Up to this point the picture looks incomplete as Lott's geometric proof of \eqref{Eqn:Lott} works without any distinction on the parity of $N$. In contrast, for the analytical counterpart one needs to distinguish between  the Witt and the non-Witt case since in the latter we are forced to impose boundary conditions. This motivates the following question: Does there exist an essentially self-adjoint operator on $M/S^1$, independent of the codimension of the fixed point set in $M$, whose index is precisely the $S^1$-signature? Hope for the existence of such operator relies on the fundamental work \cite{BH78} of Br\"uning and Heintze,  where the authors develop a machinery to ``push-down" self-adjoint operators to quotients of compact Lie group actions. The key observation of their formalism is that, whenever a self-adjoint operator commuting with the group action is restricted to the space of invariant sections, it remains self-adjoint in the restricted domain. Once this result is established, Br\"uning and Heintze constructed a unitary map $\Phi$ between the space of square integrable invariant sections on the open set of principal orbits and the space of square integrable sections of a certain vector bundle defined on the quotient space. This construction seems appropriate for our case of interest because all geometric differential operators on $M$, defined on smooth forms, are essentially self-adjoint since $M$ is closed. The next question is to determine which operator to choose in order to apply Br\"uning and Heintze's construction.  Two natural candidates are the Hodge-de Rham operator and the odd signature operator. Implementing the procedure described above for these two operators, one obtains only partially satisfactory results. Concretely, the induced operators are indeed self-adjoint by construction, but the resulting potentials do not anti-commute with $\star_{M_0/S^1}$ (\cite[Section 4.3]{JO17}). This is of course a problem since $\star_{M_0/S^1}$ is the natural involution which should split the desired push down operator in order to obtain the $S^1$-signature. Nevertheless, going back to the construction of \cite{BH78}, one can see that it is enough to push down a transversally elliptic operator in order to obtain an elliptic operator on the quotient. Using this observation, which enlarges the pool of candidates for the operator, and by analyzing the concrete form of the unitary transformation $\Phi$ defined by Br\"uning and Heintze, we are able to find an essentially self-adjoint $S^1$-invariant transversally elliptic operator whose induced push-down operator satisfies the desired conditions. Indeed, consider the first order symmetric transversally elliptic differential operator $B\coloneqq -c(\chi)d+d^\dagger c(\chi):\Omega_c(M_0)\longrightarrow\Omega_c(M_0)$, where $c(\chi)$ denotes the left Clifford action on $\wedge T^*M$ by the characteristic $1$-form of the induced foliation by the $S^1$-action. As $B$ commutes with the Gau\ss-Bonnet grading $\varepsilon\coloneqq (-1)^j$ on $j$-forms, then we define $\mathscr{D}'$ through the following commutative diagram 

\begin{align*}
\xymatrixcolsep{2cm}\xymatrixrowsep{2cm}\xymatrix{
\Omega_c^\text{ev}(M_0)^{S^1} \ar[r]^-{B^\text{ev}} & \Omega_c^\text{ev}(M_0)^{S^1} \\
\Omega_c(M_0/S^1) \ar[u]^-{\psi_\text{ev}} \ar[r]^-{\mathscr{D}'}& \Omega_c(M_0/S^1), \ar[u]_-{\psi_\text{ev}}
 }
\end{align*}
where $\psi_\text{ev}$ is a  modification of the unitary transformation introduced in \cite[Section 5]{BS88}. 

\begin{THM}\label{THM1}
The operator $\mathscr{D}':\Omega_c(M_0/S^1)\longrightarrow \Omega_c(M_0/S^1)$ is given explicitly by 
\begin{align*}
\mathscr{D}'=D_{M_0/S^1} +\frac{1}{2}c(\bar{\kappa})\varepsilon-\frac{1}{2}\widehat{c}(\bar{\varphi}_0)(1-\varepsilon),
\end{align*}
where $\bar{\kappa}$ is the mean curvature form and $\widehat{c}(\bar{\varphi}_0)$ is a bounded endomorphism. In addition $\mathscr{D}'$ satisfies:
\begin{enumerate}
\item It anti-commutes with $\star_{M_0/S^1}$.
\item It is essentially self-adjoint. 
\item It has the same principal symbol as  the Hodge-de Rham operator $D_{M_0/S^1}$ .
\item It is discrete. 
\end{enumerate}
\end{THM}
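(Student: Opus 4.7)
The plan is to derive the explicit formula first and then deduce the four properties from it. Starting from $B=-c(\chi)d+d^\dagger c(\chi)$ and the definition of $\mathscr{D}'$ through the commutative diagram, I would compute $\psi_{\text{ev}}^{-1}B^{\text{ev}}\psi_{\text{ev}}$ by decomposing every $S^1$-invariant form along the foliation generated by $\chi$ into a ``basic part'' and a $\chi$-normal part; this is precisely the decomposition that the modified Brüning--Seeley unitary $\psi_{\text{ev}}$ is designed to diagonalize. The key identity to be used is Rummler's formula, which expresses $d$ on invariant forms as $d_{M_0/S^1}$ plus contributions from the mean curvature form $\kappa$ and the tensor $\varphi_0$ of the foliation. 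After composing with $c(\chi)$ and its adjoint and keeping track of how $\varepsilon$ swaps normal/basic types, the contribution of $-c(\chi)d$ should assemble into $d_{M_0/S^1}$ together with half of $c(\bar{\kappa})\varepsilon$, and the contribution of $d^\dagger c(\chi)$ should complete the picture by furnishing $d^\dagger_{M_0/S^1}$ together with the $\widehat{c}(\bar{\varphi}_0)(1-\varepsilon)$ correction. This step is tedious but mechanical once the conventions of $\psi_{\text{ev}}$ are fixed.

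Once the explicit formula is established, property (3) is immediate: the additional summands are of order zero, so $\mathscr{D}'$ and $D_{M_0/S^1}$ share their principal symbol. Property (1) can be checked summand by summand: $D_{M_0/S^1}$ anti-commutes with $\star_{M_0/S^1}$ by standard Hodge theory on the odd-dimensional quotient, while $c(\bar{\kappa})$ and $\widehat{c}(\bar{\varphi}_0)$ anti-commute with $\star_{M_0/S^1}$ by the defining property of left (resp.\ right) Clifford multiplication; the parity projectors $\varepsilon$ and $1-\varepsilon$ then combine with $\star_{M_0/S^1}$ to give the correct sign.

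For property (2), the strategy is to invoke the restriction principle of Brüning--Heintze \cite{BH78}: since $M$ is closed, the symmetric first-order operator $B$ is essentially self-adjoint on $\Omega_c(M_0)$, and because $B$ commutes with the $S^1$-action it restricts to an essentially self-adjoint operator on the $S^1$-invariant subspace. Since $\psi_{\text{ev}}$ is a unitary from $L^2(\Omega_c(M_0/S^1))$ onto $L^2(\Omega_c^{\text{ev}}(M_0)^{S^1})$ and the defining diagram commutes, $\mathscr{D}'$ inherits essential self-adjointness. The point requiring care is verifying that $\psi_{\text{ev}}$ genuinely unitarizes the natural $L^2$-pairings on both sides, which follows from its construction together with the coarea-type formula identifying the two volume forms.

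The hardest step will be (4), the discreteness of the spectrum. Unlike $D_{M_0/S^1}$, whose discreteness has been proved by Brüning via the cone model, the summand $\tfrac{1}{2}c(\bar{\kappa})\varepsilon$ is not a bounded perturbation because $\bar{\kappa}$ behaves like $O(1/r)$ as one approaches the fixed-point set. The plan is to reduce $\mathscr{D}'$ near each component of $M^{S^1}$ to a model operator of the shape \eqref{Eqn:OpSing} and to observe that the extra $1/r$-term merely shifts the cone coefficient $A_V$ by a bounded zeroth-order endomorphism of the vertical bundle. Since Brüning's analysis of $A_V$ in \cite{B09} depends only on its spectral structure, and since a bounded symmetric perturbation of $A_V$ does not move its eigenvalues outside a compact neighbourhood of the original spectrum, the model operator still admits a compact parametrix in the conical neighbourhood. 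Combined with the ellipticity of $\mathscr{D}'$ away from the singular stratum, this produces a global compact resolvent and hence the desired discreteness.
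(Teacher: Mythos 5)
Your route for the explicit formula and for properties (1)--(3) is essentially the paper's own: decompose invariant forms via Rummler's formula, transport through the modified Brüning--Seeley unitaries $\psi_{\mathrm{ev}/\mathrm{odd}}$, and get essential self-adjointness by restricting the essentially self-adjoint operator $B=-c(\chi)d+d^\dagger c(\chi)$ on the closed manifold $M$ to the $S^1$-invariant subspace (Brüning--Heintze) and conjugating by $\psi_{\mathrm{ev}}$. Two small imprecisions there: in the actual computation both $\kappa$- and $\varphi_0$-contributions come from \emph{both} $d$ and $d^\dagger$ (the wedge part of $\widehat{c}(\bar\varphi_0)$ from $d$, the contraction part from $d^\dagger$), and the anti-commutation of $\widehat{c}(\bar\varphi_0)$ with $\star_{M_0/S^1}$ is \emph{not} the generic behaviour of right Clifford multiplication on an even-dimensional space (for a $1$-form that would commute); it rests on the specific $2$-form identity $(\bar\varphi_0\wedge)^\dagger=-\star_{M_0/S^1}(\bar\varphi_0\wedge)\star_{M_0/S^1}$. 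Also note that the boundedness of $\widehat{c}(\bar\varphi_0)$, which is part of the statement, is not automatic: it requires the local computation $\bar\varphi_0=r\,\varphi_{0,\mathcal{F}}$ near $M^{S^1}$, which your proposal never supplies.

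The genuine gap is in (4). You argue that the extra term only shifts the cone coefficient by the bounded endomorphism $-\tfrac{1}{2}\varepsilon$ and that ``a bounded symmetric perturbation of $A_V$ does not move its eigenvalues outside a compact neighbourhood of the original spectrum,'' concluding that Brüning's parametrix still applies. That statement is true but does not deliver what is needed: the Brüning--Seeley analysis of the model operator $\gamma(\partial_r+\bigstar\otimes(A_H(r)+r^{-1}\mathscr{A}_V))$ without boundary conditions hinges on the spectral gap $\spec(\mathscr{A}_V)\cap(-\tfrac12,\tfrac12)=\emptyset$, and a generic bounded perturbation can perfectly well leave (or create) spectrum inside that interval --- indeed $A_V$ itself has the eigenvalue $0$ on vertical harmonic forms in the non-Witt case, which is exactly why $D_{M_0/S^1}$ fails to be essentially self-adjoint there. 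What your argument is missing is the explicit computation that the perturbation $-\tfrac12\varepsilon$ shifts the essential eigenvalues $2j-N$ of $A_V$ (on fibrewise harmonic forms of $\mathbb{C}P^N$) to the half-integers $2j-N\pm\tfrac12$, while on the orthogonal complement a rescaling of the vertical metric pushes the remaining eigenvalues $\pm\sqrt{\lambda+(j-N-1/2)^2}$ out of $(-\tfrac12,\tfrac12)$; only with $|\mathscr{A}_V|\ge\tfrac12$ in hand can the parametrix construction of Brüning's Section 4 be adapted verbatim to yield a compact resolvent for the (unique) self-adjoint extension. This spectral computation is the whole point of the factor $\tfrac12$ in the potential, and without it your discreteness argument does not go through.
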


By the Kato-Rellich theorem it is enough to study the essentially self-adjoint operator
\begin{align*}
\mathscr{D}\coloneqq D_{M_0/S^1}+\frac{1}{2}c(\bar{\kappa})\varepsilon.
\end{align*}
We call this operator the {\em induced Dirac-Schr\"odinger operator}. Since the mean curvature form can be written close to the fixed point set as $\bar{\kappa}=-dr/r$, one verifies that close to the fixed point set we can express similarly
\begin{introequation}\label{Eqn:OpSingPot}
\Psi^{-1}\mathscr{D}\Psi
=\gamma\left(\frac{\partial}{\partial r}+
\left(\begin{array}{cc}
I & 0\\
0 & -I
\end{array}\right)\otimes \left(A(r)-\frac{\varepsilon}{2r}\right)
\right).
\end{introequation}
One can deduce from \cite[Theorem 3.1]{B09} that
\begin{introequation}\label{CondSpecOpPot}
\spec\left(A_V-\frac{1}{2}\varepsilon\right)\cap\left(-\frac{1}{2},\frac{1}{2}\right)=\emptyset,
\end{introequation}
which verifies that $\mathscr{D}$ is indeed essentially self-adjoint. Furthermore, it is easy to verify that the parametrix's construction of \cite[Section 4]{B09} can be adapted to  $\mathscr{D}$, which allows us to prove that this operator is discrete. \\

For the Witt  case we show that the index of $\mathscr{D}^+$ computes the signature invariant $\sigma_{S^1}(M)$.
\begin{THM}\label{THM2}
In the Witt case we have for the graded Dirac-Schr\"odinger operator $\mathscr{D}^+$ the following index identity 
\begin{align*}
\ind(\mathscr{D}^+)=\sigma_{S^1}(M)=\int_{M_0/S^1}L\left(T(M_0/S^1), g^{T(M_0/S^1)}\right).
\end{align*}
\end{THM}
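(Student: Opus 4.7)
The strategy is to establish the two equalities in turn. For $\ind(\mathscr{D}^+) = \sigma_{S^1}(M)$, the plan is to homotope $\mathscr{D}$ to $D_{M_0/S^1}$ within the class of essentially self-adjoint, discrete operators anti-commuting with $\star_{M_0/S^1}$, and then invoke the Witt-case identity $\ind(D_{M_0/S^1}^+) = \sigma_{S^1}(M)$ recalled in the introduction following \cite[Section 5]{B09}. For the equality $\sigma_{S^1}(M) = \int_{M_0/S^1} L$, the cleanest approach is to compute $\ind(\mathscr{D}^+)$ directly via a local heat kernel argument on the orbit space, using that $\mathscr{D}$ shares the principal symbol of $D_{M_0/S^1}$; combined with the first equality this also reproves Lott's identity \eqref{Eqn:Lott} and forces $\eta(M^{S^1})$ to be absent in the Witt case.

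For the homotopy, I introduce
\begin{align*}
\mathscr{D}_t \coloneqq D_{M_0/S^1} + \tfrac{t}{2}\,c(\bar{\kappa})\varepsilon, \qquad t \in [0,1],
\end{align*}
interpolating $\mathscr{D}_0 = D_{M_0/S^1}$ and $\mathscr{D}_1 = \mathscr{D}$. Each $\mathscr{D}_t$ still anti-commutes with $\star_{M_0/S^1}$ and, near the singular stratum, takes the form \eqref{Eqn:OpSingPot} with cone coefficient $A_V - \tfrac{t}{2}\varepsilon$. Since in the Witt case the essential eigenvalues $2j - N$ of $A_V$ on vertical harmonics satisfy $|2j-N|\geq 1$ (as $N$ is odd), we have
\begin{align*}
\spec\!\left(A_V - \tfrac{t}{2}\varepsilon\right) \cap \left(-\tfrac{1}{2},\tfrac{1}{2}\right) = \emptyset \qquad \text{for every } t \in [0,1],
\end{align*}
so Br\"uning's parametrix construction in \cite[Section 4]{B09} applies uniformly in $t$: each $\mathscr{D}_t$ is essentially self-adjoint with discrete spectrum, and the family depends continuously on $t$ in the norm-resolvent sense. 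Constancy of the Fredholm index along this homotopy then yields $\ind(\mathscr{D}^+) = \ind(D_{M_0/S^1}^+) = \sigma_{S^1}(M)$.

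For the second equality I would carry out a McKean-Singer computation $\ind(\mathscr{D}^+) = \str(e^{-s\mathscr{D}^2})$. Writing $\mathscr{D}^2 = D_{M_0/S^1}^2 + \{D_{M_0/S^1}, \tfrac{1}{2}c(\bar{\kappa})\varepsilon\} + \tfrac{1}{4}|c(\bar{\kappa})|^2$, the anticommutator and zeroth-order terms are lower order with respect to Getzler rescaling on the smooth part of the orbit space, so that the interior integrand in the $s \downarrow 0$ asymptotic reduces to the Hirzebruch $L$-polynomial $L(T(M_0/S^1), g^{T(M_0/S^1)})$, independently of the potential. Br\"uning's parametrix controls the small-time heat trace near the singular stratum and, under the Witt hypothesis, shows that no extra boundary-type contribution survives at leading order, giving $\ind(\mathscr{D}^+) = \int_{M_0/S^1} L$.

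The main obstacle is the continuity statement in the homotopy argument: the perturbation $\tfrac{1}{2}c(\bar{\kappa})\varepsilon$ has a $1/r$ singularity at the fixed point set, so relative boundedness with respect to $D_{M_0/S^1}$ is not automatic, and the standard Kato-Rellich argument that works for $\mathscr{D}$ alone must be carried out uniformly in $t$. The uniform spectral gap displayed above is precisely what provides a common form core for the whole family (from Br\"uning's parametrix) and enables norm-resolvent continuity; verifying this carefully in the model near the fixed point set and patching back to the global operator is the delicate analytic input of the proof, with the subsequent index identifications reducing to cited material.
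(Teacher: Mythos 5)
Your proposed route is genuinely different from the paper's, and both of its halves have gaps at exactly the places where the real work lies. For the first equality, the statement you need is constancy of $\ind(\mathscr{D}_t^+)$ along the path $\mathscr{D}_t=D_{M_0/S^1}+\tfrac{t}{2}c(\bar{\kappa})\varepsilon$, and your proposed mechanism (``the uniform spectral gap provides a common form core and norm-resolvent continuity'') is not a proof: the perturbation behaves like $\varepsilon/2r$ near $M^{S^1}$, and to run a Kato-type stability argument you need it to be relatively bounded with controlled constant with respect to the cone operator. The best available estimate of this kind (the Hardy-inequality computation behind Lemma \ref{Lemma:5.12}, used in Proposition \ref{Prop:DefIndex2}) gives $\bnorm{\tfrac{\varepsilon}{2r}\sigma}\leq \tfrac{1+\beta}{2d}\norm{\cdot}$ with $d=\textnormal{dist}(\mp\tfrac12,\spec(A_V))$, and on the vertical harmonic part the Witt spectrum $\{2j-N\}$ always contains $\pm1$, so $d=\tfrac12$ and the bound is $1+\beta>1$; moreover at $t=1$ the cone coefficient $A_V-\tfrac12\varepsilon$ has eigenvalues exactly $\pm\tfrac12$, the borderline case where domains and resolvent continuity are most delicate. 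This is precisely why the paper does not argue by a global homotopy: it splits $\ind(\mathscr{D}^+)$ via the Ballmann--Br\"uning--Carron gluing theorem into contributions from $Z_t$ and $U_t$ with APS-type boundary conditions, proves the $U_t$ contribution vanishes (Theorem \ref{Thm:5.2Pot}) by separating the vertical harmonic part $\mathcal{H}$ from $\mathcal{H}^\perp$ --- using the Hardy/Kato estimate only on $\mathcal{H}^\perp$ (after rescaling makes $d^\perp$ large) and a symmetry/involution argument on $\mathcal{H}$ (Proposition \ref{Prop:DefIndex3}) where the Kato bound is insufficient --- and controls all changes of boundary condition by Theorem \ref{BThm5.9} and Br\"uning's generalized Thom-space identity before quoting the APS theorem on $Z_t$. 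Your plan collapses all of this into an unproved continuity claim.

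For the second equality, the McKean--Singer/Getzler step asserts that ``Br\"uning's parametrix shows that no extra boundary-type contribution survives at leading order'' near the singular stratum; that is not in the cited material and is exactly the crux --- in the paper the potential contributions from $M^{S^1}$ (the terms $\tfrac12\dim\ker A_0(t)$, the Kato index of the pair of spectral projections, $\sigma_{(2)}(T_\pi)$ and Dai's $\tau$) are identified and shown to cancel only after substantial analysis, so a direct small-time heat-trace argument on the singular quotient would have to reprove all of that. It is also unnecessary: in the Witt case $\eta(M^{S^1})=0$, so $\sigma_{S^1}(M)=\int_{M_0/S^1}L$ is immediate from Lott's Theorem \ref{Thm:S1SignatureThm}, and the entire content of the statement is the first equality. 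As it stands, your proposal gets the correct conclusion (consistent with Theorem \ref{Thm:IndD+} and Theorem \ref{Thm:LocalDesD1}) but does not supply the analytic input that makes it true.
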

Here we have used the fact that in the Witt case the eta invariant of the odd signature operator of the fixed point set vanishes. \\

For the non-Witt case the index computation has been so far elusive. Nevertheless, as  $\ind(\mathscr{D}^+)$ is still defined in this case and in view of Theorem \ref{THM2}, we expect an analogous result. 

\subsection*{Acknowledgments}
I would like to thank Prof. Jochen  Br\"uning for all these years’ guidance and unconditional support — in particular, for the enlightening conversations during the development of this project. I am deeply indebted to Sara Azzali, Francesco Bei,  Batu G\"uneysu , Prof. Sylvie Paycha, Prof. Ken Richardson and  Asilya Suleymanova for all the ideas that contributed to this work.  I gratefully acknowledge the financial support from the Berlin Mathematical School and  the project  SFB 647: Space - Time - Matter.

\section{Br\"uning-Heintze contruction }\label{Sect:BH}

The aim on this first section is to give a brief description of the construction, developed in the fundamental work \cite{BH78} of Br\"uning and Heintze, of induced self-adjoint operators on quotients of compact Lie group actions. 

\subsection{The isomorphism $\Phi$ }

Let $G$ be a compact Lie group acting on a smooth oriented Riemannian manifold $M$ by orientation preserving isometries. Denote by $M^{G}\subset M$ the fixed point set and by $M_0\subset M$ the open and dense subset consisting of the union of all principal orbits in $M$ (\cite[Theorem 2.8.5]{DK00}). If $M$ is connected then so is $M_0$.  
For $x\in M$, we denote by $Gx$ and $G_x$ its {\em orbit} and its {\em isotropy group} respectively. The {\em orbit map} $\pi_G:M\longrightarrow M/G$  induces a Riemannian structure on the orbit space $M/G$ by requiring $\pi_G$ to be a Riemannian submersion. We call it the {\em quotient metric}. In addition, let $\pi_E: E\longrightarrow M$ be a complex vector bundle with Hermitian metric $\inner{\cdot}{\cdot}_E$.  This metric induces an inner product on the space of continuous sections with compact support $C_c(M,E)$ by
\begin{equation}\label{Eqn:HermE}
(s, s')_{L^2(E)}\coloneqq \int_M \inner{s(x)}{s'(x)}_E \vol_M(x),
\end{equation}
where $s,s'\in C_c(M,E)$, $x\in M$ and $\vol_M$ denotes the Riemannian volume element on $M$. Define the space $L^2(E)$  as the Hilbert space completion of $C_c(M,E)$ with respect to the inner product \eqref{Eqn:HermE}.
\begin{remark}\label{Rmk:M-M0}
By \cite[Proposition IV.3.7]{B72} it follows  that $M-M_0$ has measure zero with respect to the Riemannian measure, hence $L^2(E)=L^2(E|_{M_0})$. 
\end{remark}
Assume further that $E$ is a  {\em $G$-equivariant vector bundle}, i.e. the projection $\pi_E$ commutes with a $G$-action on $E$. In this context there is an induced action of $G$ on the space of continuous sections $C(M,E)$ defined by the relation
\begin{equation}\label{Eqn:GActionSections}
(U_gs)(x)\coloneqq g(s(g^{-1}x)),
\end{equation}
where $g\in G$ $s\in C(M,E)$ and  $x\in M$. This action induces a unitary representation of $G$ in $L^2(E)$. We say that a section $s\in C(M,E)$ is {\em $G$-invariant} if $U_g s=s$ for all $g\in G$ and we denote by $C(M,E)^G$ and $L^2(E)^G$ the $G$-invariant subspaces of $C(M,E)$ and $L^2(E)$ respectively. 

\begin{example}[Exterior Algebra]\label{Ex:ExtAlg}
As before let $M$ be an oriented Riemannian manifold on which $G$ acts by orientation preserving isometries. The action on $M$ induces an action on the exterior algebra bundle  $E=\wedge_\mathbb{C} T^*M\coloneqq \wedge T^*M \otimes\mathbb{C}$ so that $E$ becomes a $G$-vector bundle over $M$. The action \eqref{Eqn:GActionSections} on differential forms is simply given by $U_g\omega=(g^{-1})^{*}\omega$. 
\end{example}

Now consider the subset
\begin{equation}\label{Eqn:DefE'}
E'\coloneqq \bigcup_{x\in M_0} E^{G_x}_x
\end{equation}
where $E^{G_x}_x$ denotes the elements of the fiber $E_x\coloneqq\pi_E^{-1}(x)$ which are invariant under the $G_x$-action.
If $M$ is connected then $E'$ is a $G$-equivariant subbundle of $E\big{|}_{M_0}$ \cite[Lemma 1.2]{BH78}. As $G$ acts on $E'$ with one orbit type, then it follows that  $F\coloneqq E'/G$ is a manifold (\cite[Theorem 2.6.7]{DK00}). In addition, if $\pi'_G:E'\longrightarrow F$ denotes the orbit map and $\pi_{E'}:E'\longrightarrow M_0$ denotes the projection, then using the Slice Theorem (\cite[Theorem 2.4.1]{DK00}) one can show that $F$ is a vector bundle over $M_0/G$, of the same rank as $E'$,  and the following diagram commutes:
\begin{align}\label{Diag:OrbitMaps}
\xymatrixrowsep{2cm}\xymatrixcolsep{2cm}\xymatrix{
E' \ar[d]_-{\pi_{E'}}  \ar[r]^-{\pi'_G}  & F \ar[d]^-{\pi_{F}} \\
M_0 \ar[r]^-{\pi_G} & M_0/G.
}
\end{align}
\begin{lemma}[{\cite[Lemma 1.13]{JO17}}]\label{Lemma:InducedMetricF}
The bundle $F$ inherits a Hermitian metric $\inner{\cdot}{\cdot}_F$ from $E'$ defined by the relation 
$\inner{\pi'_G v_1}{\pi'_G v_2}_F(y)\coloneqq \inner{v_1}{v_2}_E(x),$ where $x\in M_0$, $v_1,v_2\in E'_x$ and $\pi_G(x)=y$.
\end{lemma}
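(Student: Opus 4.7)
The plan is to verify that the formula $\inner{\pi'_G v_1}{\pi'_G v_2}_F(y) \coloneqq \inner{v_1}{v_2}_E(x)$ is independent of all choices, defines a sesquilinear positive-definite form on each fiber of $F$, and varies smoothly with the base point. Sesquilinearity and positivity are immediate from the corresponding properties of $\inner{\cdot}{\cdot}_E$ together with the linearity of the $G$-action on fibers of $E$, so the substance of the proof lies in the well-definedness and in the smoothness.

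For well-definedness I would split the ambiguity in the choice of $(x, v_i)$ into two independent steps. First, fixing $x \in \pi_G^{-1}(y)$, any two lifts of $\pi'_G v_i$ inside the single fiber $E'_x$ must differ by the action of some $g \in G$; the condition $\pi_{E'}(g v_i) = g x = x$ forces $g \in G_x$, and the defining invariance $v_i \in E^{G_x}_x$ then yields $g v_i = v_i$, so the lift within a single fiber is unique. Second, any alternative basepoint over $y$ has the form $\tilde{x} = g x$, and the element $g v_i \in E'_{\tilde{x}}$ also maps under $\pi'_G$ to $\pi'_G v_i$, so the fiberwise uniqueness just established (applied now in $E'_{\tilde{x}}$) forces the alternative lift $\tilde{v}_i$ to equal $g v_i$. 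The essential input is the $G$-invariance of the Hermitian metric on $E$, encoded in the unitarity of the representation $U_g$ on $L^2(E)$ asserted right after \eqref{Eqn:GActionSections}; it gives
\begin{equation*}
\inner{\tilde{v}_1}{\tilde{v}_2}_E(\tilde{x}) = \inner{g v_1}{g v_2}_E(g x) = \inner{v_1}{v_2}_E(x),
\end{equation*}
so the proposed value depends only on $y$ and on $\pi'_G v_1, \pi'_G v_2$.

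For smoothness I would use that, because $G$ acts on $M_0$ with a single orbit type, the orbit map $\pi_G : M_0 \to M_0/G$ is a smooth fiber bundle and admits local smooth sections $s : U \to M_0$. By the Slice Theorem, already invoked in establishing the bundle structure of $F$ in \eqref{Diag:OrbitMaps}, the restriction $\pi'_G : E'|_{s(U)} \to F|_U$ is a smooth fiberwise linear isomorphism of vector bundles, so any smooth local section $w_i$ of $F|_U$ pulls back to a smooth section $\tilde{w}_i$ of $E'|_{s(U)}$ with $\pi'_G \tilde{w}_i(x) = w_i(\pi_G(x))$. The defining formula then reads $\inner{w_1}{w_2}_F \circ \pi_G|_{s(U)} = \inner{\tilde{w}_1}{\tilde{w}_2}_E$, whose right-hand side is smooth on $s(U)$, establishing smoothness of $\inner{w_1}{w_2}_F$ on $U$. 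I do not anticipate a genuine obstacle here; the main subtlety to keep track of is that this local expression is independent of the chosen section $s$ and of the particular lifts, which is exactly what the well-definedness argument guarantees.
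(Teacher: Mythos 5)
Your proof is correct, and since the paper itself gives no argument here (it defers entirely to \cite[Lemma 1.13]{JO17}), your two-step well-definedness check (uniqueness of the lift within a fixed fiber from $v_i\in E^{G_x}_x$, then comparison of basepoints via equivariance and invariance of the fiber metric) together with smoothness via local sections of $\pi_G$ is exactly the standard argument that the citation carries. The only cosmetic point is that the pointwise identity $\inner{gv_1}{gv_2}_E(gx)=\inner{v_1}{v_2}_E(x)$ is really the standing hypothesis that the $G$-action on $E$ preserves the Hermitian metric --- which is what makes $U_g$ unitary on $L^2(E)$ in the first place --- rather than something deduced from the $L^2$-unitarity.
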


For $y\in M_0/G$ let $h(y)\coloneqq \vol(\pi_G^{-1}(y))$ be the {\em volume of the orbit} containing a point in $\pi_G^{-1}(y)$. We can consider the weighted inner product on $C_c(M_0/G,F)$ defined by the formula
\begin{equation}\label{Def:L2FhGen}
(s,s')_{L^2(F,h)}\coloneqq \int_{M_0/G}\inner{s(y)}{s'(y)}_F h(y) \vol_{M_0/G}(y),
\end{equation}
where  $\vol_{M_0/G}$ denotes the Riemannian volume element of $M_0/G$ with respect to the quotient metric.  Analogously we define $L^2(F,h)$ to be the completion of $C_c(M_0/G,F)$ with respect to this inner product. We now state one of the most important results of \cite{BH78}. The spirit of the proof relies on the construction above  and on Remark \ref{Rmk:M-M0}.

\begin{theorem}[{\cite[Theorem $1.3$]{BH78}}]\label{Thm:Fund}
There is an isometric isomorphism of Hilbert spaces
$$\Phi:L^2(E)^{G}\longrightarrow L^2(F,h).$$
With $\pi'_G:E'\longrightarrow F$ denoting the orbit map $\Phi$ is given by
$$\Phi s_1 \circ \pi_G(x)=\pi'_G\circ s_1(x),$$
where $s_1\in C_c(M_0,E)^G$ and $x\in M_0$. Its inverse map is given by 
$$\Phi^{-1}s_2(x)=s_2\circ \pi_G(x)\cap E_x,$$
where $s_2\in C_c(M_0/G,F)$ and $x\in M_0$. 
\end{theorem}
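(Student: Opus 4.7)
The plan is to verify, in order, that $\Phi$ is well defined on $C_c(M_0,E)^G$ with values in $C_c(M_0/G,F)$, that it is an $L^2$-isometry with the stated inverse, and that everything extends by density to the Hilbert space completions.

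First I would observe that any $G$-invariant section $s\in C(M_0,E)^G$ automatically takes values in the subbundle $E'$: for $g\in G_x$,
$$s(x)=(U_gs)(x)=g\cdot s(g^{-1}x)=g\cdot s(x),$$
so $s(x)\in E_x^{G_x}=E'_x$. The relation $s(gx)=g\cdot s(x)$ then forces $\pi'_G\circ s$ to be constant on each $G$-orbit in $M_0$, and commutativity of diagram~\eqref{Diag:OrbitMaps} shows that it descends to a continuous section $\bar{s}$ of $F$, which we declare to be $\Phi s$; compact support is preserved since $\supp(\Phi s)=\pi_G(\supp s)$. Dually, the key point for the inverse is that $\pi'_G|_{E'_x}\colon E'_x\to F_{\pi_G(x)}$ is a bijection: if $v,w\in E'_x$ satisfy $\pi'_Gv=\pi'_Gw$, then $w=gv$ for some $g\in G$, and $\pi_Ew=x=\pi_Ev$ forces $g\in G_x$, which fixes $v$; surjectivity is immediate from the definition $F=E'/G$. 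This unique lift is exactly what the formula $s_2\circ\pi_G(x)\cap E_x$ prescribes.

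The crucial step is the isometry. For this I would invoke the coarea formula for the Riemannian submersion $\pi_G\colon M_0\to M_0/G$,
$$\int_{M_0}f\,\vol_M=\int_{M_0/G}\left(\int_{\pi_G^{-1}(y)}f\,\vol_{\pi_G^{-1}(y)}\right)\vol_{M_0/G}(y),$$
applied to the pointwise pairing $f(x)=\inner{s(x)}{s'(x)}_E$. Since $U_g$ is unitary on $E$, $f$ is $G$-invariant, hence constant along each orbit; the inner integral collapses to $f(x_y)\cdot h(y)$ for any choice $x_y\in\pi_G^{-1}(y)$, and Lemma~\ref{Lemma:InducedMetricF} identifies $f(x_y)=\inner{\Phi s(y)}{\Phi s'(y)}_F$. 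This gives $(s,s')_{L^2(E)}=(\Phi s,\Phi s')_{L^2(F,h)}$, and the identities $\Phi\Phi^{-1}=\mathrm{id}$ and $\Phi^{-1}\Phi=\mathrm{id}$ on the respective pre-Hilbert spaces follow directly from the fibrewise bijectivity of $\pi'_G$.

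Finally, to extend $\Phi$ to a unitary isomorphism of the completions, I would combine density of $C_c(M_0,E)^G$ in $L^2(E)^G$ — obtained from density of $C_c(M_0,E)$ in $L^2(E)$ via Remark~\ref{Rmk:M-M0}, followed by the $L^2$-contractive averaging projection $s\mapsto\int_G U_g s\,dg$ onto invariants — together with the analogous density of $C_c(M_0/G,F)$ in $L^2(F,h)$. The main obstacle I anticipate is the coarea identity itself: one must know that $h$ is at least measurable on $M_0/G$ and that the disintegration of $\vol_M$ as the product of $h(y)\,\vol_{\pi_G^{-1}(y)}$ with $\vol_{M_0/G}$ holds almost everywhere, a fact which ultimately rests on the principal-orbit description of $M_0$ furnished by the Slice Theorem.
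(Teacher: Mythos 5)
Your proposal is correct and follows essentially the same route as the proof the paper defers to (the paper only cites \cite[Theorem 1.3]{BH78}, noting that the argument rests on the construction of $F$ and on Remark~\ref{Rmk:M-M0}): invariant sections take values in $E'$, $\pi'_G$ restricts to a fibrewise bijection $E'_x\to F_{\pi_G(x)}$, the isometry comes from disintegrating $\vol_{M_0}$ along the Riemannian submersion $\pi_G$ so that the orbit integral produces the weight $h$, and the extension to the completions uses density of $C_c(M_0,E)^G$ obtained by averaging together with $L^2(E)=L^2(E|_{M_0})$. The only point worth adding is a one-line check (via the Slice Theorem / local triviality of $E'\to F$) that the fibrewise lift $\Phi^{-1}s_2$ is continuous with compact support, so that $\Phi$ really maps onto the dense subspace $C_c(M_0/G,F)$.
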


\subsection{Induced operators on the principal orbit type}\label{Section:InduedOperatorsGen}

In the context of the subsection above, consider a self-adjoint operator $R:\dom(R)\subset L^2(E)\longrightarrow L^2(E)$ commuting with the $G$-action, that is, $U_g(\dom(R))\subset \dom(R)$ and $U_g R(s)=RU_g(s)$ for all $g\in G$ and $s\in\dom(R)$.

\begin{lemma}[{\cite[Lemma $2.2$]{BH78}}]\label{Lemma:OpS}
The operator $S\coloneqq R|_{\dom(R)\cap L^2(E)^G}$ is a well-defined self-adjoint operator on $L^2(E)^G$ with $\dom(S)=\dom(R)^G$.
\end{lemma}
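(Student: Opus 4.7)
The plan is to exploit the orthogonal projection $P:L^2(E)\longrightarrow L^2(E)^G$ given by averaging over $G$, namely $Ps\coloneqq \int_G U_g s \, dg$ with respect to the normalized Haar measure on $G$. Since $G$ acts unitarily, $P$ is self-adjoint, idempotent, and has norm one. The first step is to verify the trivial facts: $S$ is well-defined because $R$ commutes with $U_g$, so for $s\in \dom(R)^G$ we have $U_g R s = R U_g s = R s$, which places $R s$ in $L^2(E)^G$; and $S$ is symmetric simply because it is the restriction of the self-adjoint $R$ to a subspace.

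The crucial technical step is to establish that $P$ commutes with $R$ in the sense that $P(\dom(R))\subset \dom(R)$ and $PR = RP$ on $\dom(R)$. The idea is to approximate $Ps=\int_G U_g s\, dg$ by Riemann sums $\sum_i \lambda_i U_{g_i}s$, each of which lies in $\dom(R)$ because each $U_{g_i}$ preserves $\dom(R)$, and to use that $R$ is closed together with the continuity of $g\mapsto U_g s$ and $g\mapsto U_g Rs = RU_g s$ in $L^2(E)$. This is the main technical obstacle, but it is standard once one recalls that a closed operator commuting with a strongly continuous unitary representation commutes with integrals against Radon measures on the group, provided the integrand of the operator applied to the vector is also continuous.

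Once this is in hand, the self-adjointness argument is a short computation. Suppose $t\in \dom(S^*)$ with $S^*t = u$; then $t,u\in L^2(E)^G$, so $Pt=t$ and $Pu=u$. For any $s\in \dom(R)$ we have $Ps\in \dom(R)^G = \dom(S)$, and using self-adjointness of $P$ and the commutation $PR=RP$,
\begin{align*}
(Rs,t) \;=\; (Rs,Pt) \;=\; (PRs,t) \;=\; (RPs,t) \;=\; (SPs,t) \;=\; (Ps,u) \;=\; (s,Pu) \;=\; (s,u).
\end{align*}
Hence the linear functional $s\mapsto (Rs,t)$ on $\dom(R)$ is bounded, so $t\in \dom(R^*)=\dom(R)$ with $Rt=u$. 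Since $t\in L^2(E)^G$ this forces $t\in \dom(R)^G = \dom(S)$, and we conclude $\dom(S^*)\subset \dom(S)$. Combined with the symmetry of $S$, this proves $S=S^*$, and the identification $\dom(S)=\dom(R)^G$ is built into the definition.
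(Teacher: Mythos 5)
Your argument is correct and is essentially the approach of the cited source \cite[Lemma 2.2]{BH78} (the paper itself states this lemma without reproducing a proof): the averaging projection $P=\int_G U_g\,dg$ onto $L^2(E)^G$ commutes with $R$ (via Riemann sums and the closedness of $R$), so $L^2(E)^G$ reduces $R$ and the reduced part is self-adjoint, which is exactly your computation with $\dom(S^*)$. The only point you leave implicit is that $\dom(S)=\dom(R)^G$ is dense in $L^2(E)^G$, which is needed for $S^*$ to exist; this follows immediately from $P(\dom(R))\subset\dom(R)$ together with the density of $\dom(R)$ in $L^2(E)$ and the continuity of $P$, so it is a one-line addition rather than a gap.
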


From Theorem \ref{Thm:Fund} and Lemma \ref{Lemma:OpS} we deduce the following remarkable result which allows to ``push-down" a self-adjoint operator commuting to $G$ to the quotient. 
\begin{proposition}[{\cite[pg. 178-179]{BH78}}]\label{Prop:OpT}
The operator $T:\dom(T)\subseteq L^2(F,h)\longrightarrow L^2(F,h)$  defined by
$
T \coloneqq \Phi\circ S\circ \Phi^{-1}\big{|}_{\Phi(\dom(S))},
$
with $\dom(T)\coloneqq \Phi(\dom(S))$ is a self-adjoint. 
\end{proposition}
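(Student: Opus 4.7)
The plan is to recognize that this proposition is a transport-of-structure statement: self-adjointness is preserved under unitary conjugation, and all the required ingredients have already been assembled. By Theorem \ref{Thm:Fund}, $\Phi : L^2(E)^G \to L^2(F,h)$ is an isometric isomorphism of Hilbert spaces, i.e.\ a unitary operator, and by Lemma \ref{Lemma:OpS} the operator $S$ is self-adjoint on $L^2(E)^G$ with $\dom(S) = \dom(R)^G$. The definition $T \coloneqq \Phi \circ S \circ \Phi^{-1}|_{\Phi(\dom(S))}$ is therefore precisely the conjugate of $S$ by the unitary $\Phi$, and the whole proof reduces to checking that unitary conjugation preserves self-adjointness when the domain is correctly transported.

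Concretely, I would proceed in two steps. First, I would verify that $T$ is symmetric: for $u,v \in \dom(T)$, setting $\tilde u = \Phi^{-1} u$ and $\tilde v = \Phi^{-1} v$, which lie in $\dom(S)$ by the definition of $\dom(T)$, the isometry of $\Phi$ gives
\begin{align*}
(Tu, v)_{L^2(F,h)} = (\Phi S\tilde u, \Phi \tilde v)_{L^2(F,h)} = (S\tilde u, \tilde v)_{L^2(E)^G} = (\tilde u, S\tilde v)_{L^2(E)^G} = (u, Tv)_{L^2(F,h)},
\end{align*}
where the middle equality uses the symmetry (in fact self-adjointness) of $S$ provided by Lemma \ref{Lemma:OpS}.

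Second, I would show $\dom(T^*) \subseteq \dom(T)$, which together with symmetry yields $T=T^*$. If $v \in \dom(T^*)$, there exists $w \in L^2(F,h)$ with $(Tu,v)_{L^2(F,h)} = (u,w)_{L^2(F,h)}$ for every $u \in \dom(T)$. Applying $\Phi^{-1}$ on both sides and using that $\Phi$ is an isometric bijection $\dom(S) \to \dom(T)$, this rewrites as $(S\tilde u, \Phi^{-1} v)_{L^2(E)^G} = (\tilde u, \Phi^{-1} w)_{L^2(E)^G}$ for all $\tilde u \in \dom(S)$. Hence $\Phi^{-1} v \in \dom(S^*) = \dom(S)$, so $v \in \Phi(\dom(S)) = \dom(T)$, as required; the computation also identifies $w = Tv$.

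I do not anticipate any real obstacle: the statement is an instance of the general fact that the conjugate $U A U^{-1}$ of a self-adjoint operator $A$ by a unitary $U$ is self-adjoint on the transported domain $U\dom(A)$. The only care needed is the bookkeeping of domains, in particular ensuring that $\Phi$ maps $\dom(S)$ bijectively onto $\dom(T)$ by construction, so that the identification $\dom(S^*) = \dom(S)$ transfers cleanly to $\dom(T^*) = \dom(T)$.
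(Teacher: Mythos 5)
Your proof is correct and is exactly the argument the paper has in mind: it records the proposition as a direct consequence of Theorem \ref{Thm:Fund} (unitarity of $\Phi$) and Lemma \ref{Lemma:OpS} (self-adjointness of $S$), citing \cite{BH78} rather than spelling out the standard transport-of-structure computation you wrote down. Your two steps (symmetry, then $\dom(T^*)\subseteq\dom(T)$ via $\dom(S^*)=\dom(S)$) are the correct bookkeeping for that conjugation argument.
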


A particular case of interest is when $R$ is generated by an elliptic differential operator $D:C_c^\infty(M,E)\longrightarrow C_c^\infty(M,E)$ so that $C^\infty_c(M,E)\subset\dom(R)$ and $R|_{C^\infty_c(M,E)}=D$. The following result states that in this case the induced operator $T$ of Proposition \ref{Prop:OpT} is also generated by a differential operator of the same order. 
\begin{proposition}[{\cite[Theorem 2.4]{BH78}}]\label{Prop:SDiff}
If $R$ is generated by a differential operator $D$ of order $k$, then $T$ is also generated by a certain differential operator $D'$ of order $k$. Their principal symbols are related by the formula
$
\sigma_P(D')(y,\xi)(\pi_G'(e))=\pi_G'(\sigma_P(D)(x,\pi_G^*\xi)(e)),
$
where $y\in M_0/G$, $\xi\in T^*_y (M_0/G)$, $x\in\pi^{-1}_G(y)$ and $e\in E'_x$. In particular, the operator $D'$ is elliptic if $D$ is transversally elliptic. 
\end{proposition}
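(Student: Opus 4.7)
The plan is to define $D'$ as the push-down of $D$ by $\Phi$ on smooth compactly supported sections of $F$, then verify the three claims by working locally in slice coordinates. For $s\in C_c^\infty(M_0/G,F)$, the section $\Phi^{-1}s$ is smooth, $G$-invariant, and compactly supported in $M_0$; since $D$ commutes with the $G$-action, $D(\Phi^{-1}s)$ is smooth, compactly supported, and $G$-invariant. The $G$-equivariance of $D$ combined with the $G_x$-invariance of $(\Phi^{-1}s)(x)$ yields $g\cdot(D\Phi^{-1}s)(x)=(D\Phi^{-1}s)(x)$ for all $g\in G_x$, so $D\Phi^{-1}s$ takes values in $E'$. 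Hence $D':=\Phi\circ D\circ \Phi^{-1}$ is a well-defined map $C_c^\infty(M_0/G,F)\longrightarrow C_c^\infty(M_0/G,F)$, and it agrees with $T$ there by Proposition \ref{Prop:OpT}.

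To see that $D'$ is a differential operator of order $k$, I would invoke the slice theorem: around any $y\in M_0/G$ and $x\in\pi_G^{-1}(y)$, pick a $G$-equivariant trivialization of a tubular neighborhood of $Gx$ as $(G/G_x)\times\Sigma$, where $\Sigma$ is a slice identified with a neighborhood of $y$ in $M_0/G$ (on the principal stratum, $G_x$ acts trivially on $\Sigma$). In such adapted coordinates and a local equivariant frame of $E$, any $G$-invariant section is determined by its restriction to $\Sigma$, and derivatives along the fundamental vector fields reduce on such sections to the zeroth-order $\mathfrak{g}$-action on the fibers of $E$. Writing $D$ out in these coordinates, all ``vertical'' derivatives contribute at order zero when evaluated on $G$-invariant sections, so restricting to $\Sigma$ and pushing through $\pi_G'$ yields a differential operator of order $\leq k$ on $F$.

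For the symbol formula, the map $\Phi$ intertwines multiplication by $f\in C^\infty(M_0/G)$ with multiplication by $\pi_G^* f$, and hence $[D',f]^k=\Phi\circ[D,\pi_G^* f]^k\circ\Phi^{-1}$. Characterizing the principal symbol as the leading term of this iterated commutator then gives
\[
\sigma_P(D')(y,df_y)\circ\pi_G'=\pi_G'\circ\sigma_P(D)(x,\pi_G^* df_y),
\]
which is the stated relation (every cotangent vector $\xi\in T_y^*(M_0/G)$ equals $df_y$ for some $f$). Ellipticity follows at once: a nonzero $\xi\in T_y^*(M_0/G)$ pulls back to a nonzero covector $\pi_G^*\xi$ annihilating the vertical subspace at $x$, so by transverse ellipticity $\sigma_P(D)(x,\pi_G^*\xi)$ is invertible on $E_x$, and the $G_x$-equivariance of the symbol (with $\pi_G^*\xi$ being $G_x$-fixed) makes it preserve $E_x^{G_x}$, hence an isomorphism there.

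The main obstacle I expect is the local decomposition of $D$ in the second step: one must bookkeep the vertical-versus-horizontal parts of $D$ in the product slice coordinates precisely enough to see that vertical derivatives genuinely collapse to zeroth-order terms on $G$-invariant sections. Peetre's theorem would yield locality for free, but the order bound and the commutator-based symbol calculation still require the slice analysis described above.
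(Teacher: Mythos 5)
Your argument is correct, and it is essentially the proof of the result being cited: the paper offers no proof of Proposition \ref{Prop:SDiff}, deferring entirely to \cite{BH78}, and the route you take --- the slice theorem on the principal stratum (where the isotropy acts trivially on the slice), the fact that invariant sections are determined by their restriction to a slice so that orbit-direction derivatives collapse to zeroth-order fibrewise terms, and the iterated-commutator characterization of the principal symbol combined with the fact that $\Phi$ intertwines multiplication by $f$ with multiplication by $\pi_G^*f$ --- is exactly the standard Br\"uning--Heintze argument, including the equivariance observation that makes $\sigma_P(D)(x,\pi_G^*\xi)$ preserve $E_x^{G_x}$ and hence restrict to an isomorphism there. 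One caveat: with the meaning of ``generated'' stated in the paper just before the proposition (namely $C_c^\infty\subset\dom$ and the restriction equals the differential operator), your proof is complete; but if ``generated'' is read in the stronger sense of \cite{BH78} that is actually exploited later (that $T$ is the \emph{closure} of $D'$ on $\Omega_c(M_0/S^1)$, giving essential self-adjointness of $\mathscr{D}'$ on that core), your argument does not address the missing half: one must show that smooth invariant sections compactly supported in the principal stratum $M_0$ form a core for $S$, which requires an approximation/cut-off argument near $M-M_0$ and is the analytically substantive part of the proof in \cite{BH78}.
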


\section{Lott's $S^1$-equivariant signature formula}\label{Sect:Lott}

\subsection{Definition of the equivariant $S^1$-signature}\label{Section:BasicsDef}

Let $(M, g^{TM})$ be an $4k+1$ dimensional, closed, oriented Riemannian manifold on which the circle $S^1$ acts by orientation-preserving isometries. Let us denote by $V$ the generating vector field of the action and by $\imath:M^{S^1}\longrightarrow M$  the inclusion of the fixed point set into $M$. We can define two sub-complexes of the de Rham complex of $M$,
\begin{align*}
\Omega_\text{bas}(M)\coloneqq &\{\omega\in\Omega(M)\:|\:L_V\omega=0\:\:\text{and}\:\:\iota_V\omega=0\},\\
\Omega_\text{bas}(M,M^{S^1})\coloneqq &\{\omega\in\Omega(M)_\text{bas}\:|\: \imath^*\omega=0\},
\end{align*}
where $L_V$ is the Lie derivative. Denote by $H^*_\text{bas}(M)$ and $H^*_\text{bas}(M,M^{S^1})$ their respective  cohomology groups.  It can be shown that there exist isomorphisms (\cite[Proposition 1]{L00})
\begin{align}\label{IsomsBasic}
H^{*}_\text{bas}(M,M^{S^1})\cong H^{*}_{\text{bas},c}(M-M^{S^1})\cong H^{*}(M/S^1,M^{S^1};\mathbb{R}),
\end{align}
where the subscript $c$ denotes cohomology with compact support. These  cohomology groups are all $S^1$-homotopy invariant (\cite[Proposition 2]{L00}).\\

Using the musical isomorphims induced by the Riemannian metric one defines the $1$-form $\alpha$ on $M-M^{S^1}$ by
 $\alpha\coloneqq V^\flat /\norm{V}^2$ so that $\alpha(V)=1$. Here we list some important properties of $\alpha$.

\begin{proposition}[{\cite[Section 2]{L00}}]\label{Prop:alpha}
The following relations hold true:
\begin{enumerate}
\item The $2$-form $d\alpha$ is basic. 
\item The form $\alpha$ satisfies $L_V\alpha=0$, that is, $\alpha$ is $S^1$-invariant.
\item If $\omega\in\Omega_{\textnormal{bas},c}^{4k-1}(M-M^{S^1})$ then 
$$\int_{M}\alpha\wedge d\omega=0. $$
\end{enumerate}
\end{proposition}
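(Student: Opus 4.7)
I would treat the three items in the order (2), (1), (3), since each relies on its predecessors.

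For (2) I would use that $V$ generates an isometric action, so $V$ is a Killing vector field, i.e.\ $L_V g^{TM}=0$. Combined with $L_V V = [V,V] = 0$, this gives $L_V\|V\|^2 = (L_V g^{TM})(V,V) + 2g^{TM}(L_V V, V) = 0$, and similarly $L_V V^\flat = (L_V g^{TM})(V,\cdot) + g^{TM}(L_V V,\cdot) = 0$. Applying Leibniz to $\alpha = V^\flat/\|V\|^2$ then yields $L_V\alpha=0$.

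For (1), I need to verify $\iota_V d\alpha = 0$ and $L_V d\alpha = 0$. The second is immediate from (2) since $d$ and $L_V$ commute. For the first, Cartan's magic formula gives $L_V\alpha = d\iota_V\alpha + \iota_V d\alpha$. The defining property $\alpha(V)=1$ means $\iota_V\alpha\equiv 1$, so $d\iota_V\alpha = 0$, and hence $\iota_V d\alpha = L_V\alpha = 0$ by (2).

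For (3) I would combine Stokes' theorem with a dimension count. Since $\alpha$ has degree one, $d(\alpha\wedge\omega) = d\alpha\wedge\omega - \alpha\wedge d\omega$, so
\begin{align*}
\int_M \alpha\wedge d\omega = \int_M d\alpha\wedge\omega - \int_M d(\alpha\wedge\omega).
\end{align*}
Because $\omega$ is compactly supported in $M-M^{S^1}$, so is $\alpha\wedge\omega$, and Stokes on the compact manifold $M$ kills the second term. The key remaining step, which I expect to be the subtle point, is to argue that $d\alpha\wedge\omega$ vanishes pointwise on $M-M^{S^1}$. Both $d\alpha$ (by (1)) and $\omega$ are basic, so $\iota_V(d\alpha\wedge\omega)=0$; but $d\alpha\wedge\omega$ is a form of top degree $4k+1$ on the $(4k+1)$-manifold $M-M^{S^1}$ where $V$ is nowhere vanishing. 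Completing $V$ to a local frame $V,f_1,\dots,f_{4k}$ and evaluating the top form on this frame forces $d\alpha\wedge\omega$ to vanish identically on $M-M^{S^1}$, hence the first integral is zero as well. This dimension/horizontality argument is the only genuinely geometric ingredient; everything else is a formal consequence of Cartan calculus and Stokes.
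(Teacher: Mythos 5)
Your proposal is correct, and it is essentially the standard argument: the paper itself does not reprove this proposition but cites \cite[Section 2]{L00}, where the proof runs along the same lines (Killing-field/Cartan calculus for (1) and (2), and for (3) integration by parts together with the observation that $d\alpha\wedge\omega$ is a horizontal form of top degree $4k+1$ on $M-M^{S^1}$, hence vanishes pointwise, while compact support of $\omega$ in $M-M^{S^1}$ justifies Stokes). No gaps.
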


\begin{definition}[{\cite[Definition 4]{L00}}]
The {\em  equivariant $S^1$-signature}  $\sigma_{S^1}(M)$ of $M$ with respect to the $S^1$-action  is defined as the signature of the symmetric quadratic form
\begin{align*}
\xymatrixcolsep{2cm}\xymatrixrowsep{0.01cm}\xymatrix{
H^{2k}_{\text{bas},c}(M-M^{S^1})\times H^{2k}_{\text{bas},c}(M-M^{S^1}) \ar[r] & \mathbb{R}\\
(\omega,\omega') \ar@{|->}[r] & \displaystyle{\int_M\alpha\wedge \omega\wedge\omega'}.
}
\end{align*}
\end{definition}

\begin{remark}
It was shown by Lott that $\sigma(M)_{S^1}$ is independent of the Riemannian metric (\cite[Proposition 5]{L00}) and that if $f:M\longrightarrow N$ is a orientation-preserving $S^1$-homotopy equivalence then $\sigma_{S^1}(M)=\sigma_{S^1}(N)$ (\cite[Proposition 6]{L00}). 
\end{remark}

\subsection{The equivariant $S^1$-signature formula for semi-free actions}\label{Section:S1 Signature}

Let us assume now that the action is semi-free, which means that the isotropy groups are either the trivial group or the whole $S^1$. Let $M_0\coloneqq M-M^{S^1}$ be the set of principal orbits where the action is free. We equip the manifold $M_0/S^1$ with the quotient metric $g^{T(M_0/S^1)}$. In this context the dimension of the fixed point set $M^{S^1}$ must be odd, so we can consider its associated  odd signature operator (\cite[Equation 4.6]{APSI}) and the corresponding eta invariant $\eta(M^{S^1})$ (\cite[Equation 1.7]{APSI}) . One of the most important results in \cite{L00} is the following {\em index-like formula} for the equivariant $S^1$-signature. 
\begin{theorem}[{\cite[Theorem 4]{L00}}]\label{Thm:S1SignatureThm}
Suppose $S^1$ acts effectively and semifreely on $M$, then 
$$\sigma_{S^1}(M)=\int_{M_0/S^1}L\left(T(M_0/S^1),g^{T(M_0/S^1)}\right)+\eta(M^{S^1}), $$
where $L\left(T(M_0/S^1),g^{T(M_0/S^1)}\right)$ denotes the $L$-polynomial in the curvature of $g^{T(M_0/S^1)}$.
\end{theorem}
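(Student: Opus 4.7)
The plan is to reduce Lott's formula to the Atiyah--Patodi--Singer signature theorem on an orbit space truncated away from the singular stratum. For $\epsilon>0$ small, let $U_\epsilon\subset M$ denote the open $\epsilon$-tubular neighborhood of $M^{S^1}$ (with respect to the exponential map of $g^{TM}$), and set $W_\epsilon\coloneqq (M\setminus U_\epsilon)/S^1$. Since $S^1$ acts freely on $M\setminus M^{S^1}$, the space $W_\epsilon$ is a compact oriented Riemannian $4k$-manifold with boundary, and $\partial W_\epsilon$ is locally a $\mathbb{C}P^N$-bundle over the corresponding component of $M^{S^1}$. One first deforms the induced quotient metric to an exact cylindrical product on a small collar of $\partial W_\epsilon$; the resulting perturbation changes both $\int L$ and $\eta(\partial W_\epsilon)$ by amounts that vanish as $\epsilon\to 0$.

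The next step isolates the topological content. Combining the isomorphism \eqref{IsomsBasic} with excision and Poincar\'e--Lefschetz duality produces a canonical isomorphism $H^{2k}_{\textnormal{bas},c}(M_0)\cong H^{2k}(W_\epsilon,\partial W_\epsilon;\mathbb{R})$. Using property $(3)$ of Proposition \ref{Prop:alpha} and the fact that integration of $\alpha\wedge(\cdot)$ along $S^1$-orbits descends to ordinary integration against the Riemannian volume of $W_\epsilon$, one checks that Lott's pairing $(\omega,\omega')\mapsto\int_M\alpha\wedge\omega\wedge\omega'$ is transported to the usual cup-product intersection form on $H^{2k}(W_\epsilon,\partial W_\epsilon;\mathbb{R})$. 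Consequently
\begin{equation*}
\sigma_{S^1}(M)=\sig(W_\epsilon)
\end{equation*}
for all sufficiently small $\epsilon>0$.

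I would then apply the APS index formula to the signature operator on $W_\epsilon$ with its cylindrical boundary metric,
\begin{equation*}
\sig(W_\epsilon)=\int_{W_\epsilon}L\bigl(TW_\epsilon,g^{TW_\epsilon}\bigr)-\eta(\partial W_\epsilon),
\end{equation*}
and let $\epsilon\to 0$. For the interior term, the quotient metric near $M^{S^1}$ has the cone structure $dr^2+r^2 g^{\mathbb{C}P^N}+\pi^{*}g^{M^{S^1}}$, and a direct inspection of the Pontryagin forms of this model shows that each monomial in $L$ is integrable against the Riemannian volume element, so $\int_{W_\epsilon}L$ converges to $\int_{M_0/S^1}L$. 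For the boundary term, $\partial W_\epsilon$ is a Riemannian submersion with fibers $\mathbb{C}P^N$ of diameter $O(\epsilon)$; in this adiabatic limit the Bismut--Cheeger/Dai formula evaluates $\eta(\partial W_\epsilon)$ in terms of the fiber signature and the eta invariant of the base. Since $\sig(\mathbb{C}P^N)=1$ when $N$ is even and vanishes when $N$ is odd, the limit reduces precisely to $\eta(M^{S^1})$ (with the sign absorbed into the APS convention), yielding the stated identity.

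The main obstacle is the adiabatic limit of the boundary eta invariant. One has to rule out spectral flow across zero as $\epsilon\to 0$, i.e., control the very small eigenvalues of the signature operator on the shrinking $\mathbb{C}P^N$-bundle; the non-Witt components (even $N$) are particularly delicate because the vertical harmonic forms in middle fiber degree produce a non-trivial $L^2$-kernel whose contribution must be tracked via a Leray-type spectral sequence on basic cohomology. A secondary but technically lengthy point is the estimate that the cylindrical deformation of the metric near $\partial W_\epsilon$ changes both $\int L$ and $\eta$ by amounts that are $o(1)$ uniformly in $\epsilon$, which requires an explicit local computation in the normal bundle of $M^{S^1}$.
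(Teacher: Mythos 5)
Your proposal follows essentially the same route as the paper's sketch of Lott's proof: truncate the quotient at distance $\epsilon$ from the fixed point set, identify $\sigma_{S^1}(M)$ with the signature of the resulting compact manifold with boundary, apply the APS signature theorem, and pass to the limit $\epsilon\to 0$, controlling the interior term by the explicit cone-metric connection/curvature computation and the boundary eta invariant by Dai's adiabatic limit formula with vanishing eta form and $\tau$ invariant. The only minor divergence is that you deform the metric to a product near the boundary (and must estimate the effect of that deformation), whereas the paper keeps the quotient metric and instead shows the transgression term in the APS formula vanishes by equivariant methods.
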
 

\begin{remark}
It is important to emphasize that part of the conclusion of Theorem \ref{Thm:S1SignatureThm} is the convergence of the integral of the $L$-polynomial over the open manifold $M_0/S^1$. 
\end{remark}

\subsubsection{Sketch of the proof of the signature formula} \label{Sect:PfoofSignatureFormula}

We now give a brief description of the proof of Theorem \ref{Thm:S1SignatureThm} presented in \cite[Section 2.3]{L00}. Let $F\subset M^{S^1}$ be a connected component of the fixed point set. As the action is orientation preserving the dimension of $F$ must be odd and can be written as $\dim F = 4k -2N-1$, for some $N\in\mathbb{N}_0$. Let $NF$ denote the normal bundle of $F$ in $M$. The first step of the proof is to model a neighborhood of $F$ in $M/S^1$ as the mapping cylinder $C(\mathcal{F})$ of the projection of a Riemannian fiber bundle $\pi_{\mathcal{F}}:\mathcal{F}\longrightarrow F$ where $\mathcal{F}\coloneqq \mathcal{S}/S^1$ and $ \mathcal{S}\coloneqq SNF$ denotes the associated sphere bundle (Figure \ref{Fig:LocalDescrF}). By \cite[Lemma 2.2]{U70} it follows that the fibers of $\mathcal{F}$ are copies of $\mathbb{C}P^N$. 

\begin{figure}[h]
\begin{center}
\begin{tikzpicture}
\draw (-2,5)--(2,5);
\draw (-3,3)--(1,3);
\draw (-2,5)--(-3,3);
\draw (2,5)--(1,3);
\draw (-2.5,1)--(1.5,1);
\draw (-3,3)--(-2.5,1);
\draw (2,5)--(1.5,1);
\draw (1,3)--(1.5,1);
\draw [dashed](-2,5)--(-2.25,3);
\draw (-2.5,1)--(-2.25,3);
\draw (-1,3)--(-0.5,1);
\draw [dashed](0,5)--(-0.25,3);
\draw (-0.5,1)--(-0.25,3);
\draw (0,5)--(-1,3);
\draw (-2.4,2) arc (87:155:0.3 and 0.5);
\draw (1.6,2) arc (87:155:0.3 and 0.5);
\draw (-0.4,2) arc (87:155:0.3 and 0.5);
\draw (-2.7,1.7)--(1.33,1.7);
\draw (-2.4,2)--(1.62,2);
\node (a) at (-1,4) {$\mathbb{C}P^N$};
\node (b) at (0,0.7) {$F$};
\draw [<-](-2.84,1.68)--(-2.65,1);
\node (c) at (-3,1.3) {$r$};
\node (d) at (-2.5,0.8) {$0$};
\node (e) at (-3.2,3) {$1$};
\end{tikzpicture}
\caption{Mapping cylinder of the $\mathbb{C}P^N$-fibration $\pi_{\mathcal{F}}:\mathcal{F}\longrightarrow F.$}\label{Fig:LocalDescrF}
\end{center}
\end{figure}
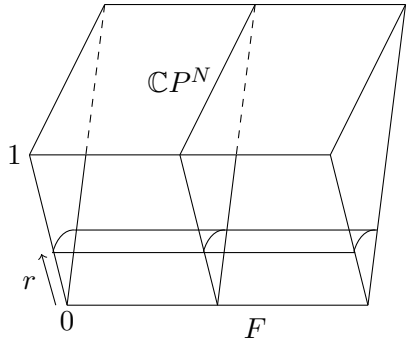

For $t>0$, let $N_t(F)$ be the $t$-neighborhood of $F$ in $M/S^1$.  Using \cite[Proposition A.III.5]{BGM}) one can verify $\sigma_{S^1}(M)=\sigma(M/S^1-N_{t}(F))$ for $r$ small enough. Applying the Atiyah-Patodi-Singer signature theorem we get
\begin{align}\label{Eq:MainIdea}
\sigma( M/S^1 - N_t(F))=&\int_{M/S^1-N_r(F)}L\left(T(M/S^1 - N_t(F)),g^{T(M/S^1 - N_t(F))}\right)\\
& +\int_{\partial N_t(F)}TL(\partial N_t(F))+\eta(\partial N_t(F))\notag.
\end{align}
The main idea of Lott's proof is to study the behavior of the terms in \eqref{Eq:MainIdea} as $r\longrightarrow 0$. Let us see how to do this for the first term. Let $\{e^i\}_{i=1}^{2N}\cup\{f^\alpha\}_{\alpha=1}^{4k-2N-1}$ be a local orthonormal basis for $T^*{\mathcal{F}}$ as in \cite[Section III(c)]{BL95}. Let $\omega$ be the connection $1$-form of the Levi-Civita connection associated to this basis. The components of $\omega$ satisfy the structure equations 
\begin{align*}
de^i+\omega^i_j\wedge e^j+\omega^i_\alpha\wedge f^\alpha=0,\\
df^\alpha+\omega^\alpha_j\wedge e^j+\omega^\alpha_\beta\wedge f^\beta=0.
\end{align*}
Let $\Omega$ denote the associated curvature $2$-form.  We can construct a local orthonormal basis for $T^*C(\mathcal{F})$ from the basis above as $\{dr\}\cup\{\widehat{e}^i\}_{i=1}^{v}\cup\{\widehat{f}^\alpha\}_{\alpha=1}^{h}$ for $0<r<t$ where $\widehat{e}^i\coloneqq re^i$ and $
\widehat{f}^\alpha\coloneqq f^\alpha$. The associated connection $1$-form $\widehat{\omega}$  satisfies the structure equations 
\begin{align*}
\widehat{\omega}^r_i\wedge \widehat{e}^i+\widehat{\omega}^r_\alpha\wedge \widehat{f}^\alpha=&0,\\
d\widehat{e}^i+\widehat{\omega}^i_j\wedge \widehat{e}^j+\widehat{\omega}^j_\alpha\wedge \widehat{f}^\alpha+\widehat{\omega}^i_r\wedge dr=&0,\\
d\widehat{f}^\alpha+\widehat{\omega}^\alpha_j\wedge \widehat{e}^j+\widehat{\omega}^\alpha_\beta\wedge \widehat{f}^\beta++\widehat{\omega}^\alpha_r\wedge dr=&0.
\end{align*}
It can be shown that the components of $\widehat{\omega}$ are
\begin{align}\label{Eqns:Conn1From}
\widehat{\omega}^{i}_j &= \omega^i_j,\nonumber\\
\widehat{\omega}^{i}_r  &= e^i,\nonumber\\
\widehat{\omega}^{i}_\alpha &= r\omega^i_\alpha ,\\
\widehat{\omega}^{\alpha}_\beta &= \omega^{\alpha}_{\beta\gamma}f^\gamma+r^2\omega^{\alpha}_{\beta j}e^j,\nonumber \\
\widehat{\omega}^{\alpha}_r &=0 .\nonumber  
\end{align}
Using these expressions one can compute the components of the curvature $2$-form $\widehat{\Omega}$ as $r\longrightarrow 0$ to show that the limit of the first term in the right hand side of \eqref{Eq:MainIdea} is well-defined. 
The transgression term in \eqref{Eq:MainIdea} vanishes using equivariant methods (\cite[Proposition 7.35]{BGV}. For the limit of the eta invariant one uses Dai's formula (\cite[Theorem 0.3]{D91}). One can verify that both the eta form (\cite[Sections 9.4, 10.7]{BGV}, \cite[Section 1.c]{G00}) and the tau invariant (\cite[pg. 170, 270]{BT82}) vanish. 

\subsubsection{The Witt condition} Now we comment on an important topological interpretation of the $S^1$-signature in the context of intersection homology theory introduced by Goresky and MacPherson in \cite{GMcP80}. As we saw in the proof of Theorem \ref{Thm:S1SignatureThm} above, each connected component $F\subset M^{S^1}$ of the fixed point set is an odd-dimensional closed manifold, whose dimension can be written as $\dim F=4k-2N-1$. We now distinguish the two possible cases for $N$. We say that $M/S^1$ satisfies the {\em Witt condition}, if $N$ is odd, that is, the codimension of the fixed point set $M^{S^1}$ in $M$ is divisible by four. Note that in this case $\eta(M^{S^1})=0$. This follows from \cite[Remark (3),(4), pg. 61]{APSI} since $4k-2N-1=2(2k-N)-1$ and $2k-N$ is odd if and only if $N$ is odd. Stratified spaces satisfying that the middle dimensional cohomology of the links vanish are called {\em Witt spaces} ( \cite{S83}). This is of course consistent with the definition above since $H^N(\mathbb{C}P^N)$ vanishes if and only if $N$ is odd. For this kind of stratified spaces one can always construct the Goresky-MacPherson $L$-class following the procedure described in detail in \cite[Section 5.3]{B07}, for example.  The following result describes an explicit form of the $L$-homology class for our case of interest.  

\begin{proposition}[$L$-homology Class, {\cite[Proposition 8]{L00}}]\label{Prop:LHomologyClass}
In the Witt case the differential form $L(T(M/S^1))$ represents the $L$-homology class of $M/S^1$.
\end{proposition}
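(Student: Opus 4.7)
The plan is to combine Lott's signature formula (Theorem~\ref{Thm:S1SignatureThm}) with the axiomatic characterization of the Goresky--MacPherson $L$-homology class on Witt spaces. Recall that $[L(X)]\in H_*(X;\mathbb{Q})$ of a Witt space $X$ is uniquely determined by the property that its pairing against Poincar\'e duals of transverse subspaces yields intersection-homology signatures, so it suffices to verify that $L(T(M_0/S^1))$, viewed as a current on $M/S^1$, produces the correct numerical pairings.

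First I would identify Lott's $S^1$-signature with the intersection-homology signature of $M/S^1$. The isomorphisms \eqref{IsomsBasic} give
\begin{equation*}
H^{2k}_{\textnormal{bas},c}(M-M^{S^1})\cong H^{2k}(M/S^1,M^{S^1};\mathbb{R}),
\end{equation*}
and in the Witt case this is naturally identified with the middle-perversity intersection cohomology $IH^{2k}(M/S^1;\mathbb{R})$, with Lott's pairing $(\omega,\omega')\mapsto\int_M\alpha\wedge\omega\wedge\omega'$ matching (up to orientation conventions) the Goresky--MacPherson intersection pairing. Consequently $\sigma_{S^1}(M)=\sigma_{IH}(M/S^1)=\langle [L(M/S^1)],1\rangle$.

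Second, I would check the top-degree pairing. Combining Theorem~\ref{Thm:S1SignatureThm} with the Witt vanishing $\eta(M^{S^1})=0$ (noted in Section~\ref{Section:S1 Signature}) yields
\begin{equation*}
\int_{M_0/S^1}L\bigl(T(M_0/S^1),g^{T(M_0/S^1)}\bigr)=\sigma_{S^1}(M)=\langle [L(M/S^1)],1\rangle,
\end{equation*}
confirming the equality at the level of the degree-zero component.

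To promote this to an equality of full homology classes I would run the same argument in families over transverse test cycles. Given a smooth cycle $Z\subset M/S^1$ representing an intersection-homology class, an equivariant transversality argument (averaging over $S^1$) allows us to arrange that the preimage $\widetilde Z:=\pi_{S^1}^{-1}(Z)\subset M$ is a closed oriented $S^1$-manifold, semi-free with fixed set $\widetilde Z\cap M^{S^1}$, and that the induced stratification $Z\supset Z\cap M^{S^1}$ remains Witt (since the link is still $\mathbb{C}P^N$ with $N$ odd). Applying Theorem~\ref{Thm:S1SignatureThm} to $\widetilde Z$ and invoking the naturality of the $L$-polynomial under Riemannian submersion for the pieces lying in $M_0/S^1$ gives
\begin{equation*}
\int_{Z\cap M_0/S^1}L\bigl(T(M_0/S^1),g^{T(M_0/S^1)}\bigr)=\sigma_{S^1}(\widetilde Z)=\sigma_{IH}(Z)=\langle [L(M/S^1)],[Z]\rangle.
\end{equation*}
Since the pairings agree on a generating family of transverse cycles, the uniqueness part of Goresky--MacPherson's construction of the $L$-class forces $L(T(M_0/S^1))$, as a current on $M/S^1$, to represent $[L(M/S^1)]$.

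The main obstacle is the equivariant transversality step together with the verification that the restriction of the quotient metric to $Z\cap M_0/S^1$ behaves compatibly with the quotient metric induced on $\widetilde Z/S^1$, so that the integrand in the parametrized formula is genuinely the $L$-polynomial of $Z$ with its intrinsic geometry; this is essentially the local computation in \eqref{Eqns:Conn1From}, which must be reproduced transversely to $Z$.
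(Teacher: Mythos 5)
This proposition is not proved in the paper at all: it is quoted verbatim from Lott (\cite[Proposition 8]{L00}), so there is no internal proof to compare against; your attempt has to be judged on its own and against Lott's argument, whose broad strategy (evaluate the class on preimages of stratified-transverse maps to spheres and apply the $S^1$-signature formula, Theorem \ref{Thm:S1SignatureThm}, to the resulting semi-free $S^1$-manifold upstairs) your sketch does capture. However, as written there are genuine gaps. First, the Goresky--MacPherson $L$-class of a Witt space is characterized by the signatures of preimages $Z=f^{-1}(p)$ of maps $f\colon M/S^1\to S^{4k-4j}$ that are transverse in the stratified sense, not by pairings against arbitrary smooth cycles, and the expression $\langle [L(M/S^1)],[Z]\rangle$ pairs two homology classes, which is not well formed; the relevant evaluation is against the pulled-back generator of $H^{4k-4j}(S^{4k-4j})$. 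This is not a cosmetic point: for such sphere-preimages the normal bundle of $Z\cap M_0/S^1$ in $M_0/S^1$ is trivial, which is exactly what lets one compare $L(T(M_0/S^1))\big|_Z$ with $L(T(Z\cap M_0/S^1))$; for a general cycle these differ by the $L$-class of the normal bundle and your displayed identity $\int_{Z\cap M_0/S^1}L(T(M_0/S^1))=\sigma_{S^1}(\widetilde Z)$ simply fails. Second, even with trivial normal bundle the two integrands are only cohomologous closed forms on the \emph{noncompact} manifold $Z\cap M_0/S^1$, so equality of the integrals does not follow formally; one must control the discrepancy (an exact form) near the fixed-point stratum, which is precisely the kind of local analysis encoded in \eqref{Eqns:Conn1From} and in the convergence statement of Theorem \ref{Thm:S1SignatureThm}. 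You flag this as ``the main obstacle'' but do not carry it out, and it is where the actual content of Lott's Proposition 8 lies.

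Third, your opening step asserts that $H^{2k}_{\textnormal{bas},c}(M-M^{S^1})\cong H^{2k}(M/S^1,M^{S^1};\mathbb{R})$ is ``naturally identified'' with $IH^{2k}(M/S^1;\mathbb{R})$ with matching pairings, and from this you take $\sigma_{S^1}(M)=\sigma_{IH}(M/S^1)$ as an input. That identification is not established by \eqref{IsomsBasic}, and in both Lott's paper and the present one the statement $\sigma_{S^1}(M)=\sigma_{IH}(M/S^1)$ (Corollary \ref{Coro:IH}) is presented as a \emph{consequence} of the proposition you are trying to prove, so using it as a premise risks circularity unless you supply an independent (e.g.\ sheaf-theoretic) argument. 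In short: the skeleton of the argument is the right one, but to make it a proof you need (i) to work with stratified-transverse maps to spheres rather than arbitrary cycles, using normal-bundle triviality explicitly, (ii) a quantitative comparison of the intrinsic and restricted $L$-forms near $M^{S^1}$ justifying the equality of the two improper integrals, and (iii) either to drop the $IH$ identification from step one or to prove it without appealing to Corollary \ref{Coro:IH}.
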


In addition, for Witt spaces there is a well-defined non-degenerate  pairing in intersection homology (\cite[Section 4.4]{B07}) which gives rise to a signature invariant. 

\begin{coro}[{\cite[Corollary 1]{L00}}]\label{Coro:IH}
In the Witt case, 
\begin{align*}
\sigma_{S^1}(M)=\int_{M_0/S^1} L(T(M_0/S^1)),
\end{align*}
equals the intersection homology signature of $M/S^1$. 
\end{coro}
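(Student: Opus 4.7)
The plan is to assemble the corollary from three ingredients that have already been collected above: Theorem \ref{Thm:S1SignatureThm}, the vanishing of $\eta(M^{S^1})$ in the Witt case, and Proposition \ref{Prop:LHomologyClass}, combined with the standard Goresky--MacPherson description of the signature of a Witt space as the pairing of its $L$-homology class with the unit cohomology class.

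First I would feed the Witt hypothesis into the signature formula of Theorem \ref{Thm:S1SignatureThm}. Each connected component $F\subset M^{S^1}$ has $\dim F=4k-2N-1=2(2k-N)-1$; when $N$ is odd, $2k-N$ is odd, so by the Atiyah--Patodi--Singer observation already cited in the paragraph preceding Proposition \ref{Prop:LHomologyClass} we have $\eta(F)=0$ for each such $F$, and hence $\eta(M^{S^1})=0$. Substituting into Theorem \ref{Thm:S1SignatureThm} gives
\begin{align*}
\sigma_{S^1}(M)=\int_{M_0/S^1}L\bigl(T(M_0/S^1),g^{T(M_0/S^1)}\bigr),
\end{align*}
the convergence of the integral being part of that theorem.

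Next I would translate the right-hand side into an intersection-homology quantity. By Proposition \ref{Prop:LHomologyClass}, the differential form $L(T(M/S^1))$ represents the Goresky--MacPherson $L$-homology class $\CL{L}(M/S^1)$ of the Witt space $M/S^1$. Since the singular stratum $M^{S^1}$ has strictly smaller dimension than $M/S^1$, integration of the form over the open dense regular stratum $M_0/S^1$ computes the Kronecker pairing of $\CL{L}(M/S^1)$ with $1\in H^0(M/S^1;\BB{R})$. By the general Goresky--MacPherson theory recalled just above the statement of the corollary (see \cite[Section 4.4]{B07}), this Kronecker pairing equals the signature of the nondegenerate intersection pairing on middle-perversity intersection cohomology $IH^{2k}(M/S^1)$, i.e.\ the intersection homology signature of $M/S^1$. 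Chaining the three identifications yields the claim.

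The main obstacle lies in the middle identification: one must confirm that integrating the chosen $L$-polynomial representative over the open dense stratum $M_0/S^1$ genuinely reproduces the homological pairing that defines the intersection homology signature. This is however guaranteed by the construction referenced above Proposition \ref{Prop:LHomologyClass}: the Goresky--MacPherson $L$-class on a Witt space admits smooth differential-form representatives on the regular stratum whose Kronecker pairing against constant functions is implemented by integration over the regular part, and the convergence supplied by Theorem \ref{Thm:S1SignatureThm} makes this integration unambiguous. All other steps are bookkeeping.
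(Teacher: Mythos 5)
Your proposal is correct and follows exactly the route the paper intends: the paper itself gives no independent proof (the corollary is quoted from \cite{L00}), and the derivation it points to is precisely your assembly of Theorem \ref{Thm:S1SignatureThm}, the observation that $\eta(M^{S^1})=0$ when $N$ is odd, and Proposition \ref{Prop:LHomologyClass} combined with the Goresky--MacPherson--Siegel fact that the degree-zero component of the $L$-homology class of a Witt space is its intersection homology signature. Nothing essential is missing.
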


\section{Induced Dirac-Schr\"odinger operator on $M_0/S^1$}\label{Section:InducedOpConstruction}\label{Sec:Induced}

The goal of this section is to implement the construction described in Section \ref{Sect:BH} for the special case of a semi-free $S^1$-action, as discussed in  Section \ref{Section:S1 Signature}. 

\subsection{The mean curvature $1$-form} \label{Sect:MeanCurvFrom}

Let $M$ be an $(n+1)$-dimensional oriented, closed Riemannian manifold on which $S^1$ acts by orientation preserving isometries (before we had $n=4k$, but here we treat the general case). Denote by $\nabla$ its associated Levi-Civita connection. As in Section \ref{Section:BasicsDef} let $V$ be the generating vector field of the $S^1$-action. The flow of the vector field $V$ generates a $1$-dimensional foliation $L$ on $M_0$. We will denote by $L^\perp$ the {\em transverse distribution}, i.e.  $TM_0=L\oplus L^\perp$. The distribution $L$ is always integrable and the corresponding integral curves are precisely the $S^1$-orbits. In contrast, the transverse distribution is not necessarily integrable. 

\begin{definition}
Let $X\coloneqq V/\norm{V}$ be the {\em unit vector field} which defines the foliation $L$.  Using the musical isomorphism induced by the metric we define (see \cite{T97})
\begin{enumerate}
\item The associated {\em characteristic $1$-form} $\chi\coloneqq X^\flat$.
\item The {\em mean curvature vector field} $H\coloneqq\nabla_X X$.
\item The {\em mean curvature $1$-form} $1$-form $\kappa\coloneqq H^\flat$.
\end{enumerate}
\end{definition}

It is easy to see that the mean curvature vector field satisfies $H\in C^\infty (L^\perp)$. As a consequence $\kappa$ is horizontal, i.e. $\iota_X\kappa=0$.

\begin{lemma}[{\cite[Chapter 6]{T97}}]\label{Lemma:kappa}
The mean curvature form $\kappa$ satisfies $\kappa=L_X \chi$. 
\end{lemma}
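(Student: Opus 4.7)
The plan is a direct computation using Cartan's magic formula together with the defining properties of the Levi-Civita connection. First I would write
\begin{align*}
L_X\chi = \iota_X d\chi + d\,\iota_X\chi.
\end{align*}
Since $X$ is a unit vector field, $\iota_X\chi = \chi(X) = \langle X,X\rangle = 1$, so the second term vanishes identically. The problem therefore reduces to identifying $\iota_X d\chi$ with $\kappa$.

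For an arbitrary vector field $Y$ on $M_0$, I would expand $d\chi(X,Y)$ using the standard intrinsic formula for the exterior derivative:
\begin{align*}
d\chi(X,Y) = X(\chi(Y)) - Y(\chi(X)) - \chi([X,Y]).
\end{align*}
The middle term is zero because $\chi(X)=1$ is constant, and $\chi(Y)=\langle X,Y\rangle$, $\chi([X,Y])=\langle X,[X,Y]\rangle$. Expanding $X\langle X,Y\rangle$ with the metric-compatibility of $\nabla$ and using the torsion-free identity $[X,Y]=\nabla_X Y - \nabla_Y X$, one obtains
\begin{align*}
d\chi(X,Y) = \langle\nabla_X X, Y\rangle + \langle X,\nabla_X Y\rangle - \langle X,\nabla_X Y\rangle + \langle X,\nabla_Y X\rangle.
\end{align*}
The last term vanishes because $\langle X,X\rangle = 1$ forces $\langle \nabla_Y X, X\rangle = 0$. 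Thus $d\chi(X,Y) = \langle \nabla_X X, Y\rangle = \langle H, Y\rangle = \kappa(Y)$, which is exactly the claim $\iota_X d\chi = \kappa$.

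There is no real obstacle here: the only subtlety is remembering that $X$ being a \emph{unit} vector field (rather than the original Killing field $V$) is what makes both $\iota_X\chi$ constant and $\langle \nabla_Y X, X\rangle$ vanish. The statement would fail if $\chi$ were replaced by $V^\flat$, so I would emphasize that the normalization of $X$ is used twice in the argument.
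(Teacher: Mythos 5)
Your computation is correct: the paper gives no proof of this lemma (it simply cites Tondeur), and your argument is the standard one that the cited reference uses, with the unit normalization of $X$ entering exactly where you say, namely in killing $d(\iota_X\chi)$ and the term $\langle\nabla_Y X,X\rangle$. Note also that what you actually establish, $\iota_X d\chi=\kappa$, is precisely the equivalent form the paper later extracts from the lemma via Cartan's formula, so your proof is consistent with the conventions in force here.
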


Let $\alpha\coloneqq V^\flat/\norm{V}^2$  be the $1$-form considered in Section  \ref{Section:BasicsDef}.
The following statement is a consequence of Proposition \ref{Prop:alpha}(2) and Lemma \ref{Lemma:kappa}.

\begin{coro}\label{Coro:ChiInv}
The characteristic $1$-form $\chi$ satisfies $L_V\chi=0$, that is $\chi$ is $S^1$-invariant. 
\end{coro}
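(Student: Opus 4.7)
The plan is to exploit the simple algebraic identity $\chi = \|V\|\,\alpha$, which follows directly from the definitions $\chi = X^{\flat} = V^{\flat}/\|V\|$ and $\alpha = V^{\flat}/\|V\|^{2}$. Applying the Leibniz rule for the Lie derivative yields
\begin{align*}
L_{V}\chi \;=\; V(\|V\|)\,\alpha \;+\; \|V\|\,L_{V}\alpha.
\end{align*}
The second term vanishes by Proposition \ref{Prop:alpha}(2), so it suffices to see that $V(\|V\|)=0$. This is where the isometric character of the $S^{1}$-action enters: $V$ is a Killing vector field, and the Killing identity $g(\nabla_{Y}V,Z)+g(\nabla_{Z}V,Y)=0$ specialized to $Y=Z=V$ gives $V(\|V\|^{2})=2g(\nabla_{V}V,V)=0$, hence $V(\|V\|)=0$.

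A more intrinsic route, which makes the role of Lemma \ref{Lemma:kappa} explicit, is to write $V=\|V\|X$ and use the standard formula $L_{fX}\omega = f L_{X}\omega + df\wedge \iota_{X}\omega$ for a one-form $\omega$. Applied to $\chi$ this gives
\begin{align*}
L_{V}\chi \;=\; \|V\|\,L_{X}\chi \;+\; d\|V\|\wedge \iota_{X}\chi \;=\; \|V\|\,\kappa \;+\; d\|V\|,
\end{align*}
invoking Lemma \ref{Lemma:kappa} and the tautology $\iota_{X}\chi = g(X,X)=1$. It then remains to verify $\|V\|\,\kappa = -d\|V\|$, i.e.\ $\kappa = -d\log\|V\|$. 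This is a short computation: $\nabla_{X}X = \|V\|^{-2}\nabla_{V}V$ once $V(\|V\|)=0$ is used, and $(\nabla_{V}V)^{\flat} = -\|V\|\,d\|V\|$ again by the Killing identity; combining these two yields the claimed formula for $\kappa$.

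I do not anticipate any serious obstacle: the statement is a short formal consequence of facts already in hand. The only subtlety is bookkeeping the role of the Killing condition, which provides the cancellation $V(\|V\|)=0$ in the first approach, or equivalently the identification $\kappa = -d\log\|V\|$ in the second. Either way the two pieces—Proposition \ref{Prop:alpha}(2) and Lemma \ref{Lemma:kappa}—combine cleanly to give $L_{V}\chi = 0$.
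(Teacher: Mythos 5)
Your argument is correct, and it is worth noting that the paper gives no detailed proof at all: it simply records the corollary as a consequence of Proposition \ref{Prop:alpha}(2) and Lemma \ref{Lemma:kappa}. Your first route --- $\chi=\norm{V}\alpha$, Leibniz, $L_V\alpha=0$ from Proposition \ref{Prop:alpha}(2), and $V(\norm{V})=0$ from the Killing identity $g(\nabla_VV,V)=0$ --- is essentially the natural fleshing-out of the paper's intended one-liner, with the isometric character of the action supplying the $S^1$-invariance of $\norm{V}$. Your second route is genuinely different: it does not use Proposition \ref{Prop:alpha}(2) at all, but instead combines Lemma \ref{Lemma:kappa} with an independent derivation of $\kappa=-d\log\norm{V}$, which is precisely Proposition \ref{Prop:NormV}(2) of the paper (stated only \emph{after} the corollary and cited to \cite{JO17}); so this route proves that later proposition ahead of time and then reads off $L_V\chi=\norm{V}\kappa+d\norm{V}=0$. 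Both computations check out (in particular $(\nabla_VV)^\flat=-\norm{V}\,d\norm{V}$ is a correct use of the Killing equation, and $\iota_X\chi=1$ makes the formula $L_{fX}\omega=fL_X\omega+df\wedge\iota_X\omega$ close the argument). The only cosmetic point is your closing claim that ``either way'' the two cited facts combine: in fact your first route uses only Proposition \ref{Prop:alpha}(2) and your second only Lemma \ref{Lemma:kappa}, each supplemented by the Killing condition --- which is harmless, since either suffices, and arguably makes the corollary more self-contained than the paper's attribution.
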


We can combine Cartan's formula and Lemma \ref{Lemma:kappa}  to get  $\kappa=\iota_X d\chi$, or equivalently 
\begin{equation}\label{Eqn:dchi}
d\chi + \kappa\wedge \chi \eqqcolon \varphi_0,
\end{equation}
where $\varphi_0$ satisfies $\iota_X\varphi_0=0$, i.e. $\varphi_0$ is {\em horizontal}. Equation \eqref{Eqn:dchi} is known as {\em Rummler's formula} and it holds for general tangentially oriented foliations (\cite[Lemma 10.4]{BGV}, \cite[Chapter 4]{T97}). Observe that the characteristic form $\chi$ can be thought as the volume form on each leaf of the foliation as the vector field $X$ satisfies $X\in C^\infty (L)$, it is $S^1$-invariant and $\norm{\chi}=1$. In particular, the volume of the orbit function $h:M_0/S^1\longrightarrow \mathbb{R}$ used in \eqref{Def:L2FhGen} can be written explicitly as  
\begin{equation}\label{Eqn:h}
h(y)=\int_{\pi^{-1}_{S^1}(y)} \chi.
\end{equation}
\begin{lemma}\label{Lemma:dh}
The exterior derivative of the volume of the orbit function is $dh=-h\kappa$. Thus, the mean curvature form $\kappa$ measures the volume change of the orbits. 
\end{lemma}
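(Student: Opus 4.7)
The plan is to interpret $dh = -h\kappa$ as an equality of $1$-forms on the base $M_0/S^1$ and verify it by evaluating on an arbitrary horizontal $S^1$-invariant vector field. More precisely, given a vector field $Y$ on $M_0/S^1$, I will lift it to the unique horizontal $S^1$-invariant vector field $\widetilde{Y} \in C^\infty(L^\perp)$ and compute $(dh)(Y) = \widetilde{Y} \cdot h$ by differentiating under the integral sign.

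Since the flow of $\widetilde{Y}$ maps $S^1$-orbits to $S^1$-orbits (it is $S^1$-invariant and horizontal, hence descends to the flow of $Y$), I can pull the derivative inside the integral \eqref{Eqn:h} and get
\begin{align*}
\widetilde{Y} \cdot h \;=\; \int_{\pi_{S^1}^{-1}(y)} L_{\widetilde{Y}} \chi.
\end{align*}
Then Cartan's magic formula gives $L_{\widetilde{Y}} \chi = d(\iota_{\widetilde{Y}} \chi) + \iota_{\widetilde{Y}} d\chi$. Because $\widetilde{Y}$ is horizontal, $\iota_{\widetilde{Y}} \chi = 0$, so the first term drops. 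For the second term I invoke Rummler's formula \eqref{Eqn:dchi}, which rewrites $d\chi = -\kappa \wedge \chi + \varphi_0$, yielding
\begin{align*}
\iota_{\widetilde{Y}} d\chi \;=\; -\kappa(\widetilde{Y})\,\chi + \kappa \wedge \iota_{\widetilde{Y}} \chi + \iota_{\widetilde{Y}} \varphi_0 \;=\; -\kappa(\widetilde{Y})\,\chi + \iota_{\widetilde{Y}} \varphi_0.
\end{align*}

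Now I integrate over the orbit. Since $\kappa$ and $\widetilde{Y}$ are both $S^1$-invariant, the function $\kappa(\widetilde{Y})$ is constant along each orbit and descends to the value $\kappa(Y)$ on $M_0/S^1$; hence its contribution integrates to $-\kappa(Y)\, h(y)$. The remaining term $\int_{\pi_{S^1}^{-1}(y)} \iota_{\widetilde{Y}} \varphi_0$ vanishes: along an orbit the integrand pairs with the tangent vector $X$, giving $\varphi_0(\widetilde{Y}, X) = -\varphi_0(X, \widetilde{Y}) = 0$ by the horizontality property $\iota_X \varphi_0 = 0$ noted right after \eqref{Eqn:dchi}. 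Combining these computations yields $\widetilde{Y} \cdot h = -h\, \kappa(Y)$, and since $Y$ was arbitrary the identity $dh = -h\kappa$ follows.

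The main technical point to be careful about is the legitimacy of differentiating under the integral, which requires only that $\widetilde{Y}$ be smooth and $S^1$-invariant (so its flow preserves the fibration $\pi_{S^1}$ over a short time), together with compactness of the $S^1$-orbits. The rest is a bookkeeping exercise reducing everything via Rummler's formula and the horizontality of $\widetilde{Y}$ and $\varphi_0$; no further geometric input is needed.
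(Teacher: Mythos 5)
Your proof is correct, but it travels a slightly different road than the paper. The paper's argument is a one-line fiber-integration computation: it cites the fact that integration along the fiber commutes with the exterior derivative (Bott--Tu, Proposition 6.14.1), so $dh=\int_{\pi^{-1}_{S^1}(y)}d\chi$, and then applies Rummler's formula \eqref{Eqn:dchi} under the integral; the term $\int\kappa\wedge\chi$ gives $h\kappa$ by the projection formula (since $\kappa$ is basic it pulls out of the fiber integral), and $\int\varphi_0=0$ because $\varphi_0$ is horizontal. You instead evaluate $dh$ on an arbitrary vector field $Y$ of the base via its horizontal $S^1$-invariant lift $\widetilde{Y}$, differentiate under the integral using the flow of $\widetilde{Y}$ (which maps orbits to orbits because $\widetilde{Y}$ is $\pi_{S^1}$-related to $Y$), and then use Cartan's formula plus Rummler's formula pointwise; the horizontality of $\widetilde{Y}$ kills $\iota_{\widetilde{Y}}\chi$, the invariance (equivalently, basicness) of $\kappa$ makes $\kappa(\widetilde{Y})$ fiberwise constant and produces $-h\,\bar{\kappa}(Y)$, and the horizontality of $\varphi_0$ kills the remaining term exactly as in the paper. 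In effect you re-derive, by hand and in the special case of one-dimensional fibers, the commutation of $d$ with fiber integration that the paper simply quotes; what you gain is a self-contained, more elementary argument that needs no external citation, and what the paper gains is brevity. The one step you flag yourself --- differentiating under the integral sign --- is indeed the only delicate point, and your justification (smooth $S^1$-invariant lift, flow preserving the fibration, compact orbits) is adequate, so there is no gap.
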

\begin{proof}
We use  \cite[Proposition 6.14.1]{BT82} and  \eqref{Eqn:dchi} to compute 
\begin{align*}
d h= d\int_{\pi^{-1}_{S^1}(y)} \chi= \int_{\pi^{-1}_{S^1}(y)}d\chi=-\int_{\pi^{-1}_{S^1}(y)}\kappa\wedge\chi +\int_{\pi^{-1}_{S^1}(y)}\varphi_0=-h\kappa, 
\end{align*}
where we have used that the integral of $\varphi_0$ is zero because this is a horizontal $2$-form. 
\end{proof}
The next proposition shows that all the geometric quantities discussed above are encoded in the norm of the generating vector field $V$. 
\begin{proposition}[{\cite[Proposition 4.7]{JO17}}]\label{Prop:NormV}
In terms of $\norm{V}$ we can express
\begin{enumerate}
\item $\chi=\norm{V}\alpha$.
\item $\kappa=-d\log(\norm{V})$.
\item $\varphi_0=\norm{V} d\alpha$.
\end{enumerate}
\end{proposition}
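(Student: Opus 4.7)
The proof plan is to verify the three identities in sequence, each building on the previous one, exploiting just two basic facts: the algebraic relation $X=V/\|V\|$ and the Killing property of the generating vector field $V$.

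For (1), I would just unpack definitions: $\chi = X^\flat = (V/\|V\|)^\flat = V^\flat/\|V\| = \|V\|\cdot (V^\flat/\|V\|^2) = \|V\|\,\alpha$. This is purely algebraic and requires no geometric input.

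For (2), which is the real content of the proposition, the plan is to compute $\kappa(Y) = \langle \nabla_X X, Y\rangle$ for an arbitrary vector field $Y$. Expanding $\nabla_X X = \nabla_X(V/\|V\|)$ via the Leibniz rule yields two terms; the term involving $X(\|V\|)$ vanishes because $V$ is Killing (so $V(\|V\|^2) = 2\langle\nabla_V V,V\rangle = 0$ by skew-symmetry of $\nabla V$, hence $X(\|V\|)=0$). What remains is $\nabla_X X = \|V\|^{-2}\nabla_V V$. Applying the Killing skew-symmetry a second time, $\langle\nabla_V V, Y\rangle = -\langle V,\nabla_Y V\rangle = -\tfrac{1}{2}Y(\|V\|^2) = -\|V\|\,Y(\|V\|)$. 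Combining these gives $\kappa(Y) = -Y(\|V\|)/\|V\| = -Y(\log\|V\|)$, i.e.\ $\kappa = -d\log\|V\|$.

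For (3), I would simply substitute (1) and (2) into Rummler's formula \eqref{Eqn:dchi}: expanding $d\chi = d(\|V\|\alpha) = d\|V\|\wedge\alpha + \|V\|\,d\alpha$ and $\kappa\wedge\chi = -(d\|V\|/\|V\|)\wedge(\|V\|\,\alpha) = -d\|V\|\wedge\alpha$, the two non-closed terms cancel and leave $\varphi_0 = \|V\|\,d\alpha$.

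The only genuine obstacle is part (2); everything else is formal. The key insight there, which one must remember to use twice, is that $V$ being Killing makes $(Y,Z)\mapsto\langle\nabla_Y V,Z\rangle$ skew-symmetric. I would make sure to record the consequence $X(\|V\|)=0$ as a small lemma before the main computation, since it is used implicitly and might otherwise look mysterious. After that, parts (1) and (3) follow without difficulty from direct substitution.
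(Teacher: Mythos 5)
Your proof is correct, and since the paper itself gives no argument for Proposition \ref{Prop:NormV} (it simply cites \cite[Proposition 4.7]{JO17}), there is nothing it diverges from: part (1) is the definitional computation, part (2) is the standard double use of the Killing skew-symmetry of $Y\mapsto\nabla_Y V$ (first to get $X(\norm{V})=0$, then to evaluate $\inner{\nabla_V V}{Y}=-\norm{V}\,Y(\norm{V})$), and part (3) follows by substituting (1) and (2) into Rummler's formula \eqref{Eqn:dchi}, all valid on $M_0$ where $\norm{V}>0$.
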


\begin{coro}\label{Coro:kappa}
The mean curvature $1$-form $\kappa$ is closed and basic.
\end{coro}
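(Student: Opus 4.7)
The plan is to deduce both assertions directly from the explicit formula $\kappa = -d\log\|V\|$ provided by Proposition \ref{Prop:NormV}(2).

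For closedness, since $\|V\|$ is a smooth positive function on $M_0$, the form $\log\|V\|$ is a well-defined smooth function there, and $\kappa = -d\log\|V\|$ is exact. Hence $d\kappa = 0$ trivially from $d^2 = 0$.

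For basicness, we must verify $\iota_V \kappa = 0$ and $L_V \kappa = 0$. The first condition is already built into the setup: as noted right after the definition, $H = \nabla_X X \in C^\infty(L^\perp)$, so $\kappa = H^\flat$ is horizontal, giving $\iota_V \kappa = \|V\|\, \iota_X \kappa = 0$. Alternatively, the same fact follows from $\kappa = -d\log\|V\|$ together with the observation that $\|V\|^2 = g(V,V)$ is $S^1$-invariant (because $S^1$ acts by isometries), so $V(\log\|V\|) = 0$. With closedness in hand, Cartan's magic formula then yields
\begin{align*}
L_V \kappa = d\,\iota_V \kappa + \iota_V\, d\kappa = 0 + 0 = 0,
\end{align*}
which completes the verification that $\kappa$ is basic.

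There is no serious obstacle here; once Proposition \ref{Prop:NormV}(2) is available, both properties follow from one line of calculus and Cartan's formula. The only substantive input is the isometric nature of the $S^1$-action, which ensures that $\|V\|$ is $S^1$-invariant and therefore that its differential has vanishing contraction with $V$.
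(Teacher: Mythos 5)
Your proposal is correct and follows the same route the paper intends: the corollary is stated as an immediate consequence of Proposition \ref{Prop:NormV}(2), with closedness coming from exactness of $\kappa=-d\log\norm{V}$ and basicness from horizontality (or $S^1$-invariance of $\norm{V}$) together with Cartan's formula. Nothing is missing.
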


We end this subsection with some properties of the $2$-from $\varphi_0$. 

\begin{proposition}[{\cite[Proposition 4.9]{JO17}}]\label{Prop:varphi0}
The following relations for $\varphi_0$ hold:
\begin{enumerate}
\item If $Y_1, Y_2\in C^{\infty}(L^\perp)$, then $\varphi_0(Y_1,Y_2)=-\chi([Y_1,Y_2])$.
\item $d\varphi_0+\kappa\wedge\varphi_0=0$.
\item $\varphi_0\in\Omega^2_\textnormal{bas}(M_0)$.
\end{enumerate}
\end{proposition}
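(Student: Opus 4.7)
The plan is to establish the three claims in sequence, using only Rummler's formula \eqref{Eqn:dchi}, Corollaries \ref{Coro:ChiInv} and \ref{Coro:kappa}, and the defining horizontality $\iota_X\varphi_0=0$; no proposition beyond what has already been proved is needed.

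For (1), I would observe that for $Y_1,Y_2\in C^{\infty}(L^{\perp})$ one has $\chi(Y_i)=\inner{X}{Y_i}=0$, so the wedge $\kappa\wedge\chi$ vanishes on the pair $(Y_1,Y_2)$. Using Rummler's formula this reduces $\varphi_0(Y_1,Y_2)$ to $d\chi(Y_1,Y_2)$, and the invariant (Koszul) expression for the exterior derivative collapses, using $\chi(Y_1)=\chi(Y_2)=0$, to $-\chi([Y_1,Y_2])$.

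For (2), apply $d$ to \eqref{Eqn:dchi} to obtain $d\varphi_0 = d\kappa\wedge\chi - \kappa\wedge d\chi$. By Corollary \ref{Coro:kappa} we have $d\kappa=0$, and substituting $d\chi=\varphi_0-\kappa\wedge\chi$ from Rummler's formula yields
\begin{align*}
d\varphi_0 = -\kappa\wedge(\varphi_0 - \kappa\wedge\chi) = -\kappa\wedge\varphi_0,
\end{align*}
where the remaining term $\kappa\wedge\kappa\wedge\chi$ vanishes by antisymmetry of the wedge. This gives the stated identity.

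For (3), I need to check the two defining conditions of a basic form, namely $\iota_V\varphi_0=0$ and $L_V\varphi_0=0$. The first is immediate from $V=\norm{V}X$ and the assumed horizontality $\iota_X\varphi_0=0$. For the second, I combine Corollary \ref{Coro:ChiInv} ($L_V\chi=0$) with the fact, following from Corollary \ref{Coro:kappa}, that $\kappa$ is basic and hence $L_V\kappa=0$. Since $L_V$ is a derivation on the exterior algebra and commutes with $d$, applying it to Rummler's formula gives
\begin{align*}
L_V\varphi_0 = d(L_V\chi) + (L_V\kappa)\wedge\chi + \kappa\wedge(L_V\chi) = 0.
\end{align*}
Each step above is a short direct computation, so I do not anticipate any genuine obstacle; the only mild care is in the order in which one invokes \eqref{Eqn:dchi} and Corollary \ref{Coro:kappa} in part (2) to avoid circularity.
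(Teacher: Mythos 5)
Your proof is correct, and it is the natural direct verification: the paper itself does not reproduce an argument here but defers to \cite[Proposition 4.9]{JO17}, and your computation (Koszul formula plus horizontality for (1), differentiating Rummler's formula together with $d\kappa=0$ for (2), and $\iota_X\varphi_0=0$ plus $L_V\chi=0$, $L_V\kappa=0$ for (3)) is exactly the expected route, with no circularity since Corollaries \ref{Coro:ChiInv} and \ref{Coro:kappa} precede the proposition. The only cosmetic point is that the horizontality $\iota_X\varphi_0=0$ is not an assumption but is itself derived at \eqref{Eqn:dchi} from $\kappa=\iota_X d\chi$ and $\iota_X\kappa=0$; citing it as established there is fine.
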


\subsection{The operator $T(D)$}

Let us consider now the Hermitian $S^1$-equivariant vector bundle $E\coloneqq  \wedge_\mathbb{C} T^* M$ of Example \ref{Ex:ExtAlg}.
Recall that the action on differential forms is given by the pullback 
$U_g\omega\coloneqq  (g^{-1})^*\omega$ for $g\in S^1$. As the Hodge star operator on $M$ commutes with the $S^1$-action on differential forms (\cite[Lemma 4.11]{JO17}),  then by Proposition \ref{Prop:Chirl}(4) we obtain the following known result.

\begin{proposition}\label{Prop:S1commD}
The Hodge-de Rham operator $D=d+d^\dagger$ of $M$ defined on the core  of differential forms $\Omega(M)\coloneqq C^{\infty}(M, \wedge_\mathbb{C} T^* M )$ is $S^1$-invariant.
\end{proposition}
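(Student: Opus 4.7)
The plan is to exploit the naturality of the exterior derivative together with the stated compatibility of the Hodge star with the $S^1$-action. Since $d^\dagger$ is built from $d$ and $\star$, once these two building blocks are shown to be $S^1$-equivariant, the $S^1$-invariance of $D = d + d^\dagger$ follows immediately.

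First I would recall that for any smooth map $f:M\longrightarrow M$ one has $d\circ f^* = f^*\circ d$ on $\Omega(M)$; applying this to $f = g^{-1}$ for $g\in S^1$ yields $d\circ U_g = U_g\circ d$ on the core $\Omega(M)$. Next, since the $S^1$-action is by orientation-preserving isometries, the Riemannian volume form and the inner product on $\wedge_{\mathbb{C}}T^*M$ are both preserved, so the Hodge star commutes with $U_g$; this is exactly the content of \cite[Lemma 4.11]{JO17} cited just before the statement. Combining these two facts with the formula $d^\dagger = \pm \star d\,\star$ on $k$-forms (with the usual dimension- and degree-dependent sign, which is the same for $\omega$ and for $U_g\omega$ because $U_g$ preserves the degree) gives $d^\dagger\circ U_g = U_g\circ d^\dagger$.

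To organize this cleanly and match the citation to Proposition \ref{Prop:Chirl}(4) invoked in the excerpt, I would state a small lemma: any bounded operator on $\Omega(M)$ that commutes with both $d$ and the Hodge star automatically commutes with $D$. Then the proof of the proposition reduces to verifying these two commutations for $U_g$, both of which are covered above. Finally, since $U_g$ is unitary on $L^2$ and preserves the smooth core $\Omega(M)$, the equality $D\circ U_g = U_g\circ D$ on the core suffices for $S^1$-invariance in the sense required by Lemma \ref{Lemma:OpS}.

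There is essentially no obstacle here; the statement is a routine consequence of naturality and the isometric character of the action. The only subtle point — and the one worth emphasizing in the write-up — is that both ingredients genuinely use the hypothesis that $S^1$ acts by \emph{orientation-preserving isometries}: isometry to get commutation with $\star$ on the unsigned level, and orientation-preservation to avoid an extra sign in $U_g\star = \star U_g$, which would otherwise reverse the sign of $d^\dagger$ and break the desired invariance of $D$.
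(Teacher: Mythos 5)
Your proof is correct and takes essentially the same route as the paper, which obtains the proposition in one line from the naturality $d\circ U_g=U_g\circ d$, the commutation of the (chirality/Hodge) star with the $S^1$-action (\cite[Lemma 4.11]{JO17}), and the formula $d^\dagger=(-1)^n\star d\star$ of Proposition \ref{Prop:Chirl}(4). One small quibble with your closing remark: an orientation-reversing isometry would only introduce a sign $U_g\star=-\star U_g$, and since $\star$ appears twice in $d^\dagger=(-1)^n\star d\star$ these signs cancel, so $d^\dagger$ (being the $L^2$-formal adjoint, which depends only on the metric and the Riemannian density) would still commute with $U_g$; orientation-preservation is thus not what saves the invariance of $D$ here, though it is of course needed for the unsigned identity $U_g\star=\star U_g$ and for other parts of the construction.
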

This shows that we are in position to apply the construction of Br\"uning and Heintze described in Section \ref{Sect:BH}. The strategy is then as  follows:
\begin{enumerate}
\item Construct explicitly the vector bundle $F\longrightarrow M_0/S^1$ and describe the $L^2$-inner product \eqref{Def:L2FhGen}. 
\item Understand the isomorphism $\Phi$ of Theorem \ref{Thm:Fund}.
\item Describe the self-adjoint operator $D:\Omega(M)^{S^1}\longrightarrow\Omega(M)^{S^1}$ of Lemma \ref{Lemma:OpS} 
\item Explicitly compute the self-adjoint operator $T$ of Proposition \ref{Prop:OpT} and describe its properties. For example, compute its principal symbol (Proposition \ref{Prop:SDiff}). 
\end{enumerate}

\begin{remark}[{\cite[Chapter II.5]{LM89}}]
The Hodge-de Rham operator is the associated Dirac operator of the Clifford bundle $\wedge_\mathbb{C}T^*M$ with left Clifford action $c(\omega)\coloneqq\omega\wedge-\iota_{\omega^\sharp}$, which satisfies the relations $c(\omega)^2=-\norm{\omega}^2$ and $c(\omega)^\dagger=-c(\omega)$. The corresponding right Clifford action is $\widehat{c}(\omega)\coloneqq\omega\wedge+\iota_{\omega^\sharp}$. 
\end{remark}

\subsubsection{Decomposition of $S^1$-invariant differential forms}
We begin with a decomposition result of the space of $S^1$-invariant forms in terms of the basic forms. Recall that we have the inclusion  $\Omega_\text{bas}(M_0)\subset \Omega(M_0)^{S^1}$,  from the Lie derivative vanishing condition.

\begin{proposition}[{\cite[Corollary 3.14]{JO17}}]\label{Coro:DecInvForm}
Any $S^1$-invariant form $\omega\in \Omega(M_0)^{S^1}$ can be uniquely decomposed as $\omega=\omega_0+\omega_1\wedge\chi$, where $\omega_0,\omega_1\in\Omega_{\textnormal{bas}}(M_0)$. With respect to this decomposition we will represent the form $\omega$ as the column vector
\begin{equation*}
\omega=
\left(
\begin{array}{c}
\omega_0\\
\omega_1
\end{array}
\right).
\end{equation*}
\end{proposition}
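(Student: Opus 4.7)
The plan is to use the characteristic $1$-form $\chi$ to split off the vertical component of any $S^1$-invariant form via an explicit formula involving the contraction $\iota_X$. Two preliminary observations make the construction work smoothly: first, $\chi$ is $S^1$-invariant by Corollary \ref{Coro:ChiInv} and satisfies $\iota_X\chi=1$ by construction; second, since $S^1$ acts by isometries the vector field $V$ is Killing, so $V\|V\|=0$, which forces $[V,X]=0$ and therefore $[L_V,\iota_X]=\iota_{[V,X]}=0$. This last commutation is what will upgrade mere $S^1$-invariance of the components to full basicness.

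For existence, given a homogeneous form $\omega\in\Omega^k(M_0)^{S^1}$ I would set
\begin{align*}
\omega_1 := (-1)^{k-1}\iota_X\omega, \qquad \omega_0 := \omega-\omega_1\wedge\chi,
\end{align*}
and extend by linearity to all of $\Omega(M_0)^{S^1}$. The identity $\omega=\omega_0+\omega_1\wedge\chi$ is then tautological. Applying $\iota_X$ and using the Leibniz rule $\iota_X(\alpha\wedge\beta)=(\iota_X\alpha)\wedge\beta+(-1)^{|\alpha|}\alpha\wedge\iota_X\beta$ together with $\iota_X^2=0$ and $\iota_X\chi=1$ yields $\iota_X\omega_1=0$ and $\iota_X\omega_0=0$; since $\iota_V=\|V\|\iota_X$ on $M_0$, this gives $\iota_V\omega_i=0$. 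The conditions $L_V\omega_i=0$ then follow at once from $L_V\omega=0$, $L_V\chi=0$ and $[L_V,\iota_X]=0$, so both $\omega_0$ and $\omega_1$ are basic.

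For uniqueness, I would suppose $\omega_0+\omega_1\wedge\chi=\tilde\omega_0+\tilde\omega_1\wedge\chi$ with all four summands basic of matching degree, and apply $\iota_X$ to both sides. Because each basic piece is annihilated by $\iota_X$, the Leibniz rule collapses the identity to $(-1)^{k-1}\omega_1=(-1)^{k-1}\tilde\omega_1$, forcing $\omega_1=\tilde\omega_1$ and then $\omega_0=\tilde\omega_0$.

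I do not anticipate a genuine obstacle: the argument is a short exercise in Cartan calculus on a trivial vertical line bundle spanned by $X$. The only care needed is in the sign convention that produces $\omega_1\wedge\chi$ rather than $\chi\wedge\omega_1$ in the statement, and in invoking the Killing property so that $X$ itself is compatible with the flow of $V$ — without this, the components would only be $S^1$-invariant and horizontal but possibly not basic.
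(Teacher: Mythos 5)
Your proof is correct, and it is essentially the argument the paper relies on (the statement is quoted from \cite{JO17}, where the decomposition is obtained by the same Cartan-calculus splitting $\omega=\iota_X(\chi\wedge\omega)+\chi\wedge\iota_X\omega$, with basicness of the components following from $L_V\chi=0$ and $[V,X]=0$). The sign bookkeeping producing $\omega_1\wedge\chi$ and the use of the Killing property to get $[L_V,\iota_X]=0$ are both handled correctly, so there is nothing to fix.
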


\subsubsection{Construction of the bundle $F$}\label{Section:ConstrF}

We start by pointing out some important remarks:  
\begin{itemize}
\item The action on  $M_0$ is free and therefore the $S^1$-invariant bundle $E'$ of \eqref{Eqn:DefE'} is nothing else but $E'=\wedge_\mathbb{C} T^*M_0$.
\item As a consequence, by counting dimensions, we see that the rank of $F$ must agree with the rank of $E'$, which is $\rk(E')= 2^{n+1}$.
\item From \cite[Lemma 6.44]{M00} it follows that for each basic form $\beta\in\Omega^r_\text{bas}(M_0)$ there exists a unique $\bar{\beta}\in\Omega^r(M_0/S^1)$ such that $\pi^*_{S^1}\bar{\beta}=\beta$. Thus, using Corollary \ref{Coro:DecInvForm} we can identify $\Omega(M_0)^{S^1}\cong \Omega(M_0/S^1)\otimes\mathbb{C}^2$, via the orbit map $\pi_{S^1}$.
\end{itemize}
These observations indicate that $F\coloneqq E'/S^1=\wedge_\mathbb{C} T^*(M_0/S^1)\oplus \wedge_\mathbb{C} T^*(M_0/S^1)$. 
Indeed, given $x\in M_0$ and $\omega_x=\omega'_x+\omega''_x\wedge\chi_x\in \wedge_\mathbb{C} T^*_x M_0$ where $\iota_{V_x}\omega'_x=\iota_{V_x}\omega''_x=0$, the orbit map on $E'$ is explicitly given by
\begin{align}\label{Def:BundleF}
\pi'_{S^1}:
\xymatrixrowsep{0.01cm}\xymatrixcolsep{2cm}\xymatrix{
E'=\wedge_\mathbb{C} T^*M_0 \ar[r] & F=\wedge_\mathbb{C} T^*(M_0/S^1)\oplus \wedge_\mathbb{C} T^*(M_0/S^1)\\
\omega_x=\omega'_x+\omega''_x\wedge\chi_x \ar@{|->}[r] & 
{\left(\begin{array}{c}
\bar{\omega}_y' \\
{\bar{\omega}_y''}
\end{array}\right),}
}
\end{align}
where $\pi_{S^1}(x)=y$ and the form $\bar{\omega}_y'\in\wedge_\mathbb{C} T^*(M_0/S^1)$ (similarly for $\bar{\omega}_y''$) is defined by the relation $\omega_x(v_x)=\bar{\omega}_y((\pi_{S^1})_* v_x)$ for all $v_x\in T_x M_0$. Hence, the diagram \eqref{Diag:OrbitMaps}  becomes,
\begin{align*}
\xymatrixrowsep{2cm}\xymatrixcolsep{2cm}\xymatrix{
E' =\wedge_\mathbb{C} T^*M_0\ar[d]_-{\pi_E} \ar[r]^-{\pi'_{S^1}} & F=\wedge_\mathbb{C} T^*(M_0/S^1)\oplus \wedge_\mathbb{C} T^*(M_0/S^1)  \ar[d]^-{\pi_F}\\
M_0 \ar[r]^-{\pi_{S^1}}& M_0/S^1.
}
\end{align*}

\subsubsection{Description of the isomorphism $\Phi$}

Given an $S^1$-invariant form with compact support $\omega\in\Omega_c(M_0)^{S^1}$  there are two unique compactly supported basic differential forms $\omega_0,\omega_1\in\Omega_{\text{bas},c}(M_0)$ such that  $\omega=\omega_0+\omega_1\wedge\chi$. With respect to the vector notation introduced in Corollary \ref{Coro:DecInvForm} we write
\begin{align*}
\left(\begin{array}{c}
\omega_0\\
\omega_1\\
\end{array}\right)=
\left(\begin{array}{c}
\pi_{S^1}^*\bar{\omega}_0\\
\pi_{S^1}^*\bar{\omega}_1\\
\end{array}\right),
\end{align*}
where $\bar{\omega}_0,\bar{\omega}_1\in\Omega_c(M_0/S^1)$. This representation allows us to express the isomorphism $\Phi$, on compactly supported forms,  as

\begin{align*}
\Phi: \xymatrixrowsep{0.01cm}\xymatrixcolsep{1.7cm}\xymatrix{
\Omega_c(M_0)^{S^1} \ar[r] & \Omega_{\text{bas},c}(M_0)\oplus \Omega_{\text{bas},c}(M_0)  \ar[r] & \Omega_c(M_0/S^1)\oplus \Omega_c(M_0/S^1)\\
\omega  \ar@{|->}[r] &
{
\left(\begin{array}{c}
\omega_0\\
\omega_1\\
\end{array}\right)=
\left(\begin{array}{c}
\pi_{S^1}^*\bar{\omega}_0\\
\pi_{S^1}^*\bar{\omega}_1\\
\end{array}\right)
}
\ar@{|->}[r] &
{
\left(\begin{array}{c}
\bar{\omega}_0\\
\bar{\omega}_1\\
\end{array}\right).
}
}
\end{align*}
We can extend this map to $\Phi:L^2(M)^{S^1}\longrightarrow L^2(F,h)$ by density. 

\subsubsection{Description of the operator $S(D)$}
Now we want to understand the operator $S$ of Lemma \ref{Lemma:OpS} associated to the Hodge-de Rham operator $D=d+d^\dagger$. First, recall that the formal adjoin of the exterior derivative can be written as  $d^\dagger =(-1)^{n}\star d\star$, where $\star$ is the {\em chirality involution}, associated to the Clifford bundle $\wedge_{\mathbb{C}}T^*M$, which is defined on $j$-forms by $\star\coloneqq i^{[(m+1)/2]+2mj+j(j-1)}*$ (\cite[Lemma 3.17]{BGV}).
\begin{proposition}[{\cite[Proposition 3.58]{BGV}}]\label{Prop:Chirl}
The chirality operator $\star$ satisfies:
\begin{enumerate}
\item $\star^2=1$.
\item $\star^\dagger =\star$.
\item For $\alpha\in T^*M$, $\star (\alpha \wedge)\star=(-1)^{n+1}\iota_{\alpha^\sharp}$.
\item $d^\dagger =(-1)^{n}\star d\star$.
\item $\star c(\omega)=(-1)^{n}c(\omega)\star$ and $\star \widehat{c}(\omega)=(-1)^{n+1}\widehat{c}(\omega)\star$,
\item $\star\varepsilon=(-1)^{n+1}\varepsilon\star$.
\item $\star D =(-1)^{n}D\star$.
\end{enumerate}
\end{proposition}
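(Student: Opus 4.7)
The plan is to derive all seven statements from three standard properties of the Hodge star $*$ on the oriented $(n+1)$-dimensional manifold $M$: (i) $*\,* = (-1)^{j(n+1-j)}$ on $\wedge^j T^*M$; (ii) $*$ is a fiberwise isometry interchanging $\wedge^j$ and $\wedge^{n+1-j}$, so that $(*\alpha,\beta)_{L^2} = (\alpha,*\beta)_{L^2}$ up to the sign from (i); and (iii) $*(\alpha\wedge\eta) = (-1)^{k}\iota_{\alpha^\sharp}(*\eta)$ for $\alpha\in T^*M$ and $\eta\in\wedge^k T^*M$. Writing $\star = i^{e(j)}\,*$ on $j$-forms with $e(j) = [(n+2)/2] + 2(n+1)j + j(j-1)$ from the definition, each of (1)--(7) becomes a question about the residue of $e(j)$ modulo $4$ under the substitutions $j\mapsto n+1-j$ (which gives the degree of $\star\omega$) and $j\mapsto j\pm 1$ (relevant to degree-shifting operators like $d$ and $\alpha\wedge$).

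First I will prove (1): on a $j$-form, $\star^2 = i^{e(j)+e(n+1-j)}(-1)^{j(n+1-j)}$, and expanding $e(j)+e(n+1-j)+2j(n+1-j)$ reduces --- by separating the parity of $n+1$ --- to a multiple of $4$. Property (6) is a pure degree count: since $\star\omega$ has degree $n+1-j$, one has $\varepsilon\star\omega = (-1)^{n+1-j}\star\omega$ whereas $\star\varepsilon\omega = (-1)^j\star\omega$, giving the relative sign $(-1)^{n+1}$. For (2), I pair $\star\alpha$ with a form $\beta$ of complementary degree, apply (ii), and observe that the required equality of phase factors reduces to the same modular computation as in (1). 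Property (3) follows by applying (iii) twice to $\star(\alpha\wedge\cdot)\star$ on a $j$-form and collapsing the resulting powers of $i^{e(\cdot)}$ using (1).

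From (3), squaring with (1) also yields $\star\iota_{\alpha^\sharp}\star = (-1)^{n+1}(\alpha\wedge)$, so (5) is immediate from the decompositions $c(\omega) = \omega\wedge - \iota_{\omega^\sharp}$ and $\widehat{c}(\omega) = \omega\wedge + \iota_{\omega^\sharp}$, the two Clifford actions picking up signs of opposite parity. Property (4) recasts the classical identity $d^\dagger = (-1)^{(n+2)j+1}\,*d\,*$ in terms of $\star$ by tracking the shifts $e(j)\mapsto e(j+1)$, and (7) is the sum of (4) with its $\star$-reflection $\star d^\dagger \star = (-1)^n d$, obtained by applying $\star$ on both sides of (4) and using (1). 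The only real obstacle is bookkeeping the phase exponent $e(j)$: every sign $(-1)^n$ or $(-1)^{n+1}$ in the conclusion descends from a careful parity analysis of $e(j)$ modulo $4$, performed once and reused throughout. This analysis is recorded in detail in \cite[Chapter 3]{BGV} under the same convention for $\star$, so the proof amounts largely to transcribing those signs into our dimension $\dim M = n+1$ and confirming that each of (1)--(7) matches the classical identities stated there.
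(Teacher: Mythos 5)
The paper itself gives no proof of this proposition --- it is quoted directly from \cite[Proposition 3.58]{BGV} --- and your derivation (writing $\star=i^{e(j)}*$ on $j$-forms with $e(j)=[(n+2)/2]+2(n+1)j+j(j-1)$ and reducing each identity to a mod-$4$ computation on $e(j)$ together with the standard Hodge-star identities) is exactly the argument of that reference, so your route coincides with the paper's source; I checked the key reductions for (1), (3) and (6) and they close as you claim. One transcription slip to fix: on $j$-forms in dimension $n+1$ the classical formula is $d^\dagger=(-1)^{(n+1)(j+1)+1}\,{*}\,d\,{*}$, not $(-1)^{(n+2)j+1}\,{*}\,d\,{*}$; the discrepancy is degree-dependent but is absorbed once the phases $i^{e(j)}$, $i^{e(j+1)}$ are tracked, so the degree-independent statement (4), and hence (7), is unaffected.
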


The strategy is to study $S(D)$ through the decomposition of Corollary \ref{Coro:DecInvForm}, that is 
\begin{align*}
S(D):=\left(d+(-1)^{n}\star d\star\right)\bigg{|}_{\Omega_c(M_0)^{S^1}}:
\xymatrixrowsep{2cm}\xymatrixcolsep{2cm}\xymatrix{
{
\begin{array}{c}
\Omega_{\text{bas},c}(M_0)^{S^1}\\
\bigoplus\\
\Omega_{\text{bas},c}(M_0)^{S^1}
\end{array}
}\ar[r] &
{
\begin{array}{c}
\Omega_{\text{bas},c}(M_0)^{S^1}\\
\bigoplus\\
\Omega_{\text{bas},c}(M_0)^{S^1}.
\end{array}
}
}
\end{align*}
For the decomposition of $\star$ we follow the techniques of \cite[Chapter 7]{T97}. 
\begin{definition}\label{Def:Bar*}
The {\em basic Hodge star operator} is defined as the linear map
\begin{align*}
\bar{*}:\xymatrixrowsep{0.01cm}\xymatrixcolsep{2cm}\xymatrix{
\Omega^j_{\textnormal{bas}}(M_0)  \ar[r] & \Omega^{n-j}_{\textnormal{bas}} (M_0),
}
\end{align*}
satisfying the conditions
\begin{align}
 \bar{*}\beta=&(-1)^{n-j}*(\beta\wedge\chi), \label{Eqn:TransHosgeStar1} \\
 *\beta=&\bar{*}\beta\wedge\chi, \label{Eqn:TransHosgeStar2}
\end{align}
where $*$ is the Hodge star operator of $M$.
\end{definition} 

\begin{remark}\label{Rmk:VolumeQuot}
Observe that the volume form can be written as $ \vol_{M_0}=* 1=\bar{*}1\wedge\chi$.
\end{remark}

\begin{lemma}
The operator $\bar{*}$ satisfies $\bar{*}^2=(-1)^{j(n-j)}$ on $j$-forms.
\end{lemma}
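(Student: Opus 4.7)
The plan is to prove the identity by a direct computation that iterates the two defining relations (\ref{Eqn:TransHosgeStar1}) and (\ref{Eqn:TransHosgeStar2}), then invokes the standard square of the Hodge star operator on the ambient manifold $M$ of dimension $n+1$.

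First, I take $\beta \in \Omega^j_{\textnormal{bas}}(M_0)$. Since $\bar{*}\beta \in \Omega^{n-j}_{\textnormal{bas}}(M_0)$, I apply (\ref{Eqn:TransHosgeStar1}) with $j$ replaced by $n-j$ to obtain
\begin{align*}
\bar{*}(\bar{*}\beta) = (-1)^{n-(n-j)} \, * \bigl(\bar{*}\beta \wedge \chi\bigr) = (-1)^{j} \, * \bigl(\bar{*}\beta \wedge \chi\bigr).
\end{align*}
Next, I use (\ref{Eqn:TransHosgeStar2}) to rewrite the bracket as $\bar{*}\beta \wedge \chi = *\beta$, which yields $\bar{*}^2 \beta = (-1)^{j}\, ** \beta$.

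Finally, I invoke the standard identity $**\alpha = (-1)^{k(m-k)}\alpha$ for a $k$-form on a Riemannian manifold of dimension $m$. Since $M$ has dimension $n+1$ and $\beta$ is a $j$-form, this gives $**\beta = (-1)^{j(n+1-j)}\beta$. Combining,
\begin{align*}
\bar{*}^2 \beta = (-1)^{j + j(n+1-j)}\beta = (-1)^{j(n+2-j)}\beta = (-1)^{j(n-j)}\beta,
\end{align*}
where the last equality uses that $2j$ is even, so the exponent $j(n+2-j) = j(n-j) + 2j$ has the same parity as $j(n-j)$.

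There is no serious obstacle here: the argument is purely a parity computation, and the main care needed is simply to track the signs in (\ref{Eqn:TransHosgeStar1}) correctly when applying it to a form of degree $n-j$ rather than $j$.
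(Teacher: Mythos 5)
Your computation is correct: applying \eqref{Eqn:TransHosgeStar1} to the basic $(n-j)$-form $\bar{*}\beta$, then \eqref{Eqn:TransHosgeStar2} to rewrite $\bar{*}\beta\wedge\chi=*\beta$, and finally $**=(-1)^{j(n+1-j)}$ on $j$-forms of the $(n+1)$-dimensional manifold gives exactly $(-1)^{j(n-j)}$ after the parity simplification. The paper states this lemma without proof, and your argument is precisely the direct verification from the two defining relations that is intended, so there is nothing to add.
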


In view of this lemma we can define a chirality operator on basic differential forms as in \cite[Section 5]{HR13}. The following result follows from Proposition \ref{Prop:Chirl}.

\begin{proposition}\label{Prop:BarChirl}
The basic chirality operator
\begin{align*}
\bar{\star}:\xymatrixrowsep{0.01cm}\xymatrixcolsep{2cm}\xymatrix{
\Omega^j_{\textnormal{bas}}(M_0)  \ar[r] & \Omega^{n-j}_{\textnormal{bas}} (M_0)
}
\end{align*}
defined by $\bar{\star}\coloneqq i^{[(n+1)/2]+2nj+j(j-1)}\bar{*}$, satisfies the relations analogous to Proposition \ref{Prop:Chirl}.
\end{proposition}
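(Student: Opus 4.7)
The strategy is to transfer each algebraic relation in Proposition \ref{Prop:Chirl} from the ambient chirality $\star$ to the basic chirality $\bar{\star}$ using the intertwining formulas \eqref{Eqn:TransHosgeStar1} and \eqref{Eqn:TransHosgeStar2}, which factor the characteristic $1$-form $\chi$ out of $*$ when acting on basic forms. Since the ambient dimension is $n+1$ while the basic dimension is $n$, signs of the form $(-1)^{n+1}$ in Proposition \ref{Prop:Chirl} will shift to $(-1)^n$, and so on.

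First, I verify $\bar{\star}^2 = 1$ by a direct calculation. Applying $\bar{\star}$ twice to a basic $j$-form $\beta$ and using the preceding lemma $\bar{*}^2 = (-1)^{j(n-j)}$, the total exponent of $i$ collected from the normalisation $i^{[(n+1)/2]+2nj+j(j-1)}$, applied once to $\beta$ and once to $\bar{*}\beta$ (a basic $(n-j)$-form), compensates exactly the sign $(-1)^{j(n-j)}$ coming from the square of $\bar{*}$. The normalisation was chosen so that this cancellation holds.

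For the self-adjointness $\bar{\star}^\dagger = \bar{\star}$, I use \eqref{Eqn:TransHosgeStar2} together with Remark \ref{Rmk:VolumeQuot} to express the basic $L^2$-pairing of two basic forms in terms of the ambient $L^2$-pairing, and then transfer the self-adjointness of $\star$ on $\Omega(M_0)$ to that of $\bar{\star}$ on $\Omega_{\textnormal{bas}}(M_0)$. For the Clifford-type relation, given a basic (hence horizontal) $1$-form $\alpha$, the operators $\alpha\wedge$ and $\iota_{\alpha^\sharp}$ preserve the basic subspace, and conjugating them by $\bar{\star}$ reduces via \eqref{Eqn:TransHosgeStar2} to the ambient identity of Proposition \ref{Prop:Chirl}(3), with the sign exponent decreased by one. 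The commutations with the grading $\varepsilon$ and with the left/right Clifford actions $c(\omega),\widehat{c}(\omega)$ for basic $\omega$ follow analogously, the shift in the sign exponent reflecting the drop in dimension.

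The main obstacle is pure bookkeeping: each exponent of $(-1)$ depends on the ambient dimension $n+1$, the basic dimension $n$, the degree of the form, and in several places on the factor $(-1)^{n-j}$ from \eqref{Eqn:TransHosgeStar1}. Organising these contributions and verifying that they combine to give exactly the claimed signs in the analog of each item of Proposition \ref{Prop:Chirl} requires careful case analysis, but introduces no new ideas beyond the intertwining formulas \eqref{Eqn:TransHosgeStar1} and \eqref{Eqn:TransHosgeStar2} together with $\bar{*}^2 = (-1)^{j(n-j)}$.
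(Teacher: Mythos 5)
Your proposal is correct and takes essentially the same route as the paper, which offers no separate argument but simply notes that the relations follow from Proposition \ref{Prop:Chirl}: transferring each identity through the defining relations \eqref{Eqn:TransHosgeStar1}--\eqref{Eqn:TransHosgeStar2} together with $\bar{*}^2=(-1)^{j(n-j)}$ is exactly that routine verification, and your key cancellation for $\bar{\star}^2=1$ indeed works since $(n-j)(n-j-1)+j(j-1)+2j(n-j)=n^2-n$ makes the total power of $i$ divisible by four. The only point to keep in mind is that in the analogues of items (4) and (7) the operator $(-1)^{n+1}\bar{\star}d\bar{\star}$ is the codifferential of the quotient metric, not the restriction of the ambient $d^{\dagger}$ to basic forms (cf.\ Remark \ref{Rmk:AdjointNotPresBas}).
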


\begin{lemma}\label{Lemma:star}
With respect to the decomposition of Corollary \ref{Coro:DecInvForm} we can express the operator $\star$ as 
\begin{equation*}
\star\bigg{|}_{\Omega(M_0)^{S^1}}=i^{q(n)}(-1)^n\left(\begin{array}{cc}
0 & -\varepsilon\bar{\star}\\
\varepsilon\bar{\star} & 0
\end{array}
\right),
\end{equation*}
where $q(n)\coloneqq (n-1)\textnormal{mod}(2)$.
\end{lemma}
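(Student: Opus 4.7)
The plan is to verify the matrix identity by directly computing $\star\omega_0$ and $\star(\omega_1\wedge\chi)$ separately for a basic $j$-form $\omega_0$ and basic $j'$-form $\omega_1$, then reassembling the two components into the matrix claimed. Since $\varepsilon$ in the matrix acts on whichever basic form it multiplies, the identity I want to confirm is
\begin{align*}
\star\omega_0 &= i^{q(n)}(-1)^n\,\varepsilon\,\bar{\star}\omega_0\wedge\chi, \\
\star(\omega_1\wedge\chi) &= -i^{q(n)}(-1)^n\,\varepsilon\,\bar{\star}\omega_1,
\end{align*}
where the exponents of $i$ and the parity signs must be tracked carefully.

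First I would compute $\star\omega_0$. Using \eqref{Eqn:TransHosgeStar2} I have $*\omega_0=\bar{*}\omega_0\wedge\chi$, and then applying the $i$-factor from the definition of $\star$ (on $M$, dimension $n+1$) and solving for $\bar{*}\omega_0$ from the definition of $\bar{\star}$ (on $M_0/S^1$, dimension $n$) gives
\begin{align*}
\star\omega_0 = i^{a(j,n+1)-a(j,n)}\,\bar{\star}\omega_0\wedge\chi,
\end{align*}
where $a(j,m):=[(m+1)/2]+2mj+j(j-1)$. A small computation shows $a(j,n+1)-a(j,n) = [(n+2)/2]-[(n+1)/2]+2j$, and checking the two parities of $n$ one verifies that $[(n+2)/2]-[(n+1)/2]$ equals $q(n)$. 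Hence the prefactor is $i^{q(n)}(-1)^j$, and the identity $(-1)^j=(-1)^n(-1)^{n-j}$ rewrites this as $i^{q(n)}(-1)^n\varepsilon$ acting on $\bar{\star}\omega_0$ (which has degree $n-j$).

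Next I would compute $\star(\omega_1\wedge\chi)$ in the same way, using \eqref{Eqn:TransHosgeStar1} rewritten as $*(\omega_1\wedge\chi)=(-1)^{n-j'}\bar{*}\omega_1$. The $j$-index in the $\star$-phase is now $j'+1$ while the $\bar{\star}$-phase uses $j'$, so the exponent difference becomes $a(j'+1,n+1)-a(j',n) = q(n)+4j'+2n+2$. Modulo $4$ this is $q(n)+2$, producing an overall extra factor of $-1$. Combining this with the sign $(-1)^{n-j'}=(-1)^n\varepsilon|_{\bar{\star}\omega_1}$ yields precisely $-i^{q(n)}(-1)^n\varepsilon\,\bar{\star}\omega_1$, matching the top component of the matrix action.

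The only real obstacle is the exponent bookkeeping: one must not confuse the two chirality conventions (one for the $(n+1)$-dimensional ambient $M$, the other for the $n$-dimensional quotient), and one must be careful about which form $\varepsilon$ grades when comparing $(-1)^j$ to $(-1)^{n-j}$. Once the two parity cases of the identity $[(n+2)/2]-[(n+1)/2]=q(n)$ are checked and the degree-shift of the second computation is correctly accounted for (the extra $+2$ in the exponent of $i$), both components fall into the claimed matrix form.
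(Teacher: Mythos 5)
Your overall plan is sound and your first computation is essentially the paper's own argument: the paper also writes $\star\beta=i^{[n/2]+1+2(n+1)j+j(j-1)}*\beta$, converts $*\beta=\bar{*}\beta\wedge\chi$ via \eqref{Eqn:TransHosgeStar2}, and absorbs the exponent difference using $[(n+1)/2]+q(n)=[n/2]+1$, which is exactly your identity $[(n+2)/2]-[(n+1)/2]=q(n)$. Where you genuinely diverge is the second component: the paper never invokes \eqref{Eqn:TransHosgeStar1}; instead it writes $\beta\wedge\chi=\chi\wedge\varepsilon\beta$ and uses the operator identities $\star(\chi\wedge)=(-1)^{n+1}\iota_{\chi^\sharp}\star$ and $\star\varepsilon=(-1)^{n+1}\varepsilon\star$ from Proposition \ref{Prop:Chirl} to reduce $\star(\beta\wedge\chi)$ to the already-computed $\star\beta$, which avoids a second round of exponent bookkeeping. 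Your direct computation of $\star(\omega_1\wedge\chi)$ is a legitimate alternative and reaches the correct formula.

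However, in that second computation two of your intermediate assertions are individually false whenever $n$ is odd (and $n$ is of arbitrary parity in this section), and they only compensate each other by accident. First, the exponent $a(j'+1,n+1)-a(j',n)=q(n)+4j'+2n+2$ is congruent mod $4$ to $q(n)+2(n+1)$, which equals $q(n)+2$ only for $n$ even; the correct extra factor is $i^{2(n+1)}=(-1)^{n+1}$, not $-1$. Second, since $\bar{\star}\omega_1$ has degree $n-j'$, the sign coming from \eqref{Eqn:TransHosgeStar1} is $(-1)^{n-j'}=\varepsilon\big|_{\bar{\star}\omega_1}$, not $(-1)^n\varepsilon\big|_{\bar{\star}\omega_1}$; your rewriting inserts a spurious $(-1)^n$. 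The two discrepancies (each a factor $(-1)^n$) cancel, so your final identity is correct; the corrected bookkeeping reads
\begin{align*}
\star(\omega_1\wedge\chi)
= i^{\,q(n)+4j'+2n+2}\,(-1)^{n-j'}\,\bar{\star}\omega_1
= i^{q(n)}(-1)^{n+1}\,\varepsilon\,\bar{\star}\omega_1
= -\,i^{q(n)}(-1)^{n}\,\varepsilon\,\bar{\star}\omega_1,
\end{align*}
which is the top entry of the claimed matrix. So the approach works, but as written the argument only verifies the lemma honestly for $n$ even; please repair the two sign statements so that both parities are covered.
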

\begin{proof}
Recall that $[\cdot]$ denotes the integer part function. First observe the relation
 \begin{align*}
\left[\frac{n+1}{2}\right]+q(n)=\left[\frac{n}{2}\right]+1.
\end{align*}
For $\beta$ a basic $j$-form we calculate,
\begin{align*}
\star\beta =&i^{[n/2]+1+2(n+1)j+j(j-1)}*\beta\\
=&i^{q(n)+2j+[(n+1)/2]+2nj+j(j-1)}\bar{*}\beta\wedge\chi\\
=&(i^{q(n)}\bar{\star}\varepsilon \beta)\wedge\chi\\
=&(i^{q(n)}(-1)^n\varepsilon\bar{\star} \beta)\wedge\chi.
\end{align*}
On the other hand using Proposition \ref{Prop:Chirl} we compute,
\begin{align*}
\star (\beta\wedge\chi)=&(\star\circ(\chi\wedge)\circ \varepsilon )\beta= (-1)^{n+1}(\iota_{\chi^\sharp}\circ \star\circ\varepsilon)\beta=(\iota_{\chi^\sharp}\circ\varepsilon\circ \star)\beta.
\end{align*}
Finally, using the first computation above we conclude that 
\begin{align*}
(\iota_{\chi^\sharp}\circ\varepsilon\circ \star)\beta=&\iota_{\chi^\sharp}\varepsilon((i^{q(n)}(-1)^n\varepsilon\bar{\star} \beta)\wedge\chi)
=-i^{q(n)}(-1)^n\iota_{\chi^\sharp}(\chi\wedge (\varepsilon\bar{\star} \beta))
=-i^{q(n)}(-1)^n\varepsilon\bar{\star} \beta.
\end{align*}
\end{proof}

We are now ready to describe the operator $S(D)$ of Lemma \ref{Lemma:OpS}.
\begin{theorem}\label{Thm:OpInv}
With respect to the decomposition of Corollary \ref{Coro:DecInvForm} the exterior derivative decomposes as
\begin{align*}
d\bigg{|}_{\Omega(M_0)^{S^1}}=\left(\begin{array}{cc}
d & \varepsilon\varphi_0\wedge\\
0 & d-\kappa\wedge
\end{array}
\right)
\end{align*}
and its formal adjoint as
\begin{align*}
d^\dagger\bigg{|}_{\Omega(M_0)^{S^1}}=
\left(\begin{array}{cc}
(-1)^{n+1}\bar{\star}d\bar{\star} +\iota_H & 0\\
-\varepsilon\bar{\star}(\varphi_0\wedge)\bar{\star} & (-1)^{n+1}\bar{\star}d\bar{\star}
\end{array}
\right).
\end{align*}
Hence, the restriction of the Hodge-de Rham operator $D$ to the space of $S^1$-invariant forms with respect to this decomposition is
\begin{equation*}
S(D)\coloneqq
D\bigg{|}_{\Omega(M_0)^{S^1}}=
\left(\begin{array}{cc}
d+(-1)^{n+1}\bar{\star}d\bar{\star} +\iota_H & \varepsilon(\varphi_0\wedge)\\
-\varepsilon\bar{\star}(\varphi_0\wedge)\bar{\star} & d+(-1)^{n+1}\bar{\star}d\bar{\star}-\kappa\wedge
\end{array}
\right).
\end{equation*}
\end{theorem}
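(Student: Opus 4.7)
My approach is to compute $d$ and $d^\dagger$ separately on an arbitrary $S^1$-invariant form $\omega = \omega_0 + \omega_1 \wedge \chi$ (with $\omega_0,\omega_1 \in \Omega_{\textnormal{bas}}(M_0)$ by Corollary \ref{Coro:DecInvForm}), and then assemble $S(D) = d + d^\dagger$. For the exterior derivative, the Leibniz rule gives $d\omega = d\omega_0 + d\omega_1 \wedge \chi + (-1)^{|\omega_1|}\,\omega_1 \wedge d\chi$; substituting Rummler's formula $d\chi = \varphi_0 - \kappa \wedge \chi$ from \eqref{Eqn:dchi} and commuting $\kappa$ past $\omega_1$ yields
\[
d\omega \;=\; \bigl(d\omega_0 + (-1)^{|\omega_1|}\omega_1 \wedge \varphi_0\bigr) \;+\; \bigl(d\omega_1 - \kappa\wedge\omega_1\bigr)\wedge\chi.
\]
Since $\varphi_0$ is a horizontal, $S^1$-invariant $2$-form (hence basic), both summands are of the form ``basic $+$ basic $\wedge\chi$'', so the upper-triangular matrix expression for $d$ can be read off directly after identifying $(-1)^{|\omega_1|}$ with the Gauß-Bonnet grading $\varepsilon$.

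For the codifferential I use $d^\dagger = (-1)^n \star d\,\star$ from Proposition \ref{Prop:Chirl}(4), together with the off-diagonal block form of $\star$ provided by Lemma \ref{Lemma:star}. Explicitly, the first $\star$ swaps the two components up to $\pm\varepsilon\bar{\star}$; then $d$ is applied using the formula derived in the previous paragraph; a second $\star$ then brings things back to the original coordinates, using $\bar{\star}^2 = (-1)^{j(n-j)}$ and the basic analogues of Proposition \ref{Prop:Chirl}(3)--(4) recorded in Proposition \ref{Prop:BarChirl}. The diagonal entries naturally simplify to $(-1)^{n+1}\bar{\star}d\bar{\star}$. The decisive geometric input is that the $\iota_H$ term in the top-left entry arises from Proposition \ref{Prop:BarChirl}(3) applied to $\kappa$, namely $\bar{\star}(\kappa\wedge)\bar{\star} = \pm\iota_{\kappa^\sharp} = \pm\iota_H$ (using $H = \kappa^\sharp$): the $-\kappa\wedge$ contribution coming from Rummler's formula, once sandwiched between the two $\star$'s, produces exactly this interior product. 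In the same way, sandwiching the $\varphi_0\wedge$ term between the two copies of $\star$ yields the off-diagonal lower-left entry $-\varepsilon\bar{\star}(\varphi_0\wedge)\bar{\star}$.

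The main obstacle is careful sign bookkeeping: the chirality $\star$, its basic counterpart $\bar{\star}$, the grading $\varepsilon$, the basic degrees of $\omega_0$ and $\omega_1$, and the distinction between $\dim M = n+1$ and $\dim(M_0/S^1) = n$ each contribute degree-dependent signs that must combine consistently. In particular, the factor $(-1)^{j(n-j)}$ from $\bar{\star}^2$, the overall $(-1)^n$ in $d^\dagger = (-1)^n\star d\star$, the prefactor $i^{q(n)}(-1)^n$ appearing in Lemma \ref{Lemma:star}, and the signs picked up when commuting $\chi$ past basic forms of various degrees must all cancel to leave precisely the matrix entries claimed. Once both blocks for $d$ and $d^\dagger$ are verified, termwise addition yields the stated formula for $S(D)$.
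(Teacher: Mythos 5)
Your proposal is correct and follows essentially the same route as the paper: the Leibniz rule plus Rummler's formula \eqref{Eqn:dchi} gives the upper-triangular block form of $d$, and then $d^{\dagger}=(-1)^{n}\star d\star$ is computed by conjugating that block matrix with the off-diagonal form of $\star$ from Lemma \ref{Lemma:star} and simplifying via the basic chirality relations of Proposition \ref{Prop:BarChirl}, which is exactly how the $\iota_{H}$ and $-\varepsilon\bar{\star}(\varphi_0\wedge)\bar{\star}$ entries arise in the paper as well (and the signs do close up as claimed, since $(i^{q(n)})^{2}=(-1)^{n+1}$). One small correction: the identity $\bar{*}^{2}=(-1)^{j(n-j)}$ concerns the basic Hodge star $\bar{*}$, whereas the basic chirality operator $\bar{\star}$ you actually use is an involution, $\bar{\star}^{2}=1$; this does not affect the argument.
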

\begin{proof}
Let $\omega_0+\omega_1\wedge\chi\in\Omega(M_0)^{S^1}$, using \eqref{Eqn:dchi} we compute (cf. \cite[Proposition 10.1]{BGV}),
\begin{align*}
d(\omega_0+\omega_1\wedge\chi)
=&d\omega_0+d\omega_1\wedge\chi-(\varepsilon\omega_1)\wedge \kappa\wedge\chi+(\varepsilon\omega_1)\wedge\varphi_0\\
=&(d\omega_0+\varepsilon\varphi_0\wedge\omega_1)+(d\omega_1-\kappa\wedge\omega_1)\wedge\chi, 
\end{align*}
from where we obtain the desired decomposition for the exterior derivative. For the adjoint, we first calculate using the decomposition of $d$ and Lemma \ref{Lemma:star},
\begin{align*}
d\star \bigg{|}_{\Omega(M_0)^{S^1}}=&
i^{q(n)}(-1)^n
\left(\begin{array}{cc}
d & \varepsilon\varphi_0\wedge\\
0 & d-\kappa\wedge
\end{array}
\right)
\left(\begin{array}{cc}
0 & -\varepsilon\bar{\star}\\
\varepsilon \bar{\star}& 0 
\end{array}
\right)\\
=& 
i^{q(n)}(-1)^n\left(\begin{array}{cc}
\varphi_0\wedge\bar{\star}& -d\varepsilon \bar{\star}\\
(d-\kappa\wedge)\varepsilon\bar{\star} & 0
\end{array}
\right)\\
=&
i^{q(n)}(-1)^n\left(\begin{array}{cc}
\varphi_0\wedge\bar{\star}& \varepsilon d\bar{\star}\\
-\varepsilon(d-\kappa\wedge)\bar{\star} & 0
\end{array}
\right).
\end{align*}

The result then follows from the relation $(i^{q(n)})^2=(-1)^{n+1}$ and  Proposition \ref{Prop:BarChirl}.
\end{proof}

\subsubsection{Construction of the operator $T(D)$}

Now that we have described the isomorphism $\Phi$ and the operator $S(D)$ we we can compute the self-adjoint operator $T\coloneqq\Phi\circ S\circ \Phi^{-1}$ of Proposition \ref{Prop:OpT}. As $D$ is a first order differential operator, Proposition \ref{Prop:SDiff} ensures $T$ is also generated by a differential operator of the same order.  Let us begin with the  Hodge star operator. In view of Remark \ref{Rmk:VolumeQuot}, we choose the sign of  volume form $\vol_{M_0/S^1}$ on $M_0/S^1$ so that $\pi_{S^1}^*(\vol_{M_0/S^1})\coloneqq\bar{*}1$. This means that we can express $\vol_{M_0}=\pi_{S^1}^*(\vol_{M_0/S^1})\wedge\chi$. With this choice we can identify $\bar{*}$, via the orbit map $\pi_{S^1}$, with the Hodge star operator $*_{M_0/S^1}$ of $M_0/S^1$ with respect to the quotient metric. Moreover, the following diagram commutes \cite[Corollary 4.25]{JO17}
\begin{align}\label{Diag:StarBar}
\xymatrixrowsep{2cm}\xymatrixcolsep{2cm}\xymatrix{
\Omega_{\textnormal{bas}}(M_0)  \ar[r]^-{\bar{\star}} & \Omega_{\textnormal{bas}} (M_0)\\
\Omega(M_0/S^1) \ar[u]^-{\pi_{S^1}^*} \ar[r]^-{\star_{M_0/S^1}}& \Omega(M_0/S^1).\ar[u]_-{\pi_{S^1}^*}
}
\end{align}

Now let us study the zero order terms. Since the forms $\kappa$ and $\varphi_0$ are both basic then there exist unique $\bar{\kappa}\in\Omega^1(M_0/S^1)$ and $\bar{\varphi}_0\in\Omega^2(M_0/S^1)$ such that $\kappa=\pi^*_{S^1}(\bar{\kappa})$ and $\varphi_0=\pi^*_{S^1}(\bar{\varphi}_0)$. Moreover, as pullback commutes with the wedge product, then the following diagram commute (similarly for $\varphi_0\wedge$)
\begin{align}\label{Diag:KappaAction}
\xymatrixrowsep{2cm}\xymatrixcolsep{2cm}\xymatrix{
\Omega_{\textnormal{bas}}(M_0)  \ar[r]^-{\kappa\wedge} & \Omega_{\textnormal{bas}} (M_0)\\
\Omega(M_0/S^1) \ar[u]^-{\pi_{S^1}^*} \ar[r]^-{\bar{\kappa}\wedge}& \Omega(M_0/S^1), \ar[u]_-{\pi_{S^1}^*}
}
\end{align}

Next we consider the term $-\bar{\star}({\varphi}_0\wedge)\bar{\star}$ as an operator on the quotient space. In view of  \eqref{Diag:StarBar} and to lighten the notation we are going to identify $\bar{\star}\equiv\star_{M_0/S^1}$.
\begin{proposition}[{\cite[Proposition 4.26]{JO17}}]\label{Prop:CliffMultVarphi}
With respect to the quotient metric on $M_0/S^1$ we have $(\bar{\varphi}_0\wedge)^\dagger=-\bar{\star}(\bar{\varphi_0}\wedge)\bar{\star}$.
In particular, the operator $\widehat{c}(\bar{\varphi}_0)\coloneqq(\bar{\varphi}_0\wedge)+(\bar{\varphi}_0\wedge)^\dagger$ satisfies 
\begin{enumerate}
\item $\widehat{c}(\bar{\varphi}_0)\bar{\star}+\bar{\star}\widehat{c}(\bar{\varphi}_0)=0$.
\item $\widehat{c}(\bar{\varphi}_0)\varepsilon-\varepsilon\widehat{c}(\bar{\varphi}_0)=0$. 
\end{enumerate}
\end{proposition}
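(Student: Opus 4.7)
The plan is to establish the three assertions in sequence, with the formula $(\bar{\varphi}_0\wedge)^\dagger=-\bar{\star}(\bar{\varphi}_0\wedge)\bar{\star}$ as the main analytic input and the two bracket relations as formal consequences.

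First I would prove the adjoint formula by reducing to the known $1$-form case of Proposition \ref{Prop:Chirl}(3) applied on $M_0/S^1$ rather than on $M$. Working in a local orthonormal coframe, I can write $\bar{\varphi}_0=\sum_{i<j}\bar{\varphi}_0^{ij}\,e^i\wedge e^j$, so it suffices (by linearity) to verify the identity for a single wedge $\omega=e^i\wedge e^j$. For a $1$-form $\alpha$ on $M_0/S^1$ (of dimension $n$), Proposition \ref{Prop:Chirl}(3) adapted to the quotient reads $\bar{\star}(\alpha\wedge)\bar{\star}=(-1)^{n}\iota_{\alpha^\sharp}=(-1)^{n}(\alpha\wedge)^\dagger$, since $(\alpha\wedge)^\dagger=\iota_{\alpha^\sharp}$. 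Using $\bar{\star}^2=1$ to insert a factor between the two wedges gives
\begin{align*}
\bar{\star}\bigl((e^i\wedge)(e^j\wedge)\bigr)\bar{\star}
=\bigl[\bar{\star}(e^i\wedge)\bar{\star}\bigr]\bigl[\bar{\star}(e^j\wedge)\bar{\star}\bigr]
=(-1)^{2n}\iota_{e_i}\iota_{e_j}=-\iota_{e_j}\iota_{e_i},
\end{align*}
while $((e^i\wedge)(e^j\wedge))^\dagger=\iota_{e_j}\iota_{e_i}$. Comparing, I read off $(\omega\wedge)^\dagger=-\bar{\star}(\omega\wedge)\bar{\star}$, and summing over $i<j$ gives the statement for $\bar{\varphi}_0$. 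The sign $-$ (rather than $+$) is the key point: it reflects the fact that $\bar{\varphi}_0$ has even degree $2$, so conjugation by $\bar{\star}$ picks up two factors of $(-1)^n$ that cancel, but the reordering of two interior products supplies a minus.

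Having the adjoint formula in hand, assertion (1) is a two-line manipulation: write $\widehat{c}(\bar{\varphi}_0)=(\bar{\varphi}_0\wedge)-\bar{\star}(\bar{\varphi}_0\wedge)\bar{\star}$ and expand
\begin{align*}
\widehat{c}(\bar{\varphi}_0)\bar{\star}+\bar{\star}\widehat{c}(\bar{\varphi}_0)
=(\bar{\varphi}_0\wedge)\bar{\star}-\bar{\star}(\bar{\varphi}_0\wedge)+\bar{\star}(\bar{\varphi}_0\wedge)-(\bar{\varphi}_0\wedge)\bar{\star}=0,
\end{align*}
using only $\bar{\star}^2=1$.

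Finally, for (2), I would observe that $\bar{\varphi}_0$ is a $2$-form, so the operator $\bar{\varphi}_0\wedge$ shifts the form degree by $+2$ and $(\bar{\varphi}_0\wedge)^\dagger$ by $-2$; both shifts preserve the parity of the degree, hence both operators commute with the Gau{\ss}-Bonnet grading $\varepsilon=(-1)^j$. Their sum $\widehat{c}(\bar{\varphi}_0)$ therefore commutes with $\varepsilon$, which is exactly (2). The only point requiring care throughout is tracking the dimension (we use $\dim M_0/S^1=n$, not $n+1$) so that the signs in Proposition \ref{Prop:Chirl} are applied correctly on the quotient; no analytic difficulty arises, as everything is a pointwise endomorphism calculation.
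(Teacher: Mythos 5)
Your argument is correct: the frame-by-frame reduction to the degree-one identity $\bar{\star}(\alpha\wedge)\bar{\star}=(-1)^{n}\iota_{\alpha^\sharp}$ from Proposition \ref{Prop:BarChirl}, the sign bookkeeping ($(-1)^{2n}=1$ cancelling while the reordering of the two interior products contributes the minus), and the purely formal deductions of (1) from $\bar{\star}^2=1$ and of (2) from the even degree of $\bar{\varphi}_0$ are all sound. The paper states this proposition without proof, citing \cite[Proposition 4.26]{JO17}, and your pointwise endomorphism computation is essentially the standard verification that reference carries out, so no comparison issues arise.
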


Finally we treat the first order terms of $S(D)$ in Theorem \ref{Thm:OpInv}.  For the exterior derivative, as it also commutes with pullbacks, we have an analogous commutative diagram, 
\begin{align}\label{Diag:dBasic}
\xymatrixrowsep{2cm}\xymatrixcolsep{2cm}\xymatrix{
\Omega_{\textnormal{bas}}(M_0)  \ar[r]^-{d} & \Omega_{\textnormal{bas}} (M_0)\\
\Omega(M_0/S^1) \ar[u]^-{\pi_{S^1}^*} \ar[r]^-{d_{M_0/S^1}}& \Omega(M_0/S^1), \ar[u]_-{\pi_{S^1}^*}
}
\end{align}
where $d_{M_0/S^1}$ is the exterior derivative of $M_0/S^1$. Hence, it remains to study the operator 
\begin{align*}
(-1)^{n+1}\bar{\star}d\bar{\star}:
\xymatrixrowsep{2cm}\xymatrixcolsep{2cm}\xymatrix{
\Omega^j_{\textnormal{bas}}(M_0)  \ar[r] & \Omega^{j-1}_{\textnormal{bas}} (M_0).
}
\end{align*}
\begin{remark}\label{Rmk:AdjointNotPresBas}
Let $d^\dagger_{M_0/S^1}=(-1)^{n+1}\star_{M_0/S^1}d_{M_0/S^1}\star_{M_0/S^1}$ be the $L^2$-formal adjoint of $d_{M_0/S^1}$ with respect to the quotient metric (Proposition \ref{Prop:Chirl}(4)). One might think that there is an analogous commutative diagram as \eqref{Diag:dBasic} where $d^{\dagger}$ and $d^\dagger_{M_0/S^1}$ are placed instead. Note however that $d^\dagger$ does not preserve the space of basic forms, as it can be explicitly seen from Theorem \ref{Thm:OpInv}, and in general
$d^\dagger\circ \pi_{S^1}^* \neq\pi_{S^1}^*\circ d^\dagger_{M_0/S^1}.$
\end{remark}

Observe that \eqref{Diag:StarBar} and  \eqref{Diag:dBasic} can be combined to obtain the following commutative diagram 
\begin{align}\label{Diag:ddaggerBasic}
\xymatrixrowsep{2cm}\xymatrixcolsep{2cm}\xymatrix{
\Omega_{\textnormal{bas}}(M_0)  \ar[r]^-{(-1)^{n+1}\bar{\star}d\bar{\star}} & \Omega_{\textnormal{bas}} (M_0)\\
\Omega(M_0/S^1) \ar[u]^-{\pi_{S^1}^*} \ar[r]^-{d^{\dagger}_{M_0/S^1}}& \Omega(M_0/S^1). \ar[u]_-{\pi_{S^1}^*}
}
\end{align}
Altogether, from the discussion of Section \ref{Section:ConstrF}, Theorem \ref{Thm:OpInv}, \eqref{Diag:StarBar}, \eqref{Diag:KappaAction}, \eqref{Diag:dBasic} and \eqref{Diag:ddaggerBasic} we can describe explicitly  the operator $T(D)$ of Proposition \ref{Diag:dBasic}.

\begin{theorem}\label{Theorem:OpT}
The operator $T(D)$ of Proposition \ref{Prop:OpT} for a semi-free $S^1$-action can be written as
\begin{equation*}
T(D)=\left(
\begin{array}{cc}
D_{M_0/S^1}+\iota_{\bar{\kappa}^\sharp} & \varepsilon(\bar{\varphi_0}\wedge) \\
 \varepsilon (\bar{\varphi}_0\wedge)^\dagger & D_{M_0/S^1}-\bar{\kappa}\wedge
\end{array}
\right),
\end{equation*}
where $D_{M_0/S^1}\coloneqq d_{M_0/S^1}+d^\dagger_{M_0/S^1}$is the Hodge-de Rham operator on $M_0/S^1$. The operator $T(D)$, when defined on the core $\Omega_c(M/S^1)$, is essentially self-adjoint. 
\end{theorem}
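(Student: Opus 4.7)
The assertion splits into two independent tasks: pinning down the explicit matrix form of $T(D)$, and establishing essential self-adjointness on the smooth compactly supported core. My plan is to read off the matrix entry-by-entry by pushing $S(D)$ through the isomorphism $\Phi$ using the commutative diagrams already established, and then to obtain essential self-adjointness by transferring the essential self-adjointness of $D$ on $M$ via $\Phi$, combined with a cut-off argument near the fixed point set.

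\textbf{Identifying the matrix.}
Starting from the block form of $S(D)$ in Theorem~\ref{Thm:OpInv}, I would treat the four entries separately. The diagonal first-order contribution $d+(-1)^{n+1}\bar{\star}d\bar{\star}$ becomes $d_{M_0/S^1}+d^\dagger_{M_0/S^1}=D_{M_0/S^1}$ by chaining diagrams \eqref{Diag:dBasic} and \eqref{Diag:ddaggerBasic}. The zero-order diagonal terms $\iota_H$ and $-\kappa\wedge$ descend, via \eqref{Diag:KappaAction} and its adjoint version (recalling $H=\kappa^\sharp$), to $\iota_{\bar{\kappa}^\sharp}$ and $-\bar{\kappa}\wedge$ on $M_0/S^1$. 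The upper-right block $\varepsilon(\varphi_0\wedge)$ descends to $\varepsilon(\bar{\varphi}_0\wedge)$ through the analogue of \eqref{Diag:KappaAction} for $\varphi_0$. For the lower-left block, combining \eqref{Diag:StarBar} with the wedge diagram for $\varphi_0$ shows that $-\varepsilon\bar{\star}(\varphi_0\wedge)\bar{\star}$ becomes $-\varepsilon\,\star_{M_0/S^1}(\bar{\varphi}_0\wedge)\star_{M_0/S^1}$, which by Proposition~\ref{Prop:CliffMultVarphi} equals $\varepsilon(\bar{\varphi}_0\wedge)^\dagger$. Assembling these four blocks reproduces exactly the stated formula for $T(D)$.

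\textbf{Essential self-adjointness via $\Phi$.}
Since $M$ is closed, $D=d+d^\dagger$ is essentially self-adjoint on $\Omega(M)$. The Peter--Weyl decomposition of $L^2(E)$ into $S^1$-isotypic components is preserved by $D$, so the restriction of $D$ to the invariant smooth core $\Omega(M)^{S^1}$ is essentially self-adjoint in $L^2(E)^{S^1}$. Since $\Phi$ is a unitary isomorphism (Theorem~\ref{Thm:Fund}) intertwining $S(D)$ with $T(D)$, the closure of $T(D)$ on $\Phi(\Omega(M)^{S^1})$ is self-adjoint. It remains to show that the smaller core $\Omega_c(M_0/S^1)$ is again a core for the same extension; by $\Phi^{-1}$ this reduces to proving that $\Omega_c(M_0)^{S^1}$ is graph-dense in $\Omega(M)^{S^1}$ with respect to $D$.

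\textbf{Cut-off near $M^{S^1}$ and main obstacle.}
The final density is obtained by a standard cut-off argument exploiting that each component of $M^{S^1}$ has codimension $2N+2\geq 2$ in $M$. I would choose smooth $S^1$-invariant cut-offs $\chi_\epsilon$ supported in $M_0$ with $\chi_\epsilon\to 1$ pointwise, $|\chi_\epsilon|\leq 1$ and $\|d\chi_\epsilon\|_{L^2}\to 0$; such a family exists because the distance to $M^{S^1}$ is $S^1$-invariant and codimension at least two allows logarithmic radial cut-offs with arbitrarily small $L^2$ gradient. For any $\omega\in\Omega(M)^{S^1}$, the truncations $\chi_\epsilon\omega\in\Omega_c(M_0)^{S^1}$ satisfy $\chi_\epsilon\omega\to\omega$ and $D(\chi_\epsilon\omega)=\chi_\epsilon D\omega+c(d\chi_\epsilon)\omega\to D\omega$ in $L^2$, proving graph-density. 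Transferring through $\Phi$ identifies $\Omega_c(M_0/S^1)$ as a core for $T(D)$. The main obstacle is precisely this last cut-off step: it is the only place where the specific codimension geometry enters, and one must construct cut-offs that are simultaneously $S^1$-invariant and have vanishing $L^2$ gradient, although both features follow from the radial symmetry of the distance function to $M^{S^1}$.
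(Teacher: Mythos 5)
Your treatment of the matrix formula follows the paper's own route essentially verbatim: decompose $S(D)$ as in Theorem \ref{Thm:OpInv} and push each block through the diagrams \eqref{Diag:StarBar}, \eqref{Diag:KappaAction}, \eqref{Diag:dBasic}, \eqref{Diag:ddaggerBasic}, with Proposition \ref{Prop:CliffMultVarphi} turning the lower-left entry into $\varepsilon(\bar{\varphi}_0\wedge)^\dagger$; that half is the same argument. Where you genuinely diverge is the essential self-adjointness. The paper takes it by construction from the Br\"uning--Heintze machinery: $D$ is essentially self-adjoint because $M$ is closed, Lemma \ref{Lemma:OpS} and Proposition \ref{Prop:OpT} make the restriction to invariants and its push-down self-adjoint, and the core property on compactly supported sections over the principal stratum is imported from Proposition \ref{Prop:SDiff} and the surrounding results of \cite{BH78}, which settle this once and for all for arbitrary compact group actions (including actions whose non-principal set has codimension one, where naive cut-offs fail). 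You instead reprove the special case needed here by hand: averaging/Peter--Weyl to see that $\Omega(M)^{S^1}$ is a core for $S$, unitary transfer through $\Phi$ (which indeed carries $\Omega_c(M_0)^{S^1}$ onto $\Omega_c(M_0/S^1)\oplus\Omega_c(M_0/S^1)$ and intertwines the differential expressions), and then graph-density of $\Omega_c(M_0)^{S^1}$ in $\Omega(M)^{S^1}$ via $S^1$-invariant logarithmic cut-offs in the distance to $M^{S^1}$, using the Leibniz identity $D(\chi_\epsilon\omega)=\chi_\epsilon D\omega+c(d\chi_\epsilon)\omega$ and the bound $\norm{c(d\chi_\epsilon)\omega}_{L^2}\leq\norm{\omega}_{L^\infty}\norm{d\chi_\epsilon}_{L^2}$, legitimate because $\omega$ is smooth on the closed manifold and because $M^{S^1}$ has codimension $2N+2\geq 2$. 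This argument is correct and is in effect a direct, self-contained proof of the core statement the paper outsources to \cite{BH78}; it has the merit of showing exactly where the codimension-two geometry of the fixed-point set enters, at the price of generality (a codimension-one exceptional set would defeat your cut-offs, while the Br\"uning--Heintze argument survives) and of redoing work the paper deliberately cites.
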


\subsection{Dirac-Schr\"odinger operators}

The operator $T\coloneqq T(D)$ of Theorem \ref{Theorem:OpT} is self-adjoint in $L^2(F,h)$ but not in $L^{2}(F)$, the $L^2$-inner product without the weight $h$. For example, the adjoint in $L^2(F)$ of $D_{M_0/S^1}+\iota_{\bar{H}}$ is $(D_{M_0/S^1}+\iota_{\bar{H}})^\dagger=D_{M_0/S^1}+\bar{\kappa}\wedge$. To obtain a self-adjoint operator in $L^2(F)$ we perform the following unitary transformation:
\begin{equation}\label{Eqn:U}
\omega=\left(
\begin{array}{c}
\omega_0\\
\omega_1
\end{array}
\right)\longmapsto 
U(\omega)\coloneqq
h^{-1/2}
\left(
\begin{array}{c}
\omega_0\\
\omega_1
\end{array}
\right),
\end{equation}
for $\omega_0,\omega_1\in\Omega_c(M_0/S^1)$. Note that $\norm{U(\omega)}_{L^2(F,h)}=\norm{\omega}_{L^2(F)}$. Using this transformation we want to compute an explicit formula for the operator $\widehat{T}\coloneqq U^{-1}TU$, defined on  $\text{Dom}(\widehat{T} )\coloneqq U^{-1}(\text{Dom}(T))$. 
\begin{lemma}\label{Lemmadh}
The volume of the orbit function $h:M_0/S^1\longrightarrow \mathbb{R}$ satisfies 
\begin{align*}
d(h^{\pm1/2})=\mp\frac{1}{2}h^{\pm1/2}\bar{\kappa}.
\end{align*}
\end{lemma}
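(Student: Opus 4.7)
The plan is a direct computation using the chain rule together with Lemma \ref{Lemma:dh}, which already identifies the exterior derivative of the volume-of-orbit function as $dh = -h\bar{\kappa}$ (reading $\kappa$ as its descent $\bar{\kappa}$ on $M_0/S^1$, as justified by Corollary \ref{Coro:kappa} together with the argument in the proof of Lemma \ref{Lemma:dh}, where integration along the fiber moves $\kappa = \pi^*_{S^1}\bar{\kappa}$ outside the integral).

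First I would write
\begin{align*}
d(h^{\pm 1/2}) = \pm\tfrac{1}{2}\, h^{\pm 1/2 - 1}\, dh,
\end{align*}
which requires only that $h$ be a positive smooth function on $M_0/S^1$. Positivity is automatic since $h(y) = \vol(\pi^{-1}_{S^1}(y))>0$ on the principal orbit set $M_0/S^1$, so $h^{\pm 1/2}$ is smooth there and the ordinary chain rule applies.

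Next I would substitute $dh = -h\bar{\kappa}$ from Lemma \ref{Lemma:dh}:
\begin{align*}
d(h^{\pm 1/2}) = \pm\tfrac{1}{2}\, h^{\pm 1/2 - 1}\cdot(-h\bar{\kappa}) = \mp\tfrac{1}{2}\, h^{\pm 1/2}\,\bar{\kappa},
\end{align*}
which is precisely the claim. There is no real obstacle; the only conceptual point worth flagging is that $\bar{\kappa}$ rather than $\kappa$ appears, but this is harmless because $h$ is a function on the quotient $M_0/S^1$ and $\kappa$ is basic, so its descent $\bar{\kappa}$ is the natural object on the base. This identity is exactly what will be needed to conjugate zero-order multiplication terms correctly when computing $\widehat{T} = U^{-1}T U$ in the sequel, which motivates stating it as a separate lemma despite its brevity.
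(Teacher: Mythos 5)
Your proposal is correct and coincides with the paper's argument: the paper simply states that the identity follows directly from Lemma \ref{Lemma:dh} ($dh=-h\kappa$), which is exactly the chain-rule computation you carry out, with the harmless descent from $\kappa$ to $\bar{\kappa}$ noted as well.
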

\begin{proof}
It follows directly from Lemma \ref{Lemma:dh}.
\end{proof}
The transformation formula follows directly from this lemma and Theorem \ref{Theorem:OpT}.
\begin{theorem}[{\cite[Theorem 4.31]{JO17}}]\label{Thm:THat}
The operator $\widehat{T}$ is given by 
\begin{equation*}
\widehat{T}=
\left(
\begin{array}{cc}
D_{M_0/S^1}+\frac{1}{2}\widehat{c}(\bar{\kappa})&\varepsilon \bar{\varphi}_0\wedge\\
\varepsilon(\bar{\varphi}_0\wedge)^\dagger &  D_{M_0/S^1}-\frac{1}{2}\widehat{c}(\bar{\kappa})
\end{array}
\right),
\end{equation*}
where $\widehat{c}(\bar{\kappa})\coloneqq\bar{\kappa}\wedge+\iota_{\bar{\kappa}^\sharp}$ is the right Clifford multiplication by the mean curvature form. 
\end{theorem}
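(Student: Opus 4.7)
The plan is to conjugate $T$ in Theorem \ref{Theorem:OpT} entry by entry by $U$, exploiting the fact that $U$ acts as multiplication by the scalar function $h^{-1/2}$ on each of the two direct summands of $F$.

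First, I would note that the off-diagonal entries of $T$, namely $\varepsilon(\bar{\varphi}_0\wedge)$ and $\varepsilon(\bar{\varphi}_0\wedge)^\dagger$, are zero-order endomorphisms of $F$. Since multiplication by the scalar function $h^{\pm 1/2}$ commutes with any zero-order operator, these entries are left unchanged by the conjugation $U^{-1}(\cdot)U$. This disposes of the off-diagonal part immediately.

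Next, the bulk of the work is the diagonal part, where I would compute the commutator of $D_{M_0/S^1}$ with multiplication by a smooth function $f$. Using the standard Leibniz rules
\begin{align*}
[d_{M_0/S^1}, f] = df \wedge, \qquad [d^\dagger_{M_0/S^1}, f] = -\iota_{(df)^\sharp},
\end{align*}
one gets $[D_{M_0/S^1}, f] = c(df)$, where $c(df) = df\wedge - \iota_{(df)^\sharp}$ is the left Clifford action. Specializing to $f = h^{-1/2}$ and applying Lemma \ref{Lemmadh}, I obtain
\begin{align*}
U^{-1} D_{M_0/S^1} U = D_{M_0/S^1} + h^{1/2}\, c\bigl(d(h^{-1/2})\bigr) = D_{M_0/S^1} + \tfrac{1}{2}\,c(\bar{\kappa}).
\end{align*}

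Finally, I would combine this with the zero-order pieces $\iota_{\bar{\kappa}^\sharp}$ (upper-left) and $-\bar{\kappa}\wedge$ (lower-right) of $T$, which are themselves unchanged under conjugation. For the upper-left entry,
\begin{align*}
U^{-1}(D_{M_0/S^1}+\iota_{\bar{\kappa}^\sharp})U = D_{M_0/S^1} + \tfrac{1}{2}(\bar{\kappa}\wedge -\iota_{\bar{\kappa}^\sharp}) + \iota_{\bar{\kappa}^\sharp} = D_{M_0/S^1} + \tfrac{1}{2}\widehat{c}(\bar{\kappa}),
\end{align*}
and analogously for the lower-right entry the $\pm$ signs conspire so that the $c(\bar{\kappa})$ term combines with $-\bar{\kappa}\wedge$ to yield $-\tfrac{1}{2}\widehat{c}(\bar{\kappa})$. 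Assembling the four entries gives the stated formula for $\widehat{T}$. There is no real obstacle here — once one observes that $U$ is scalar multiplication, everything reduces to a clean Clifford-algebra computation with the two Leibniz identities above and Lemma \ref{Lemmadh}; the only point requiring mild care is to keep track of the signs so that $c(\bar{\kappa}) \pm \iota_{\bar{\kappa}^\sharp}$ or $c(\bar{\kappa})\mp \bar{\kappa}\wedge$ rearranges into the right Clifford action $\widehat{c}(\bar{\kappa})$ with the expected coefficient.
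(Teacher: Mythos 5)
Your proposal is correct and follows essentially the same route as the paper, which simply notes that the formula follows directly from Lemma \ref{Lemmadh} and Theorem \ref{Theorem:OpT}: since $U$ is scalar multiplication by $h^{-1/2}$, the zero-order entries are untouched and the diagonal terms pick up $h^{1/2}c\bigl(d(h^{-1/2})\bigr)=\tfrac{1}{2}c(\bar{\kappa})$, which recombines with $\iota_{\bar{\kappa}^\sharp}$ and $-\bar{\kappa}\wedge$ into $\pm\tfrac{1}{2}\widehat{c}(\bar{\kappa})$. Your sign bookkeeping (including $[d^\dagger,f]=-\iota_{(df)^\sharp}$ and the use of $d(h^{-1/2})=\tfrac{1}{2}h^{-1/2}\bar{\kappa}$) checks out.
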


\subsubsection{An involution on $F$}\label{Sec:AnInvolution}

As we are interested in Fredholm indices, we would like to find a self-adjoint involution which anti-commutes with $\widehat{T}$ in order to split this operator. Since the dimension of $M_0/S^1$ is $n$ then $\bar{\star}D_{M_0/S^1}+(-1)^n D_{M_0/S^1}\bar{\star}=0$, thus a  first natural candidate is 
\begin{align}\label{Def:BigStar}
{\bigstar}\coloneqq
\left(
\begin{array}{cc}
0 & \bar{\star}\\
\bar{\star} &0
\end{array}
\right).
\end{align}
From Proposition \ref{Prop:Chirl} and Proposition \ref{Prop:CliffMultVarphi} we verify $\bigstar\widehat{T} =(-1)^{n+1}\widehat{T}\bigstar$. This implies that if $n$ is even then we can decompose 
\begin{align*}
\widehat{T}=
\left(
\begin{array}{cc}
0 & \widehat{T}^-\\
\widehat{T}^+ & 0
\end{array}
\right),
\end{align*}
with respect to the involution $\bigstar$. Nevertheless, by studying the trivial case of spinning  a closed manifold it turns out that the index of $\widehat{T}$  is always zero (\cite[Example 4.33]{JO17}).

\begin{remark}[Induced Dirac-Schr\"odinger Geometric Operators]
In \cite[Section 4.3]{JO17}, two other natural geometric operators on $M$ were pushed down to $M_0/S^1$ following the Br\"uning-Heintze approach: the {\em positive signature operator} and the {\em odd signature operator}. From the construction itself we know the resulting operators are elliptic and self-adjoint. Nevertheless, the induced potential (zero order term) in both cases does not commute with the involution $\bar{\star}$.
\end{remark}
\subsubsection{The Dirac-Schr\"odinger signature operator}\label{Sect:ConstDiracSchrOp}

In view of Proposition \ref{Coro:DecInvForm} and \eqref{Eqn:U} we introduce, for $j=0,1,\cdots 4k$, the unitary transformation
\begin{align*}
\psi_j:
\xymatrixcolsep{2cm}\xymatrixrowsep{0.01cm}\xymatrix{
\Omega_c^{j-1}(M_0/S^1)\oplus \Omega_c^j(M_0/S^1) \ar[r]& \Omega_c^j(M_0)^{S^1}\\
(\omega_{j-1},\omega_{j})\ar@{|->}[r] & h^{-1/2}\left(\pi_{S^1}^*\omega_j+(\pi_{S^1}^*\omega_{j-1})\wedge \chi\right).
}
\end{align*}
The following expressions follow immediately from Lemma \ref{Lemmadh} and Theorem \ref{Thm:THat}.
\begin{lemma}\label{Lemma:Psir}
For the maps $\psi_j$ we have the relations
\begin{align*}
d\psi_j(\omega_{j-1},\omega_j)
=&\psi_{j+1}\left(d_{M_0/S^1}\omega_{j-1}-\frac{1}{2}\bar{\kappa}\wedge\omega_{j-1},d_{M_0/S^1}\omega_{j}+\frac{1}{2}\bar{\kappa}\wedge\omega_j+\varepsilon(\bar{\varphi_0}\wedge)\omega_{j-1}\right),\\
d^\dagger\psi_j(\omega_{j-1},\omega_j)
=&\psi_{j-1}\left(d^{\dagger}_{M_0/S^1}\omega_{j-1}-\frac{1}{2}\iota_{\bar{\kappa}^\sharp} \omega_{j-1}+\varepsilon(\bar{\varphi}_0\wedge)^\dagger \omega_j ,d^{\dagger}_{M_0/S^1}\omega_{j}+\frac{1}{2}\iota_{\bar{\kappa}^\sharp}\omega_j\right).
\end{align*}
\end{lemma}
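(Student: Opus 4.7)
The proof is a direct calculation. The key observation is that $h^{-1/2}$ is a pulled-back scalar function from the quotient, so both $h^{-1/2}\pi_{S^1}^{*}\omega_j$ and $h^{-1/2}\pi_{S^1}^{*}\omega_{j-1}$ are basic, and $\psi_j(\omega_{j-1},\omega_j)$ is already presented in the form of Corollary \ref{Coro:DecInvForm} with zeroth component $h^{-1/2}\pi_{S^1}^{*}\omega_j$ and $\chi$-component $h^{-1/2}\pi_{S^1}^{*}\omega_{j-1}$. The plan is to apply the matrix expressions for $d$ and $d^\dagger$ on $\Omega(M_0)^{S^1}$ from Theorem \ref{Thm:OpInv} to this pair, commute the scalar factor $h^{-1/2}$ outside by the Leibniz rule and the identity $d(h^{-1/2})=\tfrac{1}{2}h^{-1/2}\kappa$ of Lemma \ref{Lemmadh}, and finally descend every basic operator to $M_0/S^1$ via the commutative diagrams \eqref{Diag:StarBar}, \eqref{Diag:KappaAction}, \eqref{Diag:dBasic} and \eqref{Diag:ddaggerBasic}.

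For the $d$-identity, the first row of Theorem \ref{Thm:OpInv} yields a non-$\chi$ component $d(h^{-1/2}\pi_{S^1}^{*}\omega_j)+\varepsilon\varphi_0\wedge(h^{-1/2}\pi_{S^1}^{*}\omega_{j-1})$ and a $\chi$-component $d(h^{-1/2}\pi_{S^1}^{*}\omega_{j-1})-\kappa\wedge(h^{-1/2}\pi_{S^1}^{*}\omega_{j-1})$. Expanding each exterior derivative via Leibniz and Lemma \ref{Lemmadh}, the $\chi$-component collapses to
\begin{align*}
\tfrac{1}{2}h^{-1/2}\kappa\wedge\pi_{S^1}^{*}\omega_{j-1}+h^{-1/2}\pi_{S^1}^{*}(d_{M_0/S^1}\omega_{j-1})-h^{-1/2}\kappa\wedge\pi_{S^1}^{*}\omega_{j-1},
\end{align*}
which after extracting $h^{-1/2}$ and invoking \eqref{Diag:dBasic}, \eqref{Diag:KappaAction} equals $h^{-1/2}\pi_{S^1}^{*}(d_{M_0/S^1}\omega_{j-1}-\tfrac{1}{2}\bar{\kappa}\wedge\omega_{j-1})$; the non-$\chi$ component is treated identically and picks up the advertised $+\tfrac{1}{2}\bar{\kappa}\wedge\omega_j+\varepsilon\bar{\varphi}_0\wedge\omega_{j-1}$ correction. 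Reassembling into $\psi_{j+1}$ of the announced argument is then immediate.

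For the $d^\dagger$-identity the same strategy applies via the second row of Theorem \ref{Thm:OpInv}. The subtlety flagged in Remark \ref{Rmk:AdjointNotPresBas}, that $d^\dagger$ itself fails to preserve basic forms, is circumvented because each composite operator in that matrix, namely $(-1)^{n+1}\bar{\star}d\bar{\star}$, $\iota_H$ and $-\bar{\star}(\varphi_0\wedge)\bar{\star}$, does preserve basic forms and descends respectively to $d^\dagger_{M_0/S^1}$ (by diagram \eqref{Diag:ddaggerBasic}), to $\iota_{\bar{\kappa}^\sharp}$ (since $H$ is $\pi_{S^1}$-related to $\bar{\kappa}^\sharp$, as $\kappa=\pi_{S^1}^{*}\bar{\kappa}$), and to $(\bar{\varphi}_0\wedge)^\dagger$ (by Proposition \ref{Prop:CliffMultVarphi}). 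The Leibniz contribution generated when pulling $h^{-1/2}$ past the inner $d$ is $\tfrac{1}{2}(-1)^{n+1}h^{-1/2}\bar{\star}(\kappa\wedge\bar{\star}\pi_{S^1}^{*}\omega_k)$, which by the basic analogue of Proposition \ref{Prop:Chirl}(3) provided by Proposition \ref{Prop:BarChirl} simplifies to $-\tfrac{1}{2}h^{-1/2}\pi_{S^1}^{*}(\iota_{\bar{\kappa}^\sharp}\omega_k)$; combined with the intrinsic $+\iota_{\bar{\kappa}^\sharp}\omega_j$ contribution from $\iota_H$ on the non-$\chi$ component, this leaves exactly $+\tfrac{1}{2}\iota_{\bar{\kappa}^\sharp}\omega_j$ there and $-\tfrac{1}{2}\iota_{\bar{\kappa}^\sharp}\omega_{j-1}+\varepsilon(\bar{\varphi}_0\wedge)^\dagger\omega_j$ on the $\chi$-component, precisely the announced formula.

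There is no genuine obstacle in this argument beyond the careful bookkeeping of signs arising from the basic chirality $\bar{\star}$ and the grading $\varepsilon$. As a useful consistency check, the sum $(d+d^\dagger)\psi_j$ of the two formulas must agree with $\psi$ applied to the induced operator $\widehat{T}$ of Theorem \ref{Thm:THat}, and this provides an independent verification of every sign.
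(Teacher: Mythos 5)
Your proof is correct and takes essentially the same route as the paper, whose own justification is simply that the formulas ``follow immediately'' from Lemma \ref{Lemmadh} together with the invariant-form decomposition of $d$ and $d^\dagger$ (Theorem \ref{Thm:OpInv}, as packaged in Theorem \ref{Thm:THat}) and the descent diagrams; your computation just writes out that calculation explicitly. The sign bookkeeping, in particular the simplification of $\bar{\star}(\kappa\wedge)\bar{\star}$ via the basic analogue of Proposition \ref{Prop:Chirl}(3) and the cancellation against $\iota_H$, matches the paper's conventions, so nothing needs to be changed.
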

Now consider the transformations introduced in \cite[Section 5]{BS88},
\begin{align*}
\psi_\text{ev}:&
\xymatrixcolsep{3pc}\xymatrixrowsep{0.5pc}\xymatrix{
\Omega_c(M_0/S^1)\ar[r]& \Omega_c^\text{ev}(M_0)^{S^1}\\
(\omega_0,\cdots,\omega_{4k} )\ar@{|->}[r] & (\psi_0(0,\omega_0),\psi_2(\omega_1,\omega_2),\cdots,\psi_{4k}(\omega_{4k-1},\omega_{4k})),
}\\\\\
\psi_\text{odd}:&
\xymatrixcolsep{3pc}\xymatrixrowsep{0.5pc}\xymatrix{
\Omega_c(M_0/S^1)\ar[r]& \Omega_c^\text{odd}(M_0)^{S^1}\\
(\omega_0,\cdots,\omega_{4k} )\ar@{|->}[r] & (\psi_1(\omega_0,\omega_1),\psi_3(\omega_2,\omega_3),\cdots,\psi_{4k-1}(\omega_{4k-2},\omega_{4k-1})).
}
\end{align*}
Motivated by Lemma \ref{Lemma:Psir} we define the operator
\begin{equation}\label{Eqn:ConstrD'}
\mathscr{D}'\coloneqq \psi_{\text{odd}}^{-1}d\psi_{\text{ev}}+\psi_{\text{ev}}^{-1}d^\dagger\psi_{\text{odd}}:\Omega_c(M_0/S^1)\longrightarrow\Omega_c(M_0/S^1).
\end{equation}
Clearly $\mathscr{D}'$ is symmetric since 
$(\psi_{\text{odd}}^{-1}d \psi_{\text{ev}})^\dagger = \psi_{\text{ev}}^{-1}d^\dagger\psi_{\text{odd}}$. A simple computation using Theorem \ref{Thm:OpInv} allows us to compute the explicit form of $\mathscr{D}'$.

\begin{proposition}[{\cite[Proposition 4.39]{JO17}}]
The operator $\mathscr{D}'$  defined in \eqref{Eqn:ConstrD'} is explicitly given by 
\begin{equation*}
\mathscr{D}'=D_{M_0/S^1}+\frac{1}{2}c(\bar{\kappa})\varepsilon-\widehat{c}(\bar{\varphi}_0)\left(\frac{1-\varepsilon}{2}\right).
\end{equation*}
Thus, it is a first order elliptic operator. Moreover, it is symmetric on $\Omega_c(M_0/S^1)$ and anti-commutes with the chirality operator $\bar{\star}$. 
\end{proposition}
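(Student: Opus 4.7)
The plan is a direct componentwise calculation using Lemma \ref{Lemma:Psir}, followed by a termwise check of the three listed properties. Writing $\omega=(\omega_0,\dots,\omega_{4k})\in\Omega_c(M_0/S^1)$ with the conventions $\omega_{-1}=\omega_{4k+1}=0$, I first apply Lemma \ref{Lemma:Psir} to each piece of $\psi_{\text{ev}}(\omega)=(\psi_{2k'}(\omega_{2k'-1},\omega_{2k'}))_{k'=0}^{2k}$. The exterior derivative of the $2k'$-th piece equals $\psi_{2k'+1}(a_{2k'},b_{2k'+1})$ with $a_{2k'}=d\omega_{2k'-1}-\tfrac{1}{2}\bar{\kappa}\wedge\omega_{2k'-1}$ and $b_{2k'+1}=d\omega_{2k'}+\tfrac{1}{2}\bar{\kappa}\wedge\omega_{2k'}-\bar{\varphi}_0\wedge\omega_{2k'-1}$, the last sign coming from $\varepsilon\omega_{2k'-1}=(-1)^{2k'-1}\omega_{2k'-1}$. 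Unpacking with $\psi_{\text{odd}}^{-1}$ places $a_{2k'}$ in slot $2k'$ and $b_{2k'+1}$ in slot $2k'+1$ of the output tuple in $\Omega_c(M_0/S^1)$. The analogous calculation of $\psi_{\text{ev}}^{-1}d^\dagger\psi_{\text{odd}}(\omega)$ produces the mirror corrections involving $d^\dagger$, $\iota_{\bar{\kappa}^\sharp}$ and $(\bar{\varphi}_0\wedge)^\dagger$.

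Adding both contributions at a generic slot $j$, I recover $d\omega_{j-1}+d^\dagger\omega_{j+1}=(D_{M_0/S^1}\omega)_j$, the combination $\tfrac{1}{2}[(-1)^{j-1}\bar{\kappa}\wedge\omega_{j-1}+(-1)^j\iota_{\bar{\kappa}^\sharp}\omega_{j+1}]$ (which is precisely $\tfrac{1}{2}(c(\bar{\kappa})\varepsilon\omega)_j$), and---only when $j$ is odd---the term $-\bar{\varphi}_0\wedge\omega_{j-2}-(\bar{\varphi}_0\wedge)^\dagger\omega_{j+2}$, which reassembles into $-\widehat{c}(\bar{\varphi}_0)\tfrac{1-\varepsilon}{2}$ applied to $\omega$ at degree $j$. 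This yields the announced closed form of $\mathscr{D}'$.

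The three stated properties then follow quickly. Ellipticity holds because $\tfrac{1}{2}c(\bar{\kappa})\varepsilon$ and $\widehat{c}(\bar{\varphi}_0)\tfrac{1-\varepsilon}{2}$ are zeroth-order bounded endomorphisms, so $\mathscr{D}'$ inherits the principal symbol of $D_{M_0/S^1}$, which is Clifford multiplication and hence invertible off the zero section. Symmetry is a formal consequence of the fact that $\psi_{\text{ev}}$ and $\psi_{\text{odd}}$ are unitary and $d^\dagger$ is the formal $L^2$-adjoint of $d$, giving $\mathscr{D}'=A+A^\dagger$ for $A=\psi_{\text{odd}}^{-1}d\psi_{\text{ev}}$. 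For anti-commutation with $\bar{\star}$, I verify each term separately using $\dim(M_0/S^1)=n=4k$: the analogue of Proposition \ref{Prop:Chirl}(7) provided by Proposition \ref{Prop:BarChirl} gives $\bar{\star}D_{M_0/S^1}=-D_{M_0/S^1}\bar{\star}$; combining $\bar{\star}\varepsilon=\varepsilon\bar{\star}$ (since $n$ is even) with the 1-form relation $\bar{\star}c(\bar{\kappa})=-c(\bar{\kappa})\bar{\star}$ yields $\bar{\star}c(\bar{\kappa})\varepsilon=-c(\bar{\kappa})\varepsilon\bar{\star}$; and Proposition \ref{Prop:CliffMultVarphi}(1) together with $\bar{\star}\tfrac{1-\varepsilon}{2}=\tfrac{1-\varepsilon}{2}\bar{\star}$ gives $\bar{\star}\widehat{c}(\bar{\varphi}_0)\tfrac{1-\varepsilon}{2}=-\widehat{c}(\bar{\varphi}_0)\tfrac{1-\varepsilon}{2}\bar{\star}$.

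The only real subtlety is the bookkeeping of the $\varepsilon$-signs arising from Lemma \ref{Lemma:Psir} and of the boundary slots $j\in\{0,1,4k-1,4k\}$, at which several of $\omega_{j\pm 1}$ or $\omega_{j\pm 2}$ are either zero by convention or vanish for dimensional reasons on the $4k$-dimensional base; one has to check that the closed-form right-hand side reproduces exactly the same boundary behavior as the unpacked left-hand side, which it does once one tracks the $\omega_{-1}=\omega_{4k+1}=0$ convention together with $d\omega_{4k}=0$.
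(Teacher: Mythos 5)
Your proposal is correct and follows essentially the same route as the paper: the closed form comes from unpacking $\psi_{\mathrm{odd}}^{-1}d\psi_{\mathrm{ev}}+\psi_{\mathrm{ev}}^{-1}d^\dagger\psi_{\mathrm{odd}}$ slot by slot via Lemma \ref{Lemma:Psir} (the ``simple computation'' the paper attributes to Theorem \ref{Thm:OpInv} and \cite[Proposition 4.39]{JO17}), and the three properties are verified termwise exactly as in the paper, using $\bar{\star}\varepsilon=\varepsilon\bar{\star}$, $\bar{\star}c(\bar{\kappa})=-c(\bar{\kappa})\bar{\star}$ and Proposition \ref{Prop:CliffMultVarphi}. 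The only cosmetic difference is that you get symmetry formally from $\mathscr{D}'=A+A^\dagger$ with $A=\psi_{\mathrm{odd}}^{-1}d\psi_{\mathrm{ev}}$ (as the paper notes just before the proposition) rather than checking that the zeroth-order terms are individually symmetric, and your sign bookkeeping, including the $(-1)^{j-1}$ factors and the boundary slots, checks out.
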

\begin{proof}
Observe for the principal symbol that $\sigma_P(\mathscr{D}')=\sigma_P(D_{M_0/S^1})$, so indeed $\mathscr{D}'$ is first order elliptic. For the symmetry assertion we verify
$(c(\bar{\kappa})\varepsilon)^\dagger=\varepsilon c(\bar{\kappa})^{\dagger}=-\varepsilon c(\bar{\kappa})=c(\bar{\kappa})\varepsilon,$
and 
$(\widehat{c}(\bar{\varphi}_0)(1-\varepsilon))^\dagger=(1-\varepsilon)\widehat{c}(\bar{\varphi}_0)=\widehat{c}(\bar{\varphi}_0)(1-\varepsilon)$. For the last assertion we calculate using Proposition \ref{Prop:CliffMultVarphi}, 
\begin{align*}
\bar{\star}\mathscr{D}'
=&-D_{M_0/S^1}\bar{\star}-\frac{1}{2}c(\bar{\kappa})\bar{\star}\varepsilon+\widehat{c}(\bar{\varphi}_0)\bar{\star}\left(\frac{1-\varepsilon}{2}\right)\\
=&-D_{M_0/S^1}\bar{\star}-\frac{1}{2}c(\bar{\kappa})\varepsilon\bar{\star}+\widehat{c}(\bar{\varphi}_0)\left(\frac{1-\varepsilon}{2}\right)\bar{\star}.
\end{align*}
\end{proof}

Due the nature of the potential of the operator $\mathscr{D}'$ and in view of Theorem \ref{Thm:THat} we would expect $\mathscr{D}'$ to be essentially self-adjoint on the core $\Omega_c(M_0/S^1)$. In order to apply the construction of Br\"uning and Heintze we need to find an operator on $M$, commuting with the $S^1$-action, so that when pushed down to $M_0/S^1$ coincides with $\mathscr{D}'$. Let us explore how to find such an operator. In view of \eqref{Eqn:ConstrD'} we define $\text{d}\coloneqq \psi_\text{odd}^{-1}d\psi_\text{ev}$ and $\text{d}^\dagger\coloneqq \psi_\text{ev}^{-1}d^\dagger\psi_\text{odd}$ so that $\mathscr{D}'=\text{d}+\text{d}^\dagger$.  Observe that these operators fit in the commutative diagrams
\begin{align*}
\xymatrixcolsep{4pc}\xymatrixrowsep{3pc}\xymatrix{
\Omega_c^\text{ev}(M_0)^{S^1} \ar[r]^-{d} & \Omega_c^\text{odd}(M_0)^{S^1} \ar[r]^-{\psi_\text{ev}\psi_\text{odd}^{-1}} & \Omega_c^\text{ev}(M_0)^{S^1}\\
\Omega_c(M_0/S^1) \ar[u]^-{\psi_\text{ev}} \ar[r]^{\text{d}} & \Omega_c(M_0/S^1) \ar[u]^-{\psi_\text{odd}} \ar@{=}[r] & \Omega_c(M_0/S^1), \ar[u]^-{\psi_\text{ev}} 
}
\end{align*}
\begin{align*}
\xymatrixcolsep{4pc}\xymatrixrowsep{3pc}\xymatrix{
\Omega_c^\text{ev}(M_0)^{S^1} \ar[r]^-{\psi_\text{odd}\psi_\text{ev}^{-1}} & \Omega_c^\text{odd}(M_0)^{S^1} \ar[r]^-{d^\dagger} & \Omega_c^\text{ev}(M_0)^{S^1}\\
\Omega_c(M_0/S^1) \ar[u]^-{\psi_\text{ev}} \ar@{=}[r]& \Omega_c(M_0/S^1) \ar[u]^-{\psi_\text{odd}} \ar[r]^{\text{d}^\dagger} & \Omega_c(M_0/S^1).\ar[u]^-{\psi_\text{ev}} 
}
\end{align*}
\begin{lemma}
Let $c(\chi)$ be the left Clifford multiplication by the characteristic $1$-form. Then $\psi_\textnormal{ev}\psi_\textnormal{odd}^{-1}=-c(\chi),$ and $\psi_\textnormal{odd}\psi_\textnormal{ev}^{-1}= c(\chi)$. 
\end{lemma}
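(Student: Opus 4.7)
The plan is a direct computation using the Clifford identity $c(\chi) = \chi\wedge - \iota_{\chi^\sharp}$, together with the three elementary facts: $\chi\wedge\chi = 0$, $\iota_{\chi^\sharp}\chi = \chi(X) = \|X\|^2 = 1$, and $\iota_{\chi^\sharp}(\pi_{S^1}^*\omega) = 0$ for any basic form (since $\pi_{S^1}^*\omega$ is horizontal). The key is to track how $c(\chi)$ exchanges the two summands of $\psi_j(\omega_{j-1},\omega_j) = h^{-1/2}(\pi_{S^1}^*\omega_j + \pi_{S^1}^*\omega_{j-1}\wedge\chi)$.

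First I would compute $c(\chi)$ on the two building blocks. For a horizontal form $\pi_{S^1}^*\omega_j$ of degree $j$, one has $c(\chi)(\pi_{S^1}^*\omega_j) = \chi\wedge\pi_{S^1}^*\omega_j = (-1)^j\,\pi_{S^1}^*\omega_j\wedge\chi$. For the other type, using $\iota_{\chi^\sharp}$ as a graded derivation,
\begin{align*}
c(\chi)(\pi_{S^1}^*\omega_{j-1}\wedge\chi) &= -\iota_{\chi^\sharp}(\pi_{S^1}^*\omega_{j-1}\wedge\chi) = -(-1)^{j-1}\pi_{S^1}^*\omega_{j-1} = (-1)^j\,\pi_{S^1}^*\omega_{j-1}.
\end{align*}

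Next I take an even form $\eta = \psi_\text{ev}(\omega_0,\dots,\omega_{4k})$ and isolate the degree $2j+1$ component of $c(\chi)\eta$. Two pieces contribute: the image of $h^{-1/2}\pi_{S^1}^*\omega_{2j}$ (contributing $h^{-1/2}\pi_{S^1}^*\omega_{2j}\wedge\chi$, with the sign $(-1)^{2j}=+1$) and the image of $h^{-1/2}\pi_{S^1}^*\omega_{2j+1}\wedge\chi$ (contributing $h^{-1/2}\pi_{S^1}^*\omega_{2j+1}$ with sign $(-1)^{2j+2}=+1$). Summing, the degree $2j+1$ piece equals exactly $\psi_{2j+1}(\omega_{2j},\omega_{2j+1})$, so $c(\chi)\eta = \psi_\text{odd}(\omega_0,\dots,\omega_{4k})$. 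This yields $\psi_\text{odd}\psi_\text{ev}^{-1} = c(\chi)$.

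For the companion identity, I would either repeat the same bookkeeping on an odd form $\zeta = \psi_\text{odd}(\omega_0,\dots,\omega_{4k})$ — where the signs $(-1)^{2j+1}=-1$ and $(-1)^{2j}=+1$ now combine to give the overall minus sign — or, more cheaply, invoke $c(\chi)^2 = -\|\chi\|^2 = -1$, so that
\begin{equation*}
\psi_\text{ev}\psi_\text{odd}^{-1} = \bigl(\psi_\text{odd}\psi_\text{ev}^{-1}\bigr)^{-1} = c(\chi)^{-1} = -c(\chi).
\end{equation*}
No obstacle is anticipated: the whole argument reduces to a sign check on two basis elements, and the only point requiring care is the parity bookkeeping for $\iota_{\chi^\sharp}$ as a graded derivation on $\pi_{S^1}^*\omega\wedge\chi$.
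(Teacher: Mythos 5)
Your computation is correct and is exactly the verification the paper leaves implicit: its proof is the one-line remark that the identities ``follow directly from the definition of $\psi_\text{ev}/\psi_\text{odd}$'', and your sign bookkeeping with $c(\chi)=\chi\wedge-\iota_{\chi^\sharp}$, $\iota_{\chi^\sharp}\chi=1$, and horizontality of pullbacks is the intended argument (the shortcut $\psi_\text{ev}\psi_\text{odd}^{-1}=(\psi_\text{odd}\psi_\text{ev}^{-1})^{-1}=c(\chi)^{-1}=-c(\chi)$ is also fine since $\norm{\chi}=1$). No issues.
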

\begin{proof}
This follows directly from the definition of $\psi_\textnormal{ev}/\psi_\textnormal{odd}$.
\end{proof}
From this lemma we obtain the commutative diagram
\begin{align}\label{Diag:B}
\xymatrixcolsep{2cm}\xymatrixrowsep{2cm}\xymatrix{
\Omega_c^\text{ev}(M_0)^{S^1} \ar[r]^-{-c(\chi)d+d^\dagger c(\chi)} & \Omega_c^\text{ev}(M_0)^{S^1} \\
\Omega_c(M_0/S^1) \ar[u]^-{\psi_\text{ev}} \ar[r]^-{\mathscr{D}'}& \Omega_c(M_0/S^1), \ar[u]_-{\psi_\text{ev}}
 }
\end{align}
and conclude that the operator $\mathscr{D}'$ is unitary equivalent to the operator $ -c(\chi)d+c(\chi)d^\dagger$ when restricted to $\Omega_c^\text{ev}(M_0)^{S^1} $.
\begin{proposition}\label{Prop:OpB}
The operator $B\coloneqq -c(\chi)d+d^\dagger c(\chi)$ satisfies:
\begin{enumerate}
\item It is a transversally elliptic first order differential operator with principal symbol
\begin{align*}
\sigma_P(B)(x,\xi)=-i(\inner{\chi}{\xi}+c(\xi)c(\chi)).
\end{align*}
\item It can be extended to $M$ and $B:\Omega(M)\longrightarrow \Omega(M)$  it essentially self-adjoint when defined on this core. 
\item It commutes with the $S^1$-action on differential forms.
\item It commutes with the Gau\ss-Bonnet involution $\varepsilon$ and therefore it can be decomposed as $B=B^\textnormal{ev}\oplus B^\textnormal{odd}$ where $B^\textnormal{ev/odd}:\Omega^{\textnormal{ev/odd}}(M)\longrightarrow \Omega^{\textnormal{ev/odd}}(M)$. 
\end{enumerate}
\end{proposition}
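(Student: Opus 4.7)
My plan is to tackle parts (3) and (4) first, since they reduce to formal identities already visible in the construction, then compute the principal symbol in (1), and finally use compactness of $M$ to deduce (2).

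For (3), the Hodge--de Rham operator $D=d+d^\dagger$ commutes with the $S^1$-action by Proposition \ref{Prop:S1commD}, so in particular $d$ and $d^\dagger$ individually commute with $U_g$ (the latter because $g$ acts by isometries). The characteristic form $\chi$ is $S^1$-invariant by Corollary \ref{Coro:ChiInv}, so the left Clifford multiplication $c(\chi)=\chi\wedge-\iota_{\chi^\sharp}$ commutes with $U_g$ as well. Composing, $B=-c(\chi)d+d^\dagger c(\chi)$ commutes with $U_g$ for every $g\in S^1$. For (4), note that $c(\chi)$ has odd Clifford degree, so $\varepsilon c(\chi)=-c(\chi)\varepsilon$, while $\varepsilon d=-d\varepsilon$ and $\varepsilon d^\dagger=-d^\dagger\varepsilon$. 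Each summand of $B$ is a composition of two odd-degree operators and therefore commutes with $\varepsilon$, giving the decomposition $B=B^\textnormal{ev}\oplus B^\textnormal{odd}$.

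For (1), I would use $\sigma_P(d)(x,\xi)=i\,\xi\wedge$ and $\sigma_P(d^\dagger)(x,\xi)=-i\,\iota_{\xi^\sharp}$, together with the fact that $c(\chi)$ is of order zero, to obtain
\begin{align*}
\sigma_P(B)(x,\xi)=-i\bigl(c(\chi)(\xi\wedge)+\iota_{\xi^\sharp}c(\chi)\bigr).
\end{align*}
Expanding using $\iota_{\xi^\sharp}(\chi\wedge)+\chi\wedge\iota_{\xi^\sharp}=\inner{\chi}{\xi}$ and the Clifford anticommutation $c(\xi)c(\chi)+c(\chi)c(\xi)=-2\inner{\xi}{\chi}$ yields the stated formula $\sigma_P(B)(x,\xi)=-i(\inner{\chi}{\xi}+c(\xi)c(\chi))$. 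For transversal ellipticity, I only need to check invertibility on covectors $\xi\in T^*M$ that annihilate the tangent direction to the $S^1$-orbits, i.e.\ $\inner{\chi}{\xi}=0$. For such $\xi$ the symbol reduces to $-i\,c(\xi)c(\chi)$; since $c(\xi)$ and $c(\chi)$ anticommute when $\inner{\xi}{\chi}=0$, squaring gives $(-ic(\xi)c(\chi))^2=-c(\xi)^2c(\chi)^2\cdot(-1)=\|\xi\|^2$, which is invertible whenever $\xi\neq 0$.

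For (2), the first step is formal self-adjointness: since $c(\chi)^\dagger=-c(\chi)$ (Clifford multiplication by a real $1$-form is skew on the Hermitian bundle $\wedge_\mathbb{C}T^*M$) and $(d)^\dagger=d^\dagger$, one computes $B^\dagger=-d^\dagger c(\chi)^\dagger+c(\chi)^\dagger d=d^\dagger c(\chi)-c(\chi)d=B$. Because $M$ is closed, any formally self-adjoint first-order differential operator is essentially self-adjoint on the smooth core by a standard Chernoff-type argument (completeness plus uniform bound on the symbol); alternatively, the $S^1$-invariance allows one to observe that $B$ is a bounded perturbation of Clifford multiplication applied to pieces of $D$, whose extensions to $L^2$ are already understood. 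The potential obstacle I anticipate is the symbol simplification in (1): matching signs between the Clifford-algebra conventions of the paper and the bookkeeping for $\chi\wedge\iota_{\xi^\sharp}$ vs.\ $\iota_{\xi^\sharp}\chi\wedge$ is the place where a sign error can occur, but once that is reconciled the verification of invertibility on the conormal directions to the orbits is immediate.
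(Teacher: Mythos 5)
Parts (1), (3) and (4) of your proposal follow the paper's proof almost verbatim: the symbol computation via $c(\chi)\circ(\xi\wedge)=-\xi\wedge c(\chi)-\inner{\chi}{\xi}$, the invariance of $\chi$ (Corollary \ref{Coro:ChiInv}) together with Proposition \ref{Prop:S1commD} for (3), and the parity argument with $\varepsilon$ for (4). One small inconsistency: you take $\sigma_P(d)(x,\xi)=i\,\xi\wedge$ (the paper uses $-i\,\xi\wedge$) \emph{and} you use the identity $c(\chi)(\xi\wedge)+\iota_{\xi^\sharp}c(\chi)=\inner{\chi}{\xi}+c(\xi)c(\chi)$, whereas the correct identity carries an overall minus sign; the two slips cancel and you land on the stated formula, and in any case the sign is irrelevant for transversal ellipticity, but as written the intermediate step is wrong.

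The genuine gap is in (2). Your argument treats $B$ as a formally symmetric first-order differential operator with smooth coefficients on the closed manifold $M$ and invokes a Chernoff-type theorem. But $\chi=V^\flat/\norm{V}$ is defined only on $M_0$ and does not extend even continuously across the fixed point set $M^{S^1}$ (think of $\chi=\sin\theta\,d\phi$ on $S^2$ near a pole), so $B$ is \emph{not} a smooth differential operator on $M$, and $M_0$ with the induced metric is incomplete; the standard closed-manifold/complete-manifold essential self-adjointness results do not apply off the shelf. This is precisely the content of statement (2): the paper first observes that $c(\chi)$ is a pointwise isometry on $\wedge_\mathbb{C}T^*M_0$ and that $M\setminus M_0$ has measure zero (Remark \ref{Rmk:M-M0}), so $c(\chi)$ extends to a bounded operator on all of $L^2(M,\wedge_\mathbb{C}T^*M)$; this makes $B$ densely defined with core $\Omega(M)$, and essential self-adjointness is then obtained from the cited lemma \cite[Lemma 2.1]{GL02}, using compactness of $M$. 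Your fallback remark that ``$B$ is a bounded perturbation of Clifford multiplication applied to pieces of $D$'' also fails: writing $B=-c(\chi)D+\{d^\dagger,c(\chi)\}$, the anticommutator contains $L_X^\dagger$ (first order, with coefficients singular at $M^{S^1}$) and contraction with $d\chi$, whose pointwise norm blows up like $1/r$ near the fixed point set, so the correction terms are neither bounded nor smooth. To repair (2) you must either reproduce the paper's bounded-extension argument for $c(\chi)$ plus the cited self-adjointness lemma, or give an independent argument that handles the coefficient singularity along $M^{S^1}$.
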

\begin{proof}
To prove the first statement recall the expressions for the principal symbols
\begin{align*}
\sigma_P(d)(x,\xi)=&-i\xi\wedge,\\
\sigma_P(d^\dagger)(x,\xi)=&i\iota_{\xi^\sharp}.
\end{align*}
Using the relation
\begin{align*}
c(\chi)\circ (\xi\wedge)=&\chi\wedge\xi\wedge-\iota_{X}\circ (\xi\wedge)=\chi\wedge\xi\wedge -\inner{\chi}{\xi}+\xi\wedge \iota_X =-\xi\wedge c(\chi)-\inner{\chi}{\xi},
\end{align*}
we calculate the principal symbol of the operator $B$,
\begin{align*}
\sigma_P(B)(x,\xi)
=&i\left(c(\chi)\circ(\xi\wedge)  +(\iota_{\xi^\sharp})\circ c(\chi)\right)\\
=&i\left(-\xi\wedge c(\chi)-\inner{\chi}{\xi} +(\iota_{\xi^\sharp})\circ c(\chi)\right)\\
=&-i(\inner{\chi}{\xi}+c(\xi)c(\chi)).
\end{align*}
In particular we see that if $\inner{\chi}{\xi}=0$ then $\sigma_P(B)(x,\xi)^2=\norm{\xi}^2$, so we see that $B$ is a transversally elliptic first order differential operator. To prove the second assertion observe that the Clifford multiplication operator $c(\chi)$ has domain $\dom(c(\chi))=\Omega_c(M_0)$. For $\omega\in\Omega_c(M_0)$ we have $\norm{c(\chi)\omega}^2_{L^2(\wedge_\mathbb{B} T^* M)} = \norm{\omega}^2_{L^2(\wedge_\mathbb{C} T^* M)}$ see we can extend the operator $c(\chi)$ to all $L^2(M,\wedge_\mathbb{C} T^*M)$ by density. Using this fact we see that $B$ is indeed densely defined with core $\Omega(M)$.
Since $M$ is compact we can use Remark \cite[Lemma 2.1]{GL02} to conclude that $B$ is an essentially self-adjoint operator. The last two assertions follow easily from Proposition \ref{Prop:S1commD} and the fact that $\varepsilon$ anti-commutes with the left Clifford action.
\end{proof}

Now we can implement the construction described in Section \ref{Section:InduedOperatorsGen} in this setting: the restriction of $B$ to the $S^1$-invariant forms remains essentially self-adjoint and since this operator is unitary equivalent to $\mathscr{D}'$ through $\psi_\text{ev}$ we conclude that $\mathscr{D}'$ is essentially self-adjoint with core $\Omega_c(M_0/S^1)$. We summarize these results in the next theorem.

\begin{theorem}\label{Thm:InducedDiracOp}
The Dirac-Schr\"odinger operator $\mathscr{D}'$ defined on on $\Omega_c(M_0/S^1)$, is a first order elliptic differential operator which is essentially self-adjoint.  As the dimension of  $M$ is odd then $\mathscr{D}'$ anti-commutes with the chirality operator $\bar{\star}$ on $M_0/S^1$ and therefore we can define the operator $\mathscr{D}'^{+}:\Omega^+_c(M_0/S^1)\longrightarrow \Omega^-_c(M_0/S^1)$, where $\Omega^\pm_c(M_0/S^1)$ is the  $\pm 1$-eigenspace of $\bar{\star}$. 
\end{theorem}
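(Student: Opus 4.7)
Most of the structural claims are already supplied by the results immediately preceding the statement: the previous proposition records that $\mathscr{D}'$ has the same principal symbol as $D_{M_0/S^1}$ (hence is first-order elliptic) and is symmetric on $\Omega_c(M_0/S^1)$. My plan therefore concentrates on the essential self-adjointness and the parity input needed to define $\mathscr{D}'^{+}$.

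For essential self-adjointness, the strategy is to transport the property across the Br\"uning--Heintze unitary $\psi_\text{ev}$ rather than fight the potential of $\mathscr{D}'$ directly. The starting point is Proposition \ref{Prop:OpB}: the transversally elliptic operator $B$ on the closed manifold $M$ is essentially self-adjoint on the smooth core $\Omega(M)$, commutes with the Gau\ss--Bonnet grading, and commutes with the $S^1$-action. From these three facts I would (i) restrict to $B^{\text{ev}}$, still essentially self-adjoint on $\Omega^{\text{ev}}(M)$; (ii) apply Lemma \ref{Lemma:OpS} to the closure of $B^{\text{ev}}$ to obtain a self-adjoint operator on the $S^1$-invariant subspace; and (iii) observe by a Haar averaging argument (the averaging projector commutes with $B^{\text{ev}}$ and preserves smoothness) that $\Omega^{\text{ev}}(M)^{S^1}$ is a core for this restricted self-adjoint extension. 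Finally, the commutative diagram \eqref{Diag:B} exhibits $\mathscr{D}'$ on $\Omega_c(M_0/S^1)$ as unitarily equivalent, via $\psi_\text{ev}$, to the corresponding restriction of $B^{\text{ev}}$, and essential self-adjointness is preserved under unitary conjugation.

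The step requiring the most care — and the one I expect to be the main obstacle — is the passage from the core $\Omega^{\text{ev}}(M)^{S^1}$ of smooth invariant sections on all of $M$ to the smaller core $\Omega_c^{\text{ev}}(M_0)^{S^1}$ that actually corresponds to $\Omega_c(M_0/S^1)$ under $\psi_\text{ev}$. This amounts to showing that $S^1$-invariant cutoffs supported away from a shrinking neighborhood of the fixed-point set produce graph-norm approximants. Because the local model around each component $F\subset M^{S^1}$ is $D^{4k-2N-1}\times C(\mathbb{C}P^N)$, the set $M^{S^1}$ has real codimension $2N+2\geq 2$ in $M$; a standard capacity argument with logarithmic cutoffs then gives the required density even in the borderline case $N=0$. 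Once this is in place, essential self-adjointness of $\mathscr{D}'$ on $\Omega_c(M_0/S^1)$ is immediate from the unitary equivalence.

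The chiral splitting is then a parity computation. Since $\dim M = 4k+1$ is odd, $\dim(M_0/S^1) = 4k$ is even, so $\bar{\star}^{2}=1$; moreover Proposition \ref{Prop:Chirl} yields $\bar{\star}D_{M_0/S^1}=-D_{M_0/S^1}\bar{\star}$, while Proposition \ref{Prop:CliffMultVarphi} together with $\bar{\star}\varepsilon=-\varepsilon\bar{\star}$ (again because $n$ is even) gives the corresponding anti-commutation of the zero-order terms $\tfrac{1}{2}c(\bar{\kappa})\varepsilon$ and $\tfrac{1}{2}\widehat{c}(\bar{\varphi}_0)(1-\varepsilon)$ with $\bar{\star}$. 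Assembling these yields $\bar{\star}\mathscr{D}'+\mathscr{D}'\bar{\star}=0$, and hence the orthogonal decomposition $\Omega_c(M_0/S^1)=\Omega_c^{+}(M_0/S^1)\oplus \Omega_c^{-}(M_0/S^1)$ into the $\pm 1$-eigenspaces of $\bar{\star}$, with $\mathscr{D}'$ sending $\Omega_c^{\pm}$ to $\Omega_c^{\mp}$; the off-diagonal block is the operator $\mathscr{D}'^{+}$.
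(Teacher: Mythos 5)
Your route to essential self-adjointness is the same as the paper's: push the problem to the transversally elliptic operator $B=-c(\chi)d+d^{\dagger}c(\chi)$ of Proposition \ref{Prop:OpB}, restrict its closure to $S^1$-invariant (even) forms via Lemma \ref{Lemma:OpS}, and transport the result through the unitary $\psi_{\text{ev}}$ of diagram \eqref{Diag:B}. The one place where you go beyond the text is the core-transfer step, from the core $\Omega^{\text{ev}}(M)^{S^1}$ down to $\Omega^{\text{ev}}_c(M_0)^{S^1}=\psi_{\text{ev}}(\Omega_c(M_0/S^1))$; the paper leaves this implicit in the Br\"uning--Heintze machinery, and your treatment is sound: the Haar-averaging argument gives the invariant smooth core, the codimension of $M^{S^1}$ in $M$ is $2N+2\geq 2$, and since Lemma \ref{lemma:CommBf} bounds $[B,\chi_\epsilon]$ pointwise by $|d\chi_\epsilon|$, the logarithmic cutoffs (which can be taken $S^1$-invariant and parity-preserving) do give graph-norm density, including for $N=0$.

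The parity computation at the end, however, contains a genuine sign error. On the even-dimensional quotient $M_0/S^1$ the chirality operator sends $j$-forms to $(n-j)$-forms with $n-j\equiv j \pmod 2$, so $\bar{\star}$ \emph{commutes} with $\varepsilon$ (this is the analogue of Proposition \ref{Prop:Chirl}(6) with even dimension), not anti-commutes as you claim. With your relation $\bar{\star}\varepsilon=-\varepsilon\bar{\star}$ the zero-order terms would fail to anti-commute with $\bar{\star}$: for instance $\bar{\star}\,c(\bar{\kappa})\varepsilon=-c(\bar{\kappa})\bar{\star}\varepsilon$, and inserting your sign would give $+c(\bar{\kappa})\varepsilon\bar{\star}$, i.e.\ commutation, destroying the splitting you want; the term $\widehat{c}(\bar{\varphi}_0)(1-\varepsilon)$ would not even have a definite sign. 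The correct mechanism is the one used in the proposition immediately preceding the theorem: $\bar{\star}$ anti-commutes with $c(\bar{\kappa})$ and with $\widehat{c}(\bar{\varphi}_0)$ (Proposition \ref{Prop:CliffMultVarphi}) and commutes with $\varepsilon$, whence $\bar{\star}\mathscr{D}'+\mathscr{D}'\bar{\star}=0$. Since that proposition already asserts the anti-commutation, the cleanest fix is simply to cite it; otherwise replace your sign relation by $\bar{\star}\varepsilon=\varepsilon\bar{\star}$ and the rest of your splitting argument goes through unchanged.
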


\begin{remark}\label{Rmk:OpD}
We will later give a detailed description of the operator $\mathscr{D}'$ close to a connected component of the fixed point set. We will see that the term containing $\widehat{c}(\bar{\varphi}_0)$ is actually bounded and therefore, by the Kato-Rellich theorem, it will be enough to consider the operator
\begin{align*}
\mathscr{D}\coloneqq D_{M_0/S^1}+\frac{1}{2}c(\bar{\kappa})\varepsilon.
\end{align*}
We will also see that the factor $1/2$ is fundamental for the essential self-adjointness of $\mathscr{D}$. 
\end{remark}

\begin{remark}[{\cite[Lemma 4.48]{JO17}}]
It is easy to verify that $\mathscr{D}^2$ is a generalized Laplacian in the sense of \cite[Definition 2.2]{BGV}. 
\end{remark}

Even though we are not going to make use of it, we want to finish this section by showing that the operator $\mathscr{D}'$ is also essentially self-adjoint  whenever $M$ is a complete (not necessarily compact) manifold on which $S^1$ acts by orientation preserving isometries. The strategy of the proof is inspired in the similar result for Dirac operators. 

\begin{lemma}[{\cite[Lemma 4.50]{JO17}}]\label{lemma:CommBf}
For $f\in C^\infty(M)$ we have $[B,f]=c(df)c(\chi)+\inner{\chi}{d f}$, where $[\cdot, \cdot]$ denotes the commutator. 
\end{lemma}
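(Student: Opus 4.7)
The plan is to compute $[B,f]$ directly from the definition $B = -c(\chi)d + d^\dagger c(\chi)$ by expanding the commutator. First I would observe that multiplication by $f\in C^\infty(M)$ commutes with the pointwise operator $c(\chi)$, so the only contribution to $[B,f]$ comes from the differential operators $d$ and $d^\dagger$. Concretely,
\begin{align*}
[B,f] = -c(\chi)[d,f] + [d^\dagger,f]\, c(\chi).
\end{align*}

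Next I would invoke the standard Leibniz-type commutators $[d,f] = df\wedge$ and $[d^\dagger,f] = -\iota_{(df)^\sharp}$, the latter following by integration by parts (or from the fact that $d^\dagger$ is the formal adjoint of $d$). Substituting gives
\begin{align*}
[B,f] = -c(\chi)(df\wedge) - \iota_{(df)^\sharp}\, c(\chi).
\end{align*}

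To match the claimed right-hand side, I would then recall the identity already derived in the proof of Proposition \ref{Prop:OpB}, namely
\begin{align*}
c(\chi)\circ(\alpha\wedge) = -\alpha\wedge c(\chi) - \inner{\chi}{\alpha},
\end{align*}
applied with $\alpha = df$. This rewrites the first summand as $(df\wedge)c(\chi) + \inner{\chi}{df}$, and combining with the second summand using the definition of the left Clifford action $c(df) = df\wedge - \iota_{(df)^\sharp}$ yields
\begin{align*}
[B,f] = c(df)\, c(\chi) + \inner{\chi}{df},
\end{align*}
as desired.

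The argument is essentially bookkeeping; there is no genuine obstacle, only the need to keep careful track of signs and of the distinction between left and right Clifford actions. The one place where a reader might slip is conflating the two anticommutation conventions, so I would emphasize the relation $c(\chi)(\alpha\wedge) + (\alpha\wedge)c(\chi) = -\inner{\chi}{\alpha}$ as the key ingredient that produces the zero-order term $\inner{\chi}{df}$.
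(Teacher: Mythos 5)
Your proposal is correct and is essentially the paper's own argument: the paper's proof simply says to compute as in the proof of Proposition \ref{Prop:OpB}, which is exactly the anticommutation identity $c(\chi)\circ(\alpha\wedge)=-\alpha\wedge c(\chi)-\inner{\chi}{\alpha}$ that you use, applied with $\alpha=df$ after expanding $[B,f]=-c(\chi)[d,f]+[d^\dagger,f]c(\chi)$. The only cosmetic difference is that you obtain $[d^\dagger,f]=-\iota_{(df)^\sharp}$ by adjointness, while the paper cites Proposition \ref{Prop:Chirl}(3) (i.e.\ $d^\dagger=(-1)^n\star d\star$) for the same standard fact.
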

\begin{proof}
We use Proposition \ref{Prop:Chirl}(3) to compute as in the proof of Proposition \ref{Prop:OpB}.
\end{proof}

\begin{coro}[{\cite[Corollary 4.51]{JO17}}]\label{Coro:BComplete}
Let $M$ be a complete Riemannian manifold. Then the operator $B$ is essentially self-adjoint. 
\end{coro}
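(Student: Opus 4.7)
The plan is to follow the classical Chernoff--Wolf strategy for proving essential self-adjointness of a first-order symmetric differential operator on a complete Riemannian manifold, using Lemma \ref{lemma:CommBf} as the decisive input. Since $\|\chi\|\equiv 1$ and left Clifford multiplication by a $1$-form $\omega$ is a fibrewise endomorphism of pointwise operator norm $\|\omega\|$, the formula $[B,f]=c(df)c(\chi)+\inner{\chi}{df}$ immediately yields the estimate
\[
\|[B,f]u\|_{L^2}\leq C\,\|df\|_{\infty}\,\|u\|_{L^2},\qquad f\in C^\infty_c(M),\ u\in L^2,
\]
for a universal constant $C$ (one may take $C=2$). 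This is the ``finite propagation speed'' property that drives the whole argument.

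First I would exploit completeness via Hopf--Rinow to construct cut-off functions. Fixing a basepoint $x_0\in M$, the closed geodesic balls $\overline{B(x_0,R)}$ are compact, so one constructs $\varphi_n\in C^\infty_c(M)$ with $0\leq\varphi_n\leq 1$, $\varphi_n\equiv 1$ on $B(x_0,n)$, $\supp\varphi_n\subset B(x_0,2n)$ and $\|d\varphi_n\|_{\infty}\leq 2/n$. Since $B$ is symmetric on $\Omega(M)$, it suffices to prove $\dom(B^*)\subseteq\dom(\overline{B})$, where $\overline{B}$ denotes the closure on the core.

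Given $u\in\dom(B^*)$ with $B^*u=v\in L^2$, distributional calculation gives
\[
B(\varphi_n u)=\varphi_n v+[B,\varphi_n]u.
\]
Dominated convergence yields $\varphi_n u\to u$ and $\varphi_n v\to v$ in $L^2$, while the commutator estimate above gives $\|[B,\varphi_n]u\|_{L^2}\leq(2C/n)\|u\|_{L^2}\to 0$. Hence $(\varphi_n u,B(\varphi_n u))\to(u,v)$ in the graph norm. Since each $\varphi_n u$ has compact support, a standard Friedrichs mollification, performed in a finite partition of unity subordinate to a coordinate atlas covering $\supp\varphi_n$, approximates $\varphi_n u$ by elements of $\Omega_c(M)$ in the graph norm of $B$; extracting a diagonal subsequence then identifies $u\in\dom(\overline{B})$ with $\overline{B}u=v$.

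The main technical obstacle is the Friedrichs mollification step, i.e.\ verifying that for a first-order differential operator with smooth coefficients a compactly supported $L^2$ section lying in $\dom(B^*)$ can be approximated in the graph norm by smooth compactly supported sections. The key fact is the Friedrichs commutator lemma: if $\{J_\epsilon\}$ is a family of mollifiers in a local trivialization, then $[B,J_\epsilon]\to 0$ strongly in $L^2$ as $\epsilon\to 0$. This is standard but technical to implement cleanly in the vector-bundle setting; with it in hand, together with the cut-off procedure and the commutator estimate from Lemma \ref{lemma:CommBf}, essential self-adjointness of $B$ follows.
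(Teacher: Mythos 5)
Your proposal is correct and is, in substance, exactly the argument the paper intends: the paper's own proof is a one-line appeal to the classical completeness argument for Dirac operators (\cite[Chapter 4]{F00}, \cite[Theorem II.5.7]{LM89}, \cite{W73}) driven by Lemma \ref{lemma:CommBf}, and your cut-off functions from Hopf--Rinow together with the bound $\norm{[B,f]}\leq 2\norm{df}_\infty$ and the reduction to $\dom(B^*)\subseteq\dom(\overline{B})$ are that argument written out. The one point where you genuinely depart from the cited template is the local step: the Dirac-operator proofs use interior elliptic regularity to identify minimal and maximal domains over compact sets, which is not available here because $B$ is only \emph{transversally} elliptic (its symbol vanishes for $\xi\in\mathbb{R}\chi$), and your substitution of the Friedrichs commutator lemma, valid for any first-order symmetric operator with sufficiently regular coefficients, is precisely what makes the ``analogous'' argument go through; in that respect your write-up is more robust than the bare citation. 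Two caveats you should make explicit. First, the Friedrichs step requires the coefficients of $B$ to be at least locally Lipschitz on the charts in which you mollify; this holds on $M_0$ but fails across $M^{S^1}$, where $\chi$ has no continuous extension, so near the fixed-point set one must argue as in Proposition \ref{Prop:OpB}(2) (the paper's appeal to \cite[Lemma 2.1]{GL02} in the compact case) rather than by mollification alone --- a point the paper's own proof also glosses, so it is a flag rather than a defect of your strategy. Second, for non-compact $M$ the core on which $B$ is symmetric should be $\Omega_c(M)$ rather than $\Omega(M)$, and the identity $B(\varphi_n u)=\varphi_n v+[B,\varphi_n]u$ should be justified as the statement $\varphi_n u\in\dom(B^*)$, obtained by pairing against test forms and using that $[B,\varphi_n]$ is a bounded, pointwise skew-adjoint bundle endomorphism by Lemma \ref{lemma:CommBf}.
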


\begin{proof}
If follows from \ref{lemma:CommBf} analogously as for the proof for Dirac operators given in \cite[Chapter 4]{F00}, \cite[Theorem II.5.7]{LM89} and \cite{W73}.
\end{proof}

\begin{remark}[The Basic Signature Operator]
Motivation for the structure of the operator $\mathscr{D}'$ of Theorem \ref{Thm:InducedDiracOp} comes from the work of Habib and Richardson on modified differentials in the context of Riemannian foliations \cite{HR13}. In their setting they defined the {\em basic signature operator} action on basic forms. In \cite[Section 4.4]{JO17} a detailed comparison between this operator and $\mathscr{D}'$ was made. It was shown that, when pushing down to the quotient space, the basic signature operator can be identified with $D_{M_0/S^1}$ and the twist of the de Rham differential can be seen as a consequence of the transformation  \eqref{Eqn:U}. 
\end{remark}

\begin{example}[The 2-sphere]
We consider the semi-free $S^1$-action on the unit $2$-sphere $M=S^2\subset\mathbb{R}^3$ by rotations along the $z$-axis. The fixed point set is $M^{S^1}=\{\mathcal{N},\mathcal{S}\}$, where $\mathcal{N}$ and $\mathcal{S}$ denote the north and south pole respectively. On the complement $M_0=S^2-\{\mathcal{N},\mathcal{S}\}$ the action is free. We equip $S^2$ with the induced metric coming from the Euclidean inner product of $\mathbb{R}^3$ which can be written in polar coordinates as
\begin{equation}\label{Eq:metricS2}
g^{TS^2}=d\theta^2+\sin^2\theta d\phi^2.
\end{equation}
The quotient manifold $M_0/S^1$ can be identified with the open interval $I\coloneqq (0,\pi)$, which we equip with the flat metric $g^{TI}=d\theta^2$ so that the orbit map $\pi_{S^1}:M_0\longrightarrow M_0/S^1$ becomes a Riemannian submersion. The generating vector field of the action is clearly $V=\partial_\phi$, the characteristic $1$-from is $\chi=\sin\theta d\phi$ and the corresponding mean curvature form is $\kappa=-\cot\theta d\theta$. Using \eqref{Eqn:dchi} we find that $\varphi_0=0$. As a result, the operator $\mathscr{D}'$ of Theorem \ref{Thm:InducedDiracOp}
is 
\begin{align*}
\mathscr{D}'=\mathscr{D}=D_I+\frac{1}{2}\cot\theta c(d\theta)\varepsilon.
\end{align*}
With respect to the degree decomposition, we can express it as 
\begin{equation*}\label{Eqn:D0S2}
\mathscr{D}=\gamma\left(\frac{\partial}{\partial \theta}+
\left(
\begin{array}{cc}
-1/2 & 0\\
0 & 1/2
\end{array}
\right)
\otimes \cot\theta\right),\quad\text{where}\quad
\gamma\coloneqq
\left(
\begin{array}{cc}
0 & -1\\
1 & 0
\end{array}
\right).
\end{equation*}
When $\theta\longrightarrow 0$ this operator takes the form
\begin{align*}
\gamma\left(\frac{\partial}{\partial \theta}+
\left(
\begin{array}{cc}
-1/2 & 0\\
0 & 1/2
\end{array}
\right)
\otimes \frac{1}{\theta}\right).
\end{align*} 
In view of \cite[Theorem 3.2]{BS88} we see that this operator has the structure of a {\em first order regular singular operator}, in the sense of Br\"uning and Seeley, and that indeed  is essentially self-adjoint on the core $\Omega_c(I)$.
\end{example}

\begin{remark}
We refer to \cite[Section 5]{JO17} to see more explicit examples of the construction of the operator $\mathscr{D}'$ in higher dimensions. 
\end{remark}

\section{Local description of $\mathscr{D}$ close to $M^{S^1}$}\label{Sect:LocalDesc}

\subsection{Mean curvature $1$-form on the normal bundle}
Recall from Section \ref{Sect:PfoofSignatureFormula} that the strategy to proof the equivariant $S^1$-signature formula of  \cite[Theorem 4]{L00} is to decompose the quotient space as $M_0/S^1=Z_t\cup U_t$ where $Z_t\coloneqq M_0/S^1-N_t(F)$ is a compact manifold with boundary and $U_t\coloneqq N_t(F)$ is the $t$-neighborhood of $F$ in $M/S^1,$ as schematically visualized in Figure \ref{Fig:DecompQuotSpace}. One models $U_t$ as the mapping cylinder of a Riemannian fibration $\pi_\mathcal{F}:\mathcal{F}\longrightarrow F$ , with fiber $\mathbb{C}P^N$, whose total space $\mathcal{F}$ comes as the quotient space $\mathcal{S}/S^1$ of the sphere bundle $\mathcal{S}\coloneqq SNF$ of the normal bundle of $F$ in $M$. 

\begin{figure}[H]
\begin{center}
\begin{tikzpicture}
\draw[rounded corners=32pt](7,-1)--(4,-1)--(2,-2)--(0,0) -- (2,2)--(4,1)--(7,1);
\draw (1.5,0.2) arc (175:315:1cm and 0.5cm);
\draw (3,-0.28) arc (-30:180:0.7cm and 0.3cm);
\draw (7.5,0) arc (0:360:0.5cm and 1cm);
\node (a) at (20:2.5) {$Z_t$};
\node (a) at (7,-1.5) {$\partial Z_t=\mathcal{F}_{t}$};
\draw (11,-0.6)--(11,0.6);
\draw (7,1)--(11,0.6);
\draw (7,-1)--(11,-0.6);
\node (a) at (9,0) {$U_t$};
\node (a) at (11.5,0) {$M^{S^1}$};
\end{tikzpicture}
\caption{Decomposition of $M/S^1$ as a manifold with boundary and a $t$-neighborhood of the fixed point set. }\label{Fig:DecompQuotSpace}
\end{center}
\end{figure}
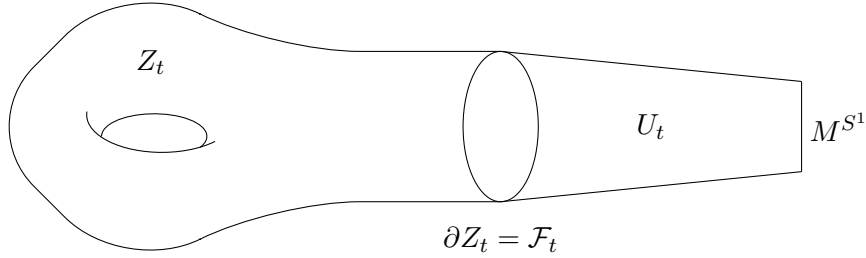

Let $\pi_\mathcal{S}:\mathcal{S}\longrightarrow F$  denote the projection and consider the decomposition of the tangent bundle $T\mathcal{S}=T_V\mathcal{S}\oplus T_H\mathcal{S}$ on which the metric decomposes as $g^{T\mathcal{S}}=g^{T_V\mathcal{S}}\oplus g ^{T_H\mathcal{S}}$, where we identify $T_H\mathcal{S}\cong TF$ via $\pi_\mathcal{S}$. We choose a local oriented orthorormal basis for $T\mathcal{S}$ of the form 
\begin{align}\label{Eqn:ONBTSN}
\{{\underline{e}}_i\}_{i=0}^{v}\cup\{{\underline{f}}_\alpha\}_{\alpha=1}^{h},
\end{align} 
where  $v\coloneqq 2N $ and $h\coloneqq 4k-2N-1$.  Here $\underline{e}_i$ and $\underline{f}_\alpha$ are vertical and horizontal vector fields respectively. Let $\{{\underline{e}}^i\}_{i=0}^{v}\cup\{{\underline{f}}^\alpha\}_{\alpha=1}^{h}$
denote the associated dual basis. In view of Proposition \cite[Lemma 2.2]{U70} we can assume with out loss of generality that the generating vector field of the free $S^1$-action on $\mathcal{S}$ is $V_\mathcal{S}\coloneqq{\underline{e}}_0\in C^\infty(\mathcal{S}, T_V \mathcal{S})$. This implies the corresponding mean curvature form $\kappa_\mathcal{S}$ vanishes by Proposition \ref{Prop:NormV}(2) since $\norm{{\underline{e}}_0}=1$.

\begin{remark}\label{Rmk:BasisBasicForms}
We can assume that the  $1$-forms $\{{\underline{e}}^i\}_{i=1}^{2N}\cup\{{\underline{f}}^\alpha\}_{\alpha=1}^{4k-2N-1}$ are basic.
\end{remark}

Let $\widehat{\underline{\omega}}$ be the connection $1$-form of the Levi-Civita connection of the metric $g^{T\mathcal{S}}$ associated to the orthonormal frame above. Recall that its components satisfy the structure equations
\begin{align*}
d{\underline{e}}^i+{\underline{\omega}}^i_j\wedge {\underline{e}}^j+{\underline{\omega}}^i_\alpha\wedge {\underline{f}}^\alpha=0,\\
d{\underline{f}}^\alpha+{\underline{\omega}}^\alpha_j\wedge {\underline{e}}^j+{\underline{\omega}}^\alpha_\beta\wedge {\underline{f}}^\beta=0.
\end{align*}
Since the characteristic $1$-form is $\chi_{\mathcal{S}}={\underline{e}}^0$ and $\kappa_\mathcal{S}=0$ we can use these structure equations to compute the $2$-form $\varphi_{0,\mathcal{S}}$  from \eqref{Eqn:dchi}, namely
\begin{equation}\label{Eqn:Varphi0S}
\varphi_{0,\mathcal{S}}=d\chi_{\mathcal{S}}=d\underline{e}^0=-{\underline{\omega}}^0_i\wedge {\underline{e}}^i-{\underline{\omega}}^0_\alpha\wedge {\underline{f}}^\alpha.
\end{equation}

Let $\mathring{\mathcal{D}}\coloneqq \mathcal{D}-F$ denote the disk bundle of the normal bundle without the zero section, where the induced $S^1$-action is still free. Now we calculate the corresponding mean curvature $1$-form $\kappa_{\mathring{\mathcal{D}}}$ and the $2$-form $\varphi_{0,\mathring{\mathcal{D}}}$ for the metric  
\begin{equation}\label{Eqn:metricTD}
g^{T\mathring{\mathcal{D}}}=dr^2 \oplus r^2 g^{T_V\mathcal{S}}\oplus g^{T_V\mathcal{S}},
\end{equation} 
where $r>0$ denotes the radial direction. From the orthonormal basis of $T^*\mathcal{S}$ described above we can construct an orthonormal basis for $T^*\mathring{\mathcal{D}}$ as
\begin{align}\label{Def:BasisTD}
\{dr\}\cup\{\widehat{\underline{e}}^i\}_{i=0}^{v}\cup\{\widehat{\underline{f}}^\alpha\}_{\alpha=1}^{h}, 
\end{align} 
setting $\widehat{\underline{e}}^i\coloneqq r\underline{e}^i$ and $\widehat{\underline{f}}^\alpha \coloneqq \underline{f}^\alpha$. Here we regard $\underline{e}^i$ and $\underline{f}^\alpha$ as $1$-forms on $\mathring{\mathcal{D}}$ by pulling them back from $\mathcal{S}$ along the projection $\mathring{\mathcal{D}}\longrightarrow \mathcal{S}$. The generating vector field of the $S^1$-action on $\mathring{\mathcal{D}}$ is still $V_{\mathring{\mathcal{D}}}=\underline{e}_0=r\widehat{\underline{e}}_0$ and therefore, by Proposition \ref{Prop:NormV}(2), we get
 \begin{align*}
\kappa_{\mathring{\mathcal{D}}}=-d\log(\norm{V_{\mathring{\mathcal{D}}}})=-d(\log r)=-\frac{dr}{r}.
\end{align*}

Now we want to compute the $2$-form $\varphi_{0,\mathring{\mathcal{D}}}$. First note that the associated characteristic $1$-form is $\chi_{\mathring{\mathcal{D}}}=\widehat{\underline{e}}^0=r\underline{e}^0$, thus we need to calculate $d\widehat{\underline{e}}^0$ in order to use \eqref{Eqn:dchi}.  Let $\widehat{\underline{\omega}}$ be the connection $1$-form corresponding to the Levi-Civita connection of the metric \eqref{Eqn:metricTD} associated with the basis \eqref{Def:BasisTD}. Using the structure equations 
\begin{align*}
d\widehat{\underline{e}}^i+\widehat{\underline{\omega}}^i_j\wedge \widehat{\underline{e}}^j+\widehat{\underline{\omega}}^i_\alpha\wedge \widehat{\underline{f}}^\alpha+\widehat{\underline{\omega}}^i_r\wedge dr=0,\\
d\widehat{\underline{f}}^\alpha+\widehat{\underline{\omega}}^\alpha_j\wedge \widehat{\underline{e}}^j+\widehat{\underline{\omega}}^\alpha_\beta\wedge \widehat{\underline{f}}^\beta+\widehat{\underline{\omega}}^\alpha_r\wedge dr=0,\\
\widehat{\underline{\omega}}^r_j\wedge\widehat{\underline{e}}^j+\widehat{\underline{\omega}}^r_\alpha\wedge\widehat{\underline{f}}^\alpha=0,
\end{align*}
 we can proceed as in Section \ref{Sect:PfoofSignatureFormula} to obtain the components of $\widehat{\omega}$ (see \eqref{Eqns:Conn1From}), 
\begin{align*}
\widehat{\underline{\omega}}^i_j=&\underline{\omega}^i_j,\\
\widehat{\underline{\omega}}^i_r=&\underline{e}^i,\\
\widehat{\underline{\omega}}^i_\alpha=& r\underline{\omega}^i_\alpha,\\
\widehat{\underline{\omega}}^\alpha_\beta=&r^2\underline{\omega}^\alpha_{\beta i}\underline{e}^i+\underline{\omega}^\alpha_{\beta\gamma}\underline{f}^\gamma,\\
\widehat{\underline{\omega}}^\alpha_r=&0.
\end{align*}
From these equations we find
\begin{align*}
d\chi_{\mathring{\mathcal{D}}}
=&-\underline{\omega}^0_i\wedge(r \underline{e}^i)-(r\underline{\omega}^0_\alpha)\wedge \underline{f}^\alpha-\underline{e}^0\wedge dr\\
=&r\left(-\underline{\omega}^0_i\wedge\underline{e}^i-\underline{\omega}^0_\alpha\wedge \underline{f}^\alpha\right)-\left(-\frac{dr}{r}\right)\wedge (r\underline{e}^0)\\
=&r\varphi_{0,\mathcal{S}}-\kappa_{\mathring{\mathcal{D}}}\wedge \chi_{\mathring{\mathcal{D}}}. 
\end{align*}
As a result, we conclude from \eqref{Eqn:dchi} that $\varphi_{0,\mathring{\mathcal{D}}}=r\varphi_{0,\mathcal{S}}$. We summarize these results in the following proposition. 
\begin{proposition}\label{Prop:KappaPhiDCirc}
Let $F$ be a connected component of the fixed point set of an effective semi-free $S^1$-action on $M$. Then, for the induced action on $\mathring{\mathcal{D}}$, the mean curvature $1$-form is $\kappa_{\mathring{\mathcal{D}}}=-{dr}/{r}$ and the $2$-form $\varphi_{0,\mathring{\mathcal{D}}}$ defined by \eqref{Eqn:dchi} is $\varphi_{0,\mathring{\mathcal{D}}}=r\varphi_{0,\mathcal{S}}$. 
\end{proposition}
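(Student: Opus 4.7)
The plan is to read both quantities off the warped-product metric $g^{T\mathring{\mathcal{D}}} = dr^2 \oplus r^2 g^{T_V\mathcal{S}} \oplus g^{T_H\mathcal{S}}$, exploiting the fact that the generating vector field of the induced free $S^1$-action on $\mathring{\mathcal{D}}$ agrees with the one on $\mathcal{S}$, namely $V_{\mathring{\mathcal{D}}} = \underline{e}_0 = r\widehat{\underline{e}}_0$ in the rescaled frame \eqref{Def:BasisTD}.

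For the mean curvature form I would invoke Proposition \ref{Prop:NormV}(2), which (the action being free on $\mathring{\mathcal{D}}$) gives $\kappa_{\mathring{\mathcal{D}}} = -d\log \|V_{\mathring{\mathcal{D}}}\|$. The warped metric immediately gives $\|V_{\mathring{\mathcal{D}}}\| = r$, so $\kappa_{\mathring{\mathcal{D}}} = -dr/r$. This is consistent with Lemma \ref{Lemma:dh}: the orbit volumes collapse linearly as $r \to 0$, which is precisely what forces the $1/r$ singularity at the fixed stratum.

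For $\varphi_{0,\mathring{\mathcal{D}}}$ I would apply Rummler's formula \eqref{Eqn:dchi}, reducing the task to the evaluation of $d\chi_{\mathring{\mathcal{D}}} = d\widehat{\underline{e}}^0 = d(r\underline{e}^0)$. The main ingredient is to solve the Cartan structure equations for the components of the Levi-Civita connection $1$-form $\widehat{\underline{\omega}}$ in the frame $\{dr\} \cup \{\widehat{\underline{e}}^i\} \cup \{\widehat{\underline{f}}^\alpha\}$; this is a standard warped-product computation entirely parallel to the one carried out in \eqref{Eqns:Conn1From} for the Lott mapping-cylinder argument, producing in particular $\widehat{\underline{\omega}}^0_j = \underline{\omega}^0_j$, $\widehat{\underline{\omega}}^0_\alpha = r\,\underline{\omega}^0_\alpha$, and $\widehat{\underline{\omega}}^0_r = \underline{e}^0$. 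Substituting into the $i=0$ structure equation and using \eqref{Eqn:Varphi0S} to recognize $\varphi_{0,\mathcal{S}} = -\underline{\omega}^0_i\wedge\underline{e}^i - \underline{\omega}^0_\alpha\wedge\underline{f}^\alpha$ produces
\[
d\chi_{\mathring{\mathcal{D}}} \;=\; r\,\varphi_{0,\mathcal{S}} \;-\; \kappa_{\mathring{\mathcal{D}}}\wedge\chi_{\mathring{\mathcal{D}}},
\]
from which $\varphi_{0,\mathring{\mathcal{D}}} = r\,\varphi_{0,\mathcal{S}}$ follows by rearranging \eqref{Eqn:dchi}.

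No genuine obstacle appears: the only nontrivial step is the warped-product connection-form computation, which is a mild adaptation of a calculation already performed in Section \ref{Sect:PfoofSignatureFormula}. One should verify that the pullback of the frame $\{\underline{e}^i,\underline{f}^\alpha\}$ along $\mathring{\mathcal{D}}\to\mathcal{S}$ is compatible with the identification of $\varphi_{0,\mathcal{S}}$ as a basic form on $\mathring{\mathcal{D}}$, but this is guaranteed by Remark \ref{Rmk:BasisBasicForms} together with the fact that $\kappa_\mathcal{S} = 0$, which ensures $d\underline{e}^0 = \varphi_{0,\mathcal{S}}$ on $\mathcal{S}$ and hence the clean scaling under the pullback.
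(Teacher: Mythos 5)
Your proposal is correct and follows essentially the same route as the paper: $\kappa_{\mathring{\mathcal{D}}}=-d\log\norm{V_{\mathring{\mathcal{D}}}}=-dr/r$ via Proposition \ref{Prop:NormV}(2) with $\norm{V_{\mathring{\mathcal{D}}}}=r$, and then the warped-metric Cartan structure equations (the same adaptation of \eqref{Eqns:Conn1From} from Section \ref{Sect:PfoofSignatureFormula}) to get $d\chi_{\mathring{\mathcal{D}}}=r\varphi_{0,\mathcal{S}}-\kappa_{\mathring{\mathcal{D}}}\wedge\chi_{\mathring{\mathcal{D}}}$, from which Rummler's formula \eqref{Eqn:dchi} gives $\varphi_{0,\mathring{\mathcal{D}}}=r\varphi_{0,\mathcal{S}}$. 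The connection components you quote agree with those computed in the paper, so there is no gap.
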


Now we proceed to study the $S^1$-quotient. Recall from above that we have the following commutative diagram 
\begin{align*}
\xymatrixrowsep{2cm}\xymatrixcolsep{2cm}\xymatrix{
\mathcal{S} \ar[rr]^-{\pi_{S^1}} \ar[dr]_-{\pi_\mathcal{S}}  && \mathcal{F} \ar[dl]^-{\pi_\mathcal{F}} \\
& F & 
}
\end{align*}
where $\pi_{S^1}:\mathcal{S}\longrightarrow\mathcal{F}$ is the orbit map and $\pi_\mathcal{S}$, $\pi_\mathcal{F}$ are corresponding projections.  The decomposition into the vertical tangent and an horizontal bundle $T\mathcal{S}=T_V\mathcal{S}\oplus T_H\mathcal{S}$ induces a decomposition  $T\mathcal{F}=T_V\mathcal{F}\oplus T_H\mathcal{F}$ via the orbit map $\pi_{S^1}$ (cf. \cite[Section 1]{B09}). Consequently there is an induced splitting of the exterior algebra,
\begin{align}\label{DecompBundleE}
\wedge T^*\mathcal{F}=\wedge T_H^*\mathcal{F}\otimes\wedge T^*_V\mathcal{F} =\bigoplus_{r=p+q}\wedge^p T^*_H\mathcal{F}\otimes \wedge^q T^*_V\mathcal{F}
\eqqcolon
\bigoplus_{p,q}\wedge^{p,q}T^*\mathcal{F}. 
\end{align} 
We denote the space of sections by $\Omega^{p,q}(\mathcal{F})\coloneqq C^\infty(\mathcal{F},\wedge^{p,q}T^*\mathcal{F})$ and the degree operators by $\text{hd}|{\wedge^{p,q} T^*\mathcal{F}}\coloneqq p$ and  $\text{vd}|{\wedge^{p,q} T^*\mathcal{F}}\coloneqq q$.\\
By Remark \ref{Rmk:BasisBasicForms} there exist $1$-forms $\{{e}^i\}_{i=1}^{v}\cup\{{f}^\alpha\}_{\alpha=1}^{h}$, where ${e}^i\in T^*_V\mathcal{F}$ and ${f}^\alpha\in T^*_H\mathcal{F}$ such that $\pi^*_{S^1}{e}^i=\underline{e}^i$ and $\pi^*_{S^1}{f}^\alpha=\underline{f}^\alpha$. The set $\{{e}^i\}_{i=1}^{v}\cup\{{f}^\alpha\}_{\alpha=1}^{h}$ forms a local orthonormal basis for $T^*\mathcal{F}$ and can be regarded as the basis considered in Section \ref{Sect:PfoofSignatureFormula}. We choose an orientation of $\mathcal{F}$ so that  $\{{f_1},\cdots, {f}_h,{e_1},\cdots,{e}_v\}$ is an oriented orthonormal basis. The following result is a direct consequence of Proposition \ref{Prop:KappaPhiDCirc}. 

\begin{proposition}
 Close to a connected component $F\subset M^{S^1}$ of the fixed point set:
 \begin{enumerate}
\item The $1$-form $\bar{\kappa}$ of diagram \ref{Diag:KappaAction} is given by $\bar{\kappa}=-{dr}/{r}$.
\item There exists $\varphi_{0,\mathcal{F}}\in\Omega^2(\mathcal{F})$ such that the $2$-form $\bar{\varphi}_0$ defined by diagram and analogous diagram can be expressed as $\bar{\varphi}_{0}= r\varphi_{0,\mathcal{F}}$. In particular, the operator $\widehat{c}(\bar{\varphi}_0)$
of Proposition \ref{Prop:CliffMultVarphi} is bounded.
\end{enumerate}
 \end{proposition}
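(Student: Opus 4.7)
The strategy is to combine Proposition \ref{Prop:KappaPhiDCirc}, which computes $\kappa_{\mathring{\mathcal{D}}}$ and $\varphi_{0,\mathring{\mathcal{D}}}$ explicitly on the deleted disk bundle, with the $S^1$-equivariant tubular neighborhood theorem, and then descend to the $S^1$-quotient via the commutative diagram \eqref{Diag:KappaAction} together with its analogue for $\varphi_0$.

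I would first invoke the $S^1$-equivariant tubular neighborhood theorem at a component $F \subset M^{S^1}$: the normal exponential map furnishes an $S^1$-equivariant diffeomorphism between a neighborhood of the zero section in $NF$ and a neighborhood of $F$ in $M$, restricting to an identification of $\mathring{\mathcal{D}}$-like regions and matching the normal-bundle metric \eqref{Eqn:metricTD} with the restriction of $g^{TM}$. Under this identification the foliation data on $M_0$ near $F$ are transported to the corresponding data on $\mathring{\mathcal{D}}$, and Proposition \ref{Prop:KappaPhiDCirc} then reads $\kappa = -dr/r$ and $\varphi_0 = r\varphi_{0,\mathcal{S}}$ in this region.

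Next I would descend along the orbit map. By Corollary \ref{Coro:kappa} and Proposition \ref{Prop:varphi0}(3) the forms $\kappa$ and $\varphi_0$ are both basic, so diagram \eqref{Diag:KappaAction} and its analogue for $\varphi_0$ yield unique forms $\bar{\kappa},\bar{\varphi}_0$ on $M_0/S^1$ with $\pi_{S^1}^{\ast}\bar{\kappa}=\kappa$ and $\pi_{S^1}^{\ast}\bar{\varphi}_0=\varphi_0$. The function $r$ and the $1$-form $dr$ are $S^1$-invariant and horizontal, hence basic, and descend to $r$ and $dr$ on the mapping cylinder $(0,1)\times\mathcal{F}$, which immediately yields $\bar{\kappa}=-dr/r$. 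The basicity of $\varphi_{0,\mathcal{S}}$ on $\mathcal{S}$, read off from \eqref{Eqn:Varphi0S} together with Remark \ref{Rmk:BasisBasicForms}, produces a unique descent $\varphi_{0,\mathcal{F}}\in\Omega^2(\mathcal{F})$, and pulling the identity $\varphi_{0,\mathring{\mathcal{D}}}=r\varphi_{0,\mathcal{S}}$ through the orbit map gives $\bar{\varphi}_0=r\varphi_{0,\mathcal{F}}$.

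For the boundedness of $\widehat{c}(\bar{\varphi}_0)$, the point is that $\varphi_{0,\mathcal{F}}$ is a smooth $2$-form on the compact base $\mathcal{F}$ and therefore uniformly bounded in any fixed smooth reference metric, while the $r$ prefactor suppresses $\bar{\varphi}_0$ as $r\to 0$. The step I expect to require the most care is checking that this suppression survives passage to the conical quotient metric $dr^2+r^2 g^{T_V\mathcal{F}}+g^{T_H\mathcal{F}}$, whose vertical directions are rescaled by $r^2$: one must decompose $\varphi_{0,\mathcal{F}}$ according to its bidegree in the splitting \eqref{DecompBundleE} and verify the bound component by component, which is the piece of the argument that is not simply formal descent from Proposition \ref{Prop:KappaPhiDCirc}.
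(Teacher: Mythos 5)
Your first two steps --- transporting Proposition \ref{Prop:KappaPhiDCirc} to a tubular neighborhood of $F$ and then descending $\kappa$ and $\varphi_0$ along the orbit map using that both forms are basic --- are exactly what the paper intends when it calls the proposition a direct consequence of Proposition \ref{Prop:KappaPhiDCirc}, so for items (1) and the identity $\bar{\varphi}_0=r\varphi_{0,\mathcal{F}}$ your route and the paper's coincide (with the shared caveat that the equivariant tubular neighborhood map is not an isometry onto the model metric \eqref{Eqn:metricTD}; like the paper, following \cite{B09}, one really works in the model).

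The genuine gap is the boundedness of $\widehat{c}(\bar{\varphi}_0)$, which is precisely the step you defer, and the component-by-component check you propose does not close it. In the quotient metric \eqref{Eqn:MetricUt} a covector in $T^*_V\mathcal{F}$ of unit $g^{T_V\mathcal{F}}$-length has norm $r^{-1}$, so in the splitting \eqref{DecompBundleE} the $(2,0)$-part of $r\varphi_{0,\mathcal{F}}$ has pointwise norm $O(r)$, the $(1,1)$-part $O(1)$, but the $(0,2)$-part $O(r^{-1})$; and the $(0,2)$-part of $\varphi_{0,\mathcal{F}}$ does \emph{not} vanish: by \eqref{Eqn:Varphi0S}, $\varphi_{0,\mathcal{S}}=d\underline{e}^0$, whose restriction to each round sphere fibre of $\mathcal{S}$ is twice the pullback of the Fubini--Study K\"ahler form of $\mathbb{C}P^N$. (You can see the same thing in the flat model $\mathbb{C}^{N+1}$ with the Hopf action, where $\varphi_0=\lVert V\rVert\, d\alpha$ by Proposition \ref{Prop:NormV}(3), $\lVert V\rVert\sim r$ and $d\alpha$ is a fixed basic form whose quotient-metric norm grows like $r^{-2}$.) Since on functions $\widehat{c}(\bar{\varphi}_0)u=u\,\bar{\varphi}_0$, the operator norm of $\widehat{c}(\bar{\varphi}_0)$ is at least $\sup|\bar{\varphi}_0|$, so ``the $r$ prefactor suppresses $\bar{\varphi}_0$'' is exactly what fails in the vertical directions, and the boundedness assertion cannot be reached by the argument you sketch; it needs an input beyond $\bar{\varphi}_0=r\varphi_{0,\mathcal{F}}$ (for instance a relative, Kato-type bound of an $r^{-1}$-sized vertical endomorphism against the singular term $r^{-1}A_V$ of the operator, or a precise statement of the norm in which boundedness is meant). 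Note that the paper itself asserts the boundedness with no argument beyond the prefactor $r$, so this is the one point where your proposal, like the paper's ``in particular'', is not yet a proof.
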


Combining this proposition with Theorem \ref{Thm:InducedDiracOp} and the Kato-Rellich Theorem (\cite[Theorem V.4.3]{KATO}) we can prove the claim of Remark \ref{Rmk:OpD}.

\begin{coro}
The operator $\mathscr{D}:\Omega_c(M_0/S^1)\longrightarrow\Omega_c(M_0/S^1)$ defined by 
\begin{align*}
\mathscr{D}\coloneqq D_{M_0/S^1}+\frac{1}{2}c(\bar{\kappa})\varepsilon,
\end{align*}
is essentially self-adjoint. 
\end{coro}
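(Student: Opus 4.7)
The plan is to deduce essential self-adjointness of $\mathscr{D}$ from that of $\mathscr{D}'$ via a bounded symmetric perturbation argument. First, recall from Theorem \ref{Thm:InducedDiracOp} that $\mathscr{D}'$ is essentially self-adjoint on the core $\Omega_c(M_0/S^1)$. A direct comparison of the explicit formulas gives
\begin{equation*}
\mathscr{D} - \mathscr{D}' \;=\; \widehat{c}(\bar{\varphi}_0)\left(\frac{1-\varepsilon}{2}\right),
\end{equation*}
so what needs to be checked is that this difference is a globally bounded symmetric operator on the relevant $L^2$-space; then the Kato-Rellich theorem \cite[Theorem V.4.3]{KATO} applies and delivers the claim.

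For symmetry, the projector $(1-\varepsilon)/2$ is self-adjoint and commutes with $\widehat{c}(\bar{\varphi}_0)$ by Proposition \ref{Prop:CliffMultVarphi}(2), while $\widehat{c}(\bar{\varphi}_0)$ is itself self-adjoint by construction; hence the difference is symmetric on $\Omega_c(M_0/S^1)$. For boundedness, I split the analysis in two regions. Away from any prescribed tubular neighborhood of $M^{S^1}$, the closed set $M/S^1$ minus that neighborhood is compact, and $\widehat{c}(\bar{\varphi}_0)$ is a continuous bundle endomorphism there, so its fibrewise operator norm is uniformly bounded. Near a connected component $F \subset M^{S^1}$, the proposition established immediately before the corollary gives $\bar{\varphi}_0 = r\varphi_{0,\mathcal{F}}$ with $\varphi_{0,\mathcal{F}}$ a smooth $2$-form on the compact fibre bundle $\mathcal{F}$; in particular the potentially singular factor $1/r$ that one might have feared from the mean-curvature side does not appear here, and $\widehat{c}(\bar{\varphi}_0)$ extends continuously across $r=0$ with uniformly bounded operator norm. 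The projector $(1-\varepsilon)/2$ is manifestly of norm at most one, so the full difference $\widehat{c}(\bar{\varphi}_0)(1-\varepsilon)/2$ is globally bounded.

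With boundedness and symmetry in hand, Kato-Rellich finishes the argument: the essentially self-adjoint operator $\mathscr{D}'$ together with the bounded symmetric perturbation $-\widehat{c}(\bar{\varphi}_0)(1-\varepsilon)/2$ has essentially self-adjoint sum $\mathscr{D}$ on the same core $\Omega_c(M_0/S^1)$, and the unique self-adjoint extensions differ only by that bounded term. The only delicate step in this plan is the verification that $\widehat{c}(\bar{\varphi}_0)$ remains bounded as one approaches the fixed-point set; this is precisely where the explicit identification $\bar{\varphi}_0 = r\varphi_{0,\mathcal{F}}$, coming from Proposition \ref{Prop:KappaPhiDCirc} and the orbit-map analysis on $\mathring{\mathcal{D}}$, is essential, and it is what distinguishes the $\widehat{c}(\bar{\varphi}_0)$ term from the genuinely singular $c(\bar{\kappa})\varepsilon$ term already absorbed into $\mathscr{D}$.
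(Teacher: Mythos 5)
Your proposal is correct and is essentially the paper's own argument: the paper likewise writes $\mathscr{D}=\mathscr{D}'+\widehat{c}(\bar{\varphi}_0)\frac{1-\varepsilon}{2}$, invokes Theorem \ref{Thm:InducedDiracOp} for the essential self-adjointness of $\mathscr{D}'$, uses the local identification $\bar{\varphi}_0=r\varphi_{0,\mathcal{F}}$ near $M^{S^1}$ (together with Proposition \ref{Prop:CliffMultVarphi}) to see that the perturbation is bounded and symmetric, and concludes via Kato--Rellich. Your extra remark about uniform boundedness away from the fixed-point set by compactness is a harmless elaboration of what the paper leaves implicit.
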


\subsection{Local description of $\mathscr{D}$ }\label{Section:DecConeOp}

Now we analyze how the complete operator $\mathscr{D}$ can be written near the fixed point set. For the Hodge-de Rham operator this was done by Br\"uning in \cite[Section 2]{B09}.  
For $t>0$,  define the model space $U_t\coloneqq \mathcal{F}\times (0,t)$ and let $\pi: U_t\longrightarrow \mathcal{F}$ be the  projection onto the first factor. Equip $U_t$ with the metric
\begin{align}\label{Eqn:MetricUt}
g^{TU_t}\coloneqq  dr^2\oplus  g^{T_H\mathcal{F}}\oplus r^2 g^{T_V\mathcal{F}},
\end{align}
for $0<r<t$. We choose the orientation on $U_t$ defined by the oriented orthonormal basis 
$\{-\partial_r, \widehat{f_1},\cdots, \widehat{f}_h,\widehat{e_1},\cdots,\widehat{e}_v\}$. Consider the unitary transformation introduced in \cite[Equation 2.12]{B09},
\begin{equation*}\label{Eqn:TransfPsi1}
\Psi_1:
\xymatrixcolsep{2cm}\xymatrixrowsep{0.01cm}\xymatrix{
L^2((0,t),\Omega(\mathcal{F})\otimes\mathbb{C}^2) \ar[r] & L^2(U_t)\\
(\sigma_1,\sigma_2) \ar@{|->}[r] & \pi^* r^{\nu}\sigma_1(r)+ dr\wedge\pi^* r^\nu \sigma_2(r),
}
\end{equation*}
where $\nu$ is the operator defined by
$$\nu:=\text{vd}-\frac{v}{2}=\text{vd}-N.$$
For example $\nu e_i=(1-N)e_i$ and $\nu f_\alpha=-Nf_\alpha$. Let us denote the horizontal and vertical chirality operators by
\begin{align*}
\bar{\star}_H\coloneqq &i^{[(h+1)/2]}c({f}^1)\cdots c({f}^h),\\
\bar{\star}_V\coloneqq &i^{[(v+1)/2]}c({e}^1)\cdots c({e}^v),
\end{align*}
A straightforward computation shows that we can write the transformed chirality operator as (\cite[Lemma 2.4]{B09}, \cite[Remark 6.6]{JO17}))
\begin{align}\label{TransTauPsi1}
\widehat{\star}\coloneqq 
\Psi_1^{-1}\bar{\star}\Psi_1
=\left(\begin{array}{cc}
0 & -\bar{\star}_H\bar{\star}_V \\
-\bar{\star}_H\bar{\star}_V  & 0
\end{array}
\right)
=\left(\begin{array}{cc}
0 & I\\
I & 0
\end{array}
\right)\otimes(-\alpha),
\end{align}
where $\alpha \coloneqq \varepsilon_H^v \bar{\star}_H\otimes\bar{\star}_V=\bar{\star}_H\bar{\star}_V $ and $\varepsilon_H/\varepsilon_V$ denote the horizontal/vertical Gau\ss-Bonnet involution. It is easy to verify from \eqref{TransTauPsi1} that the unitary transformation
\begin{align*}
\mathcal{U}\coloneqq
\frac{1}{\sqrt{2}}
\left(
\begin{array}{cc}
I & \alpha\\
-\alpha & I 
\end{array}
\right)
\end{align*}
diagonalizes $\widehat{\star}$ (\cite[Equation 2.27]{B09}), i.e.
\begin{align*}
\bigstar\coloneqq
\mathcal{U}^{-1}\hat{\star}\mathcal{U}
\left(
\begin{array}{cc}
I & 0 \\
0 & -I
\end{array}
\right).
\end{align*}

\begin{theorem}[{\cite[Theorem 2.5]{B09}}]\label{Thm:TransfDiracOps}
Under the unitary transformation  $\Psi\coloneqq \Psi_1 \mathcal{U}$ the Hodge-de Rham operator $D_{U_t}$  on $U_t$ is transformed as
\begin{equation*}
\Psi^{-1}D_{U_t}\Psi=\gamma\left(\frac{\partial}{\partial r}+
\bigstar\otimes A(r)
\right). 
\end{equation*}
In particular the  transformed signature operator is
\begin{equation*}
\Psi^{-1}D^{\textnormal{+}}_{U_t}\Psi=\frac{\partial}{\partial r}+A(r). 
\end{equation*}
The operator $A(r)$ is defined by $A(r)\coloneqq A_H(r)+r^{-1}A_V$. Here
\begin{align*}
A_H(r)\coloneqq &\left((d^{(1)}_H +r d^{(2)}_H) + (d^{(1)}_H + rd^{(2)}_H)^\dagger \right)\alpha,\\
A_{V}\coloneqq & (d_V + d^\dagger_V)\alpha+\nu.
\end{align*}
The operators $d_V, d^{(1)}_H$ and $d^{(2)}_H$ are obtained from the decomposition of the exterior derivative $d_{U_t}$ with respect to \eqref{DecompBundleE} (cf. \cite[Lemma 2.1]{B09}). The first order vertical differential operator $A_V$ is called the cone coefficient. 
\end{theorem}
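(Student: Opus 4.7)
The plan is to compute the Hodge-de Rham operator $D_{U_t}=d_{U_t}+d^\dagger_{U_t}$ on the warped product $(U_t,g^{TU_t})$, then conjugate in two steps: first by $\Psi_1$ to absorb the radial weight, and then by $\mathcal{U}$ to diagonalize the transformed chirality $\widehat{\star}$ into $\bigstar=\textnormal{diag}(I,-I)$.

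First I would write every form on $U_t$ uniquely as $\sigma_1(r)+dr\wedge\sigma_2(r)$ with $\sigma_i(r)\in\Omega(\mathcal{F})$, and decompose the exterior derivative of $\mathcal{F}$ according to the bigrading $\wedge T^*\mathcal{F}=\bigoplus_{p,q}\wedge^{p,q}T^*\mathcal{F}$. Relative to the $g^{T\mathcal{F}}$-orthonormal coframe $\{e^i\}\cup\{f^\alpha\}$, this gives $d_\mathcal{F}=d^{(1)}_H+d_V+d^{(2)}_H$, where $d^{(1)}_H$ raises horizontal degree by one, $d_V$ raises vertical degree by one, and $d^{(2)}_H$ is the connection-theoretic curvature contribution (it changes the horizontal degree by $+1$ while changing the vertical degree by $-1$, as in \cite[Lemma 2.1]{B09}). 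The rescaling $\widehat{e}^i=re^i$ that turns $\{e^i\}\cup\{f^\alpha\}$ into a $g^{TU_t}$-orthonormal coframe multiplies each $d_V$-contribution by $r^{-1}$ and each $d^{(2)}_H$-contribution by $r$, so in the $U_t$-orthonormal frame
\[
d_{U_t}(\sigma_1+dr\wedge\sigma_2)=(d^{(1)}_H+rd^{(2)}_H+r^{-1}d_V)\sigma_1 + dr\wedge\bigl(\partial_r\sigma_1-(d^{(1)}_H+rd^{(2)}_H+r^{-1}d_V)\sigma_2\bigr).
\]
Since the volume form of $(U_t,g^{TU_t})$ is $r^v\,\vol_\mathcal{F}\wedge dr$, the formal $L^2$-adjoint $d^\dagger_{U_t}$ is obtained by taking the $r=\text{const}$ fiberwise adjoints and picking up the weight correction $\partial_r\mapsto-\partial_r-v/r$ from integration by parts in the radial direction.

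Second, I would compute $\Psi_1^{-1}D_{U_t}\Psi_1$. The map $\Psi_1$ multiplies vertical degree-$q$ components of $(\sigma_1,\sigma_2)$ by $r^{q-v/2}=r^\nu$, which by construction trivializes the radial $L^2$-measure to $dr$ on $L^2((0,t),\Omega(\mathcal{F})\otimes\mathbb{C}^2)$. Under this conjugation $\partial_r$ shifts to $\partial_r+\nu/r$, the $d_V/r$ and $d^\dagger_V/r$ contributions survive unchanged (because $\Psi_1$ commutes with them up to the same $r^\nu$ factor), and the $v/r$ weight term combines with the $\nu/r$ shift into the clean coefficient that assembles into $A_V$. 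Using \eqref{TransTauPsi1}, the Clifford multiplication by $dr$ on $U_t$ becomes the off-diagonal matrix $\gamma$ and the fiberwise Clifford multiplications by horizontal and vertical coframe elements pick up factors of $\bar{\star}_H\bar{\star}_V=\alpha$ in exactly the way needed so that $A_H$ and the cone coefficient appear pre-multiplied by $\alpha$. The outcome is an operator of the form $\gamma\partial_r+\widehat{\star}\otimes A(r)$, with $A_H(r)=\bigl((d^{(1)}_H+rd^{(2)}_H)+(d^{(1)}_H+rd^{(2)}_H)^\dagger\bigr)\alpha$ and $A_V=(d_V+d^\dagger_V)\alpha+\nu$, the $\nu$ being precisely the contribution of the $\Psi_1$-shift.

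Third, $\mathcal{U}$ diagonalizes $\widehat{\star}$ into $\bigstar$, so after conjugation $\widehat{\star}\otimes A(r)$ becomes $\bigstar\otimes A(r)$ and $\gamma$ is preserved (since $\mathcal{U}$ only mixes the two copies $\sigma_1,\sigma_2$ through $\alpha$, and $\gamma$ anticommutes with $\alpha\otimes I$ in the correct way). Restricting to the $+1$-eigenspace of $\bigstar$ yields the chiral half $\Psi^{-1}D^+_{U_t}\Psi=\partial_r+A(r)$. The main obstacle in this plan is the third step of the middle paragraph: verifying that after the $\Psi_1$-conjugation the horizontal and vertical pieces of $d+d^\dagger$ recombine with the correct $\alpha$-twists so that $A(r)$ anticommutes with $\gamma$ on the nose; this is where the sign bookkeeping from Proposition \ref{Prop:Chirl}(3),(5), the horizontal/vertical Gau\ss-Bonnet grading, and the curvature term $d^{(2)}_H$ all have to be tracked simultaneously.
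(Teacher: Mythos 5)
First, note that the paper itself gives no proof of this statement: it is quoted directly from Br\"uning \cite[Theorem 2.5]{B09}, and your outline follows exactly the computation of that source -- write forms on the model cone as $\sigma_1(r)+dr\wedge\sigma_2(r)$, split $d_{\mathcal F}=d_V+d^{(1)}_H+d^{(2)}_H$ according to the bigrading \eqref{DecompBundleE}, conjugate by $\Psi_1$, then diagonalize $\widehat{\star}$ from \eqref{TransTauPsi1} by $\mathcal U$ (your observation that $\mathcal U$ preserves $\gamma$, which uses $\alpha^2=1$ and the fact that $v=2N$ is even so that $\bar\star_H$ and $\bar\star_V$ commute, is correct). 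So the architecture is the right one and matches the cited proof.

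However, the middle step is internally inconsistent as written and would fail if carried out literally. The exterior derivative is metric-independent, so on pullback forms $d_{U_t}(\pi^*\sigma_1+dr\wedge\pi^*\sigma_2)$ contains no powers of $r$ at all; the factors $r^{-1}$ on $d_V$, $r$ on $d^{(2)}_H$, and the shift $\partial_r\mapsto\partial_r+\nu/r$ are all produced by one and the same conjugation by $r^\nu$, precisely \emph{because} $r^\nu$ does not commute with $d_V$ and $d^{(2)}_H$: these shift the vertical degree by $\pm1$, so $r^{-\nu}d_V\,r^{\nu}=r^{-1}d_V$ and $r^{-\nu}d^{(2)}_H\,r^{\nu}=r\,d^{(2)}_H$, and this is also where the $\nu$ in $A_V$ comes from. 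Your proposal instead inserts the $r^{\mp1}$ factors already at the coframe-rescaling step (your displayed formula for $d_{U_t}$, which moreover omits the accompanying degree-dependent radial shift that this identification produces), and then conjugates by $\Psi_1$ claiming the terms ``survive unchanged because $\Psi_1$ commutes with them up to the same $r^\nu$ factor''; that commutation claim is false, and stacking the orthonormal-frame identification (whose natural $L^2$-weight is the uniform $r^{v/2}$ from the volume density, consistent with your $\partial_r^\dagger=-\partial_r-v/r$) on top of the extra factor $r^\nu$ is not unitary, so the powers of $r$ get double-counted. You must commit to one bookkeeping: either work with pullbacks and let the $\Psi_1$-conjugation generate both the $r^{\mp1}$ factors and the $\nu$-term (Br\"uning's route), or work with rescaled orthonormal-frame components and the weight $r^{v/2}$; as written you only land on the stated $A(r)=A_H(r)+r^{-1}A_V$ because the target formula is already known, and the remaining $\alpha$-twist and sign bookkeeping via Proposition \ref{Prop:Chirl} -- which is the actual content of the theorem -- is deferred rather than carried out.
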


The following result, which is of fundamental importance for later purposes, is a consequence of Theorem \ref{Thm:TransfDiracOps} and the discussion of \cite[Section 1]{B09}.
\begin{lemma}[{\cite[Theorem 2.5]{B09}}]\label{Lemma:VertOp1}
The operator $A_{HV}\coloneqq A_H(0)A_V+A_VA_H(0)$ is a first order vertical operator, i.e. it only differentiates with respect to the vertical coordinates. If $A_V$ is invertible then, for $r$ small enough, there exists a constant $C>0$ such that $A(r)^2\geq Cr^{-2}A_V^2$, in particular $A(r)$ is also invertible.
\end{lemma}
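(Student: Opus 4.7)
The plan is to analyze $A(r)^2$ term by term using the explicit formulas from Theorem \ref{Thm:TransfDiracOps} and to reduce the main estimate to standard elliptic interpolation on the compact fibre $\mathbb{C}P^N$.

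First I would establish that $A_{HV}$ is a first-order vertical operator by a direct symbol computation. Writing $A_H(0) = (d_H^{(1)} + (d_H^{(1)})^\dagger)\alpha$ and $A_V = (d_V + d_V^\dagger)\alpha + \nu$, the second-order contribution to the symmetrization $A_H(0)A_V + A_V A_H(0)$ comes from anticommuting a purely horizontal Clifford-type principal symbol with a purely vertical one. Since horizontal and vertical cotangent directions are orthogonal, this principal-symbol anticommutator vanishes. The remaining terms either involve the bounded grading $\nu$ or come from commutators of horizontal Clifford symbols with vertical connection coefficients, and under the splitting \eqref{DecompBundleE} one can check that every surviving derivative is vertical, matching the claim quoted from \cite[Theorem 2.5]{B09}.

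Next I would expand
\begin{align*}
A(r)^2 = A_H(r)^2 + r^{-1}\bigl(A_H(r)A_V + A_V A_H(r)\bigr) + r^{-2}A_V^2,
\end{align*}
use $A_H(r) = A_H(0) + r\widetilde{A}$ with $\widetilde{A}$ built from $d_H^{(2)}$ and its adjoint, and rewrite the cross term as $r^{-1}A_{HV} + R(r)$, where $R(r)$ is a polynomial in $r$ whose coefficients are first-order vertical or bounded operators. Since $A_V$ is elliptic along the compact fibre $\mathbb{C}P^N$, its assumed invertibility yields a uniform spectral gap $A_V^2 \geq \lambda_0^2\, I$ with $\lambda_0 > 0$, and moreover $A_V^{-1}$ maps $L^2$ into the first vertical Sobolev space, so the operator $T := A_{HV}A_V^{-1}$ is bounded.

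The core estimate is then a routine interpolation: for any $u$ in the core,
\begin{align*}
\bigl|\langle r^{-1}A_{HV} u, u\rangle\bigr| \leq r^{-1}\|T\|\,\|A_V u\|\,\|u\| \leq \tfrac{1}{2}r^{-2}\|A_V u\|^2 + \tfrac{1}{2}\|T\|^2\|u\|^2,
\end{align*}
and the last summand is absorbed into $\tfrac{1}{4}r^{-2}\|A_V u\|^2$ once $r^2 \leq \lambda_0^2/(2\|T\|^2)$, using $\|u\|^2 \leq \lambda_0^{-2}\|A_V u\|^2$. The remainder $R(r)$ is handled identically, since its coefficients are at worst first-order vertical and are multiplied by non-negative powers of $r$, while $A_H(r)^2 \geq 0$. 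Putting everything together yields, for $r$ small enough,
\begin{align*}
A(r)^2 \geq \tfrac{1}{4}\,r^{-2} A_V^2,
\end{align*}
which is the desired inequality with $C = 1/4$. Invertibility of $A(r)$ is then immediate, since $A(r)^2 \geq C r^{-2}\lambda_0^2\, I$ is strictly positive and $A(r)$ is self-adjoint.

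The main obstacle I anticipate is the clean verification that $A_{HV}$ is genuinely first-order and purely vertical: the combinatorics of the Clifford symbols composed with $\alpha = \bar{\star}_H\bar{\star}_V$ and with the grading $\nu$ have to be tracked carefully under the splitting \eqref{DecompBundleE}, so that mixed horizontal-vertical second-order terms and unwanted horizontal first-order contributions cancel as advertised. Once that structural fact is in hand, the quadratic lower bound follows from the interpolation argument above.
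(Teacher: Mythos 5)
The paper itself gives no proof of this lemma---it is quoted from \cite[Theorem 2.5]{B09}---so the benchmark is Br\"uning's argument. The analytic half of your proposal is correct and is exactly the mechanism the paper exploits later anyway: boundedness of $A_{HV}A_V^{-1}$ (the zero-order operator $L=A_{HV}(r)(A_V+\tfrac12)^{-1}$ in the proof of Lemma \ref{Lemma:5.12}), the spectral gap $A_V^2\geq\lambda_0^2$ coming from invertibility, Cauchy--Schwarz and absorption for $r$ small, with the $r\,d_H^{(2)}$-remainder in $A_H(r)$ contributing only terms that are dominated by $r^{-2}\norm{A_Vu}^2$ once $r$ is small. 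Granting the structural statement about $A_{HV}$, this does yield $\norm{A(r)u}^2\geq Cr^{-2}\norm{A_Vu}^2$ and hence invertibility of $A(r)$.

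The genuine gap is in the structural statement itself, namely that $A_{HV}$ differentiates \emph{only} vertically; as written, your justification does not deliver it. (i) You dismiss the terms involving the grading $\nu$ as harmless ``because $\nu$ is bounded'', but $A_H(0)\nu+\nu A_H(0)$, were it nonzero, would be a first-order \emph{horizontal} operator (a bounded endomorphism does not lower the order of $A_H(0)$), and such a term is not dominated by $A_V$, so $A_{HV}A_V^{-1}$ would fail to be bounded and your estimate would collapse. What saves the day is a cancellation, not boundedness: $d_H^{(1)}$ and its adjoint preserve vertical degree, hence commute with $\nu$, while $\alpha=\bar{\star}_H\bar{\star}_V$ anti-commutes with $\nu$ because $\bar{\star}_V$ reverses vertical degree, so $A_H(0)\nu+\nu A_H(0)=(d_H^{(1)}+(d_H^{(1)})^{\dagger})(\alpha\nu+\nu\alpha)=0$. (ii) The second-order cancellation is not just orthogonality of $\xi_H$ and $\xi_V$: both symbols carry the factor $\alpha$, and one needs that $\alpha$ commutes with both horizontal and vertical Clifford multiplication (true here since $h$ is odd and $v=2N$ is even), so that $\{c(\xi_H)\alpha,c(\xi_V)\alpha\}=\{c(\xi_H),c(\xi_V)\}\alpha^2=0$; you flag this bookkeeping but do not do it. (iii) Most importantly, even after the second-order part cancels one must rule out surviving first-order \emph{horizontal} terms produced when the vertical derivatives in $A_V$ fall on the coefficients of $A_H(0)$. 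This needs structural input you never invoke: the horizontal coframe $\{f^\alpha\}$ and the horizontal metric on $\mathcal{F}$ are basic, pulled back from $F$ along $\pi_\mathcal{F}$, so vertical derivatives annihilate the horizontal Clifford coefficients, and the bracket of a vertical field with a basic horizontal lift is again vertical, so mixed commutators of derivatives contribute only vertical derivatives and zero-order curvature terms (the same check is needed for the $d_H^{(2)}$-remainder). With (i)--(iii) supplied your sketch becomes a correct proof along the lines of \cite[Theorem 2.5]{B09}; without them, ``one can check that every surviving derivative is vertical'' is the lemma restated, not proved.
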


Using the relation $\varepsilon\alpha=-\alpha\varepsilon$ one verifies the transformation law
\begin{align*}
\Psi^{-1}\left(
-\frac{1}{2r}c(dr)\varepsilon
\right) \Psi=
\gamma\left(
\frac{1}{2r}
\left(
\begin{array}{cc}
-\varepsilon & 0\\
0 & \varepsilon
\end{array}
\right)
\right).
\end{align*}
Combining this formula and Theorem \ref{Thm:TransfDiracOps} we obtain the following local description of $\mathscr{D}$ close to the fixed point set. 
\begin{theorem}\label{Thm:LocalDesD1}
Under the unitary transformation $\Psi$ the operator $\mathscr{D}_{U_t}$ transforms as 
\begin{equation*}
\Psi^{-1}\mathscr{D}_{U_t}\Psi=\gamma\left(\frac{\partial}{\partial r}+
\bigstar\otimes \mathscr{A}(r)
\right),
\end{equation*}
where $\mathscr{A}(r)\coloneqq A_H(r) + r^{-1}\mathscr{A}_V$ and the associated cone coefficient is
\begin{align*}
\mathscr{A}_V\coloneqq A_V-\frac{1}{2}\varepsilon = 
A_V - \frac{1}{2r}\varepsilon_H\otimes\varepsilon_V.
\end{align*}
\end{theorem}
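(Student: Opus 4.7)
The plan is to reduce the statement to the combination of two prior computations: the transformation of $D_{U_t}$ from Theorem \ref{Thm:TransfDiracOps} and the transformation of the zero-order perturbation $\frac{1}{2}c(\bar{\kappa})\varepsilon$ already written just before the theorem. Indeed, by definition $\mathscr{D}_{U_t}=D_{U_t}+\frac{1}{2}c(\bar{\kappa})\varepsilon$, and by the preceding proposition we have $\bar{\kappa}=-dr/r$ on $U_t$, so that
\begin{align*}
\frac{1}{2}c(\bar{\kappa})\varepsilon=-\frac{1}{2r}c(dr)\varepsilon.
\end{align*}
Thus it suffices to apply $\Psi^{-1}(\cdot)\Psi$ termwise and collect the resulting radial/cone coefficients.

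First I would apply Theorem \ref{Thm:TransfDiracOps} directly, obtaining $\Psi^{-1}D_{U_t}\Psi=\gamma(\partial_r+\bigstar\otimes A(r))$ with $A(r)=A_H(r)+r^{-1}A_V$. Next, I would invoke the transformation formula stated in the text immediately before the theorem, namely
\begin{align*}
\Psi^{-1}\left(-\frac{1}{2r}c(dr)\varepsilon\right)\Psi=\gamma\left(\frac{1}{2r}
\begin{pmatrix}-\varepsilon & 0\\ 0 & \varepsilon\end{pmatrix}\right),
\end{align*}
which follows from $\varepsilon\alpha=-\alpha\varepsilon$ together with the explicit shape of $\Psi=\Psi_1\mathcal{U}$. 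The key algebraic observation is then that
\begin{align*}
\begin{pmatrix}-\varepsilon & 0\\ 0 & \varepsilon\end{pmatrix}=-\bigstar\otimes\varepsilon,
\end{align*}
since $\bigstar=\mathrm{diag}(I,-I)$. Adding the two contributions and factoring out $\bigstar\otimes(\cdot)$ gives
\begin{align*}
\Psi^{-1}\mathscr{D}_{U_t}\Psi=\gamma\left(\frac{\partial}{\partial r}+\bigstar\otimes\left(A(r)-\frac{\varepsilon}{2r}\right)\right),
\end{align*}
and regrouping $A_H(r)+r^{-1}A_V-\tfrac{1}{2r}\varepsilon=A_H(r)+r^{-1}(A_V-\tfrac{1}{2}\varepsilon)=A_H(r)+r^{-1}\mathscr{A}_V$ yields the desired formula with $\mathscr{A}(r)=A_H(r)+r^{-1}\mathscr{A}_V$ and $\mathscr{A}_V=A_V-\tfrac{1}{2}\varepsilon$. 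The final rewriting $\varepsilon=\varepsilon_H\otimes\varepsilon_V$ is just the tensor-product decomposition of the Gau\ss--Bonnet grading induced by \eqref{DecompBundleE}.

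The only nontrivial step is verifying the sign conventions, since everything hinges on the anticommutation $\varepsilon\alpha=-\alpha\varepsilon$ (which in turn comes from $\varepsilon$ anticommuting with each Clifford multiplication $c(e^i), c(f^\alpha)$) and on how $\mathcal{U}$ conjugates a diagonal operator $\mathrm{diag}(-\varepsilon,\varepsilon)$. The conjugation by $\Psi_1$ is transparent because $c(dr)$ is left multiplication by $dr$ and $\Psi_1$ inserts/removes $dr$; the conjugation by $\mathcal{U}$ is where the $\alpha$-twist enters and must be tracked with care. Once the sign is pinned down as in the displayed formula quoted from the text, the assembly is immediate and nothing beyond elementary matrix and Clifford algebra is required. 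Therefore the proof is essentially a one-line assembly from Theorem \ref{Thm:TransfDiracOps}, Proposition \ref{Prop:KappaPhiDCirc}, and the pre-computed conjugation of $-\tfrac{1}{2r}c(dr)\varepsilon$.
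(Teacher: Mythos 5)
Your proposal is correct and follows essentially the same route as the paper: the paper likewise obtains Theorem \ref{Thm:LocalDesD1} by writing $\mathscr{D}_{U_t}=D_{U_t}-\tfrac{1}{2r}c(dr)\varepsilon$ via Proposition \ref{Prop:KappaPhiDCirc}, invoking Theorem \ref{Thm:TransfDiracOps} for the first summand, and using the displayed conjugation formula (based on $\varepsilon\alpha=-\alpha\varepsilon$) for the potential, after which the identification $\operatorname{diag}(-\varepsilon,\varepsilon)=-\bigstar\otimes\varepsilon$ and the regrouping $A(r)-\tfrac{\varepsilon}{2r}=A_H(r)+r^{-1}\mathscr{A}_V$ are exactly the assembly you describe. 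Your sign bookkeeping (commutation of $\gamma$ with $\mathcal{U}$, the $\alpha$-twist with $\alpha^2=1$) checks out, so nothing is missing.
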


\subsection{Spectral decomposition of the cone coefficient}\label{Section:SpectralDecomp}

The cone coefficient $A_V$ is of fundamental importance for the study of the Hodge-de Rham operator $D_{M_0/S^1}$. The resolvent analysis by Br\"uning and Seeley (\cite{B09}, \cite{BS91}) shows that if $|A_V|\geq 1/2$ then $D_{M_0/S^1}$ is discrete and essentially self-adjoint (\cite[Theorem 0.1]{B09}). Therefore, the understanding of the spectrum of $A_V$ is crucial for the theory. A detailed analysis on the subject is presented in \cite[Section 3]{B09}. The main idea behind this analysis is to study $\spec(A_V)$ on the space of vertical harmonic forms $\mathcal{H}$ and its complementary space. On the first space the eigenvalues of $A_V$ are given by $2j-N$ for $j=0,1, \cdots, N$ (here one uses the fact that the fiber $\mathbb{C}P^{N}$ has non-trivial cohomology groups in even dimensions). On the complement the eigenvalues have the form $\pm \sqrt{\lambda + (j-N-1/2)^2}$ for  $j=0,1, \cdots, 2N$ and $\lambda > 0$ is a eigenvalue of the vertical Laplacian. By rescaling the vertical component of the metric \eqref{Eqn:MetricUt} one can make the eigenvalues $\lambda$ as large as necessary so that the spectrum condition of $A_V$ is satisfied (\cite[Remark 2.6]{JO17}). Moreover, this rescaling does not change the index of $D_{M_0/S^1}$ (\cite[pg. 29-30]{B09}). This shows it is enough to study $\spec(A_V)$ on the space of vertical harmonic forms if we are interested in the index computation. We have two possible cases: if $N$ is odd (Witt case) then $|2j-N|\geq 1$ and if $N$ is even (non-Witt case)
we get a zero eigenvalue on $\mathcal{H}^{N/2}$. Hence, in the Witt case, by recaling the vertical metric if necessary, the Hodge-de Rham operator $D_{M_0/S^1}$ is essentially self-adjoint. In the non-Witt case we are forced to impose {\em boundary conditions}. \\

From Theorem \ref{Thm:InducedDiracOp} we know that $\mathscr{D}$ is essentially self-adjoint independently on the parity of $N$. We can also see this from the spectrum of the cone coefficient $\mathscr{A}_V$. As before, up to rescaling, it is enough to study $\spec(\mathscr{A})$ restricted to $\mathcal{H}$. It follows directly from Theorem \ref{Thm:LocalDesD1} that these eigenvalues are of the form $2j-N\pm1/2$  for $j=0,1, \cdots, N$ (\cite[Theorem 6.22]{JO17}). In particular, we see $|\mathscr{A}_V|\geq 1/2$. In addition, due the nature of the potential of $\mathscr{D}$, the resolvent analysis goes along the same lines as for $D_{M_0/S^1}$ and one can show that $\mathscr{D}$ is also discrete (\cite[Section 7.2]{JO17}).

\section{An index theorem}\label{Sect:Index}

In this section we compute the index of the operator $\mathscr{D}^+$, at least in the Witt case, following the techniques used in \cite[Section 5]{B09}. The analytic tools required to treat the index computation are based on the work \cite{BBC08} of Ballmann, Br\"uning and Carron on {\em Dirac-Schr\"odinger Systems}. We begin by recalling some results on perturbation of regular projections (\cite[Sections 1.2, 1.6]{BBC08} and \cite[Section 5]{B09}). 

\subsection{Perturbation of regular projections}\label{Sec:Perturbations}

Let $(H, \inner{\cdot}{\cdot})$ be a separable complex Hilbert space and let $A:\dom(A)\subset H\longrightarrow H$ be a discrete self-adjoint operator. For each Borel subset $J\subseteq\mathbb{R}$ we denote by $Q_J\coloneqq Q_J(A)$ the associated spectral projection of A in $H$. For $\Lambda\in\mathbb{R}$ we will use the notation $Q_{<\Lambda}\coloneqq Q_{(-\infty,\Lambda)}$, $Q_{\geq\Lambda }\coloneqq Q_{[\Lambda,\infty)}$, etc. For $\Lambda=0$ we write $Q_{>}\coloneqq Q_{>0}$, etc. In particular $Q_0$ denotes the projection onto $\ker(A)$.

\begin{definition}[Sobolev chain]
For $s\geq 0$ we define the space $H^s:=H^s(A)$ to be the completion of $\dom(A)\subset H$ with respect to the inner product 
\begin{align*}
\inner{\sigma_1}{\sigma_2}_s\coloneqq \inner{(I+A^2)^{s/2}\sigma_1}{(I+A^2)^{s/2}\sigma_2}. 
\end{align*}
Note for example that $H^0=H$ and $H^1=\dom(A)$. For $s<0$ we define $H^s$ to be the strong dual space of $H^{-s}$.
\end{definition}
For a Borel subbset $J\subseteq\mathbb{R}$ we denote by $H^s_J\coloneqq Q_J(H^s)$ the image of the Sobolev space $H^s$  under the projection $Q_J$. In particular we will use the notation
$H^s_<\coloneqq Q_{<0}(H^s)$, etc.

\begin{definition}
A bounded operator $S\in\mathcal{L}(H)$ is called {\em $1/2$-smooth} if it restricts to an operator $\hat{S}:H^{1/2}\longrightarrow H^{1/2}$ and extends to an operator $\tilde{S}:H^{-1/2}\longrightarrow H^{-1/2}$. The operator $S$ will be called {\em $(1/2)$-smoothing}, or simply {\em smoothing}, if $\ran(\tilde{S})\subset H^{1/2}$.
\end{definition}

In addition to the operator $A$ let us assume that we are given  $\gamma\in\mathcal{L}(H)$ such that 
\begin{enumerate}
\item $\gamma^*=\gamma^{-1}=-\gamma$.
\item $\gamma A+A\gamma=0$.
\end{enumerate}

\begin{definition}
A $1/2$-smooth orthogonal projection $P$ in $H$ is called {\em regular} (with respect to A) if for some, or equivalently, for any $\Lambda\in\mathbb{R}$ we have 
\begin{align*}
\sigma\in H^{-1/2}, \tilde{P}\sigma=0, Q_{\leq \Lambda}(A)\sigma\in H^{1/2}\:\Rightarrow\: \sigma\in H^{1/2}.
\end{align*}
A regular projection $P$ is called {\em elliptic} if $P_\gamma\coloneqq \gamma^*(1-P)\gamma$ is also a regular projection. 
\end{definition}

\begin{example}[Spectral projections]\label{Example:SpectralProjections}
Let $\Lambda\in\mathbb{R}$, then the spectral projection $Q_{>\Lambda}$ is an elliptic projection. 
\end{example}

Let $B$ be a symmetric operator defined on $\dom(A)$ such that 
$\norm{B\sigma}\leq a\norm{\sigma}+b\norm{A\sigma}$ for all $x\in\dom(A)$ and $a,b\in\mathbb{R}_+$ with $b<1$. We call the operator $A+B$, defined on $\dom(A)$, a {\em Kato perturbation of $A$}. By Kato-Rellich Theorem the operator $A+B$ is again self-adjoint and discrete (\cite[Theorem V.4.3]{KATO}).  Thus, we can consider the corresponding Sobolev chain $H^s(A+B)$. Moreover, we can identify as Hilbert spaces $H^s(A)\cong H^s(A+B)$ for all $s\in \mathbb{R}$ (\cite[Remark 8.15]{JO17}). The following result describes how the ellipticity condition of a spectral projection behaves under a Kato perturbation. 
\begin{theorem}[{\cite[Theorem 5.9]{B09}}]\label{BThm5.9}
Assume that $A+B$ is a Kato perturbation of $A$ with $b<2/3$. Then $Q_{>}(A+B)$ is an elliptic projection with respect to $A$ and the subspaces $Q_{\leq }(A)(H)\coloneqq \ran(Q_\leq (A))$ and $Q_{> }(A+B)(H)\coloneqq \ran(Q_>(A+B))$ from a Fredholm pair. If $B$ is bounded and $|A|\geq \mu$ where $\mu>\sqrt{2}\norm{B}$, then 
$\textnormal{\ind}(Q_{\leq }(A)(H),Q_{>}(A+B)(H))=0.$
\end{theorem}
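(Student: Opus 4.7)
The plan is to reduce everything to the unperturbed case $B=0$ via equivalence of Sobolev scales, and to handle the index calculation by a homotopy argument.

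First, Kato--Rellich gives that $A+B$ is self-adjoint and discrete on $\dom(A)$. Since $B$ is relatively bounded with $b<1$, the graph norms of $A$ and $A+B$ are equivalent, so $H^{\pm 1}(A)=H^{\pm 1}(A+B)$ with equivalent norms; interpolating (or, more precisely, refining the Kato estimate at the half-integer level, which is where the sharper hypothesis $b<2/3$ enters) yields the same equivalence at the $\pm 1/2$ scales. Consequently an operator is $1/2$-smooth with respect to $A$ if and only if it is $1/2$-smooth with respect to $A+B$, and the regularity condition at $A$ becomes equivalent to the regularity condition at $A+B$.

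Next, by Example \ref{Example:SpectralProjections} applied to the discrete self-adjoint operator $A+B$, the spectral projection $Q_>(A+B)$ is elliptic with respect to $A+B$; by the previous paragraph it is therefore also elliptic with respect to $A$, which is the first claim. The Fredholm pair property of $\bigl(Q_\leq(A)(H),Q_>(A+B)(H)\bigr)$ then follows from the general machinery of Ballmann--Br\"uning--Carron \cite{BBC08}: the pairing of a regular elliptic projection with a spectral decomposition of the reference operator is Fredholm, with index computed by the associated Dirac--Schr\"odinger boundary value problem.

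For the final assertion, assume in addition that $B$ is bounded and $|A|\geq\mu>\sqrt{2}\,\norm{B}$, and consider the linear homotopy $A_t\coloneqq A+tB$ for $t\in[0,1]$. Using $2|\Real\inner{A\sigma}{tB\sigma}|\leq \tfrac{1}{2}\norm{A\sigma}^2+2t^2\norm{B\sigma}^2$ one obtains
\begin{align*}
\norm{(A+tB)\sigma}^2\geq \tfrac{1}{2}\norm{A\sigma}^2-t^2\norm{B\sigma}^2\geq \Bigl(\tfrac{\mu^2}{2}-\norm{B}^2\Bigr)\norm{\sigma}^2,
\end{align*}
so $0$ lies outside $\spec(A_t)$ uniformly in $t$, with a quantitative gap $\mu^2/2-\norm{B}^2>0$. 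By the Riesz contour representation $Q_>(A_t)=\tfrac{1}{2\pi i}\oint (z-A_t)^{-1}\,dz$ along a contour avoiding this gap, the family $t\mapsto Q_>(A_t)$ is norm-continuous; in combination with homotopy invariance of the Fredholm index of a pair of closed subspaces, the index of $(Q_\leq(A)(H),Q_>(A_t)(H))$ is constant on $[0,1]$. At $t=0$ we have $0\notin\spec(A)$, hence $Q_\leq(A)(H)\oplus Q_>(A)(H)=H$ with trivial intersection, so the index is zero; the same value then holds at $t=1$.

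The main delicate point is the half-integer Sobolev equivalence: ensuring that $1/2$-smoothness and regularity genuinely transfer between $A$ and $A+B$ under a Kato perturbation requires a strictly sharper bound than the $b<1$ sufficient for Kato--Rellich alone, and it is precisely this estimate (controlling $B$ as an operator $H^{1/2}\to H^{-1/2}$ via a Neumann series in $BA^{-1}$) that forces the restriction $b<2/3$.
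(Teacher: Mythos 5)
The paper does not prove this statement at all---it is quoted directly from \cite[Theorem 5.9]{B09}---so your argument has to stand on its own as a proof of that cited result, and as written it has a genuine gap in the first two assertions. The equivalence of the scales $H^{\pm 1/2}(A)$ and $H^{\pm 1/2}(A+B)$ is indeed available, but by interpolation and duality it already holds for any relative bound $b<1$, so contrary to your closing remark this cannot be where $b<2/3$ is needed; and while scale equivalence does transfer $1/2$-smoothness, it does \emph{not} transfer regularity. Regularity of a projection with respect to $A$ is a statement involving the spectral projections $Q_{\leq\Lambda}(A)$ of the reference operator, not only the Sobolev scale. To deduce that $Q_>(A+B)$ is regular (and elliptic) with respect to $A$ from the trivial fact that it is regular with respect to $A+B$, you must compare the two spectral families: concretely, one needs mapping properties of products or differences such as $Q_{>\Lambda}(A)\,Q_{\leq 0}(A+B)$ from $H^{-1/2}$ into $H^{1/2}$, obtained from contour representations of the projections together with resolvent estimates in which the relative bound enters quantitatively---this is the actual content of Br\"uning's theorem and the step responsible for the restriction $b<2/3$ (via bounds of the type $b/(1-b)<2$). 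The same missing comparison undermines the Fredholm-pair claim: the appeal to ``general machinery of \cite{BBC08}'' presupposes exactly the ellipticity with respect to $A$ that has not been established. So the first two claims remain unproved.

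Your treatment of the last assertion is essentially sound: the inequality $\norm{(A+tB)\sigma}^2\geq(\mu^2/2-\norm{B}^2)\norm{\sigma}^2$ correctly uses $\mu>\sqrt{2}\norm{B}$ to give a spectral gap around $0$ for $A_t=A+tB$ uniformly in $t\in[0,1]$, and homotopy invariance of the index of a Fredholm pair reduces the computation to $t=0$, where $Q_{\leq}(A)(H)$ and $Q_{>}(A)(H)$ are complementary and the index is $0$. Two repairs are needed, though: the bounded Riesz contour formula for $Q_>(A_t)$ does not make sense for the unbounded branch of the spectrum---to get norm continuity in $t$ you should instead represent the difference $Q_>(A_t)-Q_>(A_s)$ by an integral over the imaginary axis, as in \cite[Lemma A.1]{BB01}, which is exactly the device used later in this paper---and you must justify that every pair $(Q_{\leq}(A)(H),Q_{>}(A_t)(H))$ along the homotopy is Fredholm. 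That last point follows from the first part of the theorem (a bounded $B$ is a Kato perturbation with $b=0$), so the vanishing argument is contingent on the part you have not actually proved.
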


\subsection{The index formula for the signature operator in the Witt case}\label{Section:IndexSigOpWitt}

The objective of this section is to study the techniques used in \cite[Section 5]{B09} to compute the index of the signature operator ${D}^+$ on $M_0/S^1$ in the Witt case. The strategy, in view of \cite[Theorem 4.17]{BBC08}, is to consider compatible {\em super-symmetric Dirac systems} over $Z_t$ and $U_t$ (see Figure \ref{Fig:DecompQuotSpace}) in the sense of \cite[Section 3]{BBC08} and calculate the index contribution on each piece separately as $t\longrightarrow 0$. In the subsequent section we will then adapt these same techniques to compute the index of the operator $\mathscr{D}^+$ also in the Witt case. \\

In view of Theorem \ref{Thm:TransfDiracOps} we define, for $0<t<t_0/2$ fixed, the operator
\begin{align*}
{D}_1\coloneqq \gamma\left(\frac{\partial}{\partial r}+\bigstar\otimes {A}(t+r)\right),
\end{align*}
where $0<r<t$. From the relation $\gamma\bigstar +\bigstar\gamma =0$ we see that $({D}_1,\bigstar)$  defines a super-symmetric Dirac system on $Z_t$. The corresponding graded operator is 
\begin{align*}
{D}^+_1=\frac{\partial}{\partial r}+{A}(t+r). 
\end{align*}
\begin{remark}
The Hilbert space on which ${A}(t)$ is defined is $H\coloneqq L^2(\wedge T^*\mathcal{F}_t)$ where the metric on $\mathcal{F}_t$ is $g^{T\mathcal{F}_t}=g^{T_H\mathcal{F}}\oplus t^2 g^{T_V\mathcal{F}}$. For $s\geq 0$ the associated Sobolev chain is $H^{s}\coloneqq \dom(|{A}(t)|^s)$. 
\end{remark}
Similarly, we define on $U_t$,
\begin{align*}
{D}_2\coloneqq -\gamma\left(\frac{\partial}{\partial r}-\bigstar\otimes {A}(t-r)\right),
\end{align*}
and we also see that $(D_2,\bigstar)$ is a super-symmetric Dirac system with associated graded operator 
\begin{align*}
{D}^+_2=-\left(\frac{\partial}{\partial_r}-{A}(t-r)\right).
\end{align*}

It is straightforward to verify that the super-symmetric Dirac systems ${D}_1=({D}_1,\bigstar)$ and ${D}_2=({D}_2,\bigstar)$ are compatible in the sense of \cite[Section 3]{BBC08} . We should think of ${D}_1$ and ${D}_2$ as the restrictions of the operator ${D}$ to $Z_t$ and $U_t$ respectively. 
For the operator ${D}_1$ we impose the APS-type boundary condition $B_1\coloneqq \bigstar Q_{<}({A}(t))(\dom(|{A}(t)|^{1/2}))$, which we know is elliptic. 
Obviously this boundary is invariant under $\bigstar$, so we can split it as $B_1=B^+_1\oplus B^-_1$ where
$B^+_1\coloneqq  Q_{<}({A}(t))(\dom(|{A}(t)|^{1/2}))$. For the operator ${D}^+_2$  we choose the complementary boundary condition  $B^+_2\coloneqq  Q_{\geq}({A}(t))(\dom(|{A}(t)|^{1/2}))$. If we denote both operators with their compatible elliptic boundary conditions by
\begin{align*}
{D}^{+}_{Z_t,Q_{<}({A}(t))(H)}\coloneqq &{D}^+_{1,{B^+_1}},\\
{D}^{+}_{U_t,Q_{\geq}({A}(t))(H)}\coloneqq &{D}^+_{1,{B^+_2}},
\end{align*}
then from \cite[Theorem 4.17]{BBC08} we obtain the following decomposition result for the index.

\begin{theorem}[{\cite[Theorem 5.1]{B09}}]\label{Thm:5.1}
The operators ${D}^+_{U_t,Q_{\geq}({A}(t))(H)}$ and ${D}^+_{Z_t,Q_{<}({A}(t))(H)}$ are Fredholm, and we have the index identity
\begin{align*}
\textnormal{ind}({D}^+)=\textnormal{ind}\left({D}^{+}_{Z_t,Q_{<}({A}(t))(H)}\right)+\textnormal{ind}\left({D}^{+}_{U_t,Q_{\geq }({A}(t))(H)}\right).
\end{align*}
\end{theorem}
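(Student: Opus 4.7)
The plan is to deduce this splitting directly from the additivity theorem for compatible super-symmetric Dirac systems, namely \cite[Theorem 4.17]{BBC08}. The construction in the excerpt already supplies the essential ingredients: the pairs $(D_1,\bigstar)$ and $(D_2,\bigstar)$ are super-symmetric Dirac systems over $Z_t$ and $U_t$ respectively; they are compatible along the common boundary $\mathcal{F}_t = \{r = t\}$ (the sign flip built into $D_2$ accounts for the opposite outward normal), and they jointly recover $D^+$ on a collar neighborhood of $\mathcal{F}_t$ in $M_0/S^1$. What remains is to verify the hypotheses of the splitting theorem for the chosen APS-type boundary conditions.

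First I would check that $B_1^+ = Q_{<}(A(t))(H^{1/2})$ on $Z_t$ and $B_2^+ = Q_{\geq}(A(t))(H^{1/2})$ on $U_t$ are elliptic and mutually complementary. Ellipticity of $Q_{\geq}(A(t))$ is Example \ref{Example:SpectralProjections}, and the complementary projection $Q_{<}(A(t))$ is elliptic by the very definition of an elliptic projection (regularity of $P$ together with that of $P_\gamma = \gamma^{*}(1-P)\gamma$). Complementarity is immediate from $Q_{<}(A(t)) + Q_{\geq}(A(t)) = I$, which matches exactly the gluing condition in \cite[Theorem 4.17]{BBC08}. The $\bigstar$-compatibility of the boundary conditions, needed to split off the $+$-part, follows because $\bigstar$ commutes with $A(t)$ (as $\bigstar$ appears diagonally in Theorem \ref{Thm:TransfDiracOps}) and hence preserves each spectral subspace.

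Next I would establish Fredholmness of each individual system. On $Z_t$, which is compact with smooth boundary $\mathcal{F}_t$, the operator $D^+_{Z_t, Q_{<}(A(t))(H)}$ is a standard APS-type elliptic boundary value problem for a first-order Dirac-type operator, hence Fredholm by classical results. For $U_t$, which has a conical end at $r = 0$, Fredholmness is the more delicate point: one invokes the Witt hypothesis through the spectrum of the cone coefficient $A_V$. By Section \ref{Section:SpectralDecomp}, in the Witt case and after rescaling the vertical metric, $|A_V| \geq 1/2$, so the parametrix construction of \cite[Section 4]{B09} applies and yields discreteness together with the Fredholm property of $D^+$ on the model cone with the prescribed APS condition at $r = t$.

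With both systems compatible, elliptic, and Fredholm, the index additivity statement \cite[Theorem 4.17]{BBC08} delivers the claimed identity at once. The main obstacle, as I read it, is not the abstract additivity step but the analytic verification on $U_t$: one needs the spectral gap for $A_V$ together with the specific control on the cone end provided by the parametrix of \cite[Section 4]{B09} in order to promote the APS projection $Q_{\geq}(A(t))$ to a genuinely Fredholm boundary condition for the singular piece. Once this input is in hand, the remainder is a formal application of the Ballmann--Br\"uning--Carron framework.
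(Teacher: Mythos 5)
Your proposal follows essentially the same route as the paper: the paper likewise obtains Theorem \ref{Thm:5.1} (quoting \cite[Theorem 5.1]{B09}) by viewing $(D_1,\bigstar)$ and $(D_2,\bigstar)$ as compatible super-symmetric Dirac systems over $Z_t$ and $U_t$, imposing the complementary elliptic APS-type spectral projections $Q_{<}(A(t))$ and $Q_{\geq}(A(t))$ at $\mathcal{F}_t$, and invoking the splitting theorem \cite[Theorem 4.17]{BBC08}, with the spectral gap $|A_V|\geq 1/2$ (Witt case, after rescaling) supplying the Fredholm theory on the cone end exactly as you indicate.
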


We first study the index contribution on $U_t$. Set ${D}^+_t\coloneqq  {D}_{U_t,Q_{\geq}({A}(t))(H)}$ to lighten the notation. From the discussion in Section \ref{Section:SpectralDecomp} we know that
\begin{align*}
\mathcal{C}({D}^+_t)\coloneqq \{\sigma\in C^{1}_c((0,t],H^1))\:|\:Q_<({A}(t))\sigma(t)=0\}
\end{align*}
is a core for ${D}^+_t$. One of the main ingredients of the Br\"uning's index computation in \cite{B09} is the following remarkable vanishing result. 
\begin{theorem}[{\cite[Theorem 5.2]{B09}}]\label{Thm:5.2}
For $t\in(0,t_0]$ sufficiently small we have 
\begin{align*}
\textnormal{ind}\left({D}^{+}_{U_t,Q_{\geq}({A}(t))(H)}\right)=0. 
\end{align*}
\end{theorem}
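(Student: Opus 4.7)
The plan is to show directly that for $t$ sufficiently small both $\ker D^+_{U_t, Q_\geq(A(t))(H)}$ and its cokernel are trivial, which immediately yields $\ind D^+_t = 0$. First I change to the radial coordinate $s = t - r$ so that the conical singularity sits at $s = 0$ and the outer boundary at $s = t$; in these coordinates the graded operator takes the form $D^+_t = \partial_s + A(s)$ with boundary condition $\sigma(t) \in Q_\geq(A(t))(H^{1/2})$ and no explicit condition at $s = 0$, since essential self-adjointness in the Witt case (Section \ref{Section:SpectralDecomp}) handles the singular end.

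The heart of the argument is the pure cone analysis $A(s) = s^{-1} A_V$. Expanding in the eigenbasis of $A_V$, the formal solution to $(\partial_s + s^{-1} A_V)\sigma = 0$ reads $\sigma(s) = s^{-A_V}\sigma_0$; component by component, $c_\lambda s^{-\lambda}\phi_\lambda$ is square-integrable near $s = 0$ precisely when $\lambda < 1/2$, which under the Witt spectral condition $|A_V| \geq 1$ forces $\lambda \leq -1$. Consequently the trace $\sigma(t) = t^{-A_V}\sigma_0$ lies entirely in the negative spectral subspace of $A(t) = A_V/t$, in direct contradiction with the APS requirement that $\sigma(t) \in Q_\geq(A(t))(H^{1/2})$. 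The same argument applied to the formal adjoint $(D^+_t)^* = -\partial_s + A(s)$ with complementary boundary $\tau(t) \in Q_<(A(t))(H^{1/2})$ uses the solutions $\tau(s) = s^{A_V}\tau_0$, which are $L^2$ near $s = 0$ only when $\lambda > -1/2$, hence $\lambda \geq 1$ under the Witt condition, again contradicting the boundary constraint. Both kernels therefore vanish in the model cone.

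To pass from the model to the actual operator I treat $A_H(s)$ as a bounded perturbation of the singular term $s^{-1} A_V$: on the finite interval $(0, t]$ the horizontal operator is uniformly bounded, and Lemma \ref{Lemma:VertOp1} combined with the Witt invertibility of $A_V$ gives $|A(s)| \geq c\, s^{-1}$ with some $c > 1/2$, provided $t$ is chosen small enough. The asymptotic analysis of regular singular operators from \cite{BS91} and \cite{B09} then shows that solutions of $(\partial_s + A(s))\sigma = 0$ admit Frobenius-type expansions with the same leading $s^{-\lambda}$ behavior as in the model, so the $L^2$ constraint near the singularity continues to confine $\sigma(t)$ to a small perturbation of the negative spectral subspace of $A_V$. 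Meanwhile Theorem \ref{BThm5.9} guarantees that $Q_\geq(A(t))$ is a regular elliptic projection sitting close to $Q_\geq(A_V/t)$ in the sense of perturbation of regular projections, so the spectral incompatibility persists and forces $\ker D^+_t = \coker D^+_t = 0$. The main technical obstacle is to make the Frobenius analysis uniform across all eigenspaces of $A_V$ and compatible with the Sobolev scale $H^s(A(t))$; this is precisely the content of the machinery from \cite{BBC08} and where the bulk of the technical work of the proof is concentrated.
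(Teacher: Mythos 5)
Your model-cone computation correctly captures the heuristic behind the vanishing (an $L^2$ solution near the tip has trace in the negative spectral subspace, which clashes with the APS condition $Q_{\geq}(A(t))$), but the passage from the model to the actual operator is where the proof breaks. You treat $A_H(s)$ as a ``uniformly bounded perturbation'' of $s^{-1}A_V$: this is false, since $A_H(r)$ is a first-order \emph{horizontal differential operator}, unbounded on $H=L^2(\wedge T^*\mathcal{F}_t)$, and it does not commute with $A_V$. Consequently the kernel equation $(\partial_s+A(s))\sigma=0$ is a genuine PDE that does not decouple along the eigenspaces of $A_V$, and no Frobenius-type expansion ``with the same leading $s^{-\lambda}$ behavior'' follows from bounded-perturbation reasoning; making such an expansion rigorous is precisely the hard analytic content you are assuming. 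The appeal to Theorem \ref{BThm5.9} is also misplaced: that theorem concerns Kato perturbations (and its vanishing clause requires a \emph{bounded} perturbation $B$ with $|A|\geq\mu>\sqrt{2}\norm{B}$), so it cannot absorb $A_H$, nor does it say that $Q_{\geq}(A(t))$ is ``close'' to $Q_{\geq}(A_V/t)$. Finally, kernel elements of the closed operator with APS boundary condition need not be classical solutions admitting pointwise expansions, so even the model reasoning needs to be recast as an estimate.

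The paper's proof avoids all of this by a quadratic-form estimate rather than an asymptotic expansion: one expands $\norm{D^+_t\sigma(r)}_H^2$, integrates over $(0,t]$, uses Hardy's inequality for the radial part, the boundary condition at $r=t$ for the total-derivative term, and -- crucially -- Lemma \ref{Lemma:VertOp1}, which says the anticommutator $A_{HV}=A_H(0)A_V+A_VA_H(0)$ is a first-order \emph{vertical} operator, so that $A_{HV}\left(A_V+\tfrac{1}{2}\right)^{-1}$ is bounded and the cross terms can be controlled. This yields Lemma \ref{Lemma:5.12},
\begin{align*}
\norm{D^+_t\sigma}_{L^2((0,t],H)}\;\geq\;\frac{1}{2t}\bnorm{\left(A_V+\frac{1}{2}\right)\sigma}_{L^2((0,t],H)},
\end{align*}
and since by rescaling one may assume $\pm\tfrac{1}{2}\notin\spec(A_V)$, the right-hand side dominates a positive multiple of $\norm{\sigma}$, so $\ker(D^+_t)=\{0\}$; the analogous estimate for $(D^+_t)^*$ with $A_V-\tfrac{1}{2}$ (Remark \ref{Lemma:5.12R}) kills the cokernel. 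If you want to salvage your approach, you would have to replace the Frobenius step by an estimate of exactly this kind; as written, the proposal has a genuine gap.
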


The key point of the proof is the fact that, by the rescaling argument and since in our particular case the dimension of the fibers is even, we can always assume that 
\begin{align}\label{Eqn:1/2notinSpecAV}
\pm\frac{1}{2}\notin\spec(A_V),
\end{align}
This observation allows us to obtain an estimate which shows that $\ker(D^+_t)=\{0\}$.  Then, arguing analogously for the adjoint operator one verifies the vanishing of the index. The concrete form of the claimed estimate is discussed in the following lemma. We revise its proof because it will inspire techniques to derive a similar vanishing result for the operator $\mathscr{D}^+$. 

\begin{lemma}\label{Lemma:5.12}
For $t$ small enough and $\sigma\in\mathcal{C}({D}^+_t)$ we have the estimate
\begin{align*}
\norm{{D}^+_{t}\sigma}_{L^{2}((0,t],H)}\geq \frac{1}{2t}\bnorm{\left({A}_V+\frac{1}{2}\right)\sigma}_{L^{2}((0,t],H)}.
\end{align*}
\end{lemma}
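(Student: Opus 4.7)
The plan is to work in the local cylindrical trivialization of Theorem~\ref{Thm:TransfDiracOps}, where $D_t^+$ takes the form $\partial_r + A(r)$ with $A(r) = A_H(r) + r^{-1}A_V$, and to extract the bound from a Hardy-type inequality combined with the algebraic completion $A_V^2 + A_V = (A_V + \tfrac{1}{2})^2 - \tfrac{1}{4}$. For $\sigma \in \mathcal{C}(D_t^+)$ I would first expand
\[
\|D_t^+\sigma\|^2 = \int_0^t \|\partial_r\sigma\|^2\,dr + \int_0^t \|A(r)\sigma\|^2\,dr + 2\Re\int_0^t \langle\partial_r\sigma, A(r)\sigma\rangle\,dr,
\]
and integrate by parts in the cross term using $\partial_r A(r) = A_H'(r) - r^{-2}A_V$. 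The endpoint at $r=0$ disappears by compact support, while at $r=t$ the APS condition $Q_<(A(t))\sigma(t)=0$ gives $\langle\sigma(t), A(t)\sigma(t)\rangle \geq 0$ and this non-negative contribution can simply be discarded.

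After also expanding $\|A(r)\sigma\|^2 = \|A_H(r)\sigma\|^2 + r^{-2}\|A_V\sigma\|^2 + 2r^{-1}\Re\langle A_H(r)\sigma, A_V\sigma\rangle$ and regrouping, one is left with
\[
\|D_t^+\sigma\|^2 \;\geq\; \int_0^t \|\partial_r\sigma\|^2\,dr + \int_0^t \frac{\langle(A_V^2 + A_V)\sigma, \sigma\rangle}{r^2}\,dr \;-\; \mathcal{R}(t,\sigma),
\]
where $\mathcal{R}(t,\sigma)$ collects everything involving $A_H(r)$ or $A_H'(r)$. The key step is to invoke Hardy's inequality $\int_0^t \|\partial_r\sigma\|^2\,dr \geq \tfrac{1}{4}\int_0^t \|\sigma\|^2/r^2\,dr$, valid because $\sigma$ vanishes near $r=0$; the resulting $\tfrac{1}{4}$ is exactly what is needed to complete $A_V^2 + A_V + \tfrac{1}{4} = (A_V + \tfrac{1}{2})^2$, turning the main contribution into $\int_0^t r^{-2}\|(A_V + \tfrac{1}{2})\sigma\|^2\,dr \geq t^{-2}\|(A_V + \tfrac{1}{2})\sigma\|_{L^2}^2$ via the elementary bound $r^{-2} \geq t^{-2}$ on $(0,t]$.

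The main obstacle is the horizontal remainder $\mathcal{R}(t,\sigma)$: since $A_H(r)$ does not commute with $A_V$, both the mixed term $2r^{-1}\Re\langle A_H(r)\sigma, A_V\sigma\rangle$ and the derivative term $\langle\sigma, A_H'(r)\sigma\rangle$ survive the integration by parts and must be handled. Applying Cauchy--Schwarz in the form $2r^{-1}|\langle A_H\sigma, A_V\sigma\rangle| \leq \epsilon r^{-2}\|A_V\sigma\|^2 + \epsilon^{-1}\|A_H\sigma\|^2$ and using that $A_H(r)$ and $A_H'(r)$ are uniformly bounded on $[0,t_0]$ by Theorem~\ref{Thm:TransfDiracOps}, the remainder is dominated by $\epsilon\int r^{-2}\|A_V\sigma\|^2\,dr + C_\epsilon\|\sigma\|_{L^2}^2$. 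The Witt-case spectral input $|A_V + \tfrac{1}{2}| \geq \tfrac{1}{2}$ from Section~\ref{Section:SpectralDecomp} then gives $\|\sigma\|_H \leq 2\|(A_V + \tfrac{1}{2})\sigma\|_H$ pointwise in the fiber, so for $\epsilon$ small and $t$ below some $t_0$ the remainder is absorbed at the cost of replacing the prefactor $t^{-2}$ by $(2t)^{-2}$, yielding the claimed estimate after taking square roots.
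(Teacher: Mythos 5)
Your overall skeleton (pointwise expansion of $\norm{{D}^+_t\sigma}^2$, integration by parts in the cross term, discarding the non-negative boundary contribution at $r=t$ via the APS condition, Hardy's inequality to complete the square $A_V^2+A_V+\tfrac14=(A_V+\tfrac12)^2$, and finally $r^{-2}\geq t^{-2}$) is exactly the paper's, but your treatment of the horizontal remainder has a genuine gap. You claim that ``$A_H(r)$ and $A_H'(r)$ are uniformly bounded on $[0,t_0]$ by Theorem \ref{Thm:TransfDiracOps}''. This is false for $A_H(r)$: by that theorem $A_H(r)=\bigl((d^{(1)}_H+rd^{(2)}_H)+(d^{(1)}_H+rd^{(2)}_H)^\dagger\bigr)\alpha$ is a genuine first order horizontal differential operator on $\mathcal{F}$, hence unbounded on $H=L^2(\wedge T^*\mathcal{F}_t)$; so the step ``the remainder is dominated by $\epsilon\int r^{-2}\norm{A_V\sigma}^2\,dr+C_\epsilon\norm{\sigma}^2_{L^2}$'' does not hold. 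Nor can the scheme be repaired by keeping the positive term $\norm{A_H(r)\sigma}^2$ from the expansion of $\norm{A(r)\sigma}^2$: Cauchy--Schwarz gives $2r^{-1}|\inner{A_H\sigma}{A_V\sigma}|\leq \epsilon^{-1}\norm{A_H\sigma}^2+\epsilon r^{-2}\norm{A_V\sigma}^2$, and absorbing the first piece into the single available copy of $\norm{A_H\sigma}^2$ forces $\epsilon\geq 1$, which wipes out the vertical term $r^{-2}\norm{A_V\sigma}^2$ entirely. The mixed term is of exactly the same strength as the diagonal terms, so no choice of $\epsilon$ works; treating it by brute-force Cauchy--Schwarz is a wrong approach, not just a missing constant.

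The missing idea is the anticommutator structure, which is precisely what the paper uses. Symmetrizing, the mixed term equals $r^{-1}\inner{A_{HV}\sigma(r)}{\sigma(r)}_H$ (up to the lower order $r$-dependent corrections), where $A_{HV}=A_H(0)A_V+A_VA_H(0)$; by Lemma \ref{Lemma:VertOp1} the first order \emph{horizontal} derivatives cancel in this anticommutator, so $A_{HV}$ is a first order \emph{vertical} operator. Since in the Witt case (after rescaling) $-\tfrac12\notin\spec(A_V)$, the operator ${L}\coloneqq A_{HV}\left(A_V+\tfrac12\right)^{-1}$ is of order zero, hence bounded, and the mixed term becomes $r^{-1}\bigl\langle\left(A_V+\tfrac12\right)\sigma,{L}^*\sigma\bigr\rangle_H$, which is controlled by $C_1\,t\,r^{-2}\bnorm{\left(A_V+\tfrac12\right)\sigma}_H\norm{\sigma}_H$ using $r^{-1}\leq t\,r^{-2}$ on $(0,t]$, and then absorbed into the main term $r^{-2}\bnorm{\left(A_V+\tfrac12\right)\sigma}^2_H$ for $t$ small, together with the (genuinely zero order) term $\inner{\sigma}{A_H'(0)\sigma}_H$, via the spectral bound $\norm{\sigma}_H\lesssim\bnorm{\left(A_V+\tfrac12\right)\sigma}_H$ that you already invoke. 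Without this cancellation mechanism your proof does not close.
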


\begin{proof}
Let $\sigma\in\mathcal{C}({D}^+_{t})$. 
We use the explicit form of $A(r)$ to compute
\begin{align*}
\norm{{D}^+_{t}\sigma(r)}_{H}^2=&\norm{\sigma'(r)}^2_{H}+\norm{A_H\sigma(r)}^2_{H}+r^{-2}\norm{{A}_V\sigma(r)}^2_{H}+r^{-1}\inner{{A}_{HV}\sigma(r)}{\sigma(r)}_{H}\\
&-\frac{d}{dr}\inner{\sigma(r)}{{A}(r)\sigma(r)}_H+r^{-2}\inner{\sigma(r)}{{A}_V\sigma(r)}-\inner{\sigma(r)}{A'_H(0)\sigma(r)}_H. 
\end{align*}

From Lemma \ref{Lemma:VertOp1} it follows that ${L}\coloneqq {A}_{HV}(r)({A}_V+1/2)^{-1}$ is a zero order operator and  therefore there exists a constant $C_1>0 $ such that,
\begin{align*}
\bnorm{{A}_{HV}(r)\left({A}_V+\frac{1}{2}\right)^{-1}}_H\leq C_1, \:\text{for $r\in(0,t]$}.
\end{align*}
It is straightforward to see that we can write
\begin{align}\label{Eqn:AuxLemma5.12}
\norm{{D}^+_{t}\sigma(r) }_{H}^2=&\left(\norm{\sigma'(r)}^2_H-\frac{1}{4r^2}\norm{\sigma(r)}^2_H\right)+\frac{d}{dr}\inner{\sigma(r)}{{A}(r)\sigma(r)}_H\\
&+r^{-2}\bnorm{\left({A}_V+\frac{1}{2}\right)\sigma(r)}^2_H+r^{-1}\left\langle\left({A}_V+\frac{1}{2}\right)\sigma(r),{L}^*\sigma(r)\right\rangle_H\notag\\
&-\inner{\sigma(r)}{A'_H(0)\sigma(r)}_H+\norm{A_H(r)\sigma(r)}^2.\notag
\end{align}
Now we integrate \eqref{Eqn:AuxLemma5.12} between $0$ and $t$ in order to compute the $L^2$-norm. After integration, the first term in the right hand side of this equation is positive by Hardy's inequality (\cite{H20}). The second term in the right hand side of \eqref{Eqn:AuxLemma5.12} is also positive after integration as a result of the boundary condition at $t$. Finally, choose $t$ small enough such that 
\begin{align*}
\norm{{D}^+_{t}\sigma}^2_{L^2((0,t],H)} \geq \frac{1}{4}\int_0^t r^{-2}\bnorm{\left({A}_V+\frac{1}{2}\right)\sigma(r)}^2_H dr\geq \frac{1}{4t^2}\bnorm{\left(A_V+\frac{1}{2}\right)\sigma}^2_{L^2((0,t],H)}.
\end{align*}
\end{proof}

\begin{remark}\label{Lemma:5.12R}
Observe that the adjoint operator 
\begin{align*}
({D}^+_{t})^*=-\frac{\partial}{\partial r}+{A}(r),
\end{align*}
has  a core $\mathcal{C}(({D}^+_{t})^*)\coloneqq \{\sigma\in C^1_c((0,t],H^1)\:|\:Q_{\geq}({A}(t))\sigma(t)=0\}$.
Hence,  we can compute similarly for $\sigma\in \mathcal{C}(({D}^+_{t})^*)$,
 All together, we get the analogous estimate
\begin{align*}
\norm{({D}^+_{t})^*\sigma}_{L^{2}((0,t],H)}\geq \frac{1}{2t}\bnorm{\left({A}_V-\frac{1}{2}\right)\sigma}_{L^{2}((0,t],H)}.
\end{align*}
\end{remark}

From Theorem \ref{Thm:5.2} we conclude that for $t>0$ small enough
\begin{align}\label{Eqn:Indexwoc}
\textnormal{ind}({D}^+)=\textnormal{ind}\left({D}^{+}_{Z_t,Q_{<}({A}(t))(H)}\right),
\end{align}
which is just the index of the signature operator on the manifold with boundary $Z_t$ with an APS-type boundary condition. 
In order to compute this index we would like to use \cite[Theorem 4.14]{APSI}. Moreover, note that the left hand side of \eqref{Eqn:Indexwoc} does not depend on $t$, thus we can study the behavior of the right hand side in the limit $t\longrightarrow 0$. This is of course motivated by the proof of Theorem \ref{Thm:S1SignatureThm}. Nevertheless, we need to be aware of the following observations:

\begin{enumerate}
\item The metric close to the boundary is not a product. Still, we can modify it so that it becomes a product near  $r=t$ without changing the index (\cite[pg. 32]{B09}).  
\item   Note that $Q_\geq({A}(t))(H)$ is not the appropriate boundary used to derive \cite[Theorem 4.14]{APSI} since ${A}(t)$ does not correspond to the tangential signature operator on $\partial Z_t$. To correct this we just need to subtract a bounded term, 
\begin{align*}
A_0(t)\coloneqq A(t)  - \frac{\nu}{t}. 
\end{align*}
\end{enumerate}

Combining  observation (2) with the results of \cite[Section 4]{APSI} and Section \ref{Sect:PfoofSignatureFormula} we obtain the formula
\begin{align}\label{IndS1SignKern}
\sigma_{S^1}(M)=\textnormal{ind}\left({D}^+_{Z_t,Q_{<}({A}_0(t))(H)}\right)+\frac{1}{2}\dim\ker(A_0(t)). 
\end{align}

The next result shows how the index changes when varying boundary conditions. 
\begin{theorem}[{\cite[Theorem 5.3]{B09}}]\label{Thm:5.3}
For $t$ sufficiently small $(Q_{<}({A}(t))(H),Q_{\geq }({A}_0(t))(H))$ is a Fredholm pair in $H$ and 
\begin{align*}
\textnormal{ind}\left( {D}^+_{Z_t,Q_{<}({A}(t))(H)}\right)=&\textnormal{ind}\left( {D}^+_{
Z_t,Q_{<}(A_0(t))(H)}\right)
+\ind(Q_{<}({A}(r))(H),Q_{\geq }({A}_0(r))(H)).
\end{align*}
\end{theorem}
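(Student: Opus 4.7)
\emph{Plan.} The statement has two components: the Fredholm property of the pair of subspaces, and the index identity. I would address them in order.

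\emph{Step 1 (Fredholm pair).} Write $A(t)=A_0(t)+B_t$ with $B_t\coloneqq \nu/t$. Since $\nu$ is a zero-order fiberwise endomorphism (acting as a constant on each vertical-degree component of $\wedge T^{*}\mathcal{F}$), $B_t$ is bounded on $H$ for every fixed $t>0$. Hence $A(t)$ is a Kato perturbation of $A_0(t)$ with relative bound $b=0<2/3$, and Theorem~\ref{BThm5.9} applies. It yields that $Q_>(A(t))=Q_>(A_0(t)+B_t)$ is regular with respect to $A_0(t)$, and that the pair $\bigl(Q_\leq(A_0(t))(H),\,Q_>(A(t))(H)\bigr)$ is Fredholm. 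Exchanging the roles of $A_0(t)$ and $A(t)$ gives the symmetric conclusion for $\bigl(Q_\leq(A(t))(H),\,Q_>(A_0(t))(H)\bigr)$. In the Witt case, after the rescaling of the vertical metric discussed in Section~\ref{Section:SpectralDecomp}, both $A(t)$ and $A_0(t)$ are invertible for all sufficiently small $t$, so $Q_{<}=Q_{\leq}$ and $Q_{\geq}=Q_{>}$. This delivers exactly the desired Fredholm pair.

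\emph{Step 2 (Index identity).} The identity describes how the index of the Dirac--Schr\"odinger realization on $Z_t$ varies when one changes its APS-type boundary condition. I would deduce it from the relative-index formula for elliptic boundary conditions, which is central to the framework of \cite{BBC08}. Concretely, consider the linear interpolation
\begin{equation*}
A_s(t)\coloneqq A_0(t)+\frac{s\nu}{t},\quad s\in[0,1],
\end{equation*}
joining $A_0(t)$ at $s=0$ to $A(t)$ at $s=1$. By Step 1 applied to each pair $\bigl(A_0(t),A_s(t)\bigr)$, each boundary condition $Q_<(A_s(t))(H)$ produces a Fredholm realization of $D_1^{+}$. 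Along the path, this index is locally constant in $s$ except at the discrete set of values where an eigenvalue of $A_s(t)$ crosses zero, and the net jump coincides with the spectral flow of the family $\{A_s(t)\}_{s\in[0,1]}$. The spectral flow, in turn, is identified with the Fredholm-pair index $\textnormal{ind}\bigl(Q_<(A(t))(H),Q_\geq(A_0(t))(H)\bigr)$ by the standard correspondence between spectral flow and relative index of spectral projections for self-adjoint families. Combining both, one obtains exactly the claimed additivity.

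\emph{Main obstacle.} The delicate point is the matching of sign conventions: one must check, using the anti-commutation $\bigstar A_s(t)=-A_s(t)\bigstar$ and the structure of the super-symmetric Dirac system, that the spectral flow of $\{A_s(t)\}$ really computes the Fredholm-pair index with the orientation appearing on the right-hand side. This amounts to careful bookkeeping between the projections $Q_<$ and $Q_\geq$, the Clifford involution $\bigstar$, and the complementary projection $(\cdot)_\gamma$ of Section~\ref{Sec:Perturbations}. A secondary point is verifying the Kato hypothesis uniformly along the interpolation, which is immediate here because $\nu/t$ is bounded for each fixed $t>0$. Once these checks are settled, the theorem follows as an instance of the general index-variation formula for Dirac--Schr\"odinger systems under change of elliptic boundary conditions.
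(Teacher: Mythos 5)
There is a genuine gap, on both halves of your argument. First, your Step 1 rests on the claim that in the Witt case $A_0(t)$ is invertible for small $t$, so that $Q_{\geq}(A_0(t))=Q_{>}(A_0(t))$. That is false in general: $A_0(t)=A(t)-\nu/t$ is precisely the tangential signature operator on $\partial Z_t=\mathcal{F}_t$, whose kernel consists of harmonic forms of the total space $\mathcal{F}$ and does not vanish under the Witt condition (which only kills the middle cohomology of the fibre $\mathbb{C}P^N$). This is exactly why the theorem pairs the strict projection $Q_{<}(A(t))$ with the non-strict projection $Q_{\geq}(A_0(t))$, and why the terms $\tfrac{1}{2}\dim\ker(A_0(t))$ appear in \eqref{IndS1SignKern} and Proposition \ref{Prop:IndexFormulaCasi}. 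The Fredholm-pair claim can still be salvaged from Theorem \ref{BThm5.9}: apply it with the roles exchanged to get that $\left(Q_{\leq}(A(t))(H),Q_{>}(A_0(t))(H)\right)$ is Fredholm, use invertibility of $A(t)$ (Lemma \ref{Lemma:VertOp1}, after the rescaling of Section \ref{Section:SpectralDecomp}) to replace $Q_{\leq}$ by $Q_{<}$, and note that enlarging $Q_{>}(A_0(t))(H)$ by the finite-dimensional space $\ker(A_0(t))$ preserves the Fredholm property; alternatively one shows directly that $Q_{>}(A(t))-Q_{>}(A_0(t))$ is compact via the resolvent formula of \cite[Lemma A.1]{BB01}, as is done in the proof of Lemma \ref{Lemma:SplitKatoInd}. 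But as written your justification is incorrect, and the endpoint kernel you have assumed away is precisely where the bookkeeping you call the ``main obstacle'' becomes delicate.

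Second, and more seriously, Step 2 does not prove the index identity; it restates it. The two assertions carrying all the content --- that the index of $D_1^{+}$ with boundary condition $Q_{<}(A_s(t))(H)$ is locally constant in $s$ and jumps exactly by the spectral flow of $\{A_s(t)\}$, and that this spectral flow equals $\ind\left(Q_{<}(A(t))(H),Q_{\geq}(A_0(t))(H)\right)$ --- are asserted by appeal to ``standard correspondences,'' but the first is precisely a relative-index formula for a change of elliptic boundary condition (continuity of the index is not automatic when the domain of the realization varies with $s$), i.e. the content of \cite[Theorem 4.14]{BBC08}. The statement you are asked to prove is quoted from \cite[Theorem 5.3]{B09}, and there (as in the analogous steps of this paper, e.g. the displayed identity $\ind(\mathscr{D}^+_t)=\ind(\mathscr{D}^+_{t,\delta})+\ind(Q_{<}(A(t))(H),Q_{\geq}(A^{\delta}(t))(H))$) the identity is obtained in a single application of that relative index theorem, once the Fredholm pair is in hand; no interpolation or spectral-flow argument is needed, and introducing one without proving the jump formula leaves the essential step unestablished.
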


This theorem, combined with \eqref{Eqn:Indexwoc} and \eqref{IndS1SignKern}, implies the following result. 
\begin{proposition}\label{Prop:IndexFormulaCasi}
For $t>0$ sufficiently small we have
\begin{align*}
\ind ({D}^+)=\sigma_{S^1}(M)-\frac{1}{2}\dim\ker(A_0(t))+\ind(Q_{<}({A}(t))(H),Q_{\geq }({A}_0(t))(H)).
\end{align*}
\end{proposition}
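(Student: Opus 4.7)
The proposition is a direct consequence of assembling three ingredients already in place, so the plan is essentially bookkeeping: chain them together and verify that nothing depends on $t$ in an incompatible way.

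First, I would invoke the cutoff identity \eqref{Eqn:Indexwoc}, which comes from Theorem~\ref{Thm:5.1} together with the vanishing on the cone piece (Theorem~\ref{Thm:5.2}): for $t>0$ small enough one has
\begin{align*}
\ind(D^+) = \ind\!\left(D^{+}_{Z_t,Q_{<}(A(t))(H)}\right).
\end{align*}
This reduces the global index to a boundary-value problem on the compact manifold-with-boundary $Z_t$ with the APS-type projection coming from $A(t)$.

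Next, I would apply Theorem~\ref{Thm:5.3} to trade the boundary projection $Q_{<}(A(t))$ for $Q_{<}(A_0(t))$, where $A_0(t)=A(t)-\nu/t$ is the genuine tangential signature operator on $\partial Z_t$. Since $\nu/t$ is a bounded perturbation, the two spectral projections differ by a $1/2$-smooth (in fact smoothing) perturbation, so $(Q_{<}(A(t))(H),Q_{\geq}(A_0(t))(H))$ is a Fredholm pair and
\begin{align*}
\ind\!\left(D^{+}_{Z_t,Q_{<}(A(t))(H)}\right)
= \ind\!\left(D^{+}_{Z_t,Q_{<}(A_0(t))(H)}\right)
+ \ind\!\left(Q_{<}(A(t))(H),\,Q_{\geq}(A_0(t))(H)\right).
\end{align*}

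Finally, I would use the Atiyah--Patodi--Singer signature identification \eqref{IndS1SignKern}. This in turn relies on the observation recalled just before \eqref{IndS1SignKern}: the metric near $\partial Z_t$ can be deformed to a product without altering the index, and $A_0(t)$ is precisely the odd signature operator on the boundary. Combined with Lott's formula from Section~\ref{Sect:PfoofSignatureFormula} (the proof of Theorem~\ref{Thm:S1SignatureThm}) this gives
\begin{align*}
\ind\!\left(D^{+}_{Z_t,Q_{<}(A_0(t))(H)}\right)
= \sigma_{S^1}(M) - \tfrac{1}{2}\dim\ker(A_0(t)).
\end{align*}

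Substituting the last two displays into the first yields the claimed identity. The only mild subtlety is ensuring that all three inputs hold simultaneously for the \emph{same} range of $t$: Theorem~\ref{Thm:5.2} requires $t$ small enough for the Hardy-type estimate of Lemma~\ref{Lemma:5.12} to force triviality of the kernel on $U_t$, while Theorem~\ref{Thm:5.3} requires $t$ small enough so that $\nu/t$ satisfies the hypotheses of Theorem~\ref{BThm5.9} with respect to $A_0(t)$. Both are open conditions for $t\to 0$, so one simply intersects the two thresholds; the left-hand side $\ind(D^+)$ is $t$-independent, so any admissible $t$ works.
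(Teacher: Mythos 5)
Your proposal is correct and follows exactly the paper's own route: it combines the reduction \eqref{Eqn:Indexwoc} (from Theorems \ref{Thm:5.1} and \ref{Thm:5.2}), the change of boundary condition in Theorem \ref{Thm:5.3}, and the APS identification \eqref{IndS1SignKern}, then substitutes. Your extra remark about choosing a common threshold for $t$ is a fine (if implicit in the paper) observation, but nothing further is needed.
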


The computation of the Kato index in the proposition above is highly non-trivial.  The main ingredient is the {\em generalized Thom space} $T_\pi$ associated to of the fibration  $\pi_\mathcal{F}:\mathcal{F}\longrightarrow F$ (cf. \cite{CD09}). As we are assuming the Witt condition on $M/S^1$ then this is also true for the compact stratified space $T_\pi$. In particular it has a well-defined signature operator whose index computes the $L^2$-signature $ \sigma_{(2)}(T_\pi)$. Using the vanishing result \cite[Theorem 5.2]{B09} applied to $T_\pi$ Br\"uning proved the following remarkable identity (\cite[Theorem 5.4]{B09})
\begin{align*}
\ind(Q_{<}({A}(t))(H),Q_{\geq }(A_0(t))(H))=\sigma_{(2)}(T_\pi)+\frac{1}{2}\dim\ker(A_0(t)).
\end{align*}

On the other hand, Cheeger and Dai showed, still restricted to the Witt case, that this $L^2$-signature coincides with Dai's $\tau$ invariant of the fibration $\pi_\mathcal{F}:\mathcal{F}\longrightarrow F$, which is this case vanishes (cf. Section \ref{Sect:PfoofSignatureFormula}). 
We therefore conclude from Proposition \ref{Prop:IndexFormulaCasi} and Theorem \ref{Thm:S1SignatureThm} that the index of the signature operator on $M_0/S^1$, in the Witt case, computes Lott's equivariant $S^1$-signature. 
\begin{theorem}\label{Thm:IndD+}
Let $M$ be a closed, oriented Riemannian $4k+1$ dimensional manifold on which $S^1$ acts effectively and semi-freely by orientation preserving isometries. If the codimension of the fixed point set $M^{S^1}$ in $M$ is divisible by four, then $M/S^1$ is a Witt space and the index of the signature operator is 
\begin{align*}
\ind(D^+)=\sigma_{S^1}(M)=\int_{M_0/S^1}L\left(T(M_0/S^1),g^{T(M_0/S^1)}\right).
\end{align*}
\end{theorem}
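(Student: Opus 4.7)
The plan is to assemble the ingredients already stated in Section \ref{Section:IndexSigOpWitt}. The starting point is Proposition \ref{Prop:IndexFormulaCasi}, which, for any sufficiently small $t>0$, gives
\begin{align*}
\ind(D^+) = \sigma_{S^1}(M) - \tfrac{1}{2}\dim\ker(A_0(t)) + \ind\bigl(Q_{<}(A(t))(H), Q_{\geq}(A_0(t))(H)\bigr).
\end{align*}
Thus the whole proof reduces to evaluating the Kato index of the Fredholm pair $(Q_{<}(A(t))(H), Q_{\geq}(A_0(t))(H))$ and showing that it cancels the half-dimension of $\ker(A_0(t))$ exactly.

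To handle the Kato index, I would invoke Br\"uning's identification (cited after Proposition \ref{Prop:IndexFormulaCasi}) of this index with the $L^2$-signature of the generalized Thom space $T_\pi$ of the fibration $\pi_{\mathcal{F}}: \mathcal{F} \longrightarrow F$, namely
\begin{align*}
\ind\bigl(Q_{<}(A(t))(H), Q_{\geq}(A_0(t))(H)\bigr) = \sigma_{(2)}(T_\pi) + \tfrac{1}{2}\dim\ker(A_0(t)).
\end{align*}
The derivation of this identity uses the vanishing result Theorem \ref{Thm:5.2} applied now to $T_\pi$ (which, under our Witt assumption on $M/S^1$, is itself a Witt space because the link $\mathbb{C}P^N$ has odd $N$), so that the same super-symmetric Dirac system techniques of \cite{BBC08} give a clean index decomposition on $T_\pi$.

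Substituting this expression into Proposition \ref{Prop:IndexFormulaCasi}, the two half-dimension terms cancel and we obtain
\begin{align*}
\ind(D^+) = \sigma_{S^1}(M) + \sigma_{(2)}(T_\pi).
\end{align*}
The final step is to eliminate the $L^2$-signature contribution. By the Cheeger--Dai formula, $\sigma_{(2)}(T_\pi)$ equals Dai's $\tau$-invariant of the fibration $\pi_{\mathcal{F}}:\mathcal{F}\longrightarrow F$; but in the Witt case the fibers $\mathbb{C}P^N$ have odd $N$ and hence no middle-degree cohomology, which forces $\tau(\pi_{\mathcal{F}})=0$ as already noted in Section \ref{Sect:PfoofSignatureFormula}. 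Combined with Theorem \ref{Thm:S1SignatureThm} (which rewrites $\sigma_{S^1}(M)$ as the integral of the $L$-polynomial, since the eta invariant of the fixed-point set vanishes under the Witt hypothesis), this yields the claimed identity.

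The main conceptual obstacle is the identification of the Kato index with $\sigma_{(2)}(T_\pi)$; the heat of the argument there is the use of the rescaling freedom to ensure $\pm 1/2 \notin \spec(A_V)$ (see the remark preceding Theorem \ref{Thm:5.2}), which in turn is what makes Lemma \ref{Lemma:5.12} and Remark \ref{Lemma:5.12R} applicable to the Thom-space model. Once this is granted, the rest of the proof is a matter of collating Proposition \ref{Prop:IndexFormulaCasi}, Theorem \ref{Thm:S1SignatureThm}, and the vanishing of $\tau$ in the Witt case.
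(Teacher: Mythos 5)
Your proposal is correct and follows essentially the same route as the paper: it assembles Proposition \ref{Prop:IndexFormulaCasi}, Br\"uning's identification of the Kato index with $\sigma_{(2)}(T_\pi)+\tfrac{1}{2}\dim\ker(A_0(t))$ via the generalized Thom space, the Cheeger--Dai identification of $\sigma_{(2)}(T_\pi)$ with Dai's $\tau$-invariant (which vanishes here), and Theorem \ref{Thm:S1SignatureThm} together with $\eta(M^{S^1})=0$ in the Witt case. This is precisely the chain of results the paper uses to conclude Theorem \ref{Thm:IndD+}.
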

Observe that we have used the fact that the $\eta(M^{S^1})$ vanishes in the Witt case.

\subsection{The index formula for the Dirac-Schr\"odinger signature operator}

In this last section we describe how to compute the index of the operator $\mathscr{D}^+$ using the techniques illustrated in Section \ref{Section:IndexSigOpWitt}. We obtain the complete index formula for this operator in the Witt case. To begin with, we make no assumption on the parity of $N$, i.e. we do not distinguish between the Witt and the non-Witt case. As before, we use the geometric decomposition of $M_0/S^1$ in order to construct compatible super-symmetric Dirac systems  
\begin{align*}
\mathscr{D}_1\coloneqq &\gamma\left(\frac{\partial}{\partial r}+\bigstar\otimes \mathscr{A}(t+r)\right),\\
\mathscr{D}_2\coloneqq &-\gamma\left(\frac{\partial}{\partial r}-\bigstar\otimes \mathscr{A}(t-r)\right),
\end{align*}
where $\mathscr{A}(r)$ is the operator of Theorem \ref{Thm:LocalDesD1}. The corresponding graded operators, with respect to the super-symmetry $\bigstar$, are 
\begin{align*}
\mathscr{D}^+_1&=\frac{\partial}{\partial r}+\mathscr{A}(t+r),\\
\mathscr{D}^+_2&=-\left(\frac{\partial}{\partial r}-\mathscr{A}(t-r)\right).
\end{align*}
\begin{remark}\label{Rmk:EquivSobChainsPot}
In this case we still set $H\coloneqq L^2(\wedge T^*\mathcal{F}_t)$ and consider the associated Sobolev chain of $\mathscr{A}(t)$  denoted by $H^{s}\coloneqq \dom(|\mathscr{A}(t)|^s)$. Since for $t>0$ fixed, $\mathscr{A}(t)-A(t)$ 
is a bounded operator, it follows by \cite[Remark 8.15]{JO17} that the Sobolev chains of $\mathscr{A}(t)$ and $A(t)$ are isomorphic.
\end{remark}

As we did for the signature operator, we impose the complementary APS-type boundary conditions
$\mathscr{B}^+_1\coloneqq  Q_{<}(\mathscr{A}(t))(\dom(|\mathscr{A}(t)|^{1/2}))$ and
$\mathscr{B}^+_2\coloneqq  Q_{\geq}(\mathscr{A}(t))(\dom(|\mathscr{A}(t)|^{1/2}))$
on $Z_t$ and $U_t$ respectively. If we denote both operators with their compatible elliptic boundary conditions by
\begin{align*}
\mathscr{D}^{+}_{Z_t,Q_{<}(\mathscr{A}(t))(H)}\coloneqq &\mathscr{D}_{1,{\mathscr{B}^+_1}},\\
\mathscr{D}^{+}_{U_t,Q_{\geq}(\mathscr{A}(t))(H)}\coloneqq &\mathscr{D}_{1,{\mathscr{B}^+_2}},
\end{align*}
then, as in last section, we can apply \cite[Theorem 4.17]{BBC08} to obtain decomposition formula 
\begin{align}\label{Eqn:Thm5.1OpPot}
\textnormal{ind}(\mathscr{D}^+)=\textnormal{ind}\left(\mathscr{D}^{+}_{Z_t,Q_{<}(\mathscr{A}(t))(H)}\right)+\textnormal{ind}\left(\mathscr{D}^{+}_{U_t,Q_{\geq }(\mathscr{A}(t))(H)}\right).
\end{align}

\subsubsection{The index formula for $\mathscr{D}^+$ in the Witt case}
Now we restrict ourselves to the Witt case, i.e. $N$ is odd. In order to obtain a vanishing result for the index contribution of $U_t$ we need to modify the boundary conditions at $r=t$. As noted in Remark \ref{Rmk:EquivSobChainsPot}, for fixed $t>0$ the operator $A(t)$ is a Kato perturbation of $\mathscr{A}(t)$. Hence, by Theorem \ref{BThm5.9} and Theorem \cite[Theorem 4.17]{BBC08} we obtain the decomposition formula of the index 
\begin{align}\label{Eqn:Thm5.1OpPotI}
\textnormal{ind}(\mathscr{D}^+)=\textnormal{ind}\left(\mathscr{D}^{+}_{Z_t,Q_{<}({A}(t))(H)}\right)+\textnormal{ind}\left(\mathscr{D}^{+}_{U_t,Q_{\geq }({A}(t))(H)}\right).
\end{align}

The following lemma relates these two boundary conditions at $r=t$. 
\begin{lemma}\label{Lemma:SplitKatoInd}
The following index identity holds true, 
\begin{align*}
 \ind(Q_{<}  (\mathscr{A}(t))(H)  ,  Q_{\geq }(A_0(t))(H))
=&\: \ind(Q_{<}({A}(t))(H),Q_{\geq }(A_0(t))(H))\\
&+\ind(Q_{<}(\mathscr{A}(t))(H),Q_{\geq }(A(t))(H)).
\end{align*}
\end{lemma}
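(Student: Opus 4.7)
The plan is to show that each of the three pairs of subspaces in the statement is a Fredholm pair and then to apply the cocycle identity for indices of Fredholm pairs. For the Fredholm property, observe that for fixed $t>0$ the operators $\mathscr{A}(t)$, $A(t)$, and $A_0(t)$ differ pairwise only by bounded self-adjoint operators: Theorem~\ref{Thm:LocalDesD1} gives $\mathscr{A}(t)=A(t)-\varepsilon/(2t)$, while by construction $A_0(t)=A(t)-\nu/t$. Each such difference is therefore a Kato perturbation with relative bound $b=0<2/3$, so three applications of Theorem~\ref{BThm5.9}---to the pairs of operators $(\mathscr{A}(t),A_0(t))$, $(A(t),A_0(t))$, and $(\mathscr{A}(t),A(t))$---yield that each of the three pairs of subspaces in the statement is Fredholm in $H$ and carries a well-defined integer index.

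With the Fredholm property established, the claim reduces to the standard cocycle identity for indices of Fredholm pairs of spectral subspaces. Setting $V_i:=Q_{<}(A_i)(H)$ and $W_i:=Q_{\geq}(A_i)(H)$ with $A_1:=\mathscr{A}(t)$, $A_2:=A(t)$, $A_3:=A_0(t)$, the identity to be shown becomes
\begin{align*}
\ind(V_1,W_3)=\ind(V_1,W_2)+\ind(V_2,W_3),
\end{align*}
which upon substituting the definitions is precisely the asserted decomposition. This additivity is a general feature of the theory of regular elliptic projections (cf.~\cite[Section 1.6]{BBC08}) and can be proved by representing the pair index $\ind(V_i,W_j)$ as the Fredholm index of the bounded operator $T_{ij}:=P_{V_j}|_{V_i}\colon V_i\to V_j$ (where $P_{V_j}$ denotes orthogonal projection onto $V_j$), observing the relation $T_{13}-T_{23}\circ T_{12}=P_{V_3}(I-P_{V_2})|_{V_1}$, and appealing to the multiplicativity of the Fredholm index under composition modulo compact operators.

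The main technical obstacle is to justify that the correction $P_{V_3}(I-P_{V_2})|_{V_1}$ is indeed compact. Although the underlying operators differ only by bounded perturbations---so the spectral projections need not differ by compact operators in full generality---the discreteness of the operators, the equivalence of their Sobolev chains (Remark~\ref{Rmk:EquivSobChainsPot}), and the regularity of the projections in the $1/2$-smooth framework of \cite{BBC08} are designed precisely to carry the cocycle through. This is the step that adapts to the Dirac--Schr\"odinger setting the treatment of the analogous identity for $A(t)$ and $A_0(t)$ used in~\cite[Section 5]{B09}, and once it is in place the lemma follows.
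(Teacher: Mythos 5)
Your overall architecture (Fredholmness of the pairs via bounded Kato perturbations, then a cocycle identity for indices of Fredholm pairs) is the same as the paper's, and the reduction to showing compactness of $P_{V_3}(I-P_{V_2})|_{V_1}$ is correct: since $Q_{<}(A(t))Q_{\geq}(A(t))=0$, this operator is compact as soon as $Q_{<}(A_0(t))-Q_{<}(A(t))$ is compact, so everything hinges on compactness of differences of spectral projections. But this is exactly the point you do not prove, and it is the heart of the lemma. Asserting that ``discreteness, the equivalence of Sobolev chains (Remark~\ref{Rmk:EquivSobChainsPot}), and regularity in the $1/2$-smooth framework are designed precisely to carry the cocycle through'' is not an argument: Sobolev-chain equivalence and $1/2$-smoothness give regularity/ellipticity of the projections, not compactness of their differences, and, as you yourself note, bounded perturbations do not in general move spectral projections by compact operators. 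Theorem~\ref{BThm5.9} also does not supply this; it gives the Fredholm-pair property (and a vanishing criterion), not the compact-difference statement needed for your ``modulo compacts'' multiplicativity.

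The missing ingredient, which is how the paper closes the argument, is the contour-integral representation of \cite[Lemma A.1]{BB01}: with $A_1=\mathscr{A}(t)$, $A_2=A_0(t)$, $\alpha_1=\alpha_2=0$ one has
\begin{align*}
Q_>(\mathscr{A}(t))-Q_>(A_0(t))=\frac{1}{2\pi i}\int_{\Real z=0}(\mathscr{A}(t)-z)^{-1}\left[\frac{1}{t}\left(\nu-\frac{\varepsilon}{2}\right)\right](A_0(t)-z)^{-1}\,dz,
\end{align*}
and since $\mathscr{A}(t)$ and $A_0(t)$ are discrete their resolvents are compact, the middle factor is bounded, and the integral converges in norm, so the difference is compact; the same formula (second resolvent identity) handles $Q_{<}(\mathscr{A}(t))-Q_{<}(A_0(t))$, $Q_>(\mathscr{A}(t))-Q_>(A(t))$ and $Q_>(A(t))-Q_>(A_0(t))$. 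With these compactness statements in hand, the additivity you want is again \cite[Lemma A.1]{BB01} (applied with $P=Q_{<}(\mathscr{A}(t))$, $Q=Q_{<}(A(t))$, $B=Q_{\geq}(A_0(t))(H)$), or equivalently your $T_{ij}$ bookkeeping. So your proof becomes complete once you replace the appeal to ``the framework'' by the resolvent-integral argument (or an explicit citation of it); as written, the decisive compactness step is a genuine gap.
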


\begin{proof}
Using similar arguments as in the proof of Theorem \ref{Thm:5.3} one can see that 
\begin{align*}
&(Q_{<}({A}(t))(H),Q_{\geq }(A_0(t))(H)),\\
&(Q_{<}(\mathscr{A}(t))(H),Q_{\geq }(A(t))(H)),
\end{align*} 
are both Fredholm pairs. Now we apply \cite[Lemma A.1]{BB01} with $A_1=\mathscr{A}(t)$, $A_2=A_0(t)$,  $\alpha_1=\alpha_2=0$ to verify that
 \begin{align*}
 Q_>(\mathscr{A}(t))-Q_>(A_0(t))=\frac{1}{2\pi i}\int_{\text{Re}\:z=0}(\mathscr{A}(t)-z)^{-1}\left[\frac{1}{t}\left(\nu-\frac{\varepsilon}{2}\right)\right](A_0(t)-z)^{-1}dz,
 \end{align*}
is compact for fixed $t>0$. Indeed, both $\mathscr{A}(t)$ and $A_0(t)$ are discrete, so their resolvent is compact, and the perturbation term $(\nu-\varepsilon/2)$ is a bounded operator. Similarly, we see that all the differences
\begin{align*}
 &Q_<(\mathscr{A}(t))-Q_<(A_0(t)),\\
 &Q_>(\mathscr{A}(t))-Q_>(A(t)),\\
  &Q_>({A}(t))-Q_>(A_0(t)),
\end{align*}
are compact. Finally we can use Lemma \cite[Lemma A.1]{BB01} with projections $P=Q_{<}(\mathscr{A}(r))$, $Q=Q_{<}({A}(r))$ and $B=Q_{\geq }(A_0(r))(H)$ to obtain the desired formula. 
\end{proof}

\begin{lemma}
For $t>0$ sufficiently small we have
$
\ind(Q_\leq(A(t))(H),Q_>(\mathscr{A}(t)(H))=0.
$
\end{lemma}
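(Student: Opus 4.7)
The plan is to apply Theorem~\ref{BThm5.9} directly, with $A := A(t)$ and $B := -\varepsilon/(2t)$. By Theorem~\ref{Thm:LocalDesD1} we have $\mathscr{A}(t) = A(t) - \varepsilon/(2t) = A(t) + B$, and $B$ is a bounded self-adjoint endomorphism with $\norm{B} = 1/(2t)$ since $\varepsilon$ is an involution. In particular $B$ is a Kato perturbation of $A(t)$ with relative bound $b = 0 < 2/3$, so the first hypothesis of Theorem~\ref{BThm5.9} is trivially satisfied, and the Fredholm pair assertion already follows.

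The substantive step is to verify the quantitative spectral gap $|A(t)| \geq \mu$ for some $\mu > \sqrt{2}\,\norm{B} = (t\sqrt{2})^{-1}$. By Lemma~\ref{Lemma:VertOp1} there exists a constant $C > 0$, depending only on the geometry transverse to the radial direction, such that $A(t)^2 \geq C\,t^{-2} A_V^2$, and hence $|A(t)| \geq \sqrt{C}\,t^{-1}|A_V|$, for all $t$ sufficiently small. In the Witt case the rescaling argument discussed in Section~\ref{Section:SpectralDecomp} allows us to rescale the vertical component of the metric~\eqref{Eqn:MetricUt} so as to arrange $|A_V| \geq M$ for any preassigned $M > 0$, without affecting $\ind(\mathscr{D}^+)$ nor any of the preceding index decompositions. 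Choosing $M$ with $\sqrt{C}\,M > 1/\sqrt{2}$ then yields $|A(t)| \geq \sqrt{C}\,M/t > (t\sqrt{2})^{-1}$ for $t$ small enough, which is exactly the bound required by Theorem~\ref{BThm5.9}.

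With both hypotheses in place, Theorem~\ref{BThm5.9} delivers the vanishing $\ind(Q_{\leq}(A(t))(H), Q_{>}(\mathscr{A}(t))(H)) = 0$. The only delicate point is the asymptotic behaviour as $t \to 0$: the perturbation norm $\norm{B} = 1/(2t)$ blows up, so one must ensure that $|A(t)|$ grows strictly faster than $1/(2t)$. This is precisely the role of the Witt hypothesis, which via Section~\ref{Section:SpectralDecomp} prevents $A_V$ from having arbitrarily small non-zero eigenvalues on vertical harmonic forms and thereby permits the rescaling to provide the needed uniform spectral gap.
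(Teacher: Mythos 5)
Your proof is correct and follows essentially the same route as the paper: apply Theorem \ref{BThm5.9} with $A=A(t)$ and the bounded perturbation $\pm\varepsilon/2t$, then use Lemma \ref{Lemma:VertOp1} together with the Witt-case rescaling of the vertical metric to get $|A(t)|\geq \sqrt{C}\,|A_V|/t > \sqrt{2}\,\norm{\varepsilon/2t}$. If anything, your version is slightly more careful than the paper's, which phrases the rescaling as ``achieving $C>1/2$'' even though $C$ is the fixed constant of Lemma \ref{Lemma:VertOp1}; you correctly place the freedom in enlarging $|A_V|$ instead.
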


\begin{proof}
The idea is to apply Theorem \ref{BThm5.9} with $A=A(t)$ and $B=\varepsilon/2t$ so that the sum $A+B=\mathscr{A}(t)$. By Lemma \ref{Lemma:VertOp1} we see that for $t$ small enough, 
\begin{align*}
|A(t)|\geq\frac{\sqrt{C}}{t}|A_V|\geq \frac{\sqrt{C}}{t}.
\end{align*}
Hence, the required condition for $\mu\coloneqq \sqrt{C}/t$ is 
\begin{align*}
\frac{\sqrt{C}}{t}>\sqrt{2}\bnorm{\frac{\varepsilon}{2t}}=\frac{\sqrt{2}}{2t},
\end{align*}
that is, $C>1/2$, which we can be achieved.
\end{proof}

From this lemma we conclude, via \cite[Theorem 4.14]{BBC08}, that in the Witt case the decompositions \eqref{Eqn:Thm5.1OpPot} and \eqref{Eqn:Thm5.1OpPotI} are the same. \\

Now we describe the vanishing result for the index on $U_t$. Analogously as before, consider the operator  $\mathscr{D}^+_t\coloneqq  \mathscr{D}_{U_t,Q_{\geq}({A}(t))(H)}$ defined on the core
\begin{align*}
\mathcal{C}({D}^+_t)\coloneqq \{\sigma\in C^{1}_c((0,t],H^1))\:|\:Q_<({A}(t))\sigma(t)=0\}.
\end{align*}

\begin{theorem}\label{Thm:5.2Pot}
In the Witt case, for $t\in(0,t_0]$ sufficiently small we have 
\begin{align*}
\textnormal{ind}\left(\mathscr{D}^{+}_{U_t,Q_{\geq}({A}(t))(H)}\right)=0. 
\end{align*}
\end{theorem}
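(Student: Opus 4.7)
The plan is to mimic the strategy of Theorem~\ref{Thm:5.2}, adapting the coercive estimate of Lemma~\ref{Lemma:5.12} to the operator $\mathscr{D}^+_t=\partial_r+A_H(r)+r^{-1}\mathscr{A}_V$, and then showing separately that $\ker(\mathscr{D}^+_t)$ and $\ker((\mathscr{D}^+_t)^*)$ are trivial. The main conceptual obstacle, compared with Theorem~\ref{Thm:5.2}, is that in the Witt case $\pm\tfrac{1}{2}\in\spec(\mathscr{A}_V)$ (see Section~\ref{Section:SpectralDecomp}), so the purely spectral argument used there cannot rule out non-trivial kernel elements; the missing input will have to come from solving the radial ODE explicitly on the kernel of the shifted cone coefficient and confronting it with the APS-type boundary condition at $r=t$.

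First I would repeat the integration-by-parts identity underlying Lemma~\ref{Lemma:5.12} verbatim, substituting $\mathscr{A}(r)$ for $A(r)$. The algebraic identity
\begin{equation*}
r^{-2}\norm{\mathscr{A}_V\sigma}^2+r^{-2}\inner{\sigma}{\mathscr{A}_V\sigma}+\tfrac{1}{4r^2}\norm{\sigma}^2=r^{-2}\bnorm{(\mathscr{A}_V+\tfrac{1}{2})\sigma}^2
\end{equation*}
is still available, so after Hardy's inequality and the usual Cauchy--Schwarz absorption of the bounded $\mathscr{A}_{HV}$ and $A'_H$ contributions one expects to reach
\begin{equation*}
\norm{\mathscr{D}^+_t\sigma}^2_{L^2((0,t],H)}\geq \frac{c}{t^2}\bnorm{(\mathscr{A}_V+\tfrac{1}{2})\sigma}^2_{L^2((0,t],H)}+c'\norm{A_H\sigma}^2_{L^2((0,t],H)}+\inner{\sigma(t)}{\mathscr{A}(t)\sigma(t)}_H,
\end{equation*}
with $c,c'>0$. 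A crucial point is that the boundary term is non-negative for $t$ small: Lemma~\ref{Lemma:VertOp1} combined with the Witt lower bound $|A_V|\geq 1$ on harmonic forms yields, after a sufficient rescaling of the vertical metric, $|A(t)|\geq C_0/t$ with $C_0>1/2$, which dominates the bounded correction $-\varepsilon/(2t)$ hidden in $\mathscr{A}(t)=A(t)-\varepsilon/(2t)$ on the range of $Q_{\geq}(A(t))$.

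Assume now $\sigma\in\ker(\mathscr{D}^+_t)$. The estimate forces $(\mathscr{A}_V+\tfrac{1}{2})\sigma(r)=0$ and $A_H(r)\sigma(r)=0$ for almost every $r$. The heart of the argument is to describe $\ker(\mathscr{A}_V+\tfrac{1}{2})$ explicitly: rewriting the equation as $A_V\sigma=\tfrac{\varepsilon-1}{2}\sigma$ and using that in the Witt case $\spec(A_V)$ consists of odd integers on harmonic forms and is bounded away from zero on the orthogonal complement, one checks that this kernel sits entirely in the odd-degree sector and coincides with the $(-1)$-eigenspace of $A_V$ there. Plugging the two conditions back into $\mathscr{D}^+_t\sigma=0$ reduces the equation to
\begin{equation*}
\sigma'(r)=\tfrac{1}{2r}\sigma(r),
\end{equation*}
whose solutions on $(0,t]$ are of the form $\sigma(r)=r^{1/2}\tau$ with $\tau$ a constant element of $\ker(\mathscr{A}_V+\tfrac{1}{2})\cap\bigcap_{r\in(0,t]}\ker(A_H(r))$. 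Evaluating at $r=t$ and using $A_V\tau=-\tau$ and $A_H(t)\tau=0$ yields $A(t)\sigma(t)=-t^{-1}\sigma(t)$, which exhibits $\sigma(t)$ as an eigenvector of $A(t)$ with strictly negative eigenvalue; the APS-type boundary condition $Q_{<}(A(t))\sigma(t)=0$ then forces $\sigma(t)=0$, hence $\tau=0$ and $\sigma\equiv 0$.

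The identical scheme applied to the adjoint $(\mathscr{D}^+_t)^*=-\partial_r+\mathscr{A}(r)$ with the complementary boundary condition $\sigma(t)\in Q_{<}(A(t))(H)$ delivers the mirror estimate with $\mathscr{A}_V-\tfrac{1}{2}$ in place of $\mathscr{A}_V+\tfrac{1}{2}$; this time the relevant kernel sits in the even-degree sector and corresponds to $A_V=+I$. The radial ODE again reduces to $\sigma'(r)=\sigma(r)/(2r)$, producing the family $\sigma(r)=r^{1/2}\tau$, but now $A(t)\sigma(t)=+t^{-1}\sigma(t)$ is strictly positive and incompatible with the complementary boundary condition. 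Therefore $\ker((\mathscr{D}^+_t)^*)=0$ and $\ind(\mathscr{D}^+_{U_t,Q_{\geq}(A(t))(H)})=0$.
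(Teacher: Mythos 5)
Your plan stands or falls with the coercive estimate, and that estimate does not survive the substitution $A(r)\mapsto\mathscr{A}(r)$. In the proof of Lemma \ref{Lemma:5.12} the mixed term $r^{-1}\inner{A_{HV}\sigma}{\sigma}$ is absorbed only because $L=A_{HV}\left(A_V+\frac{1}{2}\right)^{-1}$ is a zero order (bounded) operator, which uses precisely \eqref{Eqn:1/2notinSpecAV}, i.e.\ the invertibility of $A_V+\frac{1}{2}$. In your setting the relevant anticommutator is $\{A_H(r),\mathscr{A}_V\}$ and the relevant inverse would be $\left(\mathscr{A}_V+\frac{1}{2}\right)^{-1}$, which in the Witt case does not exist: its kernel is exactly the vertical-harmonic sector with $A_V=-1$, $\varepsilon=-1$ that you single out afterwards. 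Worse, since $A_H(r)$ \emph{commutes} with $\varepsilon=\varepsilon_H\otimes\varepsilon_V$ (both $d_H+d_H^\dagger$ and $\alpha$ anticommute with $\varepsilon$), one has $\{A_H(r),\mathscr{A}_V\}=A_{HV}(r)-\varepsilon A_H(r)$, so the cross term contains a genuinely first order \emph{horizontal} operator carrying the weight $r^{-1}$; it is neither vertical (Lemma \ref{Lemma:VertOp1} does not apply to it) nor bounded, and the only $r^{-2}$-weighted quantity available to absorb it, the Hardy contribution $\frac{1}{4r^2}\norm{\sigma}^2$, has already been spent completing the square to $r^{-2}\bnorm{\left(\mathscr{A}_V+\frac{1}{2}\right)\sigma}^2$. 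Testing the claimed inequality on sections $\sigma(r)=r^{1/2}g(r)\tau$ with $\tau\in\ker\left(\mathscr{A}_V+\frac{1}{2}\right)$ approximately an eigenvector of the horizontal operator with positive eigenvalue $\mu$ (such eigenvalues exist, as the induced operator on $\mathcal{H}$ is essentially the odd signature operator of $F$, whose spectrum is symmetric), and rescaling $s=\mu r$, shows that the inequality with $c'>0$ actually fails; it is not merely unproven. Hence the step ``$\mathscr{D}^+_t\sigma=0$ forces $(\mathscr{A}_V+\frac{1}{2})\sigma(r)=0$ and $A_H(r)\sigma(r)=0$'', and with it the ODE/boundary-condition argument, has no foundation; the mirror argument for $(\mathscr{D}^+_t)^*$ fails for the same reason with $\mathscr{A}_V-\frac{1}{2}$.

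This is exactly the point where the paper's proof departs from your scheme: it never tries to show $\ker(\mathscr{D}^+_t)=\ker((\mathscr{D}^+_t)^*)=0$ (which is in general more than is true), only that the index vanishes. It first decouples the vertical-harmonic sector from its complement by the bounded deformation $\mathscr{A}\mapsto\mathscr{A}^\delta$, legitimate by Lemma \ref{Lemma:CommPH}, at the price of a relative index of boundary projections which is killed by Theorem \ref{BThm5.9} (Proposition \ref{Prop:DefIndex1}). On $\mathcal{H}^\perp$, where $A_{V,\mathcal{H}^\perp}+\frac{1}{2}$ \emph{is} invertible with a large gap after rescaling, the Lemma \ref{Lemma:5.12}-type estimate does work and shows $\mathscr{D}^+_{t,\mathcal{H}^\perp}$ is a Kato perturbation of $D^+_{t,\mathcal{H}^\perp}$, whence index zero (Proposition \ref{Prop:DefIndex2}). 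On $\mathcal{H}$ --- precisely the sector your estimate cannot control --- the vanishing is obtained not from a kernel estimate but from the Witt-case duality: the involution $\mathscr{U}$ with $\mathscr{U}A_F\mathscr{U}=-A_F$ gives $\mathscr{U}\mathscr{D}^+_{t,\mathcal{H}}\mathscr{U}=-(\mathscr{D}^+_{t,\mathcal{H}})^*$ up to controlled terms and boundary conditions, so $\ind(\mathscr{D}^+_{t,\mathcal{H}})=-\ind(\mathscr{D}^+_{t,\mathcal{H}})=0$ (Proposition \ref{Prop:DefIndex3}). Some input of this symmetry type on the harmonic block is indispensable, and it is absent from your argument.
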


To prove this theorem we use a deformation argument, as in the proof of \cite[Theorem 5.2]{B09}, splitting the index into a contribution of the space of vertical harmonic forms and a contribution of the complement. More precisely, denote by $\Delta_V$ the fiber Laplacian and choose $\delta>0$ sufficiently small to define
\begin{align*}
P_\mathcal{H}(x)\coloneqq\frac{1}{2\pi i}\int_{|z|=\delta}(\Delta_{V,x}-z)^{-1}dz, \quad\text{for $x\in F$,}
\end{align*}
the projection onto the space of vertical harmonic $\mathcal{H}$ and let $P_{\mathcal{H}^\perp}\coloneqq I-P_\mathcal{H}$ be the complementary projection. 

\begin{lemma}[{\cite[Lemma 8.63]{JO17}}]\label{Lemma:CommPH}
The projection $I\otimes P_\mathcal{H}$ commutes with $\mathscr{A}_V$ and with the principal symbol of $A_H(t)$.
\end{lemma}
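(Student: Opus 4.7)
The lemma amounts to two commutation identities, and both reduce to elementary facts about the tensor-product decomposition $\wedge T^*\mathcal{F} = \wedge T^*_H\mathcal{F}\otimes\wedge T^*_V\mathcal{F}$ together with Hodge theory along the fibers. Since $P_\mathcal{H}$ is the spectral projection of the vertical Laplacian $\Delta_V$ onto its kernel (by construction via the resolvent integral and since $\delta$ isolates the eigenvalue $0$), the strategy is to verify that each constituent of $\mathscr{A}_V = (d_V+d_V^\dagger)\alpha + \nu - \tfrac{1}{2}\varepsilon$ and of $\sigma_P(A_H(t))$ commutes with $P_\mathcal{H}$ pointwise on each fiber.

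For the first claim I would inspect the three building blocks of $\mathscr{A}_V$ in turn. The Hodge operator $d_V + d_V^\dagger$ commutes with $\Delta_V = (d_V+d_V^\dagger)^2$ and therefore with every spectral projection of $\Delta_V$, in particular with $P_\mathcal{H}$. The involution $\alpha = \bar{\star}_H\bar{\star}_V$ splits as a horizontal and a vertical factor: $\bar{\star}_H$ acts only on the horizontal tensor factor and so commutes tautologically with $I\otimes P_\mathcal{H}$, while $\bar{\star}_V$ commutes with $\Delta_V$ and so preserves harmonic forms, hence also commutes with $P_\mathcal{H}$. Finally, both $\nu = \mathrm{vd}-N$ and $\varepsilon = \varepsilon_H\varepsilon_V$ are diagonal in vertical degree, and because $\mathcal{H}$ decomposes along the vertical grading as $\mathcal{H}=\bigoplus_q \mathcal{H}^q$, the projection $P_\mathcal{H}$ preserves the vertical grading and therefore commutes with both $\nu$ and $\varepsilon$. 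Assembling these pieces gives $[\mathscr{A}_V,\, I\otimes P_\mathcal{H}]=0$.

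For the second claim, recall that in the splitting $d = d_V + d_H^{(1)} + d_H^{(2)}$ used in Theorem \ref{Thm:TransfDiracOps} the operator $d_H^{(2)}$ is a bundle endomorphism coming from the O'Neill-type tensor of the Riemannian submersion, hence of order zero and irrelevant for the principal symbol. The principal symbol of $d_H^{(1)} + (d_H^{(1)})^\dagger$ at a horizontal covector $\xi\in T^*F$ is $i\,c(\xi)$, where $c(\xi)=\xi\wedge -\iota_{\xi^\sharp}$ is the Clifford multiplication by a horizontal form; both $\xi\wedge$ and $\iota_{\xi^\sharp}$ affect only the horizontal tensor factor of $\wedge T^*\mathcal{F}$, so $c(\xi)$ commutes with $I\otimes P_\mathcal{H}$. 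Combining this with $[\alpha,\, I\otimes P_\mathcal{H}]=0$ established in the previous paragraph yields $[\sigma_P(A_H(t))(\xi),\, I\otimes P_\mathcal{H}]=0$ for all horizontal $\xi$.

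The argument is essentially structural rather than computational, and there is no genuine obstacle. The only point requiring some care is the bookkeeping of signs when wedge products and contractions by horizontal forms are transported through the tensor decomposition $\wedge T^*_H\mathcal{F}\otimes \wedge T^*_V\mathcal{F}$; any factors of $\varepsilon_H$ that might appear in such identifications are purely horizontal and therefore do not affect the commutation with $P_\mathcal{H}$.
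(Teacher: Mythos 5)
Your proposal is correct: every constituent of $\mathscr{A}_V=(d_V+d_V^\dagger)\alpha+\nu-\tfrac12\varepsilon$ and of $\sigma_P(A_H(t))$ is either an operator acting only on the horizontal tensor factor (Clifford multiplication by horizontal covectors, $\bar{\star}_H$, $\varepsilon_H$) or a fiberwise operator commuting with $\Delta_V$ and with the vertical grading ($d_V+d_V^\dagger$, $\bar{\star}_V$, $\mathrm{vd}$, $\varepsilon_V$), so each commutes with the Riesz projection $I\otimes P_\mathcal{H}$, and the order-zero term $d_H^{(2)}$ is indeed irrelevant for the symbol. The paper itself gives no proof here but defers to \cite[Lemma 8.63]{JO17}, and your structural argument is exactly the expected one, including the harmless $\varepsilon_H$-sign bookkeeping in the tensor decomposition.
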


Let us define 
\begin{align*}
\mathscr{A}^{\delta}(r)\coloneqq &(I\otimes P_{\mathcal{H}})\mathscr{A}(r)(I\otimes P_{\mathcal{H}})+(I\otimes{P_{\mathcal{H}}}^\perp)\mathscr{A}(r)(I\otimes P_{\mathcal{H}}^\perp)\\
\eqqcolon &\mathscr{A}_{\mathcal{H}}(r)+\mathscr{A}_{{\mathcal{H}^\perp} }.
\end{align*}
By Lemma \ref{Lemma:CommPH} the difference $\mathscr{C}(r)\coloneqq \mathscr{A}^\delta(r)-\mathscr{A}(r)$
is a uniformly bounded norm, i.e. there exists $\mathscr{C}>0$ such that $\norm{\mathscr{C}(r)}\leq \mathscr{C}$ for all $r\in(0,t]$. We now consider the  {\em deformed} operator
\begin{align*}
\mathscr{D}^+_{t,\delta}\coloneqq \frac{\partial}{\partial r}+\mathscr{A}^{\delta}(r),
\end{align*}
defined on the core $\mathcal{C}(\mathscr{D}^+_{t,\delta})\coloneqq \{\sigma\in C^{1}_c((0,t],H^1))\:|\:Q_<(A^\delta(t))\sigma(t)=0\}$, where 
\begin{align*}
{A}^{\delta}(r)\coloneqq &(I\otimes P_{\mathcal{H}}){A}(r)(I\otimes P_{\mathcal{H}})+(I\otimes P_{\mathcal{H}}^\perp){A}(r)(I\otimes P_{\mathcal{H}}^\perp)\\
\eqqcolon &{A}_{\mathcal{H}}(r)+{A}_{{\mathcal{H}^\perp} }(r).
\end{align*}
Again, from Lemma \ref{Lemma:CommPH} it follows that $C(r)\coloneqq A^\delta(r)-A(r)$ has uniformly bounded operator. Since $\mathscr{D}^+_{t,\delta}-\mathscr{D}^+_{t}=\mathscr{C}(r)$ is uniformly bounded then, by \cite[Theorem 4.14]{BBC08}, 
\begin{align*}
\ind(\mathscr{D}^+_t )=\ind(\mathscr{D}^+_{t,\delta} )+\ind(Q_{<}(A(t))(H),Q_\geq(A^\delta(t))(H)).
\end{align*}
Now we introduce the operators 
\begin{align*}
\mathscr{D}^+_{t,\mathcal{H} /{\mathcal{H}^\perp} }\coloneqq\frac{\partial}{\partial r}+\mathscr{A}_{\mathcal{H} /{\mathcal{H}^\perp} }(r)
\end{align*}
defined on the core $\mathcal{C}(\mathscr{D}^+_{t,\mathcal{H} /{\mathcal{H}^\perp} })\coloneqq \{\sigma\in C^{1}_c((0,t],H^1))\:|\:Q_<(A_{\mathcal{H}/{\mathcal{H}^\perp} }(t))\sigma(t)=0\}$, which by orthogonality satisfy 
\begin{align*}
\ind(\mathscr{D}^+_{t,\delta})=\ind(\mathscr{D}^+_{t,\mathcal{H}})+\ind(\mathscr{D}^+_{t,{\mathcal{H}^\perp}})
\end{align*}
and implies the index relation
\begin{align*}
\ind(\mathscr{D}^+_t )=\ind(\mathscr{D}^+_{t,\mathcal{H} })+\ind(\mathscr{D}^+_{t,{\mathcal{H}^\perp} } )+\ind(Q_<(A(t))(H),Q_\geq(A^\delta(t))(H)). 
\end{align*}

\begin{remark}
In the Witt case, as $A(t)$ is invertible for $t>0$ small enough, it is easy to see that this is also the case for the operator $A^\delta(t)$. 
\end{remark}

Now we show these three contributions to the index vanish for $t>0$ sufficiently small. 
\begin{proposition}\label{Prop:DefIndex1}
For $t>0$ small enough, $\ind(Q_<(A(t))(H),Q_\geq(A^\delta(t))(H))=0$.
\end{proposition}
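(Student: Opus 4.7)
The plan is to invoke Theorem \ref{BThm5.9} with $A \coloneqq A(t)$ and $B \coloneqq A^\delta(t)-A(t)$, viewing $A^\delta(t)$ as a Kato perturbation of $A(t)$ at the fixed slice $r=t$. First I would record that $B$ is a bounded symmetric operator on $H$, and in fact uniformly bounded in $t$: indeed, by construction $C(r)=A^\delta(r)-A(r)$ is expressed through commutators of $A(r)$ with the fiberwise spectral projection $I\otimes P_{\mathcal{H}}$, and Lemma \ref{Lemma:CommPH} shows that only the bounded pieces of $A(r)$ (the horizontal part together with the zero-order piece $\nu$) contribute, yielding a constant $\mathscr{C}>0$ with $\|B\|=\|C(t)\|\le \mathscr{C}$ uniformly for $t\in(0,t_0]$. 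In particular $B$ is a trivial Kato perturbation of $A(t)$ (bound $b=0$), so Kato--Rellich gives self-adjointness of $A^\delta(t)=A(t)+B$ and equivalence of the Sobolev chains.

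Next I would produce a lower bound of the form $|A(t)|\ge c/t$ with $c$ independent of $t$. In the Witt case the essential eigenvalues of $A_V$ on the space of vertical harmonic forms satisfy $|2j-N|\ge 1$, and on the complement $\mathcal{H}^{\perp}$ one may, after a vertical rescaling that does not affect the index (cf.\ Section \ref{Section:SpectralDecomp}), assume $|A_V|\ge 1$. Combined with Lemma \ref{Lemma:VertOp1}, which provides a constant $C>0$ with $A(r)^2\ge C\,r^{-2}A_V^2$ for $r$ small, this yields
\begin{equation*}
|A(t)|\ \ge\ \frac{\sqrt{C}}{t}\,|A_V|\ \ge\ \frac{\sqrt{C}}{t}\,\mu_0,
\end{equation*}
for some fixed $\mu_0>0$. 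Choosing $t>0$ small enough that $\sqrt{C}\mu_0/t>\sqrt{2}\,\mathscr{C}$, the hypothesis $|A|\ge \mu>\sqrt{2}\|B\|$ of Theorem \ref{BThm5.9} holds. The theorem then produces both the Fredholm-pair property and the identity
\begin{equation*}
\ind\bigl(Q_{\le}(A(t))(H),\,Q_{>}(A^\delta(t))(H)\bigr)=0.
\end{equation*}

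Finally I would reconcile the strict/non-strict conventions. In the Witt regime, $A(t)$ is invertible for $t$ small by the bound above, so $Q_{<}(A(t))=Q_{\le}(A(t))$ and $Q_{\ge}(A(t))=Q_{>}(A(t))$. The same bound, together with $\|B\|\le\mathscr{C}$ and $|A(t)|\ge\sqrt{C}\mu_0/t\gg\mathscr{C}$, forces $0\notin\spec(A^\delta(t))$ as well, so that $Q_{\ge}(A^\delta(t))=Q_{>}(A^\delta(t))$. Consequently the claimed index equals the one given by Theorem \ref{BThm5.9}, and vanishes.

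The main technical obstacle is the verification of the lower bound $|A(t)|\ge c/t$ with $c$ large enough to dominate $\sqrt{2}\mathscr{C}$; this is precisely the place where the Witt hypothesis enters, since it excludes $0$ from $\spec(A_V|_{\mathcal{H}})$ and lets us invoke Lemma \ref{Lemma:VertOp1} to obtain the quantitative $r^{-1}$-growth of $|A(r)|$.
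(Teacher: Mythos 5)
Your argument is correct and follows essentially the same route as the paper: apply Theorem \ref{BThm5.9} to the pair $A(t)$ and the uniformly bounded perturbation $C(t)=A^\delta(t)-A(t)$, using the Witt-case bound $|A_V|\geq 1$ together with Lemma \ref{Lemma:VertOp1} to get $|A(t)|\geq \sqrt{C}/t$, and then take $t$ small so that $\sqrt{C}/t>\sqrt{2}\norm{C(t)}$. Your extra step reconciling the strict and non-strict spectral projections via invertibility of $A(t)$ and $A^\delta(t)$ is a sound addition that the paper covers only implicitly (in the remark noting that $A^\delta(t)$ is invertible for small $t$ in the Witt case).
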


\begin{proof}
As for the Witt case we can assume $|A_V|\geq 1$, Lemma \ref{Lemma:VertOp1} implies
\begin{align*}
|A(t)|\geq\frac{\sqrt{C}}{t}|A_V|\geq\frac{\sqrt{C}}{t}.
\end{align*}
To apply the vanishing statement of Theorem \ref{BThm5.9} we require $\sqrt{C}/t \geq \sqrt{2}\norm{C(t)}$, 
which can always be achieved by making $t$ small,  since $C(t)$ is uniformly bounded. 
\end{proof}

\begin{proposition}\label{Prop:DefIndex2}
For $t>0$ small enough we have $\ind(\mathscr{D}^+_{t,{\mathcal{H}^\perp} } )=0$.
\end{proposition}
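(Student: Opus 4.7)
The plan is to mirror the argument of Theorem \ref{Thm:5.2} and its underlying estimate Lemma \ref{Lemma:5.12}, but restricted to the subspace ${\mathcal{H}^\perp}$ of non-harmonic vertical forms. The key observation, discussed in Section \ref{Section:SpectralDecomp}, is that on ${\mathcal{H}^\perp}$ the spectrum of $A_V$ consists of eigenvalues of the form $\pm\sqrt{\lambda+(j-N-1/2)^2}$, with $\lambda>0$ an eigenvalue of the vertical Laplacian. By rescaling the vertical component of the metric \eqref{Eqn:MetricUt} by a large factor --- an operation that does not alter $\ind(\mathscr{D}^+)$ --- we may assume that $|A_V^{{\mathcal{H}^\perp}}|$ is as large as we wish. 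In particular there exists $\mu>1/2$ with $\mu$ arbitrarily large such that both $|\mathscr{A}_V^{{\mathcal{H}^\perp}}+1/2|\geq\mu$ and $|\mathscr{A}_V^{{\mathcal{H}^\perp}}-1/2|\geq\mu$, where $\mathscr{A}_V^{{\mathcal{H}^\perp}}=A_V^{{\mathcal{H}^\perp}}-\varepsilon/2$ is the cone coefficient of $\mathscr{D}$ restricted to ${\mathcal{H}^\perp}$.

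Next I would adapt the proof of Lemma \ref{Lemma:5.12} to $\mathscr{D}^+_{t,{\mathcal{H}^\perp}}=\partial_r+\mathscr{A}_{{\mathcal{H}^\perp}}(r)$. Using $\mathscr{A}'_{{\mathcal{H}^\perp}}(r)=A_H'{}^{{\mathcal{H}^\perp}}(r)-r^{-2}\mathscr{A}_V^{{\mathcal{H}^\perp}}$ and the identity $2\,\textnormal{Re}\inner{\sigma'}{\mathscr{A}_{{\mathcal{H}^\perp}}(r)\sigma}_H=\tfrac{d}{dr}\inner{\sigma}{\mathscr{A}_{{\mathcal{H}^\perp}}(r)\sigma}_H-\inner{\sigma}{\mathscr{A}'_{{\mathcal{H}^\perp}}(r)\sigma}_H$, one obtains after completing the square (exactly as in \eqref{Eqn:AuxLemma5.12})
\begin{align*}
\norm{\mathscr{D}^+_{t,{\mathcal{H}^\perp}}\sigma(r)}_H^2
=&\left(\norm{\sigma'(r)}_H^2-\frac{1}{4r^2}\norm{\sigma(r)}_H^2\right)+r^{-2}\bnorm{\left(\mathscr{A}_V^{{\mathcal{H}^\perp}}+\tfrac{1}{2}\right)\sigma(r)}_H^2\\
&+\norm{A_H^{{\mathcal{H}^\perp}}(r)\sigma}_H^2+r^{-1}\inner{\mathscr{A}_{HV}^{{\mathcal{H}^\perp}}\sigma}{\sigma}_H\\
&+\frac{d}{dr}\inner{\sigma}{\mathscr{A}_{{\mathcal{H}^\perp}}(r)\sigma}_H-\inner{\sigma}{A_H'{}^{{\mathcal{H}^\perp}}(r)\sigma}_H.
\end{align*}
Integrating over $(0,t]$, the first bracket is non-negative by Hardy's inequality, the boundary contribution at $r=t$ is controlled via the APS condition $Q_<(A_{{\mathcal{H}^\perp}}(t))\sigma(t)=0$ together with the boundedness of $\mathscr{A}_{{\mathcal{H}^\perp}}(t)-A_{{\mathcal{H}^\perp}}(t)=-\varepsilon/(2t)$, and the remaining bounded error terms are absorbed by the dominant contribution $r^{-2}\norm{(\mathscr{A}_V^{{\mathcal{H}^\perp}}+1/2)\sigma}_H^2\geq\mu^2 r^{-2}\norm{\sigma}_H^2$ after choosing $\mu$ large and then $t$ small. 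This yields $\norm{\mathscr{D}^+_{t,{\mathcal{H}^\perp}}\sigma}_{L^2}\geq (C/t)\norm{\sigma}_{L^2}$ with $C>0$, hence $\ker(\mathscr{D}^+_{t,{\mathcal{H}^\perp}})=\{0\}$.

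The adjoint $(\mathscr{D}^+_{t,{\mathcal{H}^\perp}})^*=-\partial_r+\mathscr{A}_{{\mathcal{H}^\perp}}(r)$ has the complementary core $\{\sigma\in C^1_c((0,t],H^1_{{\mathcal{H}^\perp}})\:|\:Q_\geq(A_{{\mathcal{H}^\perp}}(t))\sigma(t)=0\}$; running the same calculation (cf. Remark \ref{Lemma:5.12R}) with $(\mathscr{A}_V^{{\mathcal{H}^\perp}}-1/2)$ replacing $(\mathscr{A}_V^{{\mathcal{H}^\perp}}+1/2)$ gives $\ker((\mathscr{D}^+_{t,{\mathcal{H}^\perp}})^*)=\{0\}$, so $\ind(\mathscr{D}^+_{t,{\mathcal{H}^\perp}})=0$. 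The main technical obstacle is the boundary term $\inner{\sigma(t)}{\mathscr{A}_{{\mathcal{H}^\perp}}(t)\sigma(t)}_H$, which is not automatically non-negative because the APS projection is taken relative to $A_{{\mathcal{H}^\perp}}(t)$ and not $\mathscr{A}_{{\mathcal{H}^\perp}}(t)$; the correction term $-\langle\sigma(t),\varepsilon\sigma(t)\rangle/(2t)$ must be reabsorbed, which is possible precisely because the rescaling can be made independently of $t$, producing $\mu$ large enough to dominate every bounded and $1/t$-bounded remainder.
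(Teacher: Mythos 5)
Your route is genuinely different from the paper's: the paper never estimates the perturbed operator $\mathscr{D}^+_{t,{\mathcal{H}^\perp}}$ directly. It first proves the Lemma \ref{Lemma:5.12}--type estimate for the \emph{unperturbed} operator $D^+_{t,{\mathcal{H}^\perp}}=\partial_r+A_{{\mathcal{H}^\perp}}(r)$, for which the APS condition $Q_<(A_{{\mathcal{H}^\perp}}(t))\sigma(t)=0$ matches the operator appearing in the boundary term and the cross term is covered by Lemma \ref{Lemma:VertOp1}; this gives $\ind(D^+_{t,{\mathcal{H}^\perp}})=0$ and, in the sharpened form \eqref{Eqn:Lemma5.12Modif}, the bound $\bnorm{\tfrac{1}{r}(A_{V,{\mathcal{H}^\perp}}+\tfrac12)\sigma}_{L^2}\leq(1+\beta)\norm{D^+_{t,{\mathcal{H}^\perp}}\sigma}_{L^2}$. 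Combining this with the resolvent bound $\norm{\sigma}\leq (d^\perp)^{-1}\norm{(A_{V,{\mathcal{H}^\perp}}+\tfrac12)\sigma}$ and rescaling so that $2d^\perp>1+\beta$, the potential $\varepsilon/2r$ becomes a Kato perturbation with relative bound $b<1$, and the vanishing of $\ind(\mathscr{D}^+_{t,{\mathcal{H}^\perp}})$ follows from Kato's stability theorem \cite[Theorem IV.5.22]{KATO}. Your plan attacks $\mathscr{D}^+_{t,{\mathcal{H}^\perp}}$ and its adjoint head-on, and it stumbles precisely at the two points this detour is designed to avoid.

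First, the boundary term. You correctly identify the mismatch (the projection is taken with respect to $A_{{\mathcal{H}^\perp}}(t)$, not $\mathscr{A}_{{\mathcal{H}^\perp}}(t)$), but your proposed cure --- rescale so that $\mu$ is large enough ``to dominate every bounded and $1/t$-bounded remainder'' --- does not work as stated: the correction $-\tfrac{1}{2t}\inner{\sigma(t)}{\varepsilon\sigma(t)}$ is a pointwise trace at $r=t$, and no bulk term $\mu^2\int_0^t r^{-2}\norm{\sigma(r)}^2_H\,dr$, however large $\mu$, dominates $\tfrac{1}{2t}\norm{\sigma(t)}^2_H$ (concentrate $\sigma$ near $r=t$). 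One must spend part of $\norm{\sigma'}^2_{L^2}$ via a trace inequality such as $\norm{\sigma(t)}_H^2\leq 2\norm{\sigma'}_{L^2}\norm{\sigma}_{L^2}$, and since the constant $\tfrac14$ in Hardy's inequality is sharp, the resulting deficit in the Hardy step must then itself be recovered from the $\mu^2 r^{-2}$ term; none of this bookkeeping appears in your argument. Second, the cross term: because $\varepsilon$ commutes with $A_H$ and with $A_V$ (both are degree-shift operators composed with $\alpha$, and $\varepsilon\alpha=-\alpha\varepsilon$), one has $\mathscr{A}_{HV}=A_H(0)\mathscr{A}_V+\mathscr{A}_V A_H(0)=A_{HV}-\varepsilon A_H(0)$, which contains a genuinely first-order \emph{horizontal} piece. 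Hence, unlike in Lemma \ref{Lemma:5.12}, Lemma \ref{Lemma:VertOp1} does not make $\mathscr{A}_{HV}(\mathscr{A}_V^{{\mathcal{H}^\perp}}+\tfrac12)^{-1}$ bounded, and the term $r^{-1}\inner{\varepsilon A_H(r)\sigma}{\sigma}_H$ must instead be absorbed by Cauchy--Schwarz against $\norm{A_H(r)\sigma}^2_H$ and the $\mu^2 r^{-2}$ term --- possible, but not ``exactly as in \eqref{Eqn:AuxLemma5.12}''. Both repairs can be carried out, so your strategy is salvageable, but as written these two absorptions are asserted rather than proved, and they are the substance of the proposition; the paper's Kato-perturbation argument is the cleaner way around them.
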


\begin{proof}
As we are in the Witt case we can assume $|A_V|\geq 1$. Hence, it is easy to see that for the operator
\begin{align*}
{D}^+_{t,{\mathcal{H}^\perp} } \coloneqq \frac{\partial}{\partial r}+A_{{\mathcal{H}^\perp}}(r),
\end{align*}
defined on the core $\mathcal{C}(\mathscr{D}^+_{t,{\mathcal{H}^\perp} })$,  we can prove an  analogue of Lemma \ref{Lemma:5.12}. That is, for $t>0$ small enough and $\sigma\in\mathcal{C}(\mathscr{D}^+_{t,{\mathcal{H}^\perp} })$, we can derive the estimate
\begin{align*}
\bnorm{D^+_{t,{\mathcal{H}^\perp}}\sigma}_{L^2((0,t],H)}\geq\frac{1}{2t}\bnorm{\left(A_{V,{\mathcal{H}^\perp}}+\frac{1}{2}\right)\sigma}_{L^2((0,t],H)},
\end{align*}
where $A_{V,{\mathcal{H}^\perp}}\coloneqq  (I\otimes P_{\mathcal{H}}^\perp)A_V(I\otimes P_{\mathcal{H}}^\perp)$. In particular, we can show as before that $\ind(D_{t,{\mathcal{H}^\perp}})=0$. Now, the strategy of the proof is to show that $\mathscr{D}^+_{t,{\mathcal{H}^\perp} }$
is a Kato perturbation of ${D}^+_{t,{\mathcal{H}^\perp} }$ on $\mathcal{C}(\mathscr{D}^+_{t,{\mathcal{H}^\perp} })$.  If we are able to proof an estimate of the form 
\begin{align}\label{Eqn:KatoAim}
\bnorm{\frac{\varepsilon}{2r}\sigma}_{L^2((0,t],H)}
\leq b\norm{D^+_{t,{\mathcal{H}^\perp}}\sigma}_{L^2((0,t],H)},
\end{align}
with $b<1$, then by \cite[Theorem IV.5.22, pg. 236]{KATO} it would follow that 
$\textnormal{ind}\left(\mathscr{D}^{+}_{t,\mathcal{H}\perp}\right)=0$. 
From the proof of Lemma \ref{Lemma:5.12}  we see that for an element $\sigma\in\mathcal{C}(\mathscr{D}^+_{t,{\mathcal{H}^\perp}})$ we have the norm identity (see \eqref{Eqn:AuxLemma5.12}),
\begin{align*}
\norm{{D}^+_{t,{\mathcal{H}^\perp}}\sigma(r) }_{H}^2=&\left(\norm{\sigma'(r)}^2_H-\frac{1}{4r^2}\norm{\sigma(r)}^2_H\right)+\frac{d}{dr}\inner{\sigma(r)}{{A}_{{\mathcal{H}^\perp}}(r)\sigma(r)}_H\\
&+r^{-2}\bnorm{\left({A}_{V,{\mathcal{H}^\perp}}+\frac{1}{2}\right)\sigma(r)}^2_H+r^{-1}\left\langle\left({A}_{V,{\mathcal{H}^\perp}}+\frac{1}{2}\right)\sigma(r),{L}^*\sigma(r)\right\rangle_H\notag\\
&-\inner{\sigma(r)}{A'_{H,{\mathcal{H}^\perp}}(0)\sigma(r)}_H+\norm{A_{H,{\mathcal{H}^\perp}}(r)\sigma(r)}_H^2.\notag
\end{align*}
Arguing as in the mentioned proof, we see that after integration between $0$ and $t$ the first term in brackets is non-negative by Hardy's inequality and the term containing the total derivative with respect to $r$ is also non-negative because $\sigma$ has compact support and because the boundary condition at $r=t$. Thus, it follows that for $t$ small enough and $0<\beta<1$,
\begin{align}\label{Eqn:Lemma5.12Modif}
\bnorm{\frac{1}{r}\left(A_{V,{\mathcal{H}^\perp}}+\frac{1}{2}\right)\sigma}_{L^2((0,t],H)}\leq (1+\beta)\norm{D^+_{t,{\mathcal{H}^\perp}}\sigma}_{L^2((0,t],H)}.
\end{align}

On the other hand, using the norm of the resolvent identity  (\cite[SectionV.5]{KATO}), we obtain the estimate
\begin{align*}
\norm{\sigma}
\leq \bnorm{\left({A}_{V,{\mathcal{H}^\perp}}+\frac{1}{2}\right)^{-1}}\bnorm{\left({A}_{V,{\mathcal{H}^\perp}}+\frac{1}{2}\right)\sigma(r)}
= \frac{1}{d^\perp} \bnorm{\left({A}_{V,{\mathcal{H}^\perp}}+\frac{1}{2}\right)\sigma(r)},
\end{align*}
where $d^\perp\coloneqq {\text{dist}(-1/2,\spec(A_{V,{\mathcal{H}^\perp}}))}$. All together, it follows from \eqref{Eqn:Lemma5.12Modif} that
\begin{align}\label{Eqn:ResultKatoEstimate}
\bnorm{\frac{\varepsilon}{2r}\sigma}_{L^2((0,t],H)}\leq \frac{(1+\beta)}{2d^\perp}\norm{D^+_t\sigma}_{L^2((0,t],H)},
\end{align}
i.e. we have shown the desired estimate with $b\coloneqq (1+\beta)/2d^\perp $. Recall we require the condition $b<1$, which translates to
$(1+\beta)<2d^\perp $. This can always be achieved by rescaling the vertical metric.
\end{proof}

\begin{proposition}\label{Prop:DefIndex3}
For $t$ small enough we have $\ind(\mathscr{D}^+_{t,{\mathcal{H}} } )=0$.
\end{proposition}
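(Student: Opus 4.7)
The strategy is to prove $\ker(\mathscr{D}^+_{t,\mathcal{H}}) = 0 = \textnormal{coker}(\mathscr{D}^+_{t,\mathcal{H}})$ directly: first I localize any possible kernel or cokernel element to a small, explicitly identifiable subspace of $\mathcal{H}$ via a Lemma~\ref{Lemma:5.12}-type estimate, then I rule out these residual modes by analyzing the cone-tip behavior together with the APS boundary condition at $r=t$. Unlike on $\mathcal{H}^\perp$ in Proposition~\ref{Prop:DefIndex2}, a Kato-perturbation argument alone will not suffice because the essential spectrum of $A_V$ on $\mathcal{H}$ cannot be enlarged past its borderline value $\pm 1/2$ by rescaling the vertical metric.

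Since $\mathbb{C}P^N$ has cohomology concentrated in even degree, $\varepsilon_V|_\mathcal{H} = \textnormal{id}$, whence $\varepsilon|_\mathcal{H} = \varepsilon_H$ and $\mathscr{A}_V|_\mathcal{H} = A_V - \varepsilon_H/2$. Decomposing $\mathcal{H} = \bigoplus_j (\mathcal{H}^j_+ \oplus \mathcal{H}^j_-)$ by vertical degree $2j$ and by the $\pm 1$ eigenspaces of $\varepsilon_H$, the eigenvalues of $\mathscr{A}_V$ on $\mathcal{H}^j_\pm$ are $(2j-N)\mp 1/2$; in the Witt case these are half-integers with $|\mathscr{A}_V|\geq 1/2$, equality being attained only on $\mathcal{H}^{(N-1)/2}_-$ (value $-1/2$) and on $\mathcal{H}^{(N+1)/2}_+$ (value $+1/2$). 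Repeating the integration-by-parts computation from Lemma~\ref{Lemma:5.12} with $A(r)$ replaced by $\mathscr{A}(r)$---and noting that $\mathscr{A}_{HV}$ vanishes on $\ker(\mathscr{A}_V+1/2)$ by the mapping property below, so the analogue of the operator $L$ from that proof remains bounded---yields, for $t$ small enough,
\begin{align*}
\norm{\mathscr{D}^+_{t,\mathcal{H}}\sigma}_{L^2((0,t],H)}^2 \geq \frac{1}{4t^2}\bnorm{\left(\mathscr{A}_V+\frac{1}{2}\right)\sigma}_{L^2((0,t],H)}^2.
\end{align*}
Thus any $\sigma \in \ker(\mathscr{D}^+_{t,\mathcal{H}})$ takes values almost everywhere in $\ker(\mathscr{A}_V+1/2) = \mathcal{H}^{(N-1)/2}_-$, and the analogous adjoint estimate (see Remark~\ref{Lemma:5.12R}) localizes $\textnormal{coker}(\mathscr{D}^+_{t,\mathcal{H}})$ inside $\mathcal{H}^{(N+1)/2}_+$.

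To finish, I exploit the fact that the Hodge-star factor $\alpha = \bar{\star}_H\bar{\star}_V$ in $A_H(r)$ exchanges $\mathcal{H}^j_\pm$ with $\mathcal{H}^{N-j}_\mp$ and that a horizontal exterior derivative flips $\varepsilon_H$, so $A_H(r)$ maps $\mathcal{H}^{(N-1)/2}_-$ into the distinct $\mathscr{A}_V$-eigenspace $\mathcal{H}^{(N+1)/2}_-$. Projecting $\mathscr{D}^+_{t,\mathcal{H}}\sigma = 0$ onto each of these two subspaces separately forces $\partial_r\sigma - \sigma/(2r) = 0$ together with $A_H(r)\sigma = 0$, which gives $\sigma(r,y) = r^{1/2}\phi(y)$ with $\phi \in C^\infty(F,\mathcal{H}^{(N-1)/2}_-)$ and $A_H(r)\phi = 0$ for every $r\in(0,t]$. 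Using $\nu\phi = -\phi$ on $\mathcal{H}^{(N-1)/2}_-$ one then computes $A(t)\phi = A_H(t)\phi + t^{-1}\nu\phi = -t^{-1}\phi$, so $\phi$ lies in the strictly negative spectrum of $A(t)$; the APS condition $Q_<(A(t))\sigma(t)=0$ then yields $t^{1/2}\phi = 0$, hence $\phi = 0$. The parallel argument applied to the adjoint $(\mathscr{D}^+_{t,\mathcal{H}})^*$, with $\nu = +1$ on $\mathcal{H}^{(N+1)/2}_+$ and the complementary condition $Q_{\geq}(A(t))\sigma(t)=0$, eliminates the cokernel and delivers $\ind(\mathscr{D}^+_{t,\mathcal{H}})=0$. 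The central difficulty is precisely this borderline spectral behavior of $\mathscr{A}_V$: the Kato-perturbation device of Proposition~\ref{Prop:DefIndex2} is unavailable, and the explicit cone-tip analysis with its decisive use of the APS condition is therefore essential.
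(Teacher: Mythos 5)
Your argument hinges on the localization estimate
$\norm{\mathscr{D}^+_{t,\mathcal{H}}\sigma}^2_{L^2}\geq \tfrac{1}{4t^2}\norm{(\mathscr{A}_V+\tfrac12)\sigma}^2_{L^2}$,
and the justification you give for it is false. To run the Lemma \ref{Lemma:5.12} computation with $A(r)$ replaced by $\mathscr{A}(r)$ you must control the cross term $r^{-1}\inner{\mathscr{A}_{HV}\sigma}{\sigma}_H$, where now $\mathscr{A}_{HV}=A_H(0)\mathscr{A}_V+\mathscr{A}_V A_H(0)=A_{HV}-A_H(0)\varepsilon$ (recall $A_H$ \emph{commutes} with $\varepsilon$, since both $d_H+d_H^\dagger$ and $\alpha$ anti-commute with it). On $\ker(\mathscr{A}_V+\tfrac12)=\mathcal{H}^{(N-1)/2}_-$ one has $A_{HV}\sigma=0$ but $-A_H(0)\varepsilon\sigma=+A_H(0)\sigma$, so $\mathscr{A}_{HV}$ restricts there to the full first-order horizontal operator $A_H(0)$, not to zero; your claimed ``mapping property'' shows exactly this (the image lands in the $+3/2$-eigenspace, not back in the $-1/2$-eigenspace), so the analogue of the bounded operator $L=\mathscr{A}_{HV}(\mathscr{A}_V+\tfrac12)^{-1}$ does not exist: $-\tfrac12\in\spec(\mathscr{A}_V)$ on $\mathcal{H}$ and the offending term is supported precisely on that kernel. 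Nor can the cross term be absorbed by Cauchy--Schwarz against $\norm{A_H\sigma}^2$, because the price is a term $\tfrac{1}{4r^2}\norm{\sigma}^2$ and Hardy's inequality has already been spent against the existing $-\tfrac{1}{4r^2}\norm{\sigma}^2$. This is the borderline situation ($|\mathscr{A}_V|=\tfrac12$ attained on $\mathcal{H}$) in which the Lemma \ref{Lemma:5.12} mechanism genuinely breaks down; it is the whole reason the harmonic part has to be treated separately. Note also that you are trying to prove the stronger statement $\ker=\coker=0$, which is not what is needed and is not established by the index identity; coupled modes mixing $\mathcal{H}^{(N-1)/2}_-$ and $\mathcal{H}^{(N+1)/2}_-$ through $A_H$ are not excluded by anything you wrote.

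The paper avoids all of this by a duality argument instead of an a priori estimate: in the Witt case the first-order part of $\mathscr{A}_{\mathcal{H}}$ is the odd signature operator $A_F$ of $F$ with coefficients in $\mathcal{H}$, and (by the parity discussion of \cite[pg.\ 61]{APSI}) there is a self-adjoint involution $\mathscr{U}$ with $\mathscr{U}A_F\mathscr{U}=-A_F$. Hence $\mathscr{U}\mathscr{A}_{\mathcal{H}}(r)\mathscr{U}=-\mathscr{A}_{\mathcal{H}}(r)+\mathscr{C}_2(r)$ with $\mathscr{C}_2$ uniformly bounded, and since $A_{\mathcal{H}}(t)$ is invertible with spectral gap of order $t^{-1}$, the bounded perturbation does not move the spectral projection defining the boundary condition for $t$ small; one gets $\mathscr{U}\mathscr{D}^+_{t,\mathcal{H}}\mathscr{U}=-(\mathscr{D}^+_{t,\mathcal{H}})^*$ including boundary conditions, so $\ind(\mathscr{D}^+_{t,\mathcal{H}})=-\ind(\mathscr{D}^+_{t,\mathcal{H}})=0$ without any claim that kernel and cokernel vanish separately. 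If you want to salvage your approach you would need a genuinely new way to handle the borderline eigenvalue; as written, the proof has a gap at its central step.
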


\begin{proof}
We will proceed similarly as in the proof of \cite[Theorem 5.2]{B09}. Since we can identify the first order part of $\mathscr{A}_H$ with the odd signature operator $A_F$ of $F$  with coefficients in $\mathcal{H}$ we know, by the discussion of \cite[Remark (3),(4), pg. 61]{APSI}, that in the Witt case  there exists a self-adjoint involution $\mathscr{U}$ such that $\mathscr{U}A_F\mathscr{U}=-A_F$. In particular, $\mathscr{U}$ anti-commutes with the principal symbol of $\mathscr{A}_H$ and therefore
$\mathscr{U}\mathscr{A}_{\mathcal{H}}(r)\mathscr{U}=-{\mathscr{A}}_\mathcal{H}(r)+\mathscr{C}_2(r)$,
where  $\norm{\mathscr{C}_2(r)}_H\leq \mathscr{C}_2$ for $r\in(0,t]$.  Similarly,  we get an analogous formula for $A_\mathcal{H}$ since it has the same principal symbols as $\mathscr{A}_\mathcal{H}$, i.e.
$\mathscr{U}{A}_{\mathcal{H}}(r)\mathscr{U}=-{A}_\mathcal{H}(r)+{C}_2(r)$, 
where $\norm{{C}_2(r)}_H\leq C_2$ for $r\in(0,t]$. Observe now that the operator $\mathscr{U}\mathscr{D}^+_t \mathscr{U}$ defined on the core
$\mathcal{C}(\mathscr{U}\mathscr{D}^+_{t,\mathcal{H}}\mathscr{U})\coloneqq \{\sigma\in C^{1}_c((0,t],H^1))\:|\:Q_<(\mathscr{U} A_{\mathcal{H}} (t)\mathscr{U})\sigma(t)=0\}$
is given by 
\begin{align*}
\left(\frac{\partial }{\partial r}+\mathscr{U} \mathscr{A}_H(r)\mathscr{U} \right)\sigma(r)=-\left(-\frac{\partial }{\partial r}+\mathscr{A}_\mathcal{H}(r)-\mathscr{C}_2(r)\right)\sigma(r). 
\end{align*}
As we are in the Witt case we can use Lemma \ref{Lemma:VertOp1} and \cite[Theorem V.4.10]{KATO} to verify for $t>0$ sufficiently small the relation $Q_<(-A_{\mathcal{H}} (t)+C_2(t))=Q_<(-A_{\mathcal{H}})=Q_>(A_{\mathcal{H}})$ since $C_2(r)$ is uniformly bounded. We now compare $\mathscr{U} \mathscr{D}^+_t \mathscr{U} $ with the adjoint operator
\begin{align*}
(\mathscr{D}^+_{t,\mathcal{H}})^*\sigma(r)=\left(-\frac{\partial }{\partial r}+\mathscr{A}_H(r)\right)\sigma(r),
\end{align*}
defined on the core
$\mathcal{C}((\mathscr{D}^+_{t,\mathcal{H}})^*)\coloneqq \{\sigma\in C^{1}_c((0,t],H^1))\:|\:Q_\geq(A_{\mathcal{H}} (t))\sigma(t)=0\}$.
Since in the Witt case $A_\mathcal{H}$ is invertible then $Q_\geq(A_{\mathcal{H}} (t))=Q_>(A_{\mathcal{H}} (t))$, from where it follows that
$\mathscr{U} \mathscr{D}^+_t \mathscr{U} =-(\mathscr{D}^+_{t,\mathcal{H}})^*$. Altogether,
\begin{align*}
\ind(\mathscr{D}^+_{t,{\mathcal{H}} } )=\ind( \mathscr{U}\mathscr{D}^+_{t,{\mathcal{H}}} \mathscr{U})=(\mathscr{D}^+_{t,\mathcal{H}})^*=-\ind(\mathscr{D}^+_{t,{\mathcal{H}} } ), 
\end{align*}
which shows that $\ind(\mathscr{D}^+_{t,{\mathcal{H}} } )=0$. 
\end{proof}

\begin{proof}[Proof of Theorem \ref{Thm:5.2Pot}]
This follows now from the deformation argument described above, the vanishing results of Proposition \ref{Prop:DefIndex1}, Proposition \ref{Prop:DefIndex2} and Proposition \ref{Prop:DefIndex3}.
\end{proof}

Regarding the index contribution of $Z_t$ it is easy to see that, by deforming the metric close to $r=t$, we can adapt the proof Theorem \ref{Thm:5.3} to get the analogous formula,
\begin{align*}
\textnormal{ind}\left(\mathscr{D}^+_{Z_t,Q_{<}({A}(t))(H)}\right)&=\textnormal{ind}\left({D}^+_{
Z_t,Q_{<}(A_0(t))(H)}\right)+(Q_{<}({A}(r))(H),Q_{\geq }({A}_0(r))(H)), 
\end{align*}
where ${D}^+_{Z_t,Q_{<}(A_0(t))(H)}$ is the signature operator on the manifold with boundary $Z_t$. In particular, we can use  \eqref{IndS1SignKern} to conclude that for  $t>0$ small enough 
\begin{align*}
\textnormal{ind}\left(\mathscr{D}^+_{Z_t,Q_{<}({A}(t))(H)}\right)=\sigma_{S^1}(M)-\frac{1}{2}\dim\ker(A_0(t))+(Q_{<}({A}(r))(H),Q_{\geq }({A}_0(r))(H)).
\end{align*}

Finally, from this formula and from the vanishing Theorem \ref{Thm:5.2Pot} we obtain a partial answer for the index of the operator $\mathscr{D}^+$. 

\begin{theorem}
Let $M$ be a closed, oriented Riemannian $4k+1$ dimensional manifold on which $S^1$ acts effectively and semi-freely by orientation preserving isometries. If the codimension of the fixed point set $M^{S^1}$ in $M$ is divisible by four, then $M/S^1$ is a Witt space and the index of the graded Dirac-Schr\"odinger operator is 
\begin{align*}
\ind(\mathscr{D}^+)=\sigma_{S^1}(M)=\int_{M_0/S^1}L\left(T(M_0/S^1),g^{T(M_0/S^1)}\right).
\end{align*}
\end{theorem}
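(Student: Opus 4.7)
The plan is to assemble the pieces already built in Section \ref{Sect:Index} into one telescoping identity. By Remark \ref{Rmk:EquivSobChainsPot} the Sobolev chains of $\mathscr{A}(t)$ and $A(t)$ coincide, and in the Witt case we have shown that the decomposition formula \eqref{Eqn:Thm5.1OpPotI} holds, namely
\begin{align*}
\ind(\mathscr{D}^+)=\ind\bigl(\mathscr{D}^{+}_{Z_t,Q_{<}(A(t))(H)}\bigr)+\ind\bigl(\mathscr{D}^{+}_{U_t,Q_{\geq }(A(t))(H)}\bigr).
\end{align*}
The second summand vanishes for $t>0$ small by Theorem \ref{Thm:5.2Pot}, so the whole problem reduces to computing the index on $Z_t$.

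Next I would invoke the $Z_t$-side analogue of Theorem \ref{Thm:5.3}, obtained by deforming the metric to a product close to $r=t$ (this does not change the index, cf.\ the parenthetical remark after \eqref{Eqn:Indexwoc}) and using that $\mathscr{D}^+-D^+$ is bounded. This gives
\begin{align*}
\ind\bigl(\mathscr{D}^{+}_{Z_t,Q_{<}(A(t))(H)}\bigr)
=\ind\bigl(D^{+}_{Z_t,Q_{<}(A_0(t))(H)}\bigr)
+\ind\bigl(Q_{<}(A(t))(H),\,Q_{\geq}(A_0(t))(H)\bigr).
\end{align*}
The first term on the right is the APS index of the honest signature operator on the manifold with boundary $Z_t$, which by \eqref{IndS1SignKern} equals $\sigma_{S^1}(M)-\tfrac{1}{2}\dim\ker A_0(t)$.

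For the Kato index contribution I would use Br\"uning's identification via the generalized Thom space $T_{\pi}$ of the fibration $\pi_{\mathcal{F}}:\mathcal{F}\to F$: the argument of \cite[Theorem 5.4]{B09}, which applies because $M/S^1$ (and hence $T_\pi$) is Witt, yields
\begin{align*}
\ind\bigl(Q_{<}(A(t))(H),\,Q_{\geq}(A_0(t))(H)\bigr)=\sigma_{(2)}(T_{\pi})+\tfrac{1}{2}\dim\ker A_0(t).
\end{align*}
By the Cheeger--Dai theorem $\sigma_{(2)}(T_{\pi})$ coincides with Dai's $\tau$-invariant of $\pi_{\mathcal{F}}$, which vanishes in the Witt case as recalled in the sketch of Section \ref{Sect:PfoofSignatureFormula}. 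Adding everything, the two $\tfrac{1}{2}\dim\ker A_0(t)$ contributions cancel and we obtain $\ind(\mathscr{D}^+)=\sigma_{S^1}(M)$, which equals the integral of the $L$-polynomial by Theorem \ref{Thm:S1SignatureThm} since $\eta(M^{S^1})=0$ under the Witt hypothesis.

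The main obstacle in this program is not in the present theorem, which is a formal assembly, but in its ingredient Theorem \ref{Thm:5.2Pot}: the vanishing of $\ind(\mathscr{D}^+_{U_t,Q_{\geq}(A(t))(H)})$ must be done for the perturbed operator $\mathscr{D}^+$ rather than for $D^+$, and this is where the Kato-perturbation estimate \eqref{Eqn:KatoAim}, the harmonic/non-harmonic splitting $\mathscr{A}=\mathscr{A}_{\mathcal{H}}+\mathscr{A}_{\mathcal{H}^\perp}$, and the rescaling of the vertical metric to enforce $|A_V|\ge 1$ enter. All of this has already been carried out in Propositions \ref{Prop:DefIndex1}--\ref{Prop:DefIndex3}, so the present theorem is a clean corollary of those results together with the identifications above.
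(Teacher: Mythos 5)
Your assembly is correct and follows essentially the same route as the paper: the decomposition \eqref{Eqn:Thm5.1OpPotI}, the vanishing on $U_t$ from Theorem \ref{Thm:5.2Pot} (whose proof via Propositions \ref{Prop:DefIndex1}--\ref{Prop:DefIndex3} is indeed where the real work lies), the $Z_t$-analogue of Theorem \ref{Thm:5.3} together with \eqref{IndS1SignKern}, and the evaluation of the Kato index via the generalized Thom space and Cheeger--Dai, with the two $\tfrac{1}{2}\dim\ker A_0(t)$ terms cancelling. Nothing essential is missing.
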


 \bibliographystyle{acm}
\bibliography{references} 
\end{document}